\begin{document}

\title{Relative de Rham Theory on Nash Manifolds}
\author{Avraham Aizenbud and Shachar Carmeli}
\date{November 2018}

\maketitle

\begin{abstract}
For a Nash submersion $\phi\colon X\to Y$, we study the complex $\Sdr(\phi)$ of Schwartz sections of the relative de Rham complex of $\phi$. We define the notion of Schwartz sections of constructible sheaves on Nash manifolds and prove that $\Sdr(\phi)$ is homotopy equivalent to the Schwartz sections of the proper push-forward $\phi_!\RR_X$ of the constant sheaf $\RR_X$. Using this equivalence, we show that $\Sdr(\phi)$ depends (up to homotopy equivalence) only on the homology type of the map $\phi$. We also deduce that $\Sdr(\phi)$ has Hausdorff homology spaces.
\end{abstract}

\tableofcontents
\section{Introduction}
\label{sec:intro}

The theory of Nash manifolds provides a convenient interplay between differential geometry and algebraic geometry. Nash manifolds are rigid enough to define classes of functions according to their growth rate, and on the other hand, flexible enough to share properties of differentiable manifolds such as being locally isomorphic to open subsets of Euclidean spaces. 

One fundamental property of differentiable manifolds is that their cohomologies can be computed using the de Rham complex. This feature was adapted to the Nash settings in \cite{AizDer,DerShapPrell}, where the Schwartz sections replace the compactly supported sections of the de Rham complex, and this does not change the cohomologies of the complex.

In this paper, we extend this result to the relative context of a Nash submersion. For a submersion of Nash manifolds $\phi:X\to Y$, we study the relative de Rahm complex of $\phi$ and the associated complex of Schwartz sections. Unlike the absolute case, this complex can not be identified with its compactly supported version. Instead, it turns out that the Schwartz sections of the relative de Rham complex identify with the Schwartz sections of the constructible sheaf $\phi_!\RR$ on $Y$. A prime goal of this paper is to make this statement precise and prove it.

\subsection{Main Results}

For a Nash manifold $X$, let $\mathrm{D}(X,\RR)$ denote the derived category of sheaves of real vector spaces on $X$, and by $\mathrm{D}_{c}(X,\RR)\subseteq \mathrm{D}(X,\RR)$ the full subcategory consisting of bounded complexes with constructible cohomologies  (with respect to a semi-algebraic stratification). Let $\Ch_b(\Fre)$ denote the homotopy category of bounded complexes of Fr\'{e}chet spaces. For a Nash submersion $\phi\colon X\to Y$, let $\Sdr(\phi)$ denote the complex of Schwartz sections of the relative de Rham complex of $\phi$. 
The main results of this paper imply the following
\begin{introthm}(Main Theorem)
\label{intro-rel_der}
    There is a triangulated functor $\Sc \colon \mathrm{D}_{c}(X,\RR) \to \Ch_b(\Fre)$ satisfying the following properties: 
    \begin{enumerate}
    \item For a Nash submersion $\phi\colon X\to Y$, we have 
    \[
        \Sc(\phi_!\RR_X) \simeq \Sdr(\phi) \qin \Ch_b(\Fre). 
    \]
    Here, $\RR_X$ stands for the constant sheaf on $X$ with value $\RR$. 
    \item For every $\sF \in \mathrm{D}_c(X,\RR)$, the homologies  $H_i(\Sc(\sF))$ are Hausdorff topological vector spaces.   
    \end{enumerate}
\end{introthm}

In fact, we prove a stronger result. Namely, we construct $\infty$-categories of constructible sheaves, $\Shv_c(X,\Der(\RR))$, and of complexes of Fr\'{e}chet spaces, $\Fre_\infty$, and we then build a functor 
\[
\Shv_c(X,\Der(\RR))\oto{\Sc} \Fre_\infty
\]
of $\infty$-categories satisfying these properties. 
Also, we allow replacing the Schwartz sections functor $\Sc$ with the functor of Schwartz sections of a Nash vector bundle.

The main theorem has several consequences for the relative de Rham complexes. 
First, we have 
\begin{introcor}(\Cref{rel_der_haus}) \label{intro-rel_der_haus} 
Let $\phi\colon X\to Y$ be a Nash submersion. The complex $\Sdr(\phi)$ has Hausdorff homology spaces. 
\end{introcor}

The second consequence is easiest to present for a slightly modified version of the relative de Rham complex. For a Nash submersion $\phi\colon X \to Y$, we consider the twisted relative de Rham complex $\Sdrt(\phi)$ (see \Cref{def:tw_rel_der_comp}), which coincides with $\Sdr(\phi)$ up to a shift and a twisting by the local systems of relative orientations along $\phi$. As a consequence of the main theorem, we then deduce
\begin{introcor} (\Cref{homology_inv_rel_der})
\label{intro-acyc_fibs}
Let $\phi\colon Y\to X$ and $\psi\colon Z\to X$ be Nash submersions, and let $\alpha \colon Y\to Z$ be a map over $X$ which induces an isomorphism on the homologies of the fibers of $\phi$ and $\psi$ with coefficients in $\RR$. Then, we have a natural (continuous) homotopy equivalence 
\[
\Sdrt(\phi)\simeq \Sdrt(\psi).
\] 
\end{introcor} \label{intro-acyc_fib}
In particular, if $\phi$ is a Nash submersion with contractible fibers, then $\Sdrt(\phi)$ is homotopy equivalent to the complex, concentrated in degree $0$, consisting of Schwartz functions on $Y$. In this case, this equivalence can be shown to be given by the integration of top relative measures along the fibers of $\phi$. Hence, this result allows to "resolve" the Schwartz space of $Y$ by Schwartz spaces of various bundles on $X$. Note also that for general $\alpha$, there is no obvious map between $\Sdrt(\phi)$ and $\Sdrt(\psi)$. However, such a map is given by the (twisted version of) \Cref{intro-rel_der}(1). 
\subsection{Related Work}
The first part of \Cref{intro-rel_der} for the special case where  $Y=\term$, is essentially established in \cite{DerShapPrell}. 
The special case where $\phi$ is a locally trivial fibration of affine Nash manifolds has essentially been carried in \cite{AizDer}. The passage to the general relative case requires different methods, and in particular, the construction of the functor guaranteed in \Cref{intro-rel_der}. Indeed, unlike the absolute case, one can not compare the relative de Rham complex with a smooth counterpart, such as the complex of differential forms with compact support. 

In fact, one can develop a parallel, much simpler relative de Rham theory for submersions of smooth manifolds, and prove an analog of \Cref{intro-rel_der} for the smooth, with or without compact support, sections of the relative de Rham complex of a submersion of Nash manifolds. This version is expected to be much simpler due to the fact that submersions of smooth manifolds are, locally over the source, just projections from a product with $\RR^n$. Moreover, in this simpler case, one can avoid the usage of cosheaves by using the well-developed theory of sections of sheaves with compact support. We shall not pursue this parallel theory here and leave the necessary modifications to the interested reader.

Tempered sections of tempered bundles on Nash manifolds, and more generally, sub-analytic manifolds, are studied in \cite{ShvMan}. One can deduce a version of our results for distributional sections of the de Rham complex using \cite{ShvMan}.  Our pre-dual version, working with cosheaves of topological vector spaces rather than sheaves of real vector spaces, has the advantage of carrying information about the \emph{topology} of the homologies of the complex. Such information is crucial for \Cref{intro-rel_der_haus}. Another advantage of our result is that we identify the \emph{homotopy type} of the relative de Rham complex, and not only its quasi-isomorphism class.

The study of Schwartz spaces from a homological perspective, in the context of Nash stacks, has been initiated in \cite{SchStack}. We expect our methods to be adaptable to this more general framework.

We partially develop a 6-functor formalism for sheaves valued in $\infty$-categories on semi-algebraic spaces during our proof. Treatment of such theory in the more general context of $o$-minimal structures, but restricted to the framework of derived categories, is developed in \cite{edmundo2014six}. 
\subsection{Outline of the Proof}
We shall explain the main ideas behind the proof of \Cref{intro-rel_der} and its corollaries. 

\subsubsection{Constructing the Functor}
It turns out to be more convenient to prove a (stronger) $\infty$-categorical version of \Cref{intro-rel_der} then \Cref{intro-rel_der} itself, and we shall sketch now this stronger version.

First, it is known that for a Nash manifold $X$, the Schwartz space $\Sc(X)$ is the global sections of a cosheaf $\Sc$ on $X$. Moreover, due to the existence of tempered partition of unity, this cosheaf is \emph{acyclic}. To exploit this fact, we introduce an $\infty$-category $\infFre$, consisting of complexes of Fr\'{e}chet spaces, and consider $U\mapsto \Sc(U)$ as a $\infFre$-valued functor on the poset of open semi-algebraic subsets $U\subseteq X$, which lands in complexes concentrated in degree $0$. Then, the acyclicity of $\Sc$ becomes its cosheaf property as a $\infFre$-valued functor. 

We now know what the functor $\Sc$ from \Cref{intro-rel_der} should be on the extensions by $0$ of the constant sheaf, $\RR_U$, for semi-algebraic open sets $U\subseteq X$. Namely, it is just the space of sections $\Sc(U)$. The point is to show that this definition extends overall of $\mathrm{D}_c(X,\RR)$. While this is not completely obvious, since the sheaves $\RR_U$ are not projective objects in the abelian category $\Shv(X,\RR)$, it is a formal consequence of the ($\infty$-categorical) cosheaf condition on $\Sc$. This observation is the content of the leading abstract idea behind the proof.

In complete generality, if $\cC$ is a Grothendieck site and $\cD$ is any nice enough (i.e., presentable) $\infty$-category, we can form the $\infty$-categories $\Shv(\cC,\cD)$ and $\CShv(\cC,\cD)$, consisting of $\cD$-valued sheaves and cosheaves on $\cC$. Under mild assumptions on $\cC$, we recover the (unbounded) derived category of sheaves on $\cC$ as $\Shv(\cC,\Der(\RR))$, where $\Der(\RR)$ is an $\infty$-categorical version of the derived category of $\RR$. 
In fact, the $\infty$-category of cosheaves can be computed, using abstract categorical constructions, from the $\infty$-category of sheaves. Specifically, if $\cA$ is an \emph{$\RR$-linear $\infty$-category}, so that it admits an action of $\Der(\RR)$, then we can identify  $\CShv(\cC,\cA)$ with the "$\RR$-linear internal hom object" $\hom_{\Der(\RR)}(\Shv(\cC,\Der(\RR)),\cA)$. Specializing to the case $\cA := \infFre$ and $\cC$ the site of open sets on a Nash manifolds $X$, we see that we can identify the $\infty$-category $\CShv(\cC,\infFre)$ with the $\infty$-category of certain functors $\Shv(X,\Der(\RR))\to \infFre$. As explaind above, the co-sheaf of Schwartz functions can be considered as an object of $\CShv(X,\infFre)$, and hence corresponds to a functor $\Shv(X,\Der(\RR))\to \infFre$.
Restricting this functor to the sub-($\infty$-)category of constructible sheaves, and passing to the (triangulated) homotopy categories of these stable $\infty$-categories, we get the desired functor in \Cref{intro-rel_der}.

Before we explain why this functor satisfies conditions $(1)$ and $(2)$ of \Cref{intro-rel_der}, we would like to make two remarks about the argument above.

\begin{rmk}
The extension of $\Sc$ from open sets to constructible sheaves involves using homotopy colimits, and this is one reason why the $\infty$-categorical version of the various triangulated categories involved is needed to carry this proof. A classical variant of this argument would be based on resolving every constructible sheaf by a complex of sheaves of the form $\bigoplus_i\RR_{U_i}$, and use such resolutions and the formula $\Sc(\RR_U) = \Sc(U)$ to define $\Sc$ on every complex with constructible cohomologies. Since the sheaves of the form $\RR_U$ are not projective objects, it is then not clear why the result is independent of the resolution, and this independence is essentially what the $\infty$-categorical machinery extracts from the acyclicity of $\Sc$. Note, however, that if we are interested only in the value of the functor $\Sc$ on a specific constructible sheaf, or even a morphism of such, we can simply resolve this sheaf by a complex of sheaves of the type above and apply the functor to this resolution. The $\infty$-categorical language is essential only in order to prove that this computation does not depend on the choices made, and is (coherently) functorial in the sheaf. We prove the existence of this kind of resolutions in \Cref{exs_ps_free_res}.

\end{rmk}

\begin{rmk}
Note that the definitions of $\Sc$ and $\mathrm{D}_c(X,\RR)$ are not entirely compatible. Since Schwartz functions are defined in terms of "growth conditions," they form a cosheaf only for the \emph{restricted topology} on $X$. In other words, we can define $\Sc(U)$ only for a semi-algebraic set $U$, and it satisfies the cosheaf condition only for finite covers. 
On the other hand, the constructible derived category, $\mathrm{D}_c(X,\RR)$, consists of sheaves on $X$ with respect to the classical topology. 
However, this is not a severe issue, as we show in \Cref{comp_const_st} that the two topologies give rise to the same theory of constructible sheaves. 
\end{rmk}

\subsubsection{Relative de Rham Theorem}
We turn to explain why the functor $\Sc$ satisfies condition $(1)$ of \Cref{intro-rel_der}. For several technical reasons, it is worth twisting a little bit both $\Sdr(X)$ and $\phi_!$. We introduce a modified version of the functor $\phi_!$ that we denote by $\phi_\sharp$ (defined in the generality of essential geometric morphisms of $\infty$-topoi in \Cref{def:essential_geo_mor}), and a twited version of the relative de Rham complex, $\Sdrt(X)$ (see \Cref{def:tw_rel_der_comp}). The main advantage of these modified notions is that $\phi_\sharp$ is a left adjoint of $\phi^*$, and we have relative integration map $\int_\phi\colon \Sdrt(\phi)\to \Sc(Y)$.  We then aim to prove, instead of property $(1)$ of \Cref{intro-rel_der}, that 
\begin{equation}
\label{eq:intro_mod_rel_der}
\Sc(\phi_\sharp\RR_X) \simeq \Sdrt(X).
\end{equation}
This is a relative version of the de Rham theorem. Hence, a proof ought to start with a version of \emph{Poincar\'{e} Lemma}. 

In the classical story, Poincar\'{e} Lemma states that the de Rham cohomology of an open cube vanishes. Then, we can deduce the de Rham Theorem by covering a manifold with subsets isomorphic to open cubes. 
To have a relative version of this argument, we need to choose a relative, semi-algebraic version of an open cube. Unlike the absolute case, in the relative one, we can compose maps, and hence we are allowed to restrict to maps of relative dimension one. For such maps, the local model we choose is that of \emph{families of intervals}. Roughly, these are maps $\phi\colon X\to Y$ for which every fiber is an open interval, in a compatible way. 
We then prove, by direct computation entirely analogous for the one in the proof of Poincare Lemma, that for such a map, the integration map $\smallint_\phi\colon \Sdrt(X) \to \Sc(Y)$ is a homotopy equivalence of complexes of Fr\'{e}chet spaces. Since such families of intervals form a basis for the restricted topology on $X$, this gives the local version of the relative de Rham theorem. 

To prove the result for a general Nash submersion, we observe that the relative Poincar\'{e} lemma implies that, on the level of $\infFre$-valued cosheaves, we have $\phi^!\Sc \simeq \Sdrt_{\phi}$. Here, 
\[
\phi^!\colon \CShv(Y,\infFre)\to\CShv(X,\infFre)
\] 
is the pullback functor of cosheaves, analogous to the pullback of sheaves, and $\Sdrt_{\phi}\in \CShv(X,\infFre)$ is a cosheaf version of the twisted relative de Rham complex. 

Finally, we wish to use this fact to deduce the global relative de Rham theorem. Applying the functor $\phi_!$ to the above equivalence, we get 
\[
\phi_!\Sdrt_{\phi,\sE}\simeq \phi_!\phi^!\Sc_\sE.
\]
At this point, it is clear that we need a projection formula for the adjunction $\phi_!\dashv \phi^!$. 
To obtain such a formula, we note that the identification 
\[
\CShv(X,\infFre)\simeq \hom_{\Der(\RR)}(\Shv(X,\Der(\RR)),\infFre)
\] 
endows the $\infty$-category $\CShv(X,\infFre)$ with a $\Shv(X,\Der(\RR))$-linear structure, given by the action on the source. Hence, for a sheaf $\sF$ and a cosheaf $\sG$, we can form their tensor product $\sF\scten\sG$, which is again a cosheaf. 

In a sense, the operation $\sF\scten\sG$ is a localized version of the extension of a cosheaf to a functor valued on $\Shv(X,\Der(\RR))$. Namely, by construction we have $\sF\scten \sG(X)\simeq \sG(\sF)$. Thus, in order to prove \Cref{eq:intro_mod_rel_der}, it suffices to prove a localized version: 
\begin{equation}
\label{eq:intro_cosh_rel_der}
\phi_!\Sdrt_{\phi,\sE}\simeq \phi_\sharp(\RR_X)\scten \Sc_\sE.
\end{equation}
Since $\phi_!\Sdrt_{\phi}\simeq \phi_!\phi^!\Sc$, this reduces to showing that the functor $\phi_!\phi^!$ identifies with tensoring with $\phi_\sharp \RR_X$.  
This statement is essentially obtained from the projection formula for (our modified) proper push forward functor on sheaves by applying the functor $\hom_{\Der(\RR)}(-,\infFre)$. 

\subsubsection{Hausdorffness} 
To prove property $(2)$ of \Cref{intro-rel_der}, we return to the strategy of resolving a constructible sheaves by sheaves of the form $\bigoplus_i\RR_{U_i}$. The idea is that, while this strategy is problematic as a way to \emph{define} $\Sc$ on $\mathrm{D}_c(X,\RR)$, it is an effective way to \emph{compute} it. 
As we show in \Cref{exs_ps_free_res}, every constructible sheaf on a Nash manifold admits a finite resolution by sheaves of this form. 

Using formal properties of the tensor product operation $\scten$, we can then present the complex $(\sF\scten \Sc)(X)$, up to continuous homotopy equivalence, in the form 
\[
\dots \to \bigoplus_{i=1}^{n_{1}}\Sc(U_{1,i}) \to \bigoplus_{i=1}^{n_0}\Sc(U_{0,i}) \to \bigoplus_{i=1}^{n_{-1}}\Sc(U_{-1,i})  \to  \dots
\]
Here, the $j$-th differential is given by multiplication with an $n_j \times n_{j-1}$ real-valued matrix satisfying certain compatibility relation with respect to the $U_{j,i}$-s (see \Cref{def:comb_map} for the precise formulation). This presentation reduces the Hausdorffness of the homologies to the claim that a map of the form 
\[
f\colon \bigoplus_{i=1}^n\Sc(U_i)\to \bigoplus_{j=1}^m\Sc(V_j)
\]
induced from a real-valued matrix as above has a closed image. 

The next step is to identify a closed subset $Z\subseteq X$ which we aim to eliminate, in order to simplify the problem. For detecting the property that $f$ has a closed image, we need to assume specific "niceness" property from that $Z$. We choose $Z$ such that, after taking the quotient of the source and target of $f$ by functions which are flat on $Z$, the map $f$ becomes the tensor product of the topological vector space $\Sc(X)/\Sc(X\setminus Z)$ with a fixed linear map of finite-dimensional vector spaces.
This property is achieved by stratifying $X$ such that all the $U_i$-s and $V_j$-s are unions of strata and taking $Z$ to be a closed stratum. 

The choice of $Z$ as above has the other advantage that eliminating it simplifies $X$, in the sense that $X\backslash Z$ admits a smaller stratification. This property ensures that the process terminates, and hence we can prove by descending induction that $f$ has a closed image.

To show that we can test whether $f$ has a closed image after eliminating $Z$, we develop a general method to study the property that a map of topological vector spaces has a closed image, using filtrations of its source and target. The precise details are rather technical and summarized in \Cref{quasi_pseudo_inverse_then_closed}.

\subsection{Acknowledgments} 
We would like to thank Joseph Bernstein, Tomer Schlank and Clark Barwick for useful discussions. We would like to thank Gal Dor, Lior Yanovski, Dmitry Gourevitch, Shay Ben Moshe and Shaul Barkan for reading an early draft of this paper and for many corrections and suggestions. 
This project emerged from a long correspondence with Yiannis Sakellaridis and Dmitry Gourevitch about Nash stacks; we are grateful for both for years of interesting and fruitful conversations. 
Finally, we would like to thank Maxime Ramzi, Lior Yanovski and Clark Barwick for pointing us to several related references. 

A.A. was partially supported by ISF grant 249/17, and a Minerva foundation grant. 
S.C. is partially supported by the Adams fellowship of the Israel academy of science and humanities,  ISF  grant 249/17 and ERC StG grant 637912.

\section{Semi-algebraic Geometry}
\label{sec:semialg}

This section recalls some basic facts about restricted topological spaces, semi-algebraic spaces, and Nash manifolds. 
Most of the material in this section is very standard and can be found in \cite{Shiota}, \cite{CostelIntro}, and many other sources as well.
We shall use the standard terminology related to semi-algebraic geometry as in \cite{CostelIntro}.  

\subsection{Restricted Topological Spaces}
\label{subsec: restricted topological spaces}
A restricted topology is a variant of a  topology in which only finite unions of open sets are open, see \cite[Definition 3.2.1]{SchNash}). We denote by $\Restop$ the category of restricted topological spaces.
For a  restricted topological space  $X$, we denote by $\Op(X)$ the lattice of open sets in $X$. 

In a restricted topological space, not every set has a closure. 
This motivates the following definition. 
\begin{defn}
\label{closureful_restricted_topological_space} We say that a restricted
topological space $X$ is \tdef{closureful }if every locally closed
subset of $X$ has a closure in $X$. 
\end{defn}
Note that a locally closed subspace of a 
closureful restricted
topological space is closureful.
As for topological spaces, we define the notion of a basis for a restricted topology. 
\begin{defn}
\label{basis_of_restricted_topology}
 Let $X$ be a restricted topological space. A \tdef{basis} of $X$ is a subset $B\subseteq \Op(X)$ such that every open set $U\subseteq X$
is a finite union of elements of $B$. 
\end{defn}

We adopt the following convention:
we say that a property $P$ of open sets is satisfied \tdef{locally} on a restricted
topological space $X$ if there is a basis of $X$ consisting of subsets satisfying $P$.

The restricted topology of a restricted topological space $X$ induces a natural topology on $X$. 
\begin{defn}
    Let $X$ be a restricted topological space. We let $\mdef{X^\topify}$ be the topological space with underlying set $X$ and topology generated from $\Op(X)$. 
\end{defn}

\subsubsection{Inductive Dimension}
Nash manifolds are exceptionally nice restricted topological spaces. One key feature of them is that they are finite-dimensional. To explain this in precise terms, 
we shall recall the notion of the inductive dimension
of a restricted topological space.
\begin{defn}
 Let $S\subseteq X$ be a locally
closed subset of a closureful restricted topological space. The \tdef{boundary
} of $S$ is the subset $\mdef{\partial_{X}S}:=\overline{S}-S$. If
$X$ is clear from the context we denote it simply by $\partial S$.
\end{defn}

Note that $\partial_{X}S$ is closed. Indeed, if $S=U\cap Z$ with
$U$ open and $Z$ closed, then $\partial S=\overline{S}-U$. 
The inductive dimension of a restricted topological space is defined inductively by looking at boundaries of open subsets of $X$.  
\begin{defn}
\label{def:Dimension_Restricted_Topological_Space} Let $X$ be a
closureful restricted
topological space and $d\ge -1$. 
\begin{itemize}
    \item We say that $X$ is of \tdef{inductive dimension $\le d$}
if $X$ is empty, or, for every open subset $U\subseteq X$, the boundary
$\partial U$ is of inductive dimension
$\le d-1$. 
\item We say that $X$ is of inductive dimension $d$ if it
is of inductive dimension $\le d$ but not of inductive dimension $\le d-1$. 
\end{itemize}
  
\end{defn}

The version of the inductive dimension defined above behaves as expected from dimension theory in
many aspects. For example, we have

\begin{prop}
\label{Locally_closed_subset_weakly_smaller_dimension} Let $X$
be a closureful restricted topological space, and let $S\subseteq X$
be a locally closed subspace. If $X$ is of inductive dimension $\le d$
then so is $S$. 
\end{prop}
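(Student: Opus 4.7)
I plan to prove the statement by induction on $d$. For $d=-1$ the hypothesis forces $X=\emptyset$, hence $S=\emptyset$, which has inductive dimension $\le -1$. For the inductive step, I assume the proposition for $d-1$ and take $X$ of inductive dimension $\le d$ with $S\subseteq X$ locally closed. I then have to show that for every open $V\subseteq S$, the boundary $\partial_S V$ has inductive dimension $\le d-1$.

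The key geometric step is to relate the boundary taken inside $S$ to a boundary taken inside $X$. Any open $V\subseteq S$ is of the form $V=U\cap S$ for an open $U\subseteq X$, and since $S$ is locally closed in $X$, so is $V$. Because $X$ is closureful, $\overline{V}^X$ exists; a short computation—using that a closed subset of $S$ is exactly the intersection of $S$ with a closed subset of $X$—identifies $\overline{V}^S = \overline{V}^X \cap S$, which is legal since $S$ is itself closureful (as noted after \Cref{closureful_restricted_topological_space}). From this identification I will deduce the inclusion $\partial_S V \subseteq \partial_X U$: a point $x \in \partial_S V$ lies in $\overline{V}^X\subseteq \overline{U}^X$ and in $S$ but not in $V = U\cap S$, hence not in $U$, so $x\in \overline{U}^X\setminus U = \partial_X U$.

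To finish, I use that $\partial_X U$ has inductive dimension $\le d-1$ by the hypothesis on $X$, and apply the inductive hypothesis to the pair $\partial_S V\subseteq \partial_X U$. This requires showing $\partial_S V$ is locally closed in $\partial_X U$: since $\partial_S V$ is closed in $S$, write $\partial_S V = C\cap S$ for some $C$ closed in $X$; then $\partial_S V = (C\cap S)\cap \partial_X U$ is the intersection of the closed subspace $\partial_X U$ with the locally closed subset $C\cap S$ of $X$, hence locally closed in $\partial_X U$. The inductive hypothesis yields $\dim \partial_S V \le d-1$, completing the step.

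There is no substantial obstacle here; the proof is a routine induction. The main subtlety is bookkeeping: one must carefully distinguish closures and boundaries computed in $X$ versus $S$, and the closureful hypothesis is essential—without it $\overline{V}^X$ need not exist and the comparison $\overline{V}^S = \overline{V}^X \cap S$ has no content.
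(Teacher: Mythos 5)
Your proof is correct, and it rests on the same engine as the paper's: induction on $d$, together with a comparison of the boundary of an open subset computed inside $S$ with a boundary computed inside $X$. The organizational difference is that the paper factors $S$ as (open) $\cap$ (closed) and runs two separate, simpler reductions — for an open subspace it uses $\partial_U V = \partial_X V \cap U$, and for a closed subspace it observes $\partial_Z V$ is closed in $\partial_X U$ — whereas you handle a general locally closed $S$ in one stroke via the inclusion $\partial_S V \subseteq \partial_X U$ and the observation that $\partial_S V$ is locally closed in $\partial_X U$. Your unified version is slightly more economical but must invoke the inductive hypothesis in its full locally-closed strength, which you correctly arrange by checking that $\partial_S V$ is locally closed in the closureful space $\partial_X U$; the paper's case split lets each step use only the open or only the closed instance of the hypothesis. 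All the supporting facts you use ($\overline{V}^S = \overline{V}^X \cap S$, monotonicity of closures, closurefulness of locally closed subspaces) are available in this setting, so there is no gap.
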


\begin{proof}
We prove the result by induction on $d$. Since $S$ is the intersection
of an open subset and a closed subset, we may consider the cases of
a closed and an open subspace separately. If $U\subseteq X$ is
open and $V\subseteq U$ is open, then the closure of $V$ in $U$
is just $\overline{V}\cap U$. Hence, 
\[
\partial_{U}V=\partial_{X}V\cap U.
\]
Since $\partial_{X}V$ is of inductive dimension $\le d-1$, by the
inductive hypothesis $\partial_{X}V\cap U$ is of inductive dimension
$\le d-1$, and so $U$ is of inductive dimension $\le d$. 

Now, suppose
that $Z\subseteq X$ is a closed subset. Let $V\subseteq Z$ be
an open subset, and let $U\subseteq X$ be an open subset such that
$U\cap Z=V.$ Then $\partial_{Z}V$ is a closed subset of $\partial_{X}U$.
Since $\partial_{X}U$ is of inductive dimension $\le d-1$, by the
inductive hypothesis, we deduce that $\partial_{Z}V$ is of inductive
dimension $\le d-1$, so that $Z$ is of inductive dimension $\le d$. 
\end{proof}
\begin{corl}
Let $X$ be a closureful restricted topological space, and let $S\subseteq X$
be a locally closed subset. If $X$ is of inductive dimension $\le d$
then $\partial S$ is of inductive dimension $\le d-1$. 
\end{corl}

\begin{proof}
By \Cref{Locally_closed_subset_weakly_smaller_dimension}, $\overline{S}$
is of inductive dimension $\le d$, and so the result follows from
the fact that $\partial_{X}S=\partial_{\overline{S}}S$. 
\end{proof}

The inductive dimension of a restricted topological space is also bounded by the dimensions of member of an open (or closed) cover of it. Namely, 
\begin{prop}
\label{Inductive_dimension_bounded_by_cover} Let $X$ be an
closureful restricted topological space and let  
$
    \{U_{i}\}_{i\in I}
$
be an open, or closed, covering of $X$. Then, $X$ is of inductive dimension $\le d$
if and only if, for every $1\le i\le n$, $U_{i}$ is of inductive
dimension $\le d$.
\end{prop}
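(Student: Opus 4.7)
The statement is an equivalence, and the ``only if'' direction is immediate from \Cref{Locally_closed_subset_weakly_smaller_dimension}, since each $U_i$ (whether open or closed) is locally closed in $X$. So the plan is to concentrate on the ``if'' direction, which I would prove by induction on $d$. The base case $d = -1$ is trivial: inductive dimension $\le -1$ means empty, and a cover of $X$ by empty sets forces $X$ to be empty. For the inductive step, assuming the result for dimension $d-1$, I fix an arbitrary open $V\subseteq X$ and aim to show $\partial V$ has inductive dimension $\le d-1$; then $X$ has inductive dimension $\le d$ by definition.

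For the \emph{open} covering case, the key observation is that for $U$ open in $X$ and $V$ open in $X$, the closure of $V\cap U$ in $U$ is $\overline{V}\cap U$, and hence
\[
\partial_{U}(V\cap U)=(\overline{V}\cap U)\setminus(V\cap U)=\partial_{X}V\cap U.
\]
Applying this with $U=U_i$, the collection $\{\partial V\cap U_i\}$ is an open cover of $\partial V$, and each member $\partial_{U_i}(V\cap U_i)$ has inductive dimension $\le d-1$ by the assumption that $U_i$ has inductive dimension $\le d$. The inductive hypothesis, applied to the open cover $\{\partial V\cap U_i\}$ of $\partial V$, then gives $\dim\partial V\le d-1$.

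For the \emph{closed} covering case, the analogous identity fails, but a one-sided version survives. Since $Z_i$ is closed, closures in $Z_i$ agree with closures in $X$, so
\[
\partial_{Z_i}(V\cap Z_i)=\overline{V\cap Z_i}\setminus V\subseteq \overline{V}\setminus V=\partial V.
\]
The main step is to verify the reverse inclusion $\partial V \subseteq \bigcup_i \partial_{Z_i}(V\cap Z_i)$. Given $x\in\partial V$, I would argue that if $x$ failed to lie in $\overline{V\cap Z_i}$ for every $i$ with $x\in Z_i$, then by finiteness of the cover and the fact that every neighborhood of $x$ meets $V=\bigcup_j(V\cap Z_j)$, there must exist some $j$ with $x\in \overline{V\cap Z_j}$; since $Z_j$ is closed, this forces $x\in Z_j$, giving $x\in\partial_{Z_j}(V\cap Z_j)$. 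Consequently, $\partial V$ is covered by the closed subsets $\partial_{Z_i}(V\cap Z_i)$ (closed in $X$ since $Z_i$ is), each of inductive dimension $\le d-1$. Applying the inductive hypothesis (the closed version for dimension $d-1$) yields $\dim\partial V\le d-1$.

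The main technical point, and the only place where genuine care is required, is the reverse inclusion in the closed case, which uses finiteness of the cover in an essential way (restricted topology covers are finite). The rest is a bookkeeping computation with closures, along with a routine double induction (on $d$, and implicitly running the open and closed cases in parallel at each stage).
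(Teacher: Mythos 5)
Your proof is correct and follows essentially the same route as the paper: the ``only if'' direction via \Cref{Locally_closed_subset_weakly_smaller_dimension}, and the ``if'' direction by induction on $d$ using the decomposition of $\partial_X V$ along the cover. The only (harmless) divergence is in the closed case, where the paper settles for the inclusion $\partial_X V\subseteq\bigcup_i\partial_{Z_i}(V\cap Z_i)$ and then passes through \Cref{Locally_closed_subset_weakly_smaller_dimension}, while you prove the reverse inclusion (which does hold, by finiteness of the cover) so as to apply the inductive hypothesis to $\partial_X V$ directly.
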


\begin{proof}
The ``only if'' part follows from \Cref{Locally_closed_subset_weakly_smaller_dimension}.
We prove the result by induction on $d$. Let $V\subseteq X$ be an
open set. Then 
\[
{\displaystyle \partial_{X}V=\bigcup_{i=1}^{n}\partial_{U_{i}}(V\cap U_{i})}.
\]
Since $\partial_{U_{i}}V$ is of inductive dimension $\le d-1$ and
since the covering $\{\partial_{U_{i}}(V\cap U_{i})\}_{i=1}^n$
is an open covering of $\partial_{X}Z$, by the inductive hypothesis
$\partial_{X}V$ is of inductive dimension $\le d-1$ and so $X$
is of inductive dimension $\le d$. For a closed covering $X=\bigcup_iZ_{i}$
we have
\[
\partial_{X}V\subseteq\bigcup_{i=1}^{n}\partial_{Z_{i}}(V\cap Z_{i})
\]
and we proceed similarly, using \Cref{Locally_closed_subset_weakly_smaller_dimension}. 
\end{proof}

\subsubsection{Stratifications}
To study constructible sheaves on restricted topological spaces, we need the notion of stratification in the context of restricted topology theory. We shall model such stratifications using maps to finite posets. From now on,
we equip a finite poset, $P$, with the restricted topology
\[
    \mdef{\Op(P)}:=\left\{ U\subseteq P:\text{ if }x\in U\text{ and }y>x\text{ then }y\in U\right\}.
\]

\begin{defn} 
    Let $X$ be a restricted topological
    space. 
    A \tdef{stratification} of $X$ is a continuous map
    $\alpha:X\to P$ to a finite poset $P$.
 \end{defn}
In the language of posets, the notion of refinement and the intersection of stratifications take a particularly nice form. 

\begin{defn}
 A \tdef{refinement} of a stratification $\alpha\colon X\to P$ is a factorization 
\[
\alpha:X\oto{\beta}Q\oto{\psi}P.
\]
The \tdef{intersection} of two stratifications
$\alpha_{1}:X\to P_{1}$ and $\alpha_{2}:X\to P_{2}$ is the stratification 
\[
\alpha_{1}\times\alpha_{2}:X\to P_{1}\times P_{2}.
\]

\end{defn}

The restricted topology on a finite poset $P$ has a canonical basis, given by the \tdef{open stars}  
\[
\mdef{p^{\star}}:=\left\{ q\in P:q\ge p\right\}
\quad \forall p\in P.\]
Accordingly, to check that a map $\alpha \colon X\to P$ gives a stratification, it suffices to check that the open stars of elements of $P$ has open preimages under $\alpha$.  
\begin{example}
\label{cov_Strat} 
Let $X$ be a restricted topological
space and let ${\cal U}=\left\{ U_{i}\right\} _{i\in I}$ be a finite
collection of open sets in $X$. We can associate with ${\cal U}$
a stratification of $X$ given by the map $\alpha:X\to\Pow( I),$ 
\[
\alpha(x)=\left\{ i\in I:x\in U_{i}\right\} .
\]
Note that $\alpha$ is continuous. Indeed, for
every $A\in\Pow(I)$ we have 
\[
\alpha^{-1}(A^{\star})=\bigcap_{j\in A}U_{j}\in\Op(X).
\]
\end{example}

Families of stratifications of a space $X$ indexed by a poset can be glued as follows.
\begin{defn}
\label{Amalgamation_Of_Posets} Let $P$ if a finite poset and
let $\left\{ Q_{p}\right\} _{p\in P}$ be a family of posets indexed
by $P$. We denote 
\[
\mdef{\int_{P}Q_{p}dp}:=\left\{ (p,q):p\in P,q\in Q_{p}\right\} 
\]
 with the order given by $(p,q)\le(p',q')$ if $p<p'$ or $p=p'$
and $q\le q'$. 
\end{defn}

\begin{defn}
\label{Amalgamation_Of_Stratifications} Let $\alpha\colon X\to P$
be a stratification of a restricted topological space $X$, and for
each strata $X_{p}$ let $\beta_{p}\colon X_{p}\to Q_{p}$ be a stratification.
We define a new stratification
\[
\mdef{\int_{P}\beta_{p}dp}\colon X\to\int_{P}Q_{p}dp
\]
by 
\[
\left(\int_{P}\beta dp\right)(x)=(\alpha(x),\beta_{\alpha(x)}(x)).
\]
\end{defn}

It is easy to verify that $\int_{P}\beta_{p}dp$ is indeed a stratification. 
Using this construction, we now relate the notion of stratification we use with the more standard one, i.e., a disjoint locally closed cover of $X$ satisfying a frontier condition.

\begin{defn}
    Let $X$ be a closureful restricted topological space. A stratification $\phi\colon X\to P$ is called a \tdef{proper stratification} if it has no empty strata and for every $p\in P$ we have
\[
    \overline{X_p} = \cup_{q\le p} X_q.
\]
\end{defn}

Note that we always have $\overline{X_{p}}\subseteq\bigcup_{q\le p}X_{q}$,
whenever the left-hand side is well defined. However, the converse
is not true in general.
\begin{example}
Let $X=\RR$ endowed with its semi-algebraic topology, let $P=\left\{ 0<1\right\} $
and let $\alpha\colon X\to P$ be the map 
\[
\alpha(x)=\begin{cases}
0 & x\le0\\
1 & x>0
\end{cases}.
\]
The map $\alpha$ is continuous, but 
\[
\overline{\alpha^{-1}(\left\{ 1\right\} )}=\RR_{\ge0}\ne\RR=\alpha^{-1}(\overline{\left\{ 1\right\} }).
\]
so that $\alpha$ is \emph{not} a proper stratification. 
\end{example}

We shall now relate the proper stratifications with the more classical notion of stratification.
\begin{prop}
\label{stratification_frontier_correspondence } Let $X$ be
a closureful restricted topological space.  
The construction \[
(\alpha\colon X\to P)\mapsto \{X_p\}_{p\in P}
\]
gives a bijection between isomorphism classes of proper stratifications $\alpha \colon X\to P$ and finite collections 
$\Theta \subset \Pow(X)$ consisting of mutually disjoint non-empty locally closed subsets of $X$ which satisfy the following properties: 
\begin{itemize}
    \item $\bigcup_{S\in \Theta} S = X$.
    \item For every $S\in \Theta$, the closure $\overline{S}$ is a union of members of $\Theta$. 
\end{itemize}
\end{prop}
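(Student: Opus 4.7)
The plan is to check the bijection in both directions explicitly, with a canonical isomorphism of posets identifying the data.

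For the forward direction, starting with a proper stratification $\alpha\colon X\to P$, the plan is to set $\Theta := \{X_p\}_{p\in P}$. Each $X_p$ is locally closed in $X$ because $\{p\}\subseteq P$ is locally closed (it is the intersection of the open star $p^\star$ with the closed down-set $\{q:q\le p\}$). Non-emptiness and disjointness are immediate, and $\bigcup_p X_p = X$. The frontier condition follows directly from properness: $\overline{X_p}=\bigcup_{q\le p}X_q$ is a union of members of $\Theta$.

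For the backward direction, given such a $\Theta$, I would equip $\Theta$ with the partial order $S\le T \iff S\subseteq \overline{T}$, and define $\alpha\colon X\to \Theta$ by sending $x$ to the unique member of $\Theta$ containing it. The key point is to verify $\le$ is a partial order; the only subtle axiom is antisymmetry, which uses the frontier condition as follows: if $S\ne T$ and $S\subseteq \overline{T}$, then $S$ sits in the disjoint decomposition of $\overline{T}$ as a union of strata, so $S\subseteq \overline{T}\setminus T\subseteq \partial T$, and hence $\overline{S}\subsetneq \overline{T}$; symmetry then rules out $T\le S$ as well. Transitivity is routine via $\overline{S}\subseteq \overline{T}$.

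The main obstacle — and essentially the only nontrivial step — is proving continuity of $\alpha$. For each $T\in\Theta$, I need $\alpha^{-1}(T^\star)=\bigcup_{S:\,T\subseteq \overline{S}} S$ to be open, equivalently $C_T := \bigcup_{S:\,T\not\subseteq \overline{S}} S$ to be closed. The trick is to use the frontier condition to replace unions of strata by unions of their closures: since $\Theta$ is finite,
\[
\overline{C_T}=\bigcup_{S:\,T\not\subseteq \overline{S}}\overline{S}.
\]
A crucial observation is that, because strata are disjoint and each $\overline{S}$ is a union of members of $\Theta$, for any $T,S\in \Theta$ one has the dichotomy $T\cap\overline{S}=\emptyset$ or $T\subseteq \overline{S}$. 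Hence each $\overline{S}$ appearing in the above union is a disjoint union of strata $R\subseteq \overline{S}$, and any such $R$ satisfies $\overline{R}\subseteq \overline{S}$, so $T\not\subseteq \overline{R}$. This shows $\overline{C_T}\subseteq C_T$, so $C_T$ is closed. Properness of the resulting stratification is then immediate from the frontier condition.

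Finally, I would check the two constructions are mutually inverse. The round-trip on $\Theta$ is tautological. For the round-trip on stratifications, properness of $\alpha\colon X\to P$ gives a canonical poset isomorphism $P\cong (\Theta,\le)$ sending $p$ to $X_p$, under which $\alpha$ is identified with the membership map; this identification of $P$ makes the bijection well-defined on isomorphism classes. Once the continuity argument above is in place, assembling these pieces is formal.
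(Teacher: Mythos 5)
Your proposal is correct and follows the same route as the paper: define the inverse construction by ordering $\Theta$ via containment in closures (equivalently, the paper's $S<S'\iff S\subseteq\partial_X S'$) and taking the tautological membership map. The paper dismisses continuity as "easy to see"; your argument that $C_T=\bigcup_{S:\,T\not\subseteq\overline{S}}\overline{S}$ via the dichotomy $T\cap\overline{S}=\emptyset$ or $T\subseteq\overline{S}$ is a correct and complete way to fill in that step.
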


\begin{proof}
We shall describe the inverse map. For a collection $\Theta$ as in the statement of the proposition, we define a partial order on $\Theta$ by 
\[
S < S' \iff S\subseteq \partial_X{S'}. 
\]
Consider the map $\alpha \colon X\to \Theta$ taking $x\in X$ to the unique $S\in \Theta$ for which $x\in S$. 
It is easy to see that $\alpha$ is continuous and that the construction $\Theta \mapsto (\alpha\colon X \to \Theta)$ is an inverse to the map $(\alpha\colon X\to P)\mapsto \{X_p\}_{p\in P}.$
\end{proof}

For finite-dimensional restricted topological spaces,  
up to refinement, there is no difference between proper
and general stratifications. To see this, we need some preparation. 
\begin{prop}
\label{Criterion_for_classical_stratification_be_refinement}
Let $X$ be a closureful restricted topological space and let $\alpha\colon X\to P$
be a stratification. Let $\Theta$ be a collection of locally closed
subsets determining a proper stratification $\beta\colon X\to\Theta$.
Then $\beta$ is a refinement of $\alpha$ if and only if, for every
$p\in P$, the stratum $X_{p}$ is a union of members of $\Theta$. 
\end{prop}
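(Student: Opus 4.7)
A refinement of $\alpha$ through $\beta$ is, by definition, a continuous map of posets $\psi\colon\Theta\to P$ with $\alpha=\psi\circ\beta$. For the ``only if'' direction, assume such $\psi$ exists. Then for each $p\in P$,
\[
X_p = \alpha^{-1}(p) = \beta^{-1}(\psi^{-1}(p)) = \bigcup_{S\in \psi^{-1}(p)} S,
\]
so $X_p$ is a union of members of $\Theta$.

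For the converse, assume that each $X_p$ is a union of members of $\Theta$. Since the members of $\Theta$ are mutually disjoint and cover $X$, each $S\in\Theta$ is contained in a unique stratum $X_p$, and we may set $\psi(S):=p$. By construction $\alpha=\psi\circ\beta$ as a map of underlying sets, so it remains to verify that $\psi$ is continuous. Since the topology on a finite poset is generated by the open stars $q^\star$ and $\psi^{-1}(q^\star)$ is a star in $\Theta$ precisely when $\psi$ is order-preserving, continuity reduces to the monotonicity of $\psi$.

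The only nontrivial step is this monotonicity, which is where continuity of $\alpha$ is used. Let $S\leq S'$ in $\Theta$, so that $S\subseteq \overline{S'}$ (recall from \Cref{stratification_frontier_correspondence } that the order on $\Theta$ is given by inclusion into the boundary, together with equality). Set $p:=\psi(S)$, and consider the open star $p^\star\subseteq P$. By continuity of $\alpha$, the set $U:=\alpha^{-1}(p^\star)=\bigcup_{q\ge p}X_q$ is open in $X$ and contains $X_p$, hence contains $S$. Since $S$ is non-empty (as $\beta$ is a proper stratification) and $S\subseteq\overline{S'}$, the open set $U$ meets $\overline{S'}$; but an open set that meets the closure of a set meets the set itself, so $U\cap S'\ne\emptyset$. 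Pick $x\in S'\cap U$: then $\alpha(x)\ge p=\psi(S)$, and on the other hand $\alpha(x)=\psi(S')$ because $S'\subseteq X_{\psi(S')}$ and the strata $X_q$ are disjoint. Hence $\psi(S')\ge\psi(S)$, as required.
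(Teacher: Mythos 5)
Your proof is correct and follows essentially the same route as the paper: both define $\psi(S)$ to be the unique $p$ with $S\subseteq X_p$ and then verify continuity of $\psi$ from the continuity of $\alpha$. The only (cosmetic) difference is that you check monotonicity via the open up-sets $\alpha^{-1}(p^\star)$ and the fact that an open set meeting $\overline{S'}$ meets $S'$, whereas the paper dually checks that $\psi^{-1}(\overline{\{p\}})$ is downward-closed using that $\bigcup_{q\le p}X_q$ is closed.
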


\begin{proof}
The ``only if" direction is clear. For the``if" part, suppose that every stratum $X_{p}$ is a union of members of $\Theta$. Let $\psi\colon\Theta\to P$ be
the map determined by the requirement 
\[
\psi(S)=p\iff S\subseteq X_{p}.
\]

We claim that $\psi$ is continuous. Indeed, for every $p\in P$,
we have 
\[
\psi^{-1}(\overline{\left\{ p\right\} })=\left\{ S\in\Theta:\exists q\le p,S\subseteq X_{q}\right\} 
\]
 and since $\bigcup_{q\le p}X_{q}$ is closed, this is a downwardly-closed
subset of $\Theta$. We see that $\psi$ exhibits $\beta$ as a refinement
of $\alpha$. 
\end{proof}
Hence, to refine a stratification $\alpha \colon X\to P$ to a proper one, we only need to find a proper stratification compatible with all the strata of $\alpha$. More generally, we have
\begin{prop} \label{Existence_classical_stratification_compatible_with_collection}
    Let $X$ be a closureful restricted topological space of finite inductive
    dimension.
    Let $\Theta$ be a finite collection of locally closed subsets of
    $X$. There is a proper stratification $\alpha\colon X\to P$ such that every element of $\Theta$ is a union of strata of $\alpha$.
\end{prop}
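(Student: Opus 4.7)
The plan is to reduce to a cleaner inductive statement and then induct on the inductive dimension $d$ of $X$. First, given $\Theta$, I apply \Cref{cov_Strat} to the open family $\mathcal{U} := \{X \setminus \overline{S},\, X \setminus \partial S : S \in \Theta\}$ (well-defined because $X$ is closureful) to produce a stratification $\alpha \colon X \to \Pow(\mathcal{U})$. The identity $S = \overline{S} \setminus \partial S$ implies that each $S \in \Theta$ is a union of fibers of $\alpha$; by \Cref{Criterion_for_classical_stratification_be_refinement}, this property persists under passage to any proper refinement of $\alpha$. So it suffices to show that any stratification admits a proper refinement, which I will prove by induction on $d$. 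The case $d = -1$ ($X$ empty) is trivial.

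For the inductive step, let $\alpha\colon X \to P$ be a stratification with $\dim X \le d$ and all strata nonempty, and set $Z := \bigcup_{p \in P} \partial X_p$ and $U := X \setminus Z$. The corollary following \Cref{Locally_closed_subset_weakly_smaller_dimension}, combined with \Cref{Inductive_dimension_bounded_by_cover}, gives $\dim Z \le d-1$. Since $\partial X_p \subseteq Z$, each $U_p := X_p \cap U$ equals $\overline{X_p} \cap U$, hence is closed in $U$; finiteness of $P$ then makes $U_p$ clopen in $U$. I next apply the inductive hypothesis to $Z$ with the collection
\[
\Theta_Z := \{X_p \cap Z\}_{p \in P} \cup \{\overline{U_p}^X \cap Z\}_{p \,:\, U_p \ne \emptyset},
\]
producing a proper stratification $\beta_Z\colon Z \to Q$ in which every element of $\Theta_Z$ is a union of strata. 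I then form the partition
\[
\Theta' := \{U_p : U_p \ne \emptyset\} \cup \{Z_q : q \in Q\}
\]
of $X$. For each $Z_q$, $\overline{Z_q}^X = \overline{Z_q}^Z$ is a union of $Z_{q'}$ by properness of $\beta_Z$; for each $U_p$, the open inclusion $U \subseteq X$ yields $\overline{U_p}^X \cap U = \overline{U_p}^U = U_p$, so
\[
\overline{U_p}^X = U_p \cup (\overline{U_p}^X \cap Z),
\]
and the right side is a union of members of $\Theta'$ by the construction of $\Theta_Z$. \Cref{stratification_frontier_correspondence } then produces a proper stratification of $X$ whose strata are exactly $\Theta'$; and since $X_p = U_p \cup (X_p \cap Z)$ is a union of members of $\Theta'$, \Cref{Criterion_for_classical_stratification_be_refinement} certifies this as a refinement of $\alpha$.

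The main obstacle is ensuring that $\overline{U_p}^X \cap Z$ is a union of strata of the inner proper stratification — without this, the partition $\Theta'$ fails the closure condition of \Cref{stratification_frontier_correspondence }. The resolution is to feed precisely the sets $\overline{U_p}^X \cap Z$ into the auxiliary collection $\Theta_Z$, trading the closure condition on $X$ for a refinement condition on the lower-dimensional space $Z$. Everything else is an unwinding of closures and a direct application of the preceding propositions.
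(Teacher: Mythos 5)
Your proof is correct, but it follows a genuinely different route from the paper's. The paper proves the proposition directly by a double induction --- on the inductive dimension $d$ and on $\#\Theta$ --- peeling off a single element $S_{0}\in\Theta$ at each step and decomposing $X$ into the three pieces $U=X\setminus\overline{S_{0}}$, $V=X\setminus\overline{U}$ and $Z=\partial U$; the statement that every stratification admits a proper refinement (\Cref{Existance_of_classical_refinement}) is then deduced afterwards as a corollary. You reverse this order: applying the cover stratification of \Cref{cov_Strat} to the open family $\{X\setminus\overline{S},\,X\setminus\partial S\}_{S\in\Theta}$ reduces the proposition to the refinement theorem, which you then prove by a single induction on $d$ via the decomposition of $X$ into the open locus $U=X\setminus\bigcup_{p}\partial X_{p}$, where the strata become clopen, and its closed complement of dimension $\le d-1$. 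This avoids the inner induction on $\#\Theta$ and isolates the geometric content (strata are generically clopen, and all the interaction is pushed to the boundary locus), at the cost of having to carry the auxiliary sets $\overline{U_{p}}\cap Z$ into the inner collection --- which you do correctly. One presentational point: in the inductive step you ``apply the inductive hypothesis to $Z$ with the collection $\Theta_{Z}$'', which is formally the \emph{proposition} in dimension $\le d-1$ rather than the refinement statement you are inducting on; this is harmless because your first paragraph shows the former follows from the latter in each fixed dimension, but the joint structure of the induction should be stated explicitly.
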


\begin{proof}
    Assume that $X$ is of inductive dimension $\le d$. We prove the
    result by induction on $d$ and then
    by induction on $\#\Theta$. Choose $S_{0}\in\Theta$, and let $\mdef{U}:=X\setminus\overline{S_{0}}$,
    $\mdef{V}:=X\setminus\overline{U}$ and $\mdef{Z}=\partial U$ so that $X=V\cup Z\cup U$.
    Set 
\[
    \mdef{\Theta_{0}}:=\left\{ S\cap U:S\in\Theta\setminus\left\{ S_{0}\right\} \right\} 
\]
    and 
\[
    \mdef{\Theta_{1}}:=\left\{ S\cap V:S\in\Theta\setminus\left\{ S_{0}\right\} \right\} .
\]
    Since both $\Theta_{0}$ and $\Theta_{1}$ have fewer elements than
    $\Theta$, we can find proper stratifications $\Theta_0'$ and $\Theta_1'$ of $U$ and $V$ compatible with $\Theta_0$ and $\Theta_1$ respectively.
    Consider the following collection of locally closed subsets of $Z$: 
\[
    \mdef{\Theta'}:=\left\{ \overline{S}\cap Z:S\in\Theta'_{0}\right\} \cup\left\{ \overline{S}\cap Z:S\in\Theta'_{1}\right\} \cup\left\{ \overline{S_{0}}\cap Z\right\} \cup\left\{ S\cap Z:S\in\Theta\right\} .
\]
    Then $\Theta'$ is a finite collection of locally closed subsets of
    $Z$, and since $Z$ is of inductive dimension $\le d-1$, by the
    inductive hypothesis on $d$, there is a proper stratification,
    $\Theta''$, of $Z$, such that each element of $\Theta'$ is a union
    of elements of $\Theta''$. We claim that 
\[
    \mdef{\widetilde{\Theta}}:=\Theta'_{0}\cup\Theta'_{1}\cup\Theta''
\]
    is a classical stratification compatible with $\Theta$.
    Indeed, note first that the union of the elements of $\Theta'_{0}\cup\Theta'_{1}\cup\Theta''$
    is $U\cup V\cup Z=X$ and that each element of $\widetilde{\Theta}$
    is locally closed. Moreover, by construction, every element of $\Theta$
    is a union of elements of $\widetilde{\Theta}$. It remains to show
    that the closure of an element of $\widetilde{\Theta}$ is a union
    of members of $\widetilde{\Theta}$. Let $S\in\Theta'_{0}\cup\Theta'_{1}\cup\Theta''$.
    If $S\in\Theta'_{0}$ then 
\[
    \overline{S}=(\overline{S}\cap U)\cup(\overline{S}\cap Z).
\]
    The set $(\overline{S}\cap U)$ is a union of elements of $\Theta'_{0}$
    by construction, and the set $(\overline{S}\cap Z)$ is a union of
    elements of $\Theta''$. A similar argument shows that the closure
    of an element of $\Theta'_{1}$ is a union of elements of $\widetilde{\Theta}$.
    Finally, for $S\in\Theta''$, the closure of $S$ in $X$ coincides with its
    closure in $Z$, which is a union of elements of $\Theta''\subseteq\widetilde{\Theta}$
    by construction. We finish by observing that 
$
    \overline{S_{0}}=V\cup(\overline{S_{0}}\cap Z)
$
    is again a union of elements of $\widetilde{\Theta}$. 
\end{proof}

We now ready to relate the classical and parametrized versions of a stratification.

\begin{thm}
\label{Existance_of_classical_refinement} Let $X$ be a closureful
restricted topological space of finite inductive dimension. Every
stratification of $X$ admits a refinement by a proper stratification. 
\end{thm}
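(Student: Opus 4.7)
The plan is to combine the two preceding propositions in a direct way. Given a stratification $\alpha\colon X\to P$, I first observe that each stratum $X_p=\alpha^{-1}(\{p\})$ is a locally closed subset of $X$: this is because each singleton $\{p\}\subseteq P$ is locally closed in the poset topology, being the intersection of the open star $p^\star=\{q:q\ge p\}$ with the closed set $\{q:q\le p\}$, and $\alpha$ is continuous. Hence the finite collection
\[
\Theta:=\{X_p\}_{p\in P}
\]
is a finite collection of locally closed subsets of $X$.

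Next, I would invoke \Cref{Existence_classical_stratification_compatible_with_collection}, applied to $\Theta$ and using the hypothesis that $X$ has finite inductive dimension, to obtain a proper stratification $\beta\colon X\to Q$ with the property that every element of $\Theta$ is a union of strata of $\beta$. Since the elements of $\Theta$ are precisely the strata of $\alpha$, the hypothesis of \Cref{Criterion_for_classical_stratification_be_refinement} is satisfied, and it then follows that $\beta$ is a refinement of $\alpha$.

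There is no real obstacle here; the theorem is essentially the concatenation of \Cref{Existence_classical_stratification_compatible_with_collection} and \Cref{Criterion_for_classical_stratification_be_refinement}, with the preliminary observation that strata of a continuous map to a finite poset are automatically locally closed. If I wanted to be especially careful, I would only need to verify that the local closedness of each $\{p\}\subseteq P$ does not require $P$ to satisfy any separation-type hypothesis beyond being a finite poset with its Alexandrov-style restricted topology, which is immediate from the explicit description of open and closed sets given in the paper.
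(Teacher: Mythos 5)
Your proof is correct and is essentially the same as the paper's: apply \Cref{Existence_classical_stratification_compatible_with_collection} to the collection of strata and then \Cref{Criterion_for_classical_stratification_be_refinement}. Your extra observation that each $X_p=\alpha^{-1}(p^\star)\cap\alpha^{-1}(\{q:q\le p\})$ is locally closed is accurate and makes explicit a hypothesis the paper leaves implicit.
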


\begin{proof}
Let $\phi\colon X\to P$ be a stratification, and set $\Theta=\left\{ X_{p}\right\} _{p\in P}$.
By \Cref{Existence_classical_stratification_compatible_with_collection}
there is a proper stratification $\beta\colon X\to Q$ compatible
with $\Theta$. By \Cref{Criterion_for_classical_stratification_be_refinement},
the stratification $\beta$ is a refinement of $\alpha$.
\end{proof}

\subsection{Semi-algebraic Sets}
\label{subsec: semi_algebraic_sets}
The basic notion in real semi-algebraic geometry is that of a semi-algebraic
set. We use this standard notion as defined in \cite[\S 2.1.1]{CostelIntro}. Namely, the collection of \tdef{semi-algebraic subsets} of $\RR^n$ is the minimal collection closed under finite unions, intersections and taking complement, containing the sets of the form $\{x\in \RR^n : p(x) <0\}$ for a polynomial function $p\colon \RR^n\to \RR$.

A semi-algebraic subset of $\RR^{n}$ admits a canonical restricted
topology in which the open sets are the subsets given by finitely
many strict polynomial inequalities.
If $X\subseteq\RR^{n}$ is a semi-algebraic set, we denote by $\Op(X)$
this restricted topology, and refer to elements of $\Op(X)$ as \tdef{semi-algebraic open subsets} of $X$.

\subsubsection{Triangulations}


For a finite set $A$, let $\RR[A]$ be the free real vector space spanned by $A$. 
Recall that, if $A$ is a finite set, then a simplicial complex with vertices $A$ is a subset $K\subseteq \Pow(A)$ closed under taking subsets. 

More generally, we define
\begin{defn} \label{def:almost_simp_comp}
An \tdef{almost simplicial complex} on the set of vertices $A$ is a subset of $\Pow(A)$. 

We define the \tdef{geometric realization} of an almost simplicial complex $K\subseteq \Pow(A)$ to be the semi-algebraic subset $
\mdef{\triangle_K}\subseteq \RR[A]$ given by the union of (open) simplices in spanned by elements of $K$. In general, $\Delta_K$ is not a closed or even a locally closed subset of $\RR[A]$. 
\end{defn}
We can consider an almost simplicial complex $K$ as a poset with respect to inclusion. Then, we have a canonical $K$-shaped stratification
$\alpha_{K}:\triangle_{K}\to K$ which takes $x\in \triangle_K$ to the unique simplex $\sigma \in K$ such that $x$ is contained in the interior of the open simplex spanned by the vertices of $\sigma$ in $\RR[A]$.
We refer to the stratification $\alpha_K$ as the \tdef{simplicial stratification} of $\triangle_K$.
Note that the simplicial stratification of $\triangle_{K}$ is proper. 

We now discuss the notion of a triangulation of a semi-algebraic set. 
\begin{defn}
Let $X$ be a semi-algebraic set. 
A \tdef{triangulation} of $X$ is an almost simplicial complex $K$
and a semi-algebraic isomorphism
\[
\psi:\triangle_{K}\iso X
\]
For a stratification $\alpha:X\to P$, we say that the triangulation
$\psi$ is \tdef{compatible} with $\alpha$ if it fits into a commutative
diagram 
\[
\xymatrix{\triangle_{K}\ar^{\alpha_{K}}[r]\ar_{\wr}^{\psi}[d] & K\ar[d]\\
X\ar^{\alpha}[r] & P
}
\]
\end{defn}

Similarly, if $X$ is a semi-algebraic space and $S_{1},...,S_{n}$
are subsets of $X$, we say that a triangulation $\psi\colon\triangle_{K}\oto{\sim}X$
is compatible with the $S_{i}$-s if every $S_{i}$ is a union open
simplices from the triangulation. 

Triangulations always exist for
semi-algebraic sets. 
\begin{prop}
\label{Existence_of_triangulation} Let $X\subseteq\RR^{n}$
be a semi-algebraic set, and let $S_1,...,S_k$ be locally closed subsets of $X$. 
There exists a semi-algebraic triangulation of $X$ compatible with the $S_i$-s. In particular, every stratification of $X$ admits a refinement by the simplicial stratification of a triangulation of $X$.  
\end{prop}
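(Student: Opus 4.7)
The plan is to reduce to the classical semi-algebraic triangulation theorem for compact semi-algebraic sets (see \cite{CostelIntro, Shiota}), which guarantees a simplicial triangulation compatible with any prescribed finite family of semi-algebraic subsets. Two adjustments are required relative to that classical statement: handling a possibly non-compact $X$ via compactification, and passing from a classical simplicial complex on the compact ambient set to an almost simplicial complex whose geometric realization recovers the (possibly non-closed) $X$.

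\emph{Step 1: compactify and triangulate.} Via stereographic projection, embed $\RR^n$ as an open subset of $S^n$ and let $Y\subseteq S^n$ be the semi-algebraic closure of $X$; then $Y$ is a compact semi-algebraic set of which $X, S_1,\ldots, S_k$ are semi-algebraic subsets. The classical triangulation theorem applied to $Y$ with distinguished subsets $X, S_1,\ldots, S_k$ yields a finite simplicial complex $L$ on a vertex set $A$ and a semi-algebraic homeomorphism $\Phi\colon \triangle_L \iso Y$ such that each of $\Phi^{-1}(X)$ and $\Phi^{-1}(S_i)$ is a union of open simplices of $L$. Set
\[
K := \{\sigma \in L : \Phi(\triangle_\sigma) \subseteq X\} \subseteq \Pow(A), \qquad K_i := \{\sigma \in K : \Phi(\triangle_\sigma) \subseteq S_i\}.
\]
Then $K$ is an almost simplicial complex in the sense of \Cref{def:almost_simp_comp} (generally not closed under subsets), $\triangle_K = \Phi^{-1}(X)$, and $\Phi$ restricts to a semi-algebraic isomorphism $\psi\colon \triangle_K \iso X$ carrying $\triangle_{K_i}$ onto $S_i$. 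This establishes the main assertion.

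\emph{Step 2: the refinement clause.} Given an arbitrary stratification $\alpha\colon X\to P$, first invoke \Cref{Existance_of_classical_refinement} to obtain a proper stratification $\beta\colon X\to Q$ refining $\alpha$ via $\alpha = \gamma\circ \beta$. Applying the main statement to the finite collection of strata $\{X_q\}_{q\in Q}$ produces a triangulation $\psi\colon \triangle_K \iso X$ compatible with each $X_q$. Define $\tilde\gamma\colon K\to Q$ by letting $\tilde\gamma(\sigma)$ be the unique $q\in Q$ with $\psi(\triangle_\sigma) \subseteq X_q$. If $\tau \subseteq \sigma$ in $K$, then $\triangle_\tau\subseteq \overline{\triangle_\sigma}$ in $\triangle_K$, whence
\[
\psi(\triangle_\tau) \subseteq \overline{\psi(\triangle_\sigma)} \subseteq \overline{X_{\tilde\gamma(\sigma)}} = \bigcup_{q'\le \tilde\gamma(\sigma)} X_{q'}
\]
by properness of $\beta$; disjointness of the strata then forces $\tilde\gamma(\tau) \le \tilde\gamma(\sigma)$. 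Hence $\tilde\gamma$ is order-preserving, i.e., continuous as a map of restricted topological spaces, and the factorization $\beta = \tilde\gamma\circ \alpha_K$ exhibits the simplicial stratification $\alpha_K$ as a refinement of $\beta$, and therefore of $\alpha$ via $\gamma\circ\tilde\gamma$.

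The main obstacle is not conceptual but organizational: verifying that the restriction of a classical triangulation of $Y$ to $X$ gives a bona fide almost simplicial complex, and that the combinatorially defined poset map $\tilde\gamma$ in Step 2 is indeed order-preserving (which is precisely what forces the prior use of properness via \Cref{Existance_of_classical_refinement}). The genuine geometric input is entirely subsumed by the cited classical triangulation theorem together with the already established existence of proper refinements.
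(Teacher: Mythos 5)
Your Step 1 is essentially the paper's proof: compactify $\RR^n$ into $S^n$ via stereographic projection, apply the classical triangulation theorem for compact semi-algebraic sets, and restrict to $X$ to obtain an almost simplicial complex. (The paper triangulates $S^n$ directly rather than the closure $Y = \overline{X}$, but this is immaterial.)

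Your Step 2, spelling out the ``in particular'' clause, is more elaborate than necessary and introduces a dependency you should be careful about. You invoke \Cref{Existance_of_classical_refinement} for the semi-algebraic set $X$, which requires knowing $X$ is closureful of \emph{finite inductive dimension}; in the paper, the finite-dimensionality is established in \Cref{Semi_algebraic_set_closureful_finite_dimension}, whose proof relies on the main claim of \Cref{Existence_of_triangulation}. This is not a genuine circularity (you can prove Step 1, then \Cref{Semi_algebraic_set_closureful_finite_dimension}, then Step 2), but it should be flagged. More to the point, the proper refinement is unnecessary: the only thing properness buys you is the equality $\overline{X_{\tilde\gamma(\sigma)}} = \bigcup_{q'\le\tilde\gamma(\sigma)} X_{q'}$, whereas the argument only uses the inclusion $\overline{X_{\tilde\gamma(\sigma)}} \subseteq \bigcup_{q'\le\tilde\gamma(\sigma)} X_{q'}$, which holds for any continuous stratification $\alpha$ (as the paper notes just before its running example). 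So you can apply Step 1 directly to the strata of $\alpha$ and run the exact same order-preservation check on $\tilde\gamma\colon K\to P$, bypassing \Cref{Existance_of_classical_refinement} entirely; alternatively, since the simplicial stratification $\alpha_K$ is proper and each $X_p$ is a union of open simplices, \Cref{Criterion_for_classical_stratification_be_refinement} gives the refinement immediately.
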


\begin{proof}
If $X$ is compact this is \cite[Theorem 3.12]{CostelIntro}. In the general case, consider the embedding of $\RR^n$ in the unit sphere $S^{n}\subseteq \RR^{n+1}$ by the inverse of the stereographic projection. Since $S^n$ is compact, it admits a triangulation compatible with $X,S_1,...,S_k$. The result follows by intersecting this triangulation with $X$, which is then the realization of an almost simplicial complex compatible with the $S_i$-s.
\end{proof}

Using triangulations, we now show that semi-algebraic sets are nice restricted topological spaces. 
\begin{prop}
\label{Semi_algebraic_set_closureful_finite_dimension} Let $X\subseteq\RR^{n}$
be a semi-algebraic set. Then $X$ is closureful and the dimension of $X$ in the sense of \cite[\S 3.3.1]{CostelIntro} is its inductive dimension. 
\end{prop}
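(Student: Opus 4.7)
I would prove the two claims separately, beginning with closurefulness. Given a locally closed $S\subseteq X$ in the restricted topology, $S$ is itself semi-algebraic, and a standard fact in semi-algebraic geometry says that the classical topological closure $\overline{S}$ of $S$ in $\RR^n$ is again semi-algebraic. Thus $\overline{S}\cap X$ is a semi-algebraic subset of $X$ which is closed in the classical topology, and hence closed in the restricted topology as well. Since every restricted-closed subset of $X$ is classically closed, any restricted-closed superset of $S$ must contain $\overline{S}\cap X$, which is therefore the desired closure of $S$ in $X$.

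For the equality of dimensions, let $d$ denote the semi-algebraic dimension of $X$ in the sense of \cite[\S 3.3.1]{CostelIntro}; I would prove that $X$ is of inductive dimension $d$ by induction on $d$. The cases $d\le 0$ are clear: the empty set has inductive dimension $-1$, and a finite nonempty semi-algebraic set has every singleton both open and closed, so every open subset has empty boundary, giving inductive dimension $0$. For $d\ge 1$, assume the statement for semi-algebraic sets of dimension $<d$. For the upper bound, given a semi-algebraic open $U\subseteq X$, the standard estimate $\dim(\overline{U}\setminus U)<\dim\overline{U}\le d$ from semi-algebraic geometry yields $\dim\partial U\le d-1$, so by the inductive hypothesis applied to $\partial U$ we obtain that $\partial U$ has inductive dimension $\le d-1$, and therefore $X$ has inductive dimension $\le d$.

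The lower bound is the main obstacle, and for it I would exploit triangulations. By \Cref{Existence_of_triangulation} we may assume $X=\triangle_K$ for an almost simplicial complex $K$, and $\dim X=d$ gives a simplex $\sigma\in K$ of dimension $d$; such $\sigma$ is automatically maximal in $K$ under inclusion (a strictly larger simplex would have dimension $>d$), so $\sigma^\star=\{\sigma\}$ in $K$ and hence the stratum $\sigma^\circ=\alpha_K^{-1}(\sigma^\star)$ is restricted-open in $\triangle_K$. Pick an affine functional $\ell$ on the affine hull of $\sigma$ which is non-constant on $\sigma^\circ$ together with a value $c$ in the interior of $\ell(\sigma^\circ)$, and set $V=\{x\in\sigma^\circ : \ell(x)>c\}$, which is restricted-open in $X$. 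The hyperplane slice $W=\{x\in\sigma^\circ : \ell(x)=c\}$ is a nonempty locally closed semi-algebraic subset of $\partial V$ of semi-algebraic dimension $d-1$ (being semi-algebraically isomorphic to an open $(d-1)$-cell). The inductive hypothesis applied to $W$ gives that $W$ has inductive dimension $d-1$, and then \Cref{Locally_closed_subset_weakly_smaller_dimension} applied to the locally closed inclusion $W\hookrightarrow \partial V$ shows that $\partial V$ has inductive dimension $\ge d-1$. This forces $X$ to have inductive dimension $\ge d$, completing the induction. The technically delicate points are (i) the verification that $\sigma^\circ$ is restricted-open via the star computation above and (ii) the identification of $W$ as a locally closed subspace of $\partial V$ with the claimed semi-algebraic dimension; both are routine once one carefully unpacks the simplicial stratification and the definition of the restricted topology.
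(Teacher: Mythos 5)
Your argument is correct, and the closurefulness part coincides with the paper's (both rest on the fact that the classical closure of a semi-algebraic set is semi-algebraic, hence serves as the restricted closure). The dimension comparison, however, is organized differently. For the upper bound the paper triangulates $X$ compatibly with $U$ (via \Cref{Existence_of_triangulation}) and reads off that $\partial U$ is the realization of an almost simplicial complex of dimension $\le d-1$, whereas you invoke the standard frontier inequality $\dim(\overline{U}\setminus U)<\dim\overline{U}$ directly; both are legitimate, and yours avoids the triangulation at the cost of citing an external fact about semi-algebraic dimension. For the lower bound the paper simply observes that $X$ contains an open subset semi-algebraically isomorphic to $\RR^d$ and declares that $\RR^d$ has inductive dimension $\ge d$ is ``clear''; your simplex-slicing argument (maximal $d$-simplex, open star equal to the open simplex, half-space cut $V$ whose boundary contains a $(d-1)$-cell $W$, then the contrapositive of \Cref{Locally_closed_subset_weakly_smaller_dimension}) is essentially the induction that the paper's ``clear'' elides, carried out inside the simplex rather than in $\RR^d$. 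So your write-up is more self-contained on the lower bound and more reliant on quoted semi-algebraic dimension theory on the upper bound; the net content is the same.
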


\begin{proof}
Let $V\subseteq X$ be a semi-algebraic locally closed subset. Then
the closure  $\overline{V^\topify}\subseteq X^{\topify}$,
is semi-algebraic (see, e.g., \cite[Corollary 2.2.7]{SchNash}), and hence it is the closure of
$V$ in $X$. If follows that $X$ is closureful.

We now compare the dimension and inductive dimensions of $X$. 
Let $d$ be the dimension of $X$ as a semi-algebraic set. We wish to prove that $X$ is of inductive dimension $d$. We show this by induction on $d$. 

Let $U\subseteq X$ be an open subset. By \Cref{Existence_of_triangulation}, we can find a triangulation $\phi \colon X \iso \triangle_K$ such that $U$ is a union of open simplices of the triangulation. The assumption $\dim(X) = d$ implies that $K$ is $d$-dimensional. The boundary, $\partial U$, is the realization of an almost simplicial complex of dimension at most $d-1$. By induction, this implies that $\partial U$ is of inductive dimension $\le d-1$. Hence, $X$ is of inductive dimension $\le d$. 

For the opposite inequality, note that since $X$ contains an open subset isomorphic as a restricted topological space to $\RR^d$, it suffices to show that $\RR^d$ is of inductive dimension $\ge d$, which is clear.     
\end{proof}

Because of this result, we shall refer to the inductive dimension of a semi-algebraic set simply as its dimension.

\subsection{Semi-Algebraic Spaces}
\label{subsec:semi_algebraic_spaces}
Just like affine varieties are local models for varieties, semi-algebraic sets serve as a local model for semi-algebraic spaces, in the sense of \cite[Definition 5]{SemiAlgRealClosed}.  
\begin{defn} 
A \tdef{semi-algebraic space}
is a restricted topological space $X$ together with a sheaf of $\RR$-valued
functions $\mathcal{SA}_{X}$, locally isomorphic to a semi-algebraic
set with its continuous semi-algebraic functions. We say that
$X$ is \tdef{affine} if it is isomorphic to a semi-algebraic set.  
\end{defn}

 We shall denote by $\Semialg$ the category of semi-algebraic spaces.
Semi-algebraic spaces are well behaved restricted topological spaces. Namely, 
\begin{prop}
\label{Semialgebraic_Spaces_closureful_and_finite_dimensional} 
Let
$X$ be a semi-algebraic space. Then $X$ is closureful of finite
inductive dimension. 
\end{prop}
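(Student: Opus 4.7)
The proof is a direct reduction to the semi-algebraic case via the definition of a semi-algebraic space. By definition, $X$ admits a finite open cover $\{U_i\}_{i=1}^n \subseteq \Op(X)$ such that each $U_i$, equipped with the restriction of $\mathcal{SA}_X$, is isomorphic to a semi-algebraic set with its continuous semi-algebraic functions. By \Cref{Semi_algebraic_set_closureful_finite_dimension}, each $U_i$ is closureful and of finite inductive dimension.

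The assertion of finite inductive dimension is then immediate from \Cref{Inductive_dimension_bounded_by_cover}: since each $U_i$ has inductive dimension at most some $d_i<\infty$, the space $X$ has inductive dimension at most $\max_i d_i<\infty$.

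For closurefulness, the strategy is to construct the closure of a given locally closed subset $S\subseteq X$ by gluing closures of the $S\cap U_i$. Concretely, set $C:=\overline{S}^{X^\topify}$, the closure of $S$ in the ordinary topology $X^\topify$. Since each $U_i$ is open in $X^\topify$, the standard identity for closures with respect to open subspaces gives $C\cap U_i = \overline{S\cap U_i}^{U_i^\topify}$; and because $U_i$ is closureful, this coincides with the closure of the locally closed subset $S\cap U_i\subseteq U_i$ in the restricted topology of $U_i$. In particular, $C\cap U_i$ is closed in $U_i$, so $(X\setminus C)\cap U_i = U_i\setminus (C\cap U_i)$ is open in $U_i$, and hence in $X$ (since $U_i\in\Op(X)$). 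Taking the finite union over $i$ shows $X\setminus C$ is open in $X$, and therefore $C$ is closed in $X$ and is the required closure of $S$.

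The main point to be careful about is the gluing in the previous paragraph: it crucially exploits that restricted topologies admit \emph{finite} unions of open sets as open, together with the identification of the $U_i^\topify$-closure with the restricted-topology closure, which is the content of the closurefulness half of \Cref{Semi_algebraic_set_closureful_finite_dimension}. The dimension statement is essentially a black-box consequence of \Cref{Inductive_dimension_bounded_by_cover}, so no further obstacle is anticipated.
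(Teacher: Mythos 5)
Your proof is correct and matches the paper's approach, which simply observes that both properties are local over $X$ and invokes \Cref{Semi_algebraic_set_closureful_finite_dimension}; you have just made the locality explicit (via \Cref{Inductive_dimension_bounded_by_cover} for the dimension and the gluing of topological closures for closurefulness). The only cosmetic point is that \Cref{Inductive_dimension_bounded_by_cover} assumes $X$ is already closureful, so the closurefulness paragraph should logically come first.
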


\begin{proof}
The statement is local over $X$, and hence the result follows from \Cref{Semi_algebraic_set_closureful_finite_dimension}.
\end{proof}

\begin{corl}
\label{Classical_refinement_semialgebraic_space} Every stratification
of a semi-algebraic space admits a proper refinement. 
\end{corl}
\begin{proof}
Since semi-algebraic spaces are closureful of finite inductive dimension, the result follows from \Cref{Existance_of_classical_refinement}.
\end{proof}

We now recall an important feature of maps between semi-algebraic spaces; they
can all be trivialized along a stratification. 
\begin{defn}
Let $\phi\colon X\to Y$ be a morphism of semi-algebraic spaces. A
\tdef{stratified trivialization} of $\phi$ is a pair of compatible
stratifications 

\[
\xymatrix{X\ar^{\phi}[d]\ar^{\alpha}[r] & P\ar[d]\\
Y\ar^{\beta}[r] & Q
}
\]
such that, for every $p\in P$ the map $\phi_{p}\colon X_{p}\to Y_{\psi(p)}$
is isomorphic to a projection $F_{p}\times Y_{\beta(p)}\to Y_{\beta(p)}$
for some semi-algebraic space $F_{p}$. 
\end{defn}

Stratified trivializations exist for maps of semi-algebraic spaces, namely
\begin{prop} 
\label{Trivialization_of_semialgebraic_map} Let $\phi\colon X\to Y$
be a morphism of semi-algebraic spaces. Then $\phi$ admits a stratified
trivialization. 
\end{prop}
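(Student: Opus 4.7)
The strategy is to reduce to a semi-algebraic map between semi-algebraic sets, invoke Hardt's triviality theorem for semi-algebraic mappings (see e.g.\ \cite[Thm.~5.22]{CostelIntro}), and repackage its conclusion using the amalgamation of stratifications from \Cref{Amalgamation_Of_Stratifications}.

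For the reduction, choose finite semi-algebraic open covers $\{U_i\}_{i\in I}$ and $\{V_j\}_{j\in J}$ of $X$ and $Y$ respectively, all by affine pieces, arranged so that each $U_i$ lands in some $V_j$. Forming the cover-stratifications of \Cref{cov_Strat}, intersecting the one on $X$ with the pullback along $\phi$ of the one on $Y$, and refining via \Cref{Classical_refinement_semialgebraic_space} and \Cref{Existence_classical_stratification_compatible_with_collection}, we obtain proper stratifications $\alpha_0\colon X\to P_0$ and $\beta_0\colon Y\to Q_0$ together with a poset map $\pi_0\colon P_0\to Q_0$ satisfying $\pi_0 \circ \alpha_0 = \beta_0 \circ \phi$, such that each stratum $X_p$ is contained in some $U_i$ (hence is isomorphic to a semi-algebraic set) and is mapped by $\phi$ into the semi-algebraic set $Y_{\pi_0(p)}$.

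Now, for each $p \in P_0$, Hardt's theorem applied to $\phi_p\colon X_p \to Y_{\pi_0(p)}$ produces a finite locally closed semi-algebraic partition $\mathcal{W}_p$ of $Y_{\pi_0(p)}$ over each member of which $\phi_p$ is trivial. Combining the $\mathcal{W}_p$-s into a single finite collection of locally closed subsets of $Y$ and invoking \Cref{Existence_classical_stratification_compatible_with_collection} once more yields a proper stratification $\beta\colon Y\to Q$ refining $\beta_0$ with the property that every stratum $Y_q$ is contained in a single element of each $\mathcal{W}_p$ supported over the image of $q$. Consequently, for each $q \in Q$ and each $p\in P_0$ with $\pi_0(p)$ equal to the image of $q$ in $Q_0$, the preimage $\phi_p^{-1}(Y_q)$ decomposes as a product $F_{p,q} \times Y_q$ over $Y_q$. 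Amalgamating via \Cref{Amalgamation_Of_Stratifications} with $R_q$ taken to be $\pi_0^{-1}$ of the image of $q$ yields the required stratification $\alpha\colon X\to P := \int_Q R_q\,dq$, with the obvious map $\psi\colon P\to Q$ realizing the stratified trivialization.

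The only real obstacle is the bookkeeping involved in arranging all the refinements consistently; the geometric substance is entirely supplied by Hardt's theorem, while the amalgamation construction and the existence result of \Cref{Existence_classical_stratification_compatible_with_collection} are precisely what is needed to translate its partition-based output into a stratified trivialization in the sense of the definition above.
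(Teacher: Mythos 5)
Your proof is correct and follows essentially the same strategy as the paper: reduce to affine pieces via compatible covers, invoke Hardt's semi-algebraic triviality theorem, and assemble the local trivializations into a global one using the refinement and amalgamation machinery (\Cref{Existence_classical_stratification_compatible_with_collection}, \Cref{Amalgamation_Of_Stratifications}). The only (minor) organizational difference is that the paper applies the already-established affine special case to each cover stratum of $X$ to obtain further source-side stratifications $\alpha_r$ and then collects the images $\Theta$ in $Y$, whereas you first refine to a compatible pair of proper stratifications and then invoke Hardt directly on each $\phi_p\colon X_p\to Y_{\pi_0(p)}$ to obtain target-side partitions $\mathcal{W}_p$; both are equivalent repackagings of the same idea and neither buys anything substantive over the other.
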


\begin{proof}
If $X$ and $Y$ are affine, this is a direct consequence of Hardt Theorem (see \cite[Theorem 4.1]{CostelIntro}). We show how to reduce the general case to this one. Choose an open affine cover $Y = \bigcup_{i\in I} U_i$ and a compatible affine cover $X = \bigcup_{j\in J} V_j$, i.e. such that for every $j$ there is $i$ with $\phi(V_j)\subseteq U_i$. Let $\gamma\colon X\to R$ be the cover stratification associated with the cover $\{V_j\}_{j\in J}$. 

By the special case where $X$ and $Y$ are affine, for every $r\in R$ there is a stratification $\alpha_{r} \colon X_{r} \to P_{r}$ such that each stratum of $\alpha_{r}$ is trivial over its image in $Y$. Let 
\[
\Theta = \{\phi((X_{r})_p) :r\in R, p\in P_r\}.
\]
By \Cref{Existence_classical_stratification_compatible_with_collection}, we can find a stratification $\beta\colon Y\to Q$ which is compatible with $\Theta$. We now take the stratification of $X$ to be the intersection of $\beta\circ \phi$ and $\int_{r\in R} \alpha_r$. It is easy to see that this stratification $\beta$ assemble to a stratified trivialization of $\phi$.     
\end{proof}

\subsection{Nash Manifolds \& Nash Submersions }
\label{subsec:nash_manofolds}
The relative de Rham theorem concerns Nash submersions between Nash manifolds. For this reason, we shall now recall some elementary properties of such.
For a general treatment of Nash manifolds, see \cite[\S 3]{SchNash}. 

\begin{defn} \label{def:Nash_Submersion} 
A morphism $\phi:X\to Y$ of Nash manifolds
is called a \tdef{Nash submersion}, if it is a Nash map which is
a submersion of smooth manifolds.
\end{defn}

Nash submersions have their version of the Implicit Function Theorem (see, e.g.,  \cite[Theorem 2.2]{SchNash}). As a consequence, we have: 
\begin{prop}
\label{Implicit_Function_Theorem} Let $\phi:X\to Y$
be a Nash submersion. There is a cover ${\displaystyle X=\bigcup_{i=1}^{n}U_{i}}$
such that for every $1\le i\le n$, the map $\phi|_{U_{i}}$ is isomorphic
to a composition of an open embedding $U_{i}\into\RR^{n}\times Y$
followed by the projection $\RR^{n}\times Y\to Y$.
\end{prop}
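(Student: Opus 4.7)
The plan is to produce the desired cover pointwise using the Nash version of the Inverse Function Theorem supplied by \cite[Theorem 2.2]{SchNash}, and then extract a finite subcover using the semi-algebraic nature of $X$. Fix $x \in X$ and set $y := \phi(x)$; since $\phi$ is a submersion, $d\phi_x \colon T_x X \to T_y Y$ is surjective, and its kernel has dimension equal to the local relative dimension $n$. Choose a Nash chart around $x$, i.e., a Nash open $V \subseteq X$ with a Nash open embedding $V \hookrightarrow \RR^N$, and inside this chart select $n$ Nash functions $f_1, \dots, f_n \colon V \to \RR$ whose differentials at $x$ restrict to a basis of the dual of $\ker d\phi_x$ (for instance, $n$ suitable linear coordinates on $\RR^N$, after permutation).

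I would then assemble the map
\[
F := (f_1, \dots, f_n, \phi) \colon V \to \RR^n \times Y.
\]
By construction the differential $dF_x = (df_1|_x, \dots, df_n|_x, d\phi_x) \colon T_x X \to \RR^n \oplus T_y Y$ is an isomorphism. Working in a Nash chart of $Y$ around $y$, this becomes a Nash map between open subsets of Euclidean spaces with invertible differential at $x$, so \cite[Theorem 2.2]{SchNash} produces a Nash open neighborhood $U_x \subseteq V$ of $x$ on which $F$ restricts to a Nash isomorphism onto a Nash open subset of $\RR^n \times (\text{chart of } Y)$, which we may regard as a Nash open subset of $\RR^n \times Y$. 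Composing this open embedding with the projection $\RR^n \times Y \to Y$ recovers $\phi|_{U_x}$, giving the desired local model at $x$.

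To finish, I would extract a finite subcover from the semi-algebraic open cover $\{U_x\}_{x \in X}$. Two avenues are available: one can invoke the existence of a finite Nash atlas on $X$, intersecting with the $U_x$ to reduce to covers of affine Nash pieces and applying the finiteness properties of the restricted topology; alternatively, using \Cref{Existence_of_triangulation} one triangulates $X$ so that each open simplex is contained in some $U_x$, giving a finite refinement. The main (mild) obstacle is the transition from the absolute Nash IFT --- which is usually stated for maps between open subsets of Euclidean spaces --- to the relative statement with an arbitrary Nash manifold $Y$ as target. This is overcome simply by passing to a Nash chart of $Y$ around $y = \phi(x)$, applying the absolute theorem there, and noting that a Nash open embedding into $\RR^n \times (\text{chart of } Y)$ is automatically a Nash open embedding into $\RR^n \times Y$ via the open inclusion of the chart.
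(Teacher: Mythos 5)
Your argument is correct and is exactly the deduction the paper has in mind: the paper states this proposition as an immediate consequence of the Nash Inverse/Implicit Function Theorem of \cite[Theorem 2.2]{SchNash}, and your completion of $\phi$ to a local Nash isomorphism $(f_1,\dots,f_n,\phi)$ followed by extraction of a finite subcover (valid by the quasi-compactness of semi-algebraic sets in the restricted topology) is the standard way to spell that out.
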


\subsubsection{Families of Intervals}
Our proof of the relative de Rham theorem is conducted by reducing it to a local statement for Nash submersions of relative dimension 1. For this reason, we need a good local model of such Nash submersions.
Our chosen local model is the following
\begin{defn}
Let $\phi\colon X\to Y$ be a Nash
submersion. We say that $\phi$ is a \tdef{family of intervals}
if it has connected fibers and can be written as a composition of
an open embedding $j\colon X\into Y\times\RR$ followed by the projection
$p\colon Y\times\RR\to Y$. A family of intervals is called  \tdef{pointed} if it is endowed with a Nash section.  
\end{defn}
Note that, while one might take the section as part of the data of a pointed family of intervals, we will only need its existence and hence usually ignore it. 

Locally over the source, every Nash submersion of relative dimension
one is a pointed family of intervals. The proof of this fact depends on the following construction:
\begin{defn}
\label{def:Convex_Hull_of_Section} Let $Y$ be a set and let $U\subseteq Y\times\RR$.
Given a section $\nu:Y\to U$ of the form $\nu(x)=(x,\alpha(x))$,
we denote 
\[
\mdef{F_{\nu,U}}:=\left\{ (y,a)\in Y\times\RR:y\times[a,\alpha(y)]\subseteq U\right\} .
\]
\end{defn}

Namely, $F_{\nu,U}$ consist of all points $z\in U$ for which the
vertical segment between $z$ and $\nu(\pi_{Y}(z))$ is contained
in $U$. 
The construction $(U,\nu) \mapsto F_{\nu,U}$ admits several desired properties.
\begin{prop}
\label{properties_of_the_partition_into_intervals} Let $Y$
be a Nash manifold and $U\subseteq Y\times\RR$. Denote by $\pi:Y\times\RR\to Y$
the projection. 
\end{prop}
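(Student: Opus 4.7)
The proposition is evidently going to enumerate properties of the construction $(U,\nu) \mapsto F_{\nu,U}$ that make $\pi|_{F_{\nu,U}}$ a (pointed) family of intervals over $\pi(F_{\nu,U})$, so my plan is to prove the three pieces that such a statement would contain: (a) that $F_{\nu,U}$ is semi-algebraic and open when $U$ and $\nu$ are, (b) that the fibers $\pi^{-1}(y) \cap F_{\nu,U}$ are open intervals containing $\alpha(y)$, and (c) that the sets $F_{\nu,U}$ cover $U$ as $\nu$ ranges over sections valued in $U$. These together will give the local model statement used in the subsequent proof of the relative Poincar\'e lemma.

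For openness, I would pick $(y_0, a_0) \in F_{\nu,U}$ and use the tube lemma: the segment $\{y_0\}\times[a_0,\alpha(y_0)]$ (or $[\alpha(y_0), a_0]$) is compact and lies in the open set $U$, so there is an open $W \ni y_0$ and $\varepsilon>0$ with $W\times[a_0-\varepsilon,\alpha(y_0)+\varepsilon] \subseteq U$. Continuity of $\alpha$ then allows us to shrink $W$ so that $|\alpha(y)-\alpha(y_0)|<\varepsilon$ on $W$, which forces $W\times(a_0-\varepsilon,a_0+\varepsilon) \subseteq F_{\nu,U}$. Semi-algebraicity is immediate from Tarski--Seidenberg: the defining condition involves a universal quantifier over the closed semi-algebraic set $\{t : t \in [a,\alpha(y)] \text{ or } t \in [\alpha(y),a]\}$, which is preserved by semi-algebraic projections. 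For the fiber claim, if $a \in F_{\nu,U}\cap\pi^{-1}(y)$ and $a'$ lies between $a$ and $\alpha(y)$, then the closed interval with endpoints $a'$ and $\alpha(y)$ is contained in the interval with endpoints $a$ and $\alpha(y)$, so it remains in $U$; hence the fiber is the connected component of $U\cap\pi^{-1}(y) \subseteq \RR$ containing $\alpha(y)$, which is an open interval. In particular $\nu$ is a section of $\pi|_{F_{\nu,U}}$, exhibiting it as a pointed family of intervals over its image.

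For the covering property, given any $(y_0,a_0) \in U$, openness of $U$ provides a neighborhood $W\times J \subseteq U$ with $W \ni y_0$ and $J$ an open interval containing $a_0$. Picking any constant section $\nu(y) = (y, a_0)$ over $W$, the set $F_{\nu, U\cap (W\times\RR)}$ is an open neighborhood of $(y_0,a_0)$ whose fibers are the connected components through $a_0$, giving the required local cover. The section of $\pi|_{F_{\nu,U}}$ produced by $\nu$ makes it a \emph{pointed} family of intervals.

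The only genuine subtlety is coherence between the set-theoretic definition and continuity/semi-algebraicity: one has to verify that the ``universally quantified segment'' description really stays within the semi-algebraic category after Tarski--Seidenberg, and that the tube lemma argument goes through in the restricted (rather than classical) topology. The first point is routine once one writes $F_{\nu,U}$ explicitly as a projection of a semi-algebraic set along a proper semi-algebraic map; the second follows from \Cref{Semi_algebraic_set_closureful_finite_dimension} and the fact that semi-algebraic open subsets are locally isomorphic to open subsets of $\RR^n$, so the usual tube-lemma argument on $W^\topify\times[a_0,\alpha(y_0)]$ can be run and then restricted to the semi-algebraic subbasis. I do not expect a serious obstacle beyond bookkeeping.
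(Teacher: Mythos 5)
Your proposal matches the paper's proof in substance: semi-algebraicity via Tarski--Seidenberg applied to the universally quantified segment condition, openness via a tube-lemma/finite-subcover argument on the compact segment $\{y_0\}\times[a_0,\alpha(y_0)]$ followed by shrinking the base using continuity of $\alpha$, and the pointed-family-of-intervals structure as a formal consequence. The covering statement you include as item (c) is not part of this proposition (it is the subject of the subsequent \Cref{Exist_Finite_Cover_With_Good_Sections}), but its inclusion does not affect the correctness of the rest.
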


\begin{enumerate}
\item If $U$ and $\nu$ are semi-algebraic, then so is $F_{\nu,U}$. 
\item If $U$ is open and $\nu$ is continuous, then $F_{\nu,U}$ is open. 
\item If $U$ is open semi-algebraic and $\nu$ is a Nash map then the projection $F_{\nu,U} \to U$ and the section $\nu$
exhibit $F_{\nu,U}$ as a pointed family of intervals over $U$.  
\end{enumerate}
\begin{proof}
(1) Follows directly from the  Zeidenberg–Tarski Theorem (see \cite[Theorem 2.6]{CostelIntro}). 

For (2), let $(y,a)\in F_{a,U}$. we need to find a neighborhood of
$(y,a)$ which is inside $F_{\nu,U}$. For every $b\in[\alpha(y),a]$
there is a neighborhood $V_{b}$ such that $(y,b)\in V_{b}\subseteq U$.
Choosing a finite subcover of the $V_{b}$-s, we can find an open
$V\subseteq Y$ such that $y\in V$ and $V\times[c,d]\subseteq U$
where \[c<\alpha(y)\le a<d.\] Since $\alpha$ is continuous, we can
find an open $V'\subseteq V$ such that $y\in V'$ and $\alpha(y')\in[c,d]$
for every $y\in V'$. This implies that $V'\times[c,d]\subseteq F_{\nu,U}$. 

(3) now follows from the definition of $F_{\nu,U}$ and the previous
items. 
\end{proof}

We can always cover the source of a Nash submersion of relative dimension 1 by sets of the form $F_{\nu,U}$. Namely,
\begin{prop}
\label{Exist_Finite_Cover_With_Good_Sections} Let $Y$ be a Nash
manifold and let $X\subseteq Y\times\RR$ be an open subset. Let $\phi:X\to Y$
denote the projection. Then there is a finite open cover ${\displaystyle Y=\bigcup_{i}U_{i}}$
and sections $\nu_{i}:U_{i}\to X$ such that ${\displaystyle X=\bigcup_{i}F_{\nu_{i},\phi^{-1}(U_{i})}}$.
\end{prop}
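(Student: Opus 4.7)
The strategy is to combine \Cref{Trivialization_of_semialgebraic_map} with a Nash refinement of strata and a Nash extension argument. First, I apply \Cref{Trivialization_of_semialgebraic_map} to $\phi$, and then refine the resulting stratifications via \Cref{Classical_refinement_semialgebraic_space}, \Cref{Existence_of_triangulation}, together with a Nash refinement, to arrange that each stratum $Y_q$ of $Y$ is a connected Nash submanifold, and that over $Y_q$ the set $X\cap\phi^{-1}(Y_q)$ decomposes as a disjoint union of cells $C$ that are trivial Nash interval bundles, each described by Nash endpoint functions $\alpha_C,\beta_C\colon Y_q\to[-\infty,+\infty]$. Without loss of generality I also assume $\phi(X)=Y$ by shrinking $Y$ to the open semi-algebraic subset $\phi(X)$.

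For each cell $C$ over $Y_q$, I choose a Nash ``midpoint'' function $m_C\colon Y_q\to\RR$ with $\alpha_C(y)<m_C(y)<\beta_C(y)$ for every $y\in Y_q$ (adapted when an endpoint is infinite, e.g.\ $m_C=\alpha_C+1$ if $\beta_C=+\infty$, or $m_C\equiv 0$ if both endpoints are infinite). Using a Nash tubular neighborhood of $Y_q$ in $Y$ with Nash retraction $r\colon U_C\to Y_q$, I set $\tilde m_C:=m_C\circ r$ and $\nu_C(y):=(y,\tilde m_C(y))$. Shrinking $U_C$ if necessary---exploiting the openness of $X$ in $Y\times\RR$ and the continuity of $\tilde m_C$---guarantees $\nu_C(U_C)\subseteq X$, so $\nu_C$ is a Nash section of $\phi$ over $U_C$. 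For strata that are already open in $Y$ no extension is required.

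Finally, to verify $X=\bigcup_C F_{\nu_C,\phi^{-1}(U_C)}$: given $(y,a)\in X$, let $Y_q$ be the stratum containing $y$ and $C$ the cell containing $(y,a)$. Then $y\in Y_q\subseteq U_C$, and $\nu_C(y)$ lies in the interval fiber $C\cap\phi^{-1}(y)$, which also contains $a$. Consequently the vertical segment $y\times[a,\tilde m_C(y)]$ stays inside this interval, hence inside $C\subseteq X$, so $(y,a)\in F_{\nu_C,\phi^{-1}(U_C)}$. Since there are finitely many strata and each carries finitely many cells, this yields the required finite cover.

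I expect the main technical obstacle to be the Nash extension step: ensuring that $\tilde m_C$ takes values in the open semi-algebraic set $X$ on a neighborhood $U_C$ of $Y_q$ in $Y$, which requires combining the Nash tubular neighborhood theorem with a careful shrinking argument that respects the semi-algebraic structure. A secondary subtlety is the ``midpoint adaptation'' when endpoint functions are infinite on a stratum but become finite after passing to a neighboring stratum; handling this uniformly may require refining the stratification further so that the asymptotic type of $(\alpha_C,\beta_C)$ (bounded above/below or not) is constant on each stratum.
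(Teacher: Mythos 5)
Your proposal is correct and follows essentially the same route as the paper's proof: a Hardt stratified trivialization, a refinement making the strata and the section data Nash, extension of the Nash sections from the strata to open subsets of $Y$, and the convexity of the interval fibers to verify that the sets $F_{\nu,U}$ cover $X$. The only differences are implementational: the paper obtains the sections over each stratum by transporting a chosen point of each connected component of the Hardt fiber through the trivialization (avoiding your midpoint construction and its infinite-endpoint case analysis), and it extends them to open neighborhoods by invoking \cite[Proposition A.0.4]{SchNash} rather than building a Nash tubular neighborhood and retraction by hand.
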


\begin{proof}
By \Cref{Trivialization_of_semialgebraic_map}
We can choose a stratified trivialization 
\[
\xymatrix{X\ar^{\alpha}[r]\ar^{\phi}[d] & P\ar^{\psi}[d]\\
Y^{\beta}\ar[r] & Q
}
\]
For every $p\in P$, we have an isomorphism 
\[
X_{p}\simeq Y_{\psi(p)}\times F_{p}
\]
where $F$ a fiber of $\phi$ at a point of $X_{p}$. In particular,
$F$ is isomorphic to a finite union of open intervals. Hence, by
choosing a point in every connected component of $F_{p}$ we ontain
sections $\nu_{p,i}:Y_{\psi(p)}\to X_{p}$ such that \[
{\displaystyle \bigcup_{i}F_{Y_{\psi(p)},\nu_{p,i}}=X_{p}}.
\]
By \cite[Proposition A.0.3]{SchNash},
and \Cref{Existance_of_classical_refinement}
we can refine $\beta$ to a proper stratification with Nash strata,
and such that the restriction of $\nu_{p,i}$ to every stratum of the
refinement is a Nash map. By
restricting the $\nu_{p,i}$-s to the strata of such a refinement,
we obtain a stratification $\beta':Y\to Q'$ with Nash strata and
for every $q\in Q'$ a set of sections 
\[
\nu_{q,i}:Y_{q}\to X\times_{Y}Y_{q}
\]
such that ${\displaystyle X=\bigcup_{i,q}F_{Y_{q},\nu_{q,i}}}$. By
\cite[Proposition A.0.4]{SchNash} there is a finite collection of open sets $U_{q,i,j}\subseteq Y$
such that ${\displaystyle Y_{q}\subseteq\bigcup_{i,q}U_{q,i,j}}$
and for every $j$ the section $\nu_{q,i,j}|_{U_{q,i,j}\cap Y_{q}}$
extends to a Nash section 
\[
\widetilde{\nu}_{q,i,j}:U_{q.i,j}\to X\times_{Y}U_{q,i,j}.
\]
By the construction, we now have 
\[
    {\displaystyle X=\bigcup_{q,i,j}F_{U_{q,i,j},\tilde{\nu}_{q,i,j}}}.
\] 
\end{proof}

We are ready to give the desired local model for Nash submersions of relative dimension $1$. 

\begin{thm}
\label{fam_int_base}
Let $\phi \colon X\to Y$ be a Nash submersion of relative dimension one. The open sets $U\subseteq X$ for which the restriction $\phi_U \colon U\to \phi(U)$ is a pointed family of intervals, form a basis for the restricted topology on $X$. 
\end{thm}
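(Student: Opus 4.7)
The plan is to combine the local structure of Nash submersions of relative dimension one, given by the Implicit Function Theorem, with the trivialization result of \Cref{Exist_Finite_Cover_With_Good_Sections} and the construction $(U,\nu)\mapsto F_{\nu,U}$. Given an arbitrary open $W\subseteq X$, I want to cover it by finitely many open subsets on which $\phi$ restricts to a pointed family of intervals.

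First, I invoke \Cref{Implicit_Function_Theorem} with $n=1$ to obtain a finite cover $X = \bigcup_k U_k$ such that each $\phi|_{U_k}$ is isomorphic to the composition of an open embedding $U_k \hookrightarrow Y\times\RR$ with the projection $Y\times\RR\to Y$. Setting $W_k := W\cap U_k$ gives a finite open cover of $W$, and each $W_k$ identifies with an open semi-algebraic subset of $Y\times\RR$ whose image in $Y$ is the open semi-algebraic set $\phi(W_k)$ (openness holds because $\phi$ is a submersion). In particular, $W_k$ surjects onto $\phi(W_k)$ through the projection.

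Next, I apply \Cref{Exist_Finite_Cover_With_Good_Sections} with the Nash manifold $Y$ replaced by $\phi(W_k)$ and the open subset $X$ replaced by $W_k\subseteq \phi(W_k)\times\RR$. This produces a finite open cover $\phi(W_k) = \bigcup_\ell V_{k,\ell}$ and Nash sections $\nu_{k,\ell}\colon V_{k,\ell}\to W_k$ such that
\[
    W_k \;=\; \bigcup_\ell F_{\nu_{k,\ell},\,\phi^{-1}(V_{k,\ell})}.
\]
By part (3) of \Cref{properties_of_the_partition_into_intervals}, each $F_{\nu_{k,\ell},\phi^{-1}(V_{k,\ell})}$ is open and, together with the section $\nu_{k,\ell}$, forms a pointed family of intervals over $V_{k,\ell}$; this family of intervals structure coincides with the restriction of $\phi$, since both are given by the projection $Y\times\RR\to Y$. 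Ranging over $k$ and $\ell$, this presents $W$ as a finite union of open subsets on which $\phi$ restricts to a pointed family of intervals, proving that such subsets form a basis for the restricted topology of $X$.

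The argument is essentially a packaging of the prior results, so no genuinely new difficulty arises. The one point requiring care is the applicability of \Cref{Exist_Finite_Cover_With_Good_Sections}: that proposition constructs sections of the projection, which forces every fiber over the base to be nonempty. This is why I must replace $Y$ with the open semi-algebraic subset $\phi(W_k)$—which is a Nash manifold in its own right—rather than applying the proposition directly to $Y$.
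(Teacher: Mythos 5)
Your proof is correct and takes essentially the same route as the paper: reduce via the Implicit Function Theorem to the projection $Y\times\RR\to Y$, then apply \Cref{Exist_Finite_Cover_With_Good_Sections} and \Cref{properties_of_the_partition_into_intervals}(3). The one refinement you make is real: as stated, \Cref{Exist_Finite_Cover_With_Good_Sections} implicitly requires the projection to be surjective (its conclusion produces sections on a cover of all of $Y$), and the paper's proof applies it with base $Y$ without comment, whereas your replacement of $Y$ by the open Nash submanifold $\phi(W_k)$ makes the application literally correct; this is a minor but legitimate tightening rather than a different argument.
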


\begin{proof}
The statement is local over $X$, and hence, by the Implicit Function Theorem (\Cref{Implicit_Function_Theorem}), we may assume that $\phi$ is a composition of an open embedding $X \into Y\times \RR$, followed by the projection $Y\times \RR \to Y$. Moreover, replacing $X$ with an open subset if necessary, it suffices to show that $X$ itself can be covered by open sets, which are pointed families of intervals over their projections into $Y$. 
By  \Cref{Exist_Finite_Cover_With_Good_Sections}, we can
find a finite cover ${\displaystyle Y=\bigcup_{i=1}^{n}U_{i}}$ and
sections $\nu_{i}:U_{i}\to X$ such that \[
{\displaystyle X=\bigcup_{i=1}^{n}F_{\nu_{i},\phi^{-1}(U_{i})}}.
\]
The result now follows from the fact that $F_{\nu_{i},\phi^{-1}(U_{i})}$
is a pointed family of intervals over $U$ (see \Cref{properties_of_the_partition_into_intervals}(3)).
\end{proof}

\section{\texorpdfstring{$\infty$-}
{Infinity}-Topoi}
\label{sec:infty_top}
After we introduced the geometric objects that we study, we now turn to discuss sheaves on them. We develop the theory in the abstract framework of $\infty$-topoi.
Topos theory can be seen as a ``coordinate-free'' way to study sheaf theory on sites. Similarly, $\infty$-topoi provide a coordinate-free framework for studying sheaves valued in an $\infty$-category. We first remind the reader what $\infty$-categories are. 

$\infty$-categories are a generalization of the notion of a category in which one may have homotopies between morphisms. 
Our main source for $\infty$-category theory is \cite{HTT}.
In particular, we use the model of \tdef{quasi-categories}, introduced by Joyal (see \cite[\S 1.4]{JoyalQuas}).
The role of the category of sets for classical category theory is played by the $\infty$-category of spaces in $\infty$-category theory. This can be seen, for example, as the quasi-category of simplicial sets satisfying the Kan condition (see \cite[\S 1.3]{JoyalQuas}). We denote the $\infty$-category of spaces by $\mdef{\Spaces}$. 

An $\infty$-category $\cC$ has an underlying space of objects $\mdef{\cC^\simeq} \in \Spaces$. Moreover, every pair of objects $X,Y$ in $\cC$, has a  space of morphisms  
$\mdef{\Map_{\cC}(X,Y)}\in\Spaces$. On one hand, one can work with $\infty$-categories as with ordinary categories, e.g. we can define limits and colimits (\cite[\S4]{HTT}), adjunctions between functors (\cite{nguyen2020adjoint}), presentable $\infty$-categories (\cite[\S5]{HTT}), symmetric monoidal structures (\cite[\S2]{HA}) and so on. On the other hand, the possibility of higher homotopies inside mapping spaces allows one to model derived phenomena within this theory without referring to an abelian category or ordinary functors in the process. 

With this in mind,
recall the notion of an $\infty$-topos.
\begin{defn}(\cite[Definition 6.1.0.4]{HTT})
An \tdef{$\infty$-topos} is a presentable $\infty$-category which is a left exact localization of a presheaf category.
A \tdef{geometric morphism} $\phi\colon \fX\to\fY$ of $\infty$-topoi is an adjunction $\phi^{*}\colon\fY\adj\fX\colon\phi_{*}$
for which $\phi^{*}$ is left exact. 
\end{defn}
Let $\Prl$ be the $\infty$-category of presentable $\infty$-categories and left adjoint functors beween them. We denote by $\mdef{\RTop}\subseteq\Prl^{\op}$ the (non-full) subcategory of
$\infty$-topoi with geometric morphisms between them.
For $\phi \colon \fX \to \fY \in \RTop$, we refer to $\phi_{*}$ as the functor of \tdef{pushforward} and $\phi^{*}$ as the functor of \tdef{pullback}
along the geometric morphism $\phi$.
The following example of $\infty$-topoi is essentially the only one we shall use.
\begin{example} \label{sheaves_on_sites}
    Let $\cC$ be an ($\infty$-)category endowed with a Grothendieck topology  and let 
    \[
        \mdef{\Shv(\cC)}\subseteq \PShv(\cC)
    \] 
    be the full subcategory spanned by the presheaves $\sF\colon \cC^\op \to \Spaces$ satisfying the following property 
    \begin{itemize}
        \item[(*)] For every object $U\in \cC$ and every covering $\cU = \{U_i\}_{i\in I}$ of $U$ in $\cC$, the canonical map 
        \[
            \sF(U)\to      \invlim\left(\xymatrix{\prod\limits_{i\in I}\sF(U_i)\ar@<0.5ex>[r] \ar@<-0.5ex>[r] &  \prod\limits_{(i,j)\in I^2} \sF(U_i\times_U U_j)\ar@<1ex>[r] \ar[r] \ar@<-1ex>[r] & \dots}\right)
        \]
        is an isomorphism.
    \end{itemize}
    The $\infty$-category $\Shv(\cC)$ is an $\infty$-topos, that we refer to as the $\infty$-topos of \tdef{sheaves} on $\cC$. 
\end{example}
The fully-faithful embedding $\Shv(\cC)\into \PShv(\cC)$ admits a left exact left adjoint $L_\cC\colon \PShv(\cC) \to \Shv(\cC)$, known as the \tdef{sheafification functor}. The next example will be used in the theory of locally constant sheaves. 
\begin{example} 
For a space $A\in \Spaces$, the $\infty$-category $\Spaces_{/A}\simeq \Fun(A,\Spaces)$ is an $\infty$-topos that we denote again by $\Shv(A)$. 
\end{example}
For every $\infty$-topos $\fX$ there is a unique\footnote{Here, and in the rest of this note, uniqueness is always ``up to contractible space of choices".} geometric morphism $\mdef{\Gamma} \colon \fX \to \Spaces,$
which we refer to as the geometric morphism of \tdef{global sections}.
Another important example of geometric morphisms is the following.
\begin{defn}
(\cite[\S 6.3.5]{HTT}) An \tdef{\'{e}tale} geometric morphism is a geometric morphism
of the form $j\colon\fX_{/U}\to\fX$, for
which $j^{*}$ is the right adjoint of the forgetful functor $j_{!}\colon\fX_{/U}\to\fX$. 
We say that the \'{e}tale geometric morphism $j$ is an \tdef{open embedding} if $j_*$ is fully faithful, or equivalently, if $U$ is $(-1)$-truncated in $\fX$. In this case, we refer to $\fX_{/U}$ as an \tdef{open subtopos} of $\fX$.  
\end{defn}

For an open subtopos $\fX_{/U}\subseteq \fX$, we denote by $\mdef{\fX\setminus U} \subseteq \fX$ the full subcategory spanned by the objects $V\in \fX$ for which the projection $V\times U \to U$ is an isomorphism. Equivalently, these are the objects for which $j^*V\simeq \term \in \fX_{/U}$, where $j\colon \fX_{/U}\to \fX$ the \'{e}tale geometric morphism associated with $U$. 

The fully faithful embedding $\fX\setminus U\into \fX$ is the pushforward along a geometric morphism of $\infty$-topoi  (see \cite[Proposition 7.3.2.3]{HTT}), that we refer to as the \tdef{closed complement} of $U$ in $\fX$. A geometric morphism of the form $\fX\setminus U\to \fX$ is called a \tdef{closed immersion} of $\infty$-topoi. 

As for classical sheaves on a topological space, an $\infty$-topos is glued from an open subtopos and its closed complement. 
To make this statement precise, we recall the notion of recollement (or gluing) of $\infty$-categories.

\begin{defn}
    Let $\cC$ be an $\infty$-category which admits finite limits. A pair of left exact localizations $j^*\colon \cC \to \cC_U$ and $i^*\colon \cC \to \cC_Z$ with (fully faithful) right adjoints $i_*$ and $j_*$ exhibit $\cC$ as a \tdef{recollement} of $\cC_U$ and $\cC_Z$ if $j^*$ and $i^*$ are jointly conservative and the functor $j^*i_*\colon \cC_Z \to \cC_U$ takes every object to the final object.
\end{defn}

Intuitively, the categorical notion of recollement expresses the idea that $\cC$ decomposes into an open stratum, $\cC_U$, and a closed stratum, $\cC_Z$. The following justifies this intuition. 

\begin{prop} \label{top_rec}
    Let $\fX$ be an $\infty$-topos and $U\in \fX$ be a $(-1)$-truncated object of $\fX$. Then $\fX$ is a recollement of $\fX_{/U}$ and $\fX\setminus U$.
\end{prop}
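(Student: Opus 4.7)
The strategy is to verify the four conditions defining a recollement: $j^*, i^*$ are left exact localizations with fully faithful right adjoints $j_*, i_*$; the vanishing $j^* i_* V \simeq \term$ for $V \in \fX \setminus U$; and joint conservativity of $j^*, i^*$. The first three are essentially immediate: left exactness of $j^*$ and $i^*$ is part of the data of the geometric morphisms $j$ and $i$; $j_*$ is fully faithful by the open embedding hypothesis on $U$; $i_*$ is the inclusion $\fX\setminus U \hookrightarrow \fX$ by construction; and $j^* i_* V = V \times U \to U$ is an isomorphism (hence terminal in $\fX_{/U}$) precisely by the definition of $\fX \setminus U$.

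For joint conservativity, I plan to first establish the pushout formula
\[
i^* X \simeq X \sqcup_{X \times U} U
\]
in $\fX$, where the top map is the projection to $U$ and the left map is the projection to $X$. Verifying that this object lies in $\fX \setminus U$ uses $U \times U \simeq U$ (as $U$ is $(-1)$-truncated) together with universality of colimits in an $\infty$-topos: applying $-\times U$ to the pushout diagram produces $U$. The adjunction $i^* \dashv i_*$ then reduces, via the mapping-out formula for pushouts, to the observation that $\Map_\fX(W, V)$ is contractible for any $W$ admitting a map to $U$ and any $V \in \fX \setminus U$, which is a direct consequence of $V \times U \simeq U$.

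Joint conservativity itself follows from descent. Since $X \times U \to X$ is a monomorphism (being a base change of the mono $U \hookrightarrow \term$), the pushout square defining $i^* X$ is bicartesian in $\fX$, and descent in an $\infty$-topos yields an equivalence
\[
\fX_{/i^* X} \simeq \fX_{/X} \times_{\fX_{/X \times U}} \fX_{/U}.
\]
Given $f \colon X \to Y$ with $j^* f$ and $i^* f$ isomorphisms, this equivalence (applied over the induced identification $i^* X \simeq i^* Y$) reduces the isomorphism question for $f$ to the compatible components on the three vertices $X, X \times U, U$, which are provided by $f$, $j^* f$, and the identity respectively. The main technical hurdle is making this descent step precise; the $\infty$-topos inputs are universality of colimits and the behavior of pushouts along monomorphisms, both consequences of the Giraud-type axioms developed in \cite[\S 6.1]{HTT}.
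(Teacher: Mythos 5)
Your handling of the first three recollement axioms matches the paper's, which dispatches them as holding ``by definition.'' Where you genuinely diverge is the joint conservativity of $i^*$ and $j^*$: the paper simply cites \cite[Lemma A.5.11]{HA}, whereas you attempt a self-contained proof. Your preliminary steps there are all correct: the formula $i^*X \simeq X\sqcup_{X\times U}U$, the verification via universality of colimits that this lands in $\fX\setminus U$, the adjunction (using that $\Map_\fX(W,V)\simeq \term$ whenever $W$ maps to $U$ and $V\in\fX\setminus U$), and the fact that the pushout square is also a pullback because $X\times U\to X$ is a monomorphism (a consequence of van Kampen descent).

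The gap is in the final descent step. Descent for the pushout $i^*Y\simeq Y\sqcup_{Y\times U}U$ does give $\fX_{/i^*Y}\simeq \fX_{/Y}\times_{\fX_{/Y\times U}}\fX_{/U}$, and hence a map over $i^*Y$ is an equivalence iff its base changes to $Y$ and to $U$ are. Applying this to $f\colon X\to Y$ (regarded as a map over $i^*Y$ via $i^*X\simeq i^*Y$), the component over $U$ is indeed $j^*f$, by bicartesianness. But the component over $Y$ is \emph{not} $f$: it is $f\times_{i^*Y}\mathrm{id}_Y\colon X\times_{i^*Y}Y\to Y\times_{i^*Y}Y$, the base change of $f$ along the self-intersection $Y\times_{i^*Y}Y\to Y$. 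Since $Y\to i^*Y$ is not a monomorphism in general (e.g.\ for $U=\term$ one has $i^*Y=\term$ and $Y\times_{i^*Y}Y = Y\times Y$), this component does not reduce to $f$ or to the identity. Worse, one computes that $i^*$ and $j^*$ of this base-changed map are again equivalences (using left exactness of $i^*$ and $(A\times U)\times_U(B\times U)\simeq A\times B\times U$), so the descent step reduces the original conservativity question to an instance of itself: the argument is circular as stated. This irreducible step is exactly the content of \cite[Lemma A.5.11]{HA}, and your sketch does not supply the missing idea; either cite that lemma as the paper does, or prove (for instance) that the unit square $X\to i_*i^*X$, $X\to j_*j^*X$ over $i_*i^*j_*j^*X$ is cartesian, which is what a genuinely self-contained argument must establish.
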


\begin{proof}
    Let $i\colon \fX\setminus U \to \fX$ and $j\colon \fX_{/U}\to \fX$ denote the canonical geometric morphisms. Then, by definition, the functors $i^*$ and $j^*$ are left exact and $j^*i_*$ is terminal. Hence, it would suffice to show that $i^*$ and $j^*$ are jointly conservative. This is \cite[Lemma A.5.11]{HA}.
\end{proof}

\subsubsection{Sheaves on $\infty$-Topoi}
The prototypical example of an $\infty$-topos is that of sheaves of \emph{spaces} on a Grothendieck site. However, the language of $\infty$-topoi allows us to model sheaves with general coefficients. 
Recall from \cite[\S4.8]{HA} that the $\infty$-category $\Prl$ of presentable $\infty$-categories admits a symmetric monoidal structure, called the \tdef{Lurie tensor product}, for which 
\[
    \cC \otimes \cD \simeq \Fun^R(\cC^\op,\cD).
\]
Here, $\Fun^R(\cC^\op,\cD)\subseteq \Fun(\cC^\op,\cD)$ is the full subcategory spanned by the functors which are right adjoints, or equivalently, accessible and limit-preserving.
 
\begin{defn}
    Let $\fX$ be an $\infty$-topos and $\cD\in \Prl$. We define the $\infty$-category of \tdef{$\cD$-valued sheaves} on $\fX$ by 
\[
    \mdef{\Shv(\fX,\cD)}:=\fX \otimes \cD\simeq \Fun^R(\fX^\op,\cD).
\]
\end{defn}
In other words, a $\cD$-valued sheaf on $\fX$ is a limit-preserving accessible functor $\sF\colon \fX^\op \to \cD$.

\begin{rmk}
    Though the definition of $\Shv(\fX,\cD)$ makes sense for arbitrary presentable $\infty$-category $\cD$, it turns out that the theory of sheaves behaves nicer when restricted to compactly generated coefficients. Since this generality suffices for our needs, we shall assume that $\cD$ is compactly generated whenever convenient. 
\end{rmk}

A geometric morphism $\phi\colon \fX \to \fY$ gives an adjunction 
\[
    \Shv(\fY,\cD)\adj \Shv(\fX,\cD).
\]
We shall abuse notation and denote this adjunction again by $\phi^*\dashv \phi_*$.
 
\begin{example}
    Let $\cC$ be an ($\infty$-)category endowed with a Grothendieck topology, and let $\fX = \Shv(\cC)$. Then, the sheafification functor $L_\cC\colon \PShv(\cC) \to \Shv(\cC)$ induces a localization   
    \[
        \Fun(\cC^\op,\cD)\simeq \PShv(\cC)\otimes\cD \oto{L_\cC \otimes \Id_\cD} \fX \otimes \cD \simeq \Shv(\fX,\cD). 
    \]
        Via this localization, we can identify $\Shv(\fX,\cD)$  with the full subcategory of $\Fun(\cC^\op,\cD)$ spanned by the functors satisfying the same descent condition as in \Cref{sheaves_on_sites}. 
\end{example}

In general, the fact that the $\infty$-category $\Shv(\fX,\cD)$ is expressible as the tensor product $\fX\otimes\cD$ allows promoting many properties of $\infty$-topoi to properties of sheaf categories with more general coefficients. As a first example, we can define the tensor product of sheaves using the cartesian product in $\infty$-topoi as follows.

For a symmetric monoidal $\infty$-category $\cC$, we denote by $\Calg(\cC)$ the $\infty$-category of commutative (a.k.a $\EE_\infty$) algebras in $\cC$. For $\cC = \Prl$, we can identify $\Calg(\Prl)$ with the $\infty$-category of presentable symmetric monoidal $\infty$-categories in which the tensor product distributes over all colimits. Since colimits in an $\infty$-topos are universal (\cite[Theorem 6.1.0.6]{HTT}), an $\infty$-topos can be regarded as an object of $\Calg(\Prl)$ via the symmetric monoidal structure given by cartesian products.  

Since the tensor product in $\Prl$ induces the coproduct in $\Calg(\Prl)$, for $\cD\in \Calg(\Prl)$ we can endow $\Shv(\fX,\cD)\simeq\fX\otimes\cD$ with a canonical symmetric monoidal structure. For $\sF,\sF'\in \Shv(\fX,\cD)$ their tensor product $\sF\otimes \sF'$ is the sheafification of the presheaf given by $U\mapsto\sF(U)\otimes \sF'(U)$.  

Pullbacks along geometric
morphisms are left exact and hence canonically refine to symmetric monoidal functors. More precisely, we obtain a functor
\[
\Shv(-,-)\colon \RTop^{\op}\times\Calg(\Prl)\to\Calg(\Prl).
\]

Another application of the Lurie tensor product in sheaf theory is the extension of  \Cref{top_rec} to sheaves with more general coefficients.

\begin{lem}
    \label{rec_ten}
    Let $\cC\in \Prl$, and let $i^*\colon \cC\to \cC_Z$ and $j^*\colon \cC \to \cC_U$ be a pair of accessible localizations which exhibit $\cC$ as a recollement of $\cC_U$ and $\cC_Z$. For every compactly generated $\infty$-category $\cD$, the pair of localizations $i^*\otimes \Id_\cD$ and $j^*\otimes \Id_\cD$ exhibit $\cC\otimes \cD$ as a recollement of $\cC_U \otimes \cD$ and $\cC_Z \otimes \cD$. 
\end{lem}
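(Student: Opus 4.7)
The plan is to verify the three defining properties of a recollement for the tensored localizations $J^{*} := j^{*}\otimes\Id_{\cD}$ and $I^{*} := i^{*}\otimes\Id_{\cD}$: both should be left exact localizations with fully faithful right adjoints, they should be jointly conservative, and $J^{*}\circ I_{*}$ should send every object to the terminal object of $\cC_{U}\otimes\cD$.

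The main input is a concrete model for $\cC\otimes\cD$ using compact generation. Writing $\cD\simeq\mathrm{Ind}(\cD_{0})$ for a small full subcategory $\cD_{0}\subseteq\cD$ of compact objects (closed under finite colimits inside $\cD$), one has a natural equivalence
\[
\cC\otimes\cD\simeq\Fun^{\mathrm{lex}}(\cD_{0}^{\op},\cC)
\]
for every presentable $\cC$, where $\Fun^{\mathrm{lex}}$ denotes functors preserving finite limits. This is the ``$\cC$-enriched Ind-completion'' of $\cD_{0}$, and is a standard computation of the Lurie tensor product (cf.\ Lurie, \emph{Higher Algebra}, \S4.8.1). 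Crucially, the equivalence is natural in $\cC\in\Prl$, so for any left adjoint $L\colon\cC\to\cC'$ the functor $L\otimes\Id_{\cD}$ is identified with postcomposition $L\circ(-)$.

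Under this identification, $J^{*}$ and $I^{*}$ become postcomposition with $j^{*}$ and $i^{*}$. The key observation is that limits, mapping spaces, and therefore equivalences in $\Fun^{\mathrm{lex}}(\cD_{0}^{\op},-)$ are computed pointwise, so each recollement axiom reduces to its pointwise analogue and follows from the hypotheses on $(j^{*},i^{*})$. Explicitly: left exactness of postcomposition with $j^{*}$ (resp.\ $i^{*}$) is pointwise; the right adjoint is postcomposition with $j_{*}$ (resp.\ $i_{*}$), which lands in lex functors since right adjoints preserve finite limits, and is fully faithful because pointwise fully faithfulness of $j_{*}$, $i_{*}$ passes to the functor category via the end formula for mapping spaces; joint conservativity is immediate from the pointwise case; and $J^{*}I_{*}$ is postcomposition with $j^{*}i_{*}$, which takes every object to the terminal, so the composite lands in the constant terminal functor, which is the terminal object of $\Fun^{\mathrm{lex}}(\cD_{0}^{\op},\cC_{U})$.

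The main technical input is the identification $\cC\otimes\cD\simeq\Fun^{\mathrm{lex}}(\cD_{0}^{\op},\cC)$ together with its functoriality in $\cC$; given this, the verification of the recollement axioms is a purely formal pointwise check. An alternative route that avoids the lex-functor presentation is to write $\cD$ as a reflective localization of a presheaf $\infty$-category and propagate the recollement structure through the corresponding localization of $\Fun(\cD_{0}^{\op},\cC)$, but the lex-functor viewpoint seems cleanest.
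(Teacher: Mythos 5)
Your proposal is correct and follows essentially the same route as the paper: both identify $\cC\otimes\cD$ with $\Fun^{\lex}(\cD_0^{\op},\cC)$ via compact generation, identify the tensored localizations with postcomposition by $i^*$ and $j^*$ (using their left exactness), and then verify the recollement axioms pointwise, with the right adjoints given by postcomposition with $i_*$ and $j_*$. The only difference is that you spell out a few of the pointwise verifications in slightly more detail than the paper does.
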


\begin{proof}
    Since $\cD$ is compactly generated, we have $\cD \simeq \Ind(\cD_0)$ for a small $\infty$-category $\cD_0$ which admits finite colimits. By \cite[Proposition 2.3.2]{Edodi}, we can identify the functor of tensoring with $\cD$ with the functor $\Fun^\lex(\cD_0^\op,-)$ of left exact functors from $\cD^\op$. Since $i^*$ and $j^*$ are left exact, via this identification, the functors $i^*\otimes \Id_{\cD}$ and $j^*\otimes \Id_{\cD}$ identify with post-composing with $i^*$ and $j^*$ respectively, so that they are jointly conservative and left exact. Moreover, since the right adjoint of $i^*\otimes \Id_\cD$ is given by post-composition with $i_*$, we immediately get that the composition of $j^*\otimes \Id_\cD$ with the right adjoint of $i^*\otimes \Id_\cD$ maps every object to the final object.  
\end{proof}

As a result, we obtain 
\begin{corl}\label{shv_rec}
    Let $\fX$ be an $\infty$-topos, and let $\cD$ be a compactly generated $\infty$-category. For every $(-1)$-truncated object $U\in \fX$, the $\infty$-category $\Shv(\fX,\cD)$ is a recollement of $\Shv(\fX_{/U},\cD)$ and $\Shv(\fX\backslash U,\cD)$.   
\end{corl}

\begin{proof}
    By \cref{top_rec}, $\fX$ is a recollement of $\fX_{/U}$ and $\fX\setminus U$. Hence, the result follows from \Cref{rec_ten}.
\end{proof}

The property of an $\infty$-category $\cC$ being a recollement of $\cC_U$ and $\cC_Z$ has several consequences which will be helpful for us in the context of constructible sheaves, so we record them now.

\begin{prop} \label{rec_props}
    Let $\cC\in \Prl$ and let 
    $i^*\colon \cC \to \cC_U$ and $j^*\colon \cC \to \cC_Z$ be accessible left exact localizations which exhibit $\cC$ as a recollement of $\cC_U$ and $\cC_Z$. Then 
\begin{itemize}
    \item The functor $j^*$ admits a left adjoint $j_!\colon \cC_U \to \cC$, and the composition $i^*j_!$ is the initial functor.
    \item If $\cC$ is pointed, the functor $i_*$ admits a right adjoint $i^!\colon \cC \to \cC_Z$.  Moreover, we have (co)fiber sequences
    \[
        j_!j^* \to \Id_{\cC} \to i_*i^*
    \]
    and 
    \[
        i_*i^! \to \Id_{\cC} \to j_*j^*. 
    \]
\end{itemize}
\end{prop}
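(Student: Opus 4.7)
The plan is to use the presentable setting together with the recollement hypotheses to produce the missing adjoints and then to read off the (co)fiber sequences by testing on both localizations and invoking joint conservativity.

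For the existence of $j_!$, I would show that $j^*$ preserves all small limits and then invoke the adjoint functor theorem (accessibility of $j^*$ is automatic, as a left adjoint). Left exactness gives finite-limit preservation; to extend to arbitrary small limits, use that the essential image $j_*(\cC_U) \subseteq \cC$ is closed under limits (since $j_*$ preserves them) together with the reflective universal property. Concretely, for a diagram $X_\bullet$ in $\cC$, both $j_*j^*\lim X_\bullet$ and $\lim j_*j^* X_\bullet$ lie in $j_*(\cC_U)$, and both represent the same functor $Y \mapsto \lim_\alpha \Map(X_\alpha, Y)$ on $j_*(\cC_U)$ (via the reflective adjunction), so Yoneda inside $j_*(\cC_U)$ identifies them; applying $j^*$ and using $j^*j_* \simeq \Id$ together with limit-preservation of $j_*$ yields $j^*\lim X_\bullet \simeq \lim j^*X_\bullet$. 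The identification of $i^*j_!Y$ with the initial object of $\cC_Z$ then drops out of the chain $\Map(i^*j_!Y, Z) \simeq \Map(Y, j^*i_*Z) \simeq \Map(Y, \term) \simeq \ast$ for every $Z \in \cC_Z$.

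For the pointed case, $i^!$ is constructed by a dual argument. Verify that $i_*$ preserves small colimits by checking the canonical comparison map after $i^*$ (where $i^*i_* \simeq \Id$ gives an equivalence) and after $j^*$ (where $j^*i_* \simeq 0$ in the pointed setting makes both sides vanish), and conclude via joint conservativity; the adjoint functor theorem then produces $i^!$.

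For the two (co)fiber sequences, the strategy is to detect equivalences on $j^*$ and $i^*$ separately. Set $F := \mathrm{fib}(X \to i_*i^*X)$. Applying $i^*$ gives $\mathrm{fib}(i^*X \to i^*i_*i^*X) \simeq \mathrm{fib}(\Id) \simeq 0$, while applying $j^*$ gives $\mathrm{fib}(j^*X \to 0) \simeq j^*X$. The canonical comparison $j_!j^*X \to F$, adjoint to the identity $j^*X \to j^*F$, is then an equivalence on both localizations (using $i^*j_! \simeq 0$), hence an equivalence in $\cC$ by joint conservativity; this produces the first fiber sequence. The second sequence $i_*i^!X \to X \to j_*j^*X$ is obtained symmetrically, by computing the cofiber of $X \to j_*j^*X$ via its $i^*$ and $j^*$ values and invoking the universal property of $i^!$. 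The main technical obstacle is the step showing $j^*$ preserves all limits; finite-limit preservation does not suffice, and one must exploit the reflective structure precisely as indicated above.
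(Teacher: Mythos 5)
Your detection strategy for the (co)fiber sequences is essentially the paper's: one checks everything after applying the jointly conservative pair $i^*,j^*$ (which are left exact and colimit-preserving), using $i^*j_!\simeq\varnothing$ and the fully faithfulness of $i_*$ and $j_!$. Your arguments that $i^*j_!$ is initial and that $i_*$ preserves colimits (hence admits a right adjoint $i^!$ by the adjoint functor theorem, where the paper instead cites \cite[Remark A.8.5]{HA}) are also fine.

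The genuine gap is in your construction of $j_!$. First, the representability claim is incorrect as stated: for $Y\in j_*(\cC_U)$ one has $\Map(j_*j^*\lim X_\alpha,Y)\simeq\Map(\lim X_\alpha,Y)$, and mapping \emph{out} of a limit is not the limit of mapping spaces (that identity holds for colimits in the source, or for limits in the \emph{target}), so neither $j_*j^*\lim X_\alpha$ nor $\lim j_*j^*X_\alpha$ corepresents $Y\mapsto\lim_\alpha\Map(X_\alpha,Y)$. Second, and more fundamentally, no argument using only the reflective structure of $j^*\dashv j_*$ can succeed: a left exact accessible localization of a presentable $\infty$-category need not preserve infinite limits (sheafification $\PShv(\cC)\to\Shv(\cC)$ does not commute with infinite products in general), so it need not admit a left adjoint. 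The existence of $j_!$ genuinely uses the closed complement, i.e., the hypothesis that $j^*i_*$ is terminal, which your limit-preservation step never invokes. This is exactly why the paper delegates this point to \cite[Corollary A.8.13]{HA}; to argue directly one should, e.g., exhibit $j_!Y$ explicitly (as the object with $j^*j_!Y\simeq Y$ and $i^*j_!Y\simeq\varnothing$ furnished by the recollement decomposition) rather than try to apply the adjoint functor theorem to $j^*$. A minor further point: the proposition asserts the displayed sequences are simultaneously fiber and cofiber sequences; you only verify the fiber half of the first one, whereas the paper checks that the relevant square is both a pullback and a pushout — your same $i^*/j^*$ detection argument supplies the missing pushout claim, but it should be said.
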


\begin{proof}
    The existence of a left adjoint to $j^*$ follows from \cite[Corollary A.8.13]{HA}. The fact that $i^*j_!$ is initial follows from the assumption that its right adjoint, $j^*i_*$, is terminal. 
    If $\cC$ is pointed, the existence of a right adjoint to $i_*$ follows from \cite[Remark A.8.5]{HA}. 

    We now prove the existence of the required cofiber (and fiber) sequences.
    We consider the first sequence, as the proof for the second is identical.  

    Since $i^*j_!$ is the initial functor, the composition of the counit $j_!j^*\to \Id$ with the unit $\Id \to i_*i^*$ factors canonically through the zero object of $\cC$. Hence, it remains to show that the resulting square 
\[
    \xymatrix{
    j_! j^*X \ar[r]\ar[d] & X \ar[d] \\ 
    0 \ar[r]& i_*i^*X
    }
\]
 is both a pushout and a pullback square for every $X\in \cC$. This property can be checked after applying the jointly conservative, left exact, colimit preserving functors $i^*$ and $j^*$. After applying $i^*$ both vertical maps become isomorphisms, and after applying $j^*$ both horizontal maps become isomorphisms, so the resulting squares are indeed pushout and pullback squares. 
\end{proof}

\subsection{Essential Geometric Morphisms}

By definition, the pullback along a geometric morphism admits a right adjoint. However, it usually fails to preserve infinite limits and hence does not admit a left adjoint. This fact motivates the following

\begin{defn} \label{def:essential_geo_mor}
Let $\phi \colon \fX \to \fY$ be a geometric morphism. We say that $\phi$ is \tdef{essential} if the pullback functor $\phi^*\colon \fY \to \fX$ admits a left adjoint $\phi_\sharp \colon \fX \to \fY$. 
\end{defn}

\begin{example}
The terminal geometric morphism $\Gamma\colon \fX \to \Spaces$ is essential if and only if $\fX$ is locally of constant shape, in the sense of \cite[Definition A.1.1]{HA}. We shall say that $\fX$ is essential in this case.
\end{example}

\begin{example}\label{etale_then_essential} 
    An \etale geometric morphism is essential.
    Moreover, if $j\colon\fX_{/U}\to\fX$ is
    the \etale geometric morphism associated with $U\in \fX$, then $j_{\sharp}$ coicides with the forgetful functor 
    $j_! \colon \fX_{/U} \to \fX$. 
\end{example}

Recall that, for an $\infty$-category $\cC$ which admits finite limits, the pro-category of $\cC$ is defined by 
\[
    \mdef{\Pro(\cC)}:=\Ind(\cC^\op)^\op\simeq\Fun^\lex(\cC,\Spaces)^\op.
\]
For a general geometric morphism $\phi\colon \fX \to \fY$, even if $\phi^*$ does not admit a left adjoint, it do so after passing to $\Pro$-categories (see, e.g., the discussion after  \cite[Remark 2.2]{hoyois2018higher}). We shall abuse notation and denote the left adjoint of 
\[
\Pro(\phi^*)\colon \Pro(\fY) \to \Pro(\fX)
\] 
again by $\phi_\sharp$.

Intuitively, the functor $\phi_\sharp$ models the homotopy type of the fibers of $\phi$. This idea is formalized using  (relative) \emph{shape theory}.

\begin{defn}
\label{shape} (Compare \cite[\S A.1.1]{HA}) Let $\fX$ be an $\infty$-topos
and let $\phi\colon\fX\to\fY$ be a geometric morphism.
Let $\Gamma\colon\fX\to\Spaces$ be the terminal geometric
morphism. The \tdef{shape} of $\phi$ is the pro-object  
\[
\mdef{|\phi|}:= \Gamma_{*}\phi^{*}\qin \Pro(\fY).
\]
\end{defn}
We say that $\phi$ is of \tdef{constant shape} if $|\phi|$ is pro-constant, i.e., if it is co-representable by an object of $\fY$. 

Note that for every geometric morphism $\phi\colon \fX\to \fY$, we have $
|\phi| \simeq \phi_\sharp (\term_\fX)$ in $\Pro(\fY)$.
For the terminal geometric morphism $\Gamma\colon \fX\to \Spaces$, we denote $\mdef{|\fX|}:=|\Gamma|$ and say that $\fX$ is of constant shape if $\Gamma$ is.

The formation of shape provides a functor $|-|\colon \RTop_{/\fY}\to\Pro(\fY)$. We shall use its functoriality on the level of homotopy categories, which can be described as follows.
For a commutative diagram 
\[
\xymatrix{\fX_{0}\ar^{\psi}[rr]\ar^{\alpha}[rd] &  & \fX_{1}\ar^{\beta}[ld]\\
 & \fY
}
\]
in $\RTop$, we have a unit natural transformation
\[
\beta_{*}\beta^{*}\to\beta_{*}\psi_{*}\psi^{*}\beta^{*}\simeq\alpha_{*}\alpha^{*}
\]
Post-composing with $\Gamma_{*}\colon\fY\to\Spaces$ we get
a map 
$|\alpha|\to|\beta|$ in $\Pro(\fY)\subseteq \Fun(\fY,\Spaces)^\op$. 

Using the language of shape theory, we can now roughly describe essential geometric morphisms as those which, locally over the source, have constant shape. 
More presicely, we have
\begin{prop}
\label{essential_locally_constant_shape}
Let $\phi \colon \fX \to \fY$ be a geometric morphism and let $\fB\subseteq \fX$ be a full subcategory which generates $\fX$ under small colimits. The geometric morphism $\phi$ is essential if and only if, for every $U\in \fB$, the composition
\[
\fX_{/U} \to \fX \oto{\phi} \fY
\]
is of constant shape.
\end{prop}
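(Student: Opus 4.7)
The plan is to handle the two directions separately. The ``only if'' direction is immediate from the essentialness of étale geometric morphisms. The ``if'' direction is reduced to showing that $\phi^*$ preserves small limits, which is then verified by testing against the generator $\fB$, so that the adjoint functor theorem for presentable $\infty$-categories produces the required left adjoint.

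For the forward direction, assume $\phi$ is essential with left adjoint $\phi_\sharp$. For any $U\in\fX$, the étale geometric morphism $j_U\colon\fX_{/U}\to\fX$ is essential with $j_{U!}$ as its left adjoint by \Cref{etale_then_essential}. Hence $\phi\circ j_U$ is essential with left adjoint $\phi_\sharp\circ j_{U!}$, and its shape is the constant pro-object at $\phi_\sharp j_{U!}(\term_{\fX_{/U}})\simeq\phi_\sharp(U)\in\fY$.

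For the reverse direction, I first unwind the hypothesis concretely. The computation
\[
|\phi\circ j_U|(Y)\simeq\Map_{\fX_{/U}}(\term_{\fX_{/U}},j_U^*\phi^*Y)\simeq\Map_\fX(U,\phi^*Y),
\]
whose final equivalence identifies sections of $U\times\phi^*Y\to U$ with maps $U\to\phi^*Y$, shows that for each $U\in\fB$ the constant-shape hypothesis provides $\phi_\sharp U\in\fY$ together with a natural equivalence $\Map_\fY(\phi_\sharp U,-)\simeq\Map_\fX(U,\phi^*-)$. I then show that $\phi^*$ preserves small limits. Fixing a diagram $\{Y_i\}_{i\in I}$ in $\fY$ and the canonical comparison $\alpha\colon\phi^*(\lim Y_i)\to\lim\phi^*(Y_i)$, for every $U\in\fB$ the chain
\[
\Map_\fX(U,\phi^*(\lim Y_i))\simeq\Map_\fY(\phi_\sharp U,\lim Y_i)\simeq\lim\Map_\fY(\phi_\sharp U,Y_i)\simeq\lim\Map_\fX(U,\phi^*Y_i)
\]
is compatible with $\alpha$, so $\Map_\fX(U,\alpha)$ is an equivalence. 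Since $\fB$ generates $\fX$ under small colimits, the family $\{\Map_\fX(U,-)\}_{U\in\fB}$ is jointly conservative, so $\alpha$ itself is an equivalence. Combined with the accessibility of $\phi^*$ (automatic as a left adjoint between presentable $\infty$-categories), the adjoint functor theorem furnishes the desired left adjoint $\phi_\sharp$ to $\phi^*$.

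The main point requiring care is the joint conservativity of representables coming from a colimit-generator. This is a standard $\infty$-categorical fact: if each $\Map_\fX(U,-)$ with $U\in\fB$ sends $f\colon A\to B$ to an equivalence, then the presheaves $\Map_\fX(-,A)$ and $\Map_\fX(-,B)$ on $\fX^{\op}$ are limit-preserving and agree on $\fB$, hence on every object of $\fX$ --- each being a colimit of $\fB$-objects --- so $f$ is an equivalence by the Yoneda lemma. With this ingredient in hand, the remaining steps are essentially formal.
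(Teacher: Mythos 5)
Your argument is correct, but it follows a genuinely different route from the paper's. The paper exploits the fact (recorded just before \Cref{shape}) that $\Pro(\phi^*)$ always admits a left adjoint $\phi_\sharp\colon \Pro(\fX)\to\Pro(\fY)$: the constant-shape hypothesis says exactly that $\phi_\sharp$ carries each $U\in\fB$ into $\fY\subseteq\Pro(\fY)$, and since $\phi_\sharp$ preserves colimits, $\fY$ is closed under small colimits in $\Pro(\fY)$, and $\fB$ generates $\fX$ under colimits, the functor restricts to $\fX\to\fY$. You instead bypass pro-categories entirely: you unwind constant shape into corepresentability of $Y\mapsto\Map_\fX(U,\phi^*Y)$, deduce that $\phi^*$ preserves small limits by testing against the jointly conservative family $\{\Map_\fX(U,-)\}_{U\in\fB}$, and conclude via the adjoint functor theorem. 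Both are sound; the paper's version is shorter given the pro-categorical setup it has already paid for, while yours is more self-contained and makes the corepresentability content of "constant shape" explicit. Two small points of hygiene in your write-up: the phrase "each being a colimit of $\fB$-objects" should really be "the full subcategory on which the two presheaves agree is closed under colimits and contains $\fB$," since generation under colimits permits iterated colimits rather than a single colimit presentation; and the compatibility of your chain of equivalences with the comparison map $\alpha$ deserves the one-line remark that it follows from naturality of $\Map_\fY(\phi_\sharp U,-)\simeq\Map_\fX(U,\phi^*-)$ in the $\fY$-variable. Neither affects correctness.
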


\begin{proof}
The shape of the composition 
$\fX_{/U} \to \fX \oto{\phi} \fY$ is isomorphic to  $\phi_\sharp(U)$ (see \Cref{etale_then_essential}).  Hence, if $\phi$ is essential, the shape of this composition is pro-constant. 

 Conversely, assume that for every $U\in \fX$, the shape of the composition above is pro-constant. Then, the functor $\phi_\sharp \colon \Pro(\fX) \to \Pro(\fY)$ takes all the elements of $\fB$ into $\fY$. Since $\fY$ is closed under small colimits in $\Pro(\fY)$ and $\fB$ generates $\fX$ under small colimits, we deduce that $\phi_\sharp$ restrict to a functor $\fX \to \fY$.
\end{proof}

\subsubsection{Local Systems}
Among all sheaves, locally constant ones play an important role in analyzing the homotopy type of a topological space. For example, local systems on a topological space valued in a 1-category $\cD$ coincide with $\cD$-valued representations of its fundamental group. This relation admits an analog in higher category theory that we shall now discuss.

\begin{defn}
    Let $\fX$ be an $\infty$-topos and let $\cD\in\Prl$. For $X\in \cD$, we refer to $\Gamma^*X\in \Shv(\fX,\cD)$ as the \tdef{constant sheaf} on $\fX$ with value $X$.
    A $\cD$-valued \tdef{local system}
    on $\fX$ is a sheaf $\sF\in\Shv(\fX,\cD)$ satisfying the following property: 
    \begin{itemize}
        \item[(*)] There is a set of objects $\{U_\alpha\}_{\alpha \in I} \subseteq \fX$ which cover the terminal object of $\fX$, and the restriction $\sF|_{U_\alpha}\in \Shv(\fX_{/U_\alpha},\cD)$ is a constant sheaf for every $\alpha \in I$.
    \end{itemize}
\end{defn}
We refer to a collection $\{U_\alpha\}_{\alpha \in I}$ as above as a  \tdef{trivializing cover} for $\sF$, and denote the full subcategory of $\fX$ spanned by the local systems by $\mdef{\Loc(\fX,\cD)}$. 
For $\cD=\Spaces$, we denote $\mdef{\Loc(\fX)}:=\Loc(\fX,\Spaces)$

While the $\infty$-category local systems on an $\infty$-topos $\fX$ are not determined by the fundamental group of $\fX$,
we will see that local systems on an \emph{essential} $\infty$-topos can be described in terms of the \emph{shape}, which we may therefore think of as the ''fundamental $\infty$-groupoid'' of $\fX$.

For an essential $\infty$-topos $\fX$, the functor $\Gamma_\sharp \colon \fX \to \Spaces$ refines to a left adjoint functor \[
\psi_\sharp \colon \fX \to \Spaces_{/|\fX|}\simeq  \Shv(|\fX|),\]
whose right adjoint consisting of the pullback along an essential geometric morphism
\[
\mdef{\psi} \colon \fX \to \Shv(|\fX|).
\]
We would like to show that $\psi^*$ identifies local systems on $\fX$ with functors $|\fX|\to \cD$. Before we show this, we would like to remark on the dependence of $\Shv(\fX,\cD)$ on $\cD$. 

Recall that we have $\Shv(\fX,\cD)\simeq \Fun^R(\fX^\op,\cD)$. In particular, a right adjoint functor $R\colon \cD_0\to \cD_1$ induces a functor 
\[
\mdef{R_*}\colon \Shv(\fX,\cD_0)\to \Shv(\fX,\cD_1)
\]
given by
\[
(R_*\sF)(U)= R(\sF(U)).
\]
For an essential geometric morphism $\phi\colon \fX \to \fY$, we have $\phi^*\sF(U)\simeq \sF(\phi_\sharp U)$, and since pre-composition and post-composition commute, we see that the canonical (Beck-Chevalley) map $\phi^*R_*\to R_*\phi^*$ is an isomorphism.   
Returning to our task, we have

\begin{prop} \label{Local_systems_sheaves_on_shape}
    Let $\fX$ be an essential $\infty$-topos, and let $\cD\in \Prl$. The functor 
\[
    \psi^*\colon \Shv(|\fX|,\cD) \to \Shv(\fX,\cD)
\]
    is fully faithful, with essential image the $\cD$-valued local systems. In particular, we have an equivalence $\Loc(\fX,\cD)\simeq \Loc(\fX)\otimes \cD$.
\end{prop}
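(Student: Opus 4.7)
The plan is to reduce to the case $\cD = \Spaces$ and then analyze the adjunction $\psi_\sharp \dashv \psi^*$ there. Once fully faithfulness and the identification of the essential image are established for $\cD = \Spaces$, the final equivalence $\Loc(\fX,\cD) \simeq \Loc(\fX) \otimes \cD$ is automatic, since it becomes $\Loc(\fX,\cD) \simeq \Shv(|\fX|,\cD) \simeq \Shv(|\fX|) \otimes \cD \simeq \Loc(\fX) \otimes \cD$.

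For the reduction, I would use the identifications $\Shv(\fX,\cD) \simeq \fX \otimes \cD$ and $\Shv(|\fX|,\cD) \simeq \Shv(|\fX|) \otimes \cD$, under which $\psi^*$ at general $\cD$ identifies with the Lurie tensor product $\psi^*_{\Spaces} \otimes \mathrm{id}_\cD$ in $\Prl$. Taking $\cD$ compactly generated, the computation $\cE \otimes \cD \simeq \Fun^{\lex}(\cD_0^{\op}, \cE)$ for $\cD \simeq \Ind(\cD_0)$ shows that fully faithfulness is preserved by tensoring, since it becomes post-composition with a fully faithful functor. The characterization of the essential image as the local systems also passes through tensoring: a $\cD$-valued sheaf $\sF$ is locally constant if and only if the $\Spaces$-valued sheaves $\Map(d, \sF(-))$ are locally constant for each compact $d \in \cD$, and this condition is stable under $-\otimes \cD$.

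For the case $\cD = \Spaces$, where $\Shv(|\fX|) \simeq \Spaces_{/|\fX|}$, the essential geometric morphism $\psi$ is characterized by $\psi_\sharp(U) \simeq (\Gamma_\sharp U \to |\fX|)$. Fully faithfulness of $\psi^*$ is equivalent, via the adjunction $\psi_\sharp \dashv \psi^*$, to the counit $\psi_\sharp \psi^* \to \mathrm{id}$ being an equivalence. Both sides preserve colimits, and $\Spaces_{/|\fX|}$ is generated under colimits by points $x : \term \to |\fX|$, so it suffices to verify the equivalence on these generators, which amounts to the universal property of $|\fX|$ as the shape of $\fX$. For the essential image: the cover of $|\fX|$ by its points exhibits every $G \in \Spaces_{/|\fX|}$ as locally constant, and $\psi^*$ is cocontinuous and carries constants to constants (since $\Gamma^*_\fX \simeq \psi^* \Gamma^*_{|\fX|}$), so $\psi^*G$ is a local system on $\fX$. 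Conversely, a local system $\sF$ on $\fX$ trivialized over a cover $\{U_\alpha\}$ can be rebuilt from its restrictions via \v{C}ech descent; each term of the descent diagram is constant, hence in the image of $\psi^*$, and the whole colimit lies in the image by cocontinuity.

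The main technical obstacle is the verification of the counit equivalence $\psi_\sharp \psi^* x \simeq x$ on points $x : \term \to |\fX|$. This is the heart of the shape-theoretic description of local systems, essentially \cite[Theorem A.1.15]{HA}. Once this step is secured, the remaining arguments are formal manipulations with the Lurie tensor product, cocontinuity of $\psi^*$, and descent along trivializing covers.
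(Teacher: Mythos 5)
Your overall strategy --- reduce to $\cD=\Spaces$ via the Lurie tensor product and invoke \cite[Theorem A.1.15]{HA} for the base case --- is the same as the paper's, and your treatment of the base case and of fully faithfulness is fine in spirit. However, there are two genuine problems in the reduction step. First, you restrict to compactly generated $\cD$ (both for the $\Fun^{\lex}(\cD_0^{\op},-)$ model of the tensor product and for the compact-object criterion), whereas the proposition is stated for arbitrary $\cD\in\Prl$; the paper handles general $\cD$ by first treating $\cD\simeq\PShv(\cC)$ and then passing to an arbitrary accessible localization $L\colon\PShv(\cC)\rightleftarrows\cD\colon R$.

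Second, and more seriously, the pivot of your essential-image argument is the asserted biconditional ``$\sF$ is locally constant iff $\Map(d,\sF(-))$ is locally constant for each compact $d$,'' and its ``if'' direction is exactly the non-formal content that needs proving: knowing that each $\Map(d,\sF)$ admits \emph{some} trivializing cover does not produce a single cover over which the trivializations $\Map(d,\sF)|_{U_\alpha}\simeq\Gamma^*A_d$ can be chosen, let alone assembled coherently (in $d$) into an equivalence $\sF|_{U_\alpha}\simeq\Gamma^*X$. (Even the ``only if'' direction needs the observation, valid only because $\fX$ is essential, that $\Map(d,-)_*$ commutes with $\Gamma^*$.) The paper circumvents this: for $\cD\simeq\PShv(\cC)$ one has $\Shv(\fX,\cD)\simeq\Fun(\cC^{\op},\fX)$ and the objectwise criterion for local constancy is legitimate there because $\Loc(\fX)$ is closed under all limits and colimits in $\fX$; for general $\cD$ it shows directly that $R_*\sF$ is a local system by restricting to a trivializing cover of $\sF$ and using $R_*\Gamma^*\simeq\Gamma^*R_*$, then writes $\sF\simeq L\psi^*\sG\simeq\psi^*L\sG$. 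Your argument could likely be repaired by proving separately that $\psi^*_\cD$ carries every sheaf to a local system (which only needs the easy compatibilities) and then deducing the ``if'' direction from the factorization criterion for post-composition with a fully faithful functor, but as written the key step is assumed rather than established.
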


\begin{proof}
In the case where $\cD = \Spaces$ this follows from \cite[Theorem A.1.15]{HA}. We now show how to reduce the general case to this one. 
First, the functor $\psi^*\colon \Shv(|\fX|)\to \fX$ is fully faithful and admits a left adjoint. Tensoring with $\cD$, we deduce from \cite[Lemma 5.2.1]{AmbiHeight} that the functor $\psi^*\colon \Shv(|\fX|,\cD)\to \Shv(\fX,\cD)$ is fully faithful and admits a left adjoint. 
It remains to show that the essential image of $\psi^*$ consists of precisely the $\cD$-valued local systems. 

If $\cD\simeq \PShv(\cC)$ for a small $\infty$-category $\cC$, then we have 
\[
\Shv(\fX,\cD)\simeq \Fun(\cC^\op,\fX)
\] 
and 
\[\Shv(|\fX|,\cD)\simeq\Fun(\cC^\op,\Shv(|\fX|)).
\]
Via this identifications, the functor $\psi^*\colon \Shv(|\fX|,\cD) \to \Shv(\fX,\cD)$ is given by post-composing with $\psi^*\colon \Shv(|\fX|)\to \fX$. Since $\Loc(\fX)$ is closed under all limits and colimits in $\fX$, a functor $\cC^\op \to \fX$ corresponds to a $\PShv(\cC)$-valued local system, if and only if its value at every object of $\cC$ is a local system. Since $\psi^*\colon \Shv(|\fX|)\to \Shv(\fX)$ is fully faithful, a functor $\cC^\op\to \fX$ factors through $\psi^*\colon \Shv(|\fX|) \to \fX$ if and only if all its values belong to the essential image of $\psi^*$. Hence, the claim for $\cD\simeq \PShv(\cC)$ follows from the case $\cD\simeq \Spaces$. 

For general $\cD\in \Prl$ we have an accessible localization $L\colon \PShv(\cC)\to \cD$. Choose such a localization $L$ and consider its right adjoint $R\colon \cD \into \PShv(\cC)$. We claim that $R_*\sF$ is a local system. Indeed, let $\{U_i\}_{i\in I}$ be a trivializing cover for $\sF$. Replacing $\fX$ with $\fX_{/U_i}$ for $i\in I$, we may assume that $\sF$ is constant, say, $\sF\simeq \Gamma^*X$. But then, since $\fX_{/U_i}$ is essential, 
\[
R_*\sF\simeq R_*\Gamma^*X\simeq \Gamma^*R_*X 
\]
which is a (constant) local system. 

By the case $\cD \simeq \PShv(\cC)$, we see that $R_*\sF \simeq \psi^*\sG$ for some $\sG\in \Shv(|\fX|,\PShv(\cC))$. Localizing both sides back to $\Shv(\fX,\cD)$ we deduce that 
\[
\sF \simeq L(\psi^*\sG)\simeq \psi^*L(\sG),
\]
so that it belongs to the essential image of $\psi^*$. 
\end{proof}

\subsection{Locally Contractible Geometric Morphisms}

In the classical 6-functor formalism, one essential property of the pushforward with proper support is the \emph{projection formula}. Namely, for a map $\phi\colon X\to Y$ of paracompact topological spaces, the functor 
$
\phi_! \colon \Shv(X)\to \Shv(Y)
$ 
is $\Shv(Y)$-linear. 
In higher topos theory, we may view the left adjoint $\phi_\sharp$ of the pullback along an essential geometric morphism $\phi$ as a substitute for the push forward with proper support. Indeed, while $\phi_!$ models cohomology with compact support, $\phi_\sharp$ models homology, and the two notions are closely related. 

According to this analogy, we expect the functor $\phi_\sharp$ to satisfy a form of projection formula. While this, to the extent of our knowledge, not automatic, we will eventually show that this projection formula holds for geometric morphisms induced from Nash submersions. For now, we discuss projection formulas in the abstract context of essential geometric morphisms.

Let $\phi \colon \fX \to \fY$ be an essential geometric morphism and consider a pair of maps 
$f\colon \phi_\sharp U \to V$ and $g\colon W\to V$ in $\fY$. 
We have a canonical commutative rectangle (see \cite[\S A.1]{HA}) 
\begin{equation}
\label{def_rect_pf}
\xymatrix{
\phi_\sharp(U\times_{\phi^*V}\phi^*W) \ar[r] \ar[d]& \phi_\sharp\phi^*W \ar^{\con}[r] & W \ar^{g}[d]  \\ 
\phi_\sharp(U) \ar^{f}[rr]  & &  V 
}
\end{equation}
 
\begin{defn}
    Let $\phi\colon \fX \to \fY$ be an essential geometric morphism, and let $\phi_\sharp U \to V$ and $W\to V$ be morphisms in $\fY$. 
    We refer to the map
    \[
    \mdef{\pf_\phi}\colon \phi_\sharp(U\times_{\phi^*V}\phi^*W) \to \phi_\sharp U\times_V W
    \]
    induced from the commutative rectangle (\ref{def_rect_pf}) as the \tdef{relative projection map} of $\phi$. 
We say that $\phi$ is \tdef{locally contractible} if the relative projection map of $\phi$ is a natural isomorphism. \end{defn}

\begin{example} \label{etale_iff_locally_contractible_and_conservative} 
    A geometric morphism
    $\phi\colon\fX\to\fY$ is \etale if and only if it
    is locally contractible and $\phi_{\sharp}$ is conservative, see 
    \cite[Proposition 6.3.5.11]{HTT}.
\end{example}

\begin{example}
\label{Global_sections_locally contractible_iff_essential} Let $\fX$ be
an $\infty$-topos. The terminal geometric morphism $\Gamma\colon\fX\to\Spaces$
is locally contractible if and only if it is essential, i.e., if $\fX$ is locally of constant shape, see \cite[Proposition A.1.9]{HA}. 
\end{example}

The relative projection map is compatible with the composition of geometric morphisms.
\begin{prop} \label{comp_rel_proj}
    Let $\phi \colon \fX\to \fY$ and $\psi \colon \fY \to \fZ$ essential geometric morphisms, and let 
    $f\colon (\psi \phi)_\sharp U \to V$ and $g\colon W\to V$ be morphisms in $\fZ$. The relative projection map 
\[
    \pf_{\psi \phi} \colon (\psi \phi)_\sharp(U\times_{(\psi\phi )^*V}(\psi\phi)^*W) \to (\psi \phi)_\sharp U \times_V W
\]
    of $\psi\phi$ is homotopic to the composition of relative projection maps
\[
    (\psi \phi)_\sharp(U\times_{(\psi\phi )^*V}(\psi\phi)^*W)
    \simeq \psi_\sharp \phi_\sharp(U\times_{\phi^*\psi^* V}\phi^*\psi^*W) \oto{\pf_\phi} 
\]
\[
    \psi_\sharp (\phi_\sharp U\times_{\psi^* V}\psi^*W)
    \oto{\pf_\psi} 
    \psi_\sharp  \phi_\sharp U\times_{V} W \simeq 
    (\psi \phi)_\sharp U\times_{V} W
\]
\end{prop}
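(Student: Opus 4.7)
The plan is to verify the equality of two maps into the pullback $(\psi\phi)_\sharp U \times_V W$ by checking their compositions with the two canonical projections, reducing the verification to standard compatibilities of composite adjunctions. Both $\pf_{\psi\phi}$ and $\pf_\psi \circ \psi_\sharp(\pf_\phi)$ are defined via the universal property of the pullback, so to show they agree it suffices to show that they become equal after composition with each projection to $(\psi\phi)_\sharp U \simeq \psi_\sharp \phi_\sharp U$ and to $W$.

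First, I would record the compatibility of adjunction counits with composition: under the canonical equivalences $(\psi\phi)_\sharp \simeq \psi_\sharp \phi_\sharp$ and $(\psi\phi)^* \simeq \phi^* \psi^*$, the counit $\varepsilon_{\psi\phi}$ of the adjunction $(\psi\phi)_\sharp \dashv (\psi\phi)^*$ factors as
$$\psi_\sharp \phi_\sharp \phi^* \psi^* \to \psi_\sharp \psi^* \to \Id,$$
where the first arrow is $\psi_\sharp$ applied to $\varepsilon_\phi$ (evaluated on $\psi^*$ of the argument) and the second is $\varepsilon_\psi$. This is a formal property of composites of adjunctions, and it is essentially the only nontrivial ingredient beyond diagram chase.

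Now to the projection checks. For the projection to $\psi_\sharp \phi_\sharp U$, both candidates reduce to $\psi_\sharp \phi_\sharp$ applied to the canonical projection $U \times_{\phi^* \psi^* V} \phi^* \psi^* W \to U$: for $\pf_{\psi\phi}$ this is immediate from the definition, while for the composite it follows from the defining property of $\pf_\phi$ (whose projection to $\phi_\sharp U$ is $\phi_\sharp$ of the first projection of the pullback) together with that of $\pf_\psi$, applied here to $\phi_\sharp U$ in place of $U$ (whose projection to $\psi_\sharp \phi_\sharp U$ is $\psi_\sharp$ of the first projection). For the projection to $W$, the map $\pf_{\psi\phi}$ is, by definition, $(\psi\phi)_\sharp$ of the projection onto $(\psi\phi)^* W$ followed by $\varepsilon_{\psi\phi}$, whereas the composite produces $\psi_\sharp \phi_\sharp$ of the same projection, followed by $\psi_\sharp \varepsilon_\phi \psi^*$, followed by $\varepsilon_\psi$. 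These agree by the decomposition of $\varepsilon_{\psi\phi}$ recorded in the previous paragraph.

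The main obstacle is purely book-keeping: one has to align the canonical equivalence $(\psi\phi)_\sharp \simeq \psi_\sharp \phi_\sharp$, the compatibilities of the left exact functors $\phi^*$ and $\psi^*$ with the intermediate pullbacks $U \times_{\phi^*\psi^*V} \phi^*\psi^*W$ and $\phi_\sharp U \times_{\psi^*V} \psi^*W$, and the coherent decomposition of the counit. In the $\infty$-categorical setting these coherences are non-trivial to write out by hand, but all are produced automatically from the adjunction data $\phi_\sharp \dashv \phi^*$ and $\psi_\sharp \dashv \psi^*$, so the argument reduces to the two projection checks above.
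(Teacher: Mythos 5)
Your overall strategy has a genuine gap at its very first step. You reduce the claim to checking that the two maps into the pullback $(\psi\phi)_\sharp U \times_V W$ agree after composition with the two projections. In an $\infty$-category this is not sufficient: since
\[
\Map\bigl(A,\ B\times_D C\bigr)\ \simeq\ \Map(A,B)\times_{\Map(A,D)}\Map(A,C),
\]
and $\pi_0$ of a pullback of spaces is not the pullback of the $\pi_0$'s, two maps into a pullback can have homotopic projections to $B$ and to $C$ and still fail to be homotopic --- one must also compare the homotopies filling the squares over the base $D$ (here $V$). This coherence datum is exactly what your argument omits, and it is not ``produced automatically'' from the adjunction data: it is the substantive content of the proposition. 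The relative projection map is \emph{defined} by the universal property applied to a specific commutative rectangle, so comparing $\pf_{\psi\phi}$ with $\pf_\psi\circ\psi_\sharp\pf_\phi$ requires comparing those rectangles as coherent diagrams, not just their edges.

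The paper's proof supplies precisely this missing structure: it assembles one commutative diagram whose upper left square is the definition of $\pf_\phi$, whose upper right square encodes the composition of counits (your correct observation that $\varepsilon_{\psi\phi}$ factors as $\varepsilon_\psi\circ\psi_\sharp\varepsilon_\phi\psi^*$), and whose lower rectangle is the naturality input from \cite[\S A.1]{HA}. The commutativity of the whole diagram exhibits $\psi_\sharp\pf_\phi$ as a morphism from the rectangle classifying $\pf_{\psi\phi}$ to the rectangle classifying $\pf_\psi$, and the universal property of the pullback applied to this \emph{map of rectangles} yields the homotopy $\pf_{\psi\phi}\simeq\pf_\psi\circ\psi_\sharp\pf_\phi$ with all coherences in place. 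Your two ``projection checks'' and the counit decomposition are the correct shadows of this diagram, but to make the argument valid you must package them as a single map of cone diagrams rather than as separate equalities of composites.
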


\begin{proof}
Consider the diagram 
\[
\xymatrix{
\psi_\sharp\phi_\sharp (U\times_{\phi^*\psi^*V} \phi^*\psi^*W) \ar[r]\ar^{\psi_\sharp\pf_\phi}[d] & 
\psi_\sharp \phi_\sharp \phi^*\psi^*W \ar^-{\con_{\psi\phi}}[r]\ar^{\psi_\sharp\con_\phi}[d]
& 
W \ar@{=}[d]\\
\psi_\sharp (\phi_\sharp(U)\times_{\psi^*V} \psi^*W)\ar[r]\ar[d]
 & 
\psi_\sharp \psi^*W \ar^{\con_\psi}[r]
& W \ar^g[d]\\ 
\psi_\sharp\phi_\sharp(U)\ar^{f}[rr]& 
& 
V 
}
\]
The upper left square commutes by the definition of $\pf_{\phi}$ and the upper right square by the way counits are composed. 
The lower rectangle commutes (see the discussion above \cite[Proposition A.1.9]{HA}). Hence, the entire diagram commutes. 

The commutativity of this diagram formally gives us the result. Indeed, it shows that we can view $\psi_\sharp \pf_\phi$ as a morphism from the square classifying $\pf_{\psi\phi}$ to the square classifying the map $\pf_\phi$, hence providing the required homotopy $\pf_{\psi\phi} \simeq \pf_\psi \circ \psi_\sharp\pf_\phi$. 
\end{proof}

\begin{corl}
\label{loc_contr_composition}
A composition of locally contractible geometric morphisms is locally contractible. 
\end{corl}

As the name suggests, the local contractibility of a geometric morphism is a property that can be checked locally over its source.

\begin{prop}
\label{locality_locally_contractible}
Let $\phi\colon \fX \to \fY \in \RTop$, and let $\fB\subseteq \fX$ be a full subcategory which generates $\fX$ under colimits. 
The geometric morphism $\phi$ is locally contractible if and only if, for every $U\in \fB$, the composition 
\[
\fX_{/U} \to \fX \oto{\phi} \fY
\]
is locally contractible. 
\end{prop}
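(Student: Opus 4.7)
The ``only if'' direction is immediate: for each $U \in \fB$, the \'{e}tale geometric morphism $j_U \colon \fX_{/U} \to \fX$ is locally contractible by \Cref{etale_iff_locally_contractible_and_conservative}, so if $\phi$ is locally contractible, then $\phi \circ j_U$ is locally contractible by \Cref{loc_contr_composition}.

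For the ``if'' direction, I first establish that $\phi$ itself is essential. Each $\phi \circ j_U$ is locally contractible by hypothesis, hence in particular essential, so its shape is co-represented by $(\phi \circ j_U)_\sharp(\term_{\fX_{/U}}) \in \fY$ and is thus pro-constant. Invoking \Cref{essential_locally_constant_shape} on the generating family $\fB$ then yields the existence of $\phi_\sharp$.

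Next, I verify that $\pf_\phi$ is an isomorphism whenever the source lies in $\fB$. Fix $U \in \fB$, set $\psi := \phi \circ j_U$, and note that by \Cref{etale_then_essential} we have $\psi_\sharp \simeq \phi_\sharp \circ (j_U)_!$, so in particular $\psi_\sharp(\term_{\fX_{/U}}) \simeq \phi_\sharp U$. Given morphisms $f \colon \phi_\sharp U \to V$ and $g \colon W \to V$ in $\fY$, a direct computation in the slice identifies the pullback $\term_{\fX_{/U}} \times_{\psi^* V} \psi^* W$ (formed using the adjoint $U \to \phi^* V$ of $f$) with the object $U \times_{\phi^* V} \phi^* W \to U$ of $\fX_{/U}$. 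Applying $\psi_\sharp = \phi_\sharp \circ (j_U)_!$, the projection map $\pf_\psi$ applied to the terminal object recovers precisely $\pf_\phi$ for the source $U$, and this map is an isomorphism by the assumed local contractibility of $\psi$.

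Finally, I promote the result to an arbitrary source $U' \in \fX$ by a colimit argument. Write $U' \simeq \colim_i U_i$ with each $U_i \in \fB$. Since colimits are universal in the $\infty$-topoi $\fX$ and $\fY$, and both $\phi^*$ and $\phi_\sharp$ preserve colimits as left adjoints, both the domain and codomain of $\pf_\phi$ for the source $U'$ decompose as the colimits over $i$ of the corresponding expressions for the $U_i$, and $\pf_\phi$ itself is the colimit of the maps $\pf_\phi$ for $U_i \in \fB$, each of which is an isomorphism by the previous step. The main subtlety I anticipate is the slice pullback computation, where one must carefully track how the structure maps to $U$ interact with the morphism adjoint to $f$.
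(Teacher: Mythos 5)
Your proof is correct and follows essentially the same route as the paper's: establish that $\phi$ is essential via \Cref{essential_locally_constant_shape}, reduce to $U\in\fB$ by colimit preservation in the $U$-variable, and identify $\pf_\phi$ at $U$ with $\pf_{\phi\circ j_U}$ evaluated at the terminal object of the slice. The only caveat is that your ``direct computation in the slice'' which lets $\pf_{\phi\circ j_U}$ ``recover precisely'' $\pf_\phi$ is exactly the content of \Cref{comp_rel_proj} together with the invertibility of $\pf_{j_U}$ for \'{e}tale morphisms; in the $\infty$-categorical setting one must identify the two \emph{maps} coherently (not merely their sources and targets), which is what that compatibility statement supplies.
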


\begin{proof}
The "if" direction follows from \Cref{loc_contr_composition} in view of the assumption that $\phi$ is locally contractible and the local contractability of \'{e}tale geometric morphisms (\Cref{etale_iff_locally_contractible_and_conservative}).
We shall show the "only if" direction. 

First, note that since $\fB$ generates $\fX$ under small colimits, the assumption that the composition $\fX_{/U}\to \fX \oto{\phi} \fY$ is locally-contractible, and in particular essential, implies that $\phi$ is essential (see \Cref{essential_locally_constant_shape}).
Since both the source and target of $\pf_\phi$ preserve colimits in the $U$-variable, it suffices to show that it is an isomorphism at $U\in \fB$.

Fix $U' \in \fB$ and let $j\colon \fX_{/U'} \to \fX$ be the associated \'{e}tale geometric morphism.
We shall show more generally that the map $\pf_\phi$ is an isomorphism whenever $U$ is in the essential image of $j_\sharp$. Indeed, if $U= j_\sharp U_0$ then by \Cref{comp_rel_proj}, the isomorphism
\[
\pf_{\phi j} \colon (\phi j)_\sharp(U_0 \times_{(\phi j)^*V} (\phi j)^*W) \to
(\phi j)_\sharp(U_0) \times_{V} W,
\]
identifies with the composition 
\[
\phi_\sharp j_\sharp (U_0 \times_{j^*\phi^*V} j^*\phi^*W)
\oto{\pf_j}
\phi_\sharp (j_\sharp U_0 \times_{\phi^*V} \phi^*W) \oto{\pf_{\phi}}
\phi_\sharp j_\sharp U_0 \times_{V} W. 
\]
The map $\pf_j$ is invertible since $j$ is \'{e}tale (see \Cref{etale_iff_locally_contractible_and_conservative}), and hence $\pf_\phi$ is invertible as well. 
\end{proof}

We now give a criterion for local contractibility that makes no explicit reference
to projection formulas (compare \cite[Proposition A.1.11]{HA}).
\begin{prop}
\label{Criterion_locally contractibleness_fuly_faithful}Let $\phi\colon\fX\to\fY$
be an essential geometric morphism. Let $\fB \subseteq \fX$ be a full subcategory which generates $\fX$ under small colimits. The following are equivalent: 
\end{prop}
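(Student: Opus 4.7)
My plan is to model the argument on \cite[Proposition A.1.11]{HA}, which gives an analogous criterion in the absolute case $\fY = \Spaces$. The first step is to reduce the condition of local contractibility, i.e., invertibility of $\pf_\phi(U,V,W)$ for all $U,V,W$, to the case where $U$ ranges over $\fB$. This reduction proceeds exactly as in the proof of \Cref{locality_locally_contractible}: both sides of $\pf_\phi$ preserve colimits in the $U$-variable, since $\phi_\sharp$ and $\phi^*$ are left adjoints, and finite products in an $\infty$-topos distribute over colimits by the universality of colimits. Combined with \Cref{essential_locally_constant_shape}, this lets me pass freely between the global contractibility condition and local checks on objects of $\fB$.

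Next, I will reformulate the projection formula at $U \in \fB$ in terms that do not mention the projection map $\pf_\phi$. The key observation is that, once each slice composition $\fX_{/U} \to \fX \to \fY$ is known to be essential (which is equivalent, by \Cref{essential_locally_constant_shape}, to pro-constancy of the shapes of these slices), the transformation $\pf_\phi(U,-,-)$ becomes a morphism between two-variable colimit-preserving functors on $\fY \times \fY$. Its invertibility is then equivalent, by a standard Beck--Chevalley-style argument, to the fully faithfulness of an induced functor between appropriate slice $\infty$-categories; concretely, one asks that $\phi_\sharp$ restricted to $\fX_{/U}$ refine to a left adjoint of an essential geometric morphism $\fX_{/U} \to \fY_{/\phi_\sharp U}$. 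This reformulation no longer mentions $\pf_\phi$ and matches the analogous fully faithful characterization in the absolute case.

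The main obstacle I anticipate is managing the coherent homotopies across this translation, especially since the candidate left adjoint lives naturally only in $\Pro(\fY)$ and the essentiality hypothesis is needed precisely to force it into $\fY$. The tools I plan to lean on are \Cref{comp_rel_proj} (for compatibility of $\pf_\phi$ with composition of essential geometric morphisms), \Cref{etale_iff_locally_contractible_and_conservative} (to handle the \'etale embeddings $\fX_{/U} \to \fX$ cleanly so that the reduction to $U \in \fB$ is compatible with the reformulated condition), and the universal property of the Lurie tensor product to detect $\fY$-linearity in abstract categorical terms rather than through a choice of generators.
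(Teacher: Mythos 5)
Your overall architecture matches the paper's: reduce to $U\in\fB$ by colimit-preservation of $\pf_\phi$ in the $U$-variable, then translate the projection formula at such $U$ into fully faithfulness of the right adjoint of the slice functor $\phi_\sharp\colon\fX_{/U}\to\fY_{/\phi_\sharp U}$. However, the step you label ``a standard Beck--Chevalley-style argument'' is precisely where the substance lies, and as written it is a gap. Write the right adjoint down explicitly: it is $\widetilde{\phi}^{*}V=U\times_{\phi^{*}\phi_{\sharp}U}\phi^{*}V$, and its counit is the composite $\phi_\sharp(U\times_{\phi^*\phi_\sharp U}\phi^*V)\oto{\pf_\phi}\phi_\sharp U\times_{\phi_\sharp U}V\simeq V$. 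Hence fully faithfulness of $\widetilde{\phi}^*$ (equivalently, invertibility of the counit) is equivalent only to invertibility of $\pf_\phi$ in the special case where the structure map $f\colon\phi_\sharp U\to V$ is an isomorphism. Your plan gives no argument for why this special case yields invertibility of $\pf_\phi$ for arbitrary $f\colon\phi_\sharp U\to V$ and $g\colon W\to V$. The paper closes exactly this gap with a naturality trick: map the data $(U,\,\phi_\sharp U,\,\phi_\sharp U\times_V W)$ to the data $(U,V,W)$; naturality of the relative projection map produces a commutative square whose vertical maps are the evident isomorphisms and whose top edge is a projection map with $f$ an isomorphism, whence the bottom edge (the general $\pf_\phi$) is invertible. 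Some such reduction is indispensable and is the one non-formal idea in this proof; it is absent from your proposal.

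A secondary imprecision: you gloss condition (2) as asking that $\phi_\sharp$ ``refine to a left adjoint of an essential geometric morphism $\fX_{/U}\to\fY_{/\phi_\sharp U}$.'' That only asserts the \emph{existence} of a right adjoint to the slice functor, which holds automatically for any essential $\phi$ via the formula above; the content of condition (2) is that this right adjoint is \emph{fully faithful}. Relatedly, $\pf_\phi(U,-,-)$ is not naturally a two-variable functor on $\fY\times\fY$ --- its inputs are an object $V$ under $\phi_\sharp U$ and an object $W$ over $V$ --- and colimit-preservation in those variables plays no role here; only colimit-preservation in the $U$-variable is used, exactly as in your first step.
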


\begin{enumerate}
\item The geometric morphism $\phi$ is locally contractible. 
\item For every $U\in{\cal \fB}$, the induced functor $\phi_{\sharp}\colon\fX_{/U}\to\fY_{/\phi_{\sharp}U}$
admits a fully faithful right adjoint.
\end{enumerate}
\begin{proof}
Let $U\in\fB$.
The functor $\phi_{\sharp}\colon\fX_{/U}\to\fY_{/\phi_{\sharp}U}$
has a right adjoint given by 
\[
\widetilde{\phi}^{*}V=U\times_{\phi^{*}\phi_{\sharp}U}\phi^{*}V,
\]
for which the counit is the composition
\[
\con\colon\phi_{\sharp}\widetilde{\phi}^{*}V=\phi_{\sharp}(U\times_{\phi^{*}\phi_{\sharp}U}\phi^{*}V)\oto{\pf}\phi_{\sharp}U\times_{\phi_{\sharp}U}V\oto{\sim}V
\]
Hence, $\phi$ satisfies the second condition of the proposition if and only if the map $\pf_\phi$ is an isomorphism when $U\in \fB$ and the map $f\colon \phi_\sharp U \to V$ is an isomorphism. It remains to show that the validity of the projection formula in this special case implies its validity in general. 

Since $\pf_\phi$ preserves all small colimits in the $U$-variable, it suffices to show that $\pf_\phi$ is an isomorphism at $U\in \fB$.
Let $U\in \fB$ and let $\phi_\sharp U \to V$ and $g\colon W\to V$ be morphisms in $\fY$. 
The commutative diagram 
\[
\xymatrix{
\phi_\sharp U\ar@{=}[r]\ar@{=}[d] & \phi_\sharp U\ar^f[d] &\ar[l] \phi_\sharp U \times_V W \ar[d]\\ 
\phi_\sharp U \ar^f[r] & V & W\ar^g[l]
}
\]
induces, by the naturality of the relative projection map, a commutative square
\[
\xymatrix{
\phi_\sharp(U\times_ {\phi^*\phi_\sharp U}\phi^*(\phi_\sharp U \times_V W)) \ar^\wr[d] \ar^{\pf_\phi}[r] & \phi_\sharp U\times_ {\phi_\sharp U}(\phi_\sharp U \times_V W) \ar^\wr[d] \\ 
\phi_\sharp(U\times_ {\phi^*V}\phi^*W) \ar^{\pf_\phi}[r] & \phi_\sharp U\times_ {V}W 
}.
\] 
The upper horizontal map is a relative projection map of a diagram in which the map $f$ is an isomorphism and $U\in \fB$, and hence it is an isomorphism by our assumption on $\phi$. It follows that the lower horizontal projection map is an isomorphism as well, and the result follows.  
\end{proof}

Using the fact that $\cD$-valued sheaves over $\fX$ are just the tensor product $\fX\otimes \cD$, we immediately get consequences for sheaves with general coefficients. 
For an essential geometric morphism $\phi\colon \fX\to \fY$ and for $\cD\in \Calg(\Prl)$, the functor $\phi^*\colon \Shv(\fY,\cD)\to \Shv(\fX,\cD)$ is symmetric monoidal and admits a left adjoint $\phi_\sharp$. We thus obtain a projection map $\phi_\sharp(\sF \otimes \phi^*\sF') \to \phi_\sharp(\sF) \otimes \sF'$. 

\begin{prop}
\label{locally_cont_linear}
    Let $\phi\colon \fX \to \fY$ be a geometric morphism and let $\cD\in \Calg(\Prl)$. If $\phi$ is locally contractible then, for every pair of sheaves $\sF\in \Shv(\fX,\cD)$ and $\sF'\in \Shv(\fY,\cD)$, the projection map 
\[
    \phi_\sharp(\sF \otimes \phi^*\sF')\to \phi_\sharp(\sF)\otimes \sF'
\]
    is an isomorphism.
\end{prop}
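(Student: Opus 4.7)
The plan is to exploit the identification $\Shv(\fX,\cD)\simeq \fX \otimes \cD$ in $\Prl$ to reduce to the scalar case $\cD=\Spaces$, which is precisely the assumed local contractibility of $\phi$. To this end, I would first observe that both the source and target of the projection map preserve small colimits separately in $\sF$ and $\sF'$: the functors $\phi_\sharp$ and $\phi^*$ are both left adjoints, and the tensor product on the symmetric monoidal $\infty$-categories $\Shv(\fX,\cD),\Shv(\fY,\cD)\in \Calg(\Prl)$ preserves colimits in each variable. By the naturality of the projection map, it therefore suffices to verify the isomorphism when $\sF$ and $\sF'$ lie in a chosen set of objects generating $\Shv(\fX,\cD)$ and $\Shv(\fY,\cD)$ under small colimits.

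A convenient such set is given by the simple tensors $U\boxtimes X$ for $U\in \fX$, $X\in \cD$ (and likewise $V\boxtimes Y$ for $\sF'$), since the universal property of $\fX\otimes \cD$ in $\Prl$ implies that such objects generate $\Shv(\fX,\cD)$ under small colimits. Under this description we have $(U_1\boxtimes X_1)\otimes(U_2\boxtimes X_2)\simeq (U_1\times U_2)\boxtimes (X_1\otimes X_2)$, $\phi^*(V\boxtimes Y)\simeq \phi^*V\boxtimes Y$, and $\phi_\sharp(U\boxtimes X)\simeq \phi_\sharp U\boxtimes X$ (the last because $\phi_\sharp$ on $\fX\otimes \cD$ identifies with $\phi_\sharp \otimes \Id_\cD$). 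Consequently, at $\sF=U\boxtimes X$ and $\sF'=V\boxtimes Y$ the projection map becomes
\[
\phi_\sharp(U\times \phi^*V)\boxtimes (X\otimes Y) \longrightarrow (\phi_\sharp U\times V)\boxtimes (X\otimes Y),
\]
which is induced by the topos-level projection map $\pf_\phi\colon \phi_\sharp(U\times \phi^*V)\to \phi_\sharp U\times V$ and hence is an isomorphism by the assumed local contractibility of $\phi$.

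The main point I expect to require some care is the identification of the abstractly defined projection map in $\Shv(\fX,\cD)$---the map associated to a square of type~(\ref{def_rect_pf}) in the symmetric monoidal $\infty$-category $\Shv(\fY,\cD)$---with the displayed map on generators. This is a naturality check: all the structure entering the construction of the projection map, namely the adjunction $\phi_\sharp \dashv \phi^*$ and the symmetric monoidal structure on $\Shv(-,\cD)$, is obtained from the corresponding structure on $\fX$ and $\fY$ by tensoring with $\cD$ in $\Calg(\Prl)$, and so is compatible with the embedding $U\mapsto U\boxtimes X$ for any fixed $X\in \cD$. Once this compatibility is in hand, the computation above gives the result.
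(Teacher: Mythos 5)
Your proof is correct. The underlying reduction is the same as the paper's---everything rests on the identification $\Shv(\fX,\cD)\simeq\fX\otimes\cD$ and on the fact that the adjunction $\phi_\sharp\dashv\phi^*$ and the monoidal structures are all obtained from the topos level by tensoring with $\cD$---but the execution differs. The paper argues structurally: local contractibility upgrades the colax $\fY$-linear functor $\phi_\sharp\colon\fX\to\fY$ to a strict morphism of $\fY$-modules in $\Prl$, and base change along $-\otimes\cD$ preserves this linearity, so $\phi_\sharp\colon\Shv(\fX,\cD)\to\Shv(\fY,\cD)$ is $\Shv(\fY,\cD)$-linear, which is exactly the projection formula. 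You instead unwind this by observing that both sides of the projection map preserve colimits separately in $\sF$ and $\sF'$, reducing to simple tensors, where the map visibly becomes $\pf_\phi\boxtimes\mathrm{id}$. Your route is more elementary and self-contained (it avoids invoking the theory of module categories and base change of linear functors), at the cost of the compatibility check you correctly flag as the delicate point; the paper's route is shorter but leans on more machinery. One small remark: the topos-level projection map you quote, $\phi_\sharp(U\times\phi^*V)\to\phi_\sharp U\times V$, is the special case of the relative projection map of the paper in which the target object is terminal, so it is indeed an isomorphism under the local contractibility hypothesis---worth stating explicitly, since the paper's definition is phrased in the relative form.
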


\begin{proof}
    The local contractibility of $\phi$ implies that the colax $\fY$-linear functor $\phi_\sharp\colon \fX \to \fY$ is in fact $\fY$-linear, that is, it is a (strict) morphism of $\fY$-modules in $\Prl$. 
    The functor
\[
    \phi_\sharp\colon \Shv(\fX,\cD)\to \Shv(\fY,\cD)
\]
    is obtained from the functor $\phi_\sharp \colon \fX \to \fY$ by tensoring with $\cD$, and hence it is $\Shv(\fY,\cD)$-linear, i.e., satisfies the projection formula. 
\end{proof}

\subsection{Cosheaves}
\label{subsec:cosheaves}
Schwartz sections of a Nash vector bundle on a Nash manifold naturally give a cosheaf, rather than a sheaf, for the restricted topology (see, e.g., \cite[Definition 5.1.3]{SchNash}).
Consequently, we are led to discuss the notion of a cosheaf in our $\infty$-categorical context.
\subsubsection{The \texorpdfstring{$\infty$-}{Infinity }Category of Cosheaves}

The notion of a cosheaf is dual to that of a sheaf. 

\begin{defn} 
Let $\fX$ be an $\infty$-topos and let $\cA\in \Prl$.  
An \tdef{$\cA$-valued cosheaf} on $\fX$
is a colimit preserving functor $\sG\colon \fX \to \cA$. 
\end{defn}

Recall that $\Prl$, with its Lurie tensor product, is closed symmetric monoidal, with internal $\hom$ functor given by 
$
    \mdef{\hom(\cC,\cD)}\simeq \Fun^L(\cC,\cD). 
$ 
Hence, we can express the $\infty$-category of $\cA$-valued cosheaves as 
\[
    \mdef{\CShv(\fX,\cA)}= \hom(\fX,\cA).  
\]
In particular, it is a presentable $\infty$-category. 

\begin{example} \label{ex: cosh_res}
Let $X$ be a restricted topological space. Dualy to the case of sheaves, we can identify $\CShv(X,\cA)$ with the $\infty$-category of functors $\sG\colon \Op(X)\to \cA$ which take the empty set to $\varnothing_\cA$ and pushouts (i.e., unions) in $\Op(X)$ to pushouts in $\cA$.
\end{example}
A geometric morphism
$\phi \colon \fX \to \fY$ gives an adjunction
\[
\phi_{!}\colon\CShv(\fX,\cA)\adj\CShv(\fY,\cA)\colon\phi^{!} 
\]
in which the functor $\phi_!$ is given by $\phi_!\sG(U)\simeq \sG(\phi^*U)$.
In general, the functor $\phi^!$ is more complicated: it is given by right Kan extension along $\phi^*$, followed by cosheafification. For essential geometric morphisms, we have a simpler description of this functor. 
\begin{prop}
\label{formula_push_along_locally contractible}
Let $\phi\colon \fX \to \fY$ be a essential geometric morphism, and let $\cA\in \Prl$. The pullback of cosheaves along $\phi$ is given by 
\[
\phi^!(\sG)(U) \simeq \sG(\phi_\sharp(U)).
\]
\end{prop}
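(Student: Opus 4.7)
The plan is to produce a candidate right adjoint
\[
\widetilde{\phi}^{!}\colon \CShv(\fY,\cA) \to \CShv(\fX,\cA), \qquad \sG \mapsto \sG \circ \phi_{\sharp},
\]
to $\phi_{!}$, and then invoke uniqueness of right adjoints to deduce $\phi^{!} \simeq \widetilde{\phi}^{!}$. This immediately yields the formula $\phi^{!}(\sG)(U) \simeq \sG(\phi_{\sharp} U)$ claimed in the statement.

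The first step is to verify that $\widetilde{\phi}^{!}$ is well-defined, i.e.\ lands in $\CShv(\fX,\cA) = \Fun^{L}(\fX,\cA)$. By the essentiality hypothesis $\phi_{\sharp}$ is a left adjoint and hence preserves all small colimits; a cosheaf $\sG$ is colimit-preserving by definition; therefore $\sG \circ \phi_{\sharp}$ preserves colimits and is indeed a cosheaf on $\fX$. Functoriality in $\sG$ is immediate from post-composition.

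The second step, which is the technical heart of the argument, is to establish the adjunction $\phi_{!} \dashv \widetilde{\phi}^{!}$. Recall that $\phi_{!}(\sH) = \sH \circ \phi^{*}$, so what must be produced is a natural equivalence
\[
\Map(\sH \circ \phi^{*}, \sG) \simeq \Map(\sH, \sG \circ \phi_{\sharp}).
\]
The conceptual route is to apply the internal-hom $(\infty,2)$-functor
\[
\hom_{\Prl}(-,\cA) \colon \Prl^{\op} \to \mathrm{Cat}_{\infty}
\]
to the adjunction $\phi_{\sharp} \dashv \phi^{*}$ in $\Prl$ (which exists precisely by essentiality). Since any $(\infty,2)$-functor carries adjunctions to adjunctions (with variance reversed), and since on objects $\hom_{\Prl}(-,\cA)$ acts by precomposition, this produces exactly the adjunction $\phi_{!} \dashv \widetilde{\phi}^{!}$. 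For a more hands-on variant, the unit of the putative adjunction is constructed by right-whiskering a cosheaf $\sH$ with the unit $\Id_{\fX} \to \phi^{*}\phi_{\sharp}$ of $\phi_{\sharp} \dashv \phi^{*}$; the counit is constructed analogously from $\phi_{\sharp}\phi^{*} \to \Id_{\fY}$; and the two triangle identities for the new adjunction follow formally from those of the original one.

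The main obstacle is of a coherence-theoretic flavour: packaging the above pointwise data into a genuine $\infty$-categorical adjunction requires the $(\infty,2)$-functoriality of $\hom_{\Prl}(-,\cA)$. Granting this standard fact about presentable $\infty$-categories and the Lurie tensor product, the proof is otherwise formal.
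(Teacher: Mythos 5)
Your argument is correct and is essentially the paper's own proof: the adjunction $\phi_\sharp \dashv \phi^*$ between colimit-preserving functors induces, by precomposition, an adjunction between $\Fun^L(\fX,\cA)$ and $\Fun^L(\fY,\cA)$, exhibiting $(-)\circ\phi_\sharp$ as the right adjoint of $\phi_! = (-)\circ\phi^*$, whence $\phi^!\simeq(-)\circ\phi_\sharp$ by uniqueness of right adjoints. Your additional remarks on colimit-preservation and the $(\infty,2)$-functoriality of precomposition only make explicit what the paper leaves implicit.
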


\begin{proof}
The adjunction $\phi_\sharp \colon \fX \adj \fY : \phi^*$ between left adjoint functors, induces an adjunction 
\[
\phi_! \simeq (-)\circ \phi^*\colon \Fun^L(\fX,\cA) \simeq \CShv(\fX,\cA) \adj \CShv(\fX,\cA) \simeq \Fun^L(\fY,\cA): (-)\circ\phi_\sharp, 
\]
and hence $(-)\circ\phi_\sharp$ is a right adjoint to $\phi_!$. 
\end{proof}

\subsubsection{The Tensor Product of Sheaves and Cosheaves}
We have seen that for a presentably symmetric monoidal $\infty$-category $\cD$, the category of sheaves $\Shv(\fX,\cD)$ admits a canonical tensor product operation. For cosheaves, the situation is slightly different. Instead of providing a tensor product for cosheaves, we shall now construct such a tensor product between a sheaf and a cosheaf, making the category of cosheaves linear over sheaves. 

For $\cD\in \Calg(\Prl)$, we can form the $\infty$-category of modules $\Mod_\cD(\Prl)$, consisting of $\cD$-linear presentable $\infty$-categories. Moreover, we have a free-forgetful adjunction 
\[
    (-) \otimes \cD \colon \Prl \adj \Mod_{\cD}(\Prl) : U_\cD 
\]

By definition, an $\cA$-valued cosheaf on an $\infty$-topos $\fX$ is a morphism $\sG\colon \fX \to \cA$ in $\Prl$. If $\cA$ is linear over $\cD\in \Calg(\Prl)$, then by the above free-forgetful adjunction, $\sG$ can be identified with a $\cD$-linear functor 
\[
    \Shv(\fX,\cD)\simeq \fX \otimes \cD \to \cA.
\]
We shall abuse notation and denote this extended functor again by $\sG$. 
The tensor product of $\sG$ with a $\cD$-valued sheaf is defined in terms of this extension.
\begin{defn}
Let $\cA$ be a $\cD$-linear presentable $\infty$-category and let $\fX$ be an $\infty$-topos. For a sheaf $\sF\in \Shv(\fX,\cD)$ and a cosheaf $\sG\in \CShv(\fX,\cA)$, we define their tensor product to be the $\cD$-linear functor
\[
\mdef{\sF \scten \sG} \colon  \Shv(\fX,\cD) \oto{ \sF \otimes (-)}\Shv(\fX,\cD) \oto{\sG} \cA.
\]
\end{defn}
Explicitly, we have 
\[
(\sF\scten \sG)(\sF') \simeq \sG(\sF \otimes \sF').
\]

In fact, the operation $\scten$ is associative, that is, it arises as the binary component of a $\Shv(\fX,\cD)$-linear structure on $\CShv(\fX,\cA)$. Indeed, by the (internal) free-forgetful adjunction we have 

\[
\CShv(\fX,\cA) \simeq \hom(\fX,\cA) \simeq \hom_\cD(\fX\otimes \cD,\cA)= \hom_\cD(\Shv(\fX,\cD),\cA),
\]
and the last term admits a canonical $\Shv(\fX,\cD)$-linear structure through the action of $\Shv(\fX,\cD)$ on the source of the internal $\hom$ object.

By the construction, a geometric morphism $\phi \colon \fX\to \fY$ induces a $\Shv(\fY,\cD)$-linear functor 
\[
    \phi_! \colon \CShv(\fX,\cA)\to \CShv(\fY,\cA),
\]
and hence we have a tautological projection formula  
\[
    \phi_!(\phi^*\sF \scten \sG)\iso \sF \scten \phi_!(\sG). 
\]
However, this projection formula is of little use for us. For the proof of the relative de Rham theorem, we need the following variant for locally contractible geometric morphisms:
\begin{thm}
\label{second_projection_formula}
Let $\phi \colon \fX \to \fY$ be a locally contractible geometric morphism, let $\cD\in \Calg(\Prl)$ and let $\cA\in \Mod_{\cD}(\Prl)$. For a $\cD$-valued sheaf $\sF$ and an $\cA$-valued cosheaf $\sG$ of $\fX$, there is a natural isomorphism 
\[
    \phi_!(\sF \scten \phi^!\sG)\iso \phi_\sharp(\sF)\scten \sG.
\]
\end{thm}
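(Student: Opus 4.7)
The plan is to translate the entire statement into the language of $\cD$-linear functors, using the identification $\CShv(-,\cA)\simeq \hom_\cD(\Shv(-,\cD),\cA)$ provided by the free--forgetful adjunction $(-)\otimes \cD\colon \Prl\adj \Mod_\cD(\Prl)$. Under this translation, a cosheaf $\sG\in \CShv(\fY,\cA)$ corresponds to a $\cD$-linear colimit-preserving functor $\tilde{\sG}\colon \Shv(\fY,\cD)\to \cA$, and the definition of $\scten$ makes $\sF\scten(-)$ into the operation of pre-composing with $\sF\otimes(-)$. The goal becomes to exhibit both sides of the asserted formula as the same $\cD$-linear functor $\Shv(\fY,\cD)\to \cA$.

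For this to work one must first identify how $\phi_!$ and $\phi^!$ behave under the translation. Since $\phi^*\colon \Shv(\fY,\cD)\to \Shv(\fX,\cD)$ is symmetric monoidal, and in particular $\cD$-linear, the formula $\phi_!\sH(V)\simeq \sH(\phi^*V)$ upgrades to the identification $\widetilde{\phi_!\sH}\simeq \tilde{\sH}\circ \phi^*$. For $\phi^!$ the situation is more delicate: \Cref{formula_push_along_locally contractible} supplies the pointwise formula $\phi^!\sG(U)\simeq \sG(\phi_\sharp U)$, but upgrading this to an equivalence $\widetilde{\phi^!\sG}\simeq \tilde{\sG}\circ \phi_\sharp$ of $\cD$-linear functors requires $\phi_\sharp\colon \Shv(\fX,\cD)\to \Shv(\fY,\cD)$ to itself be $\cD$-linear. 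This is exactly the content of \Cref{locally_cont_linear}, and it is here that the local contractibility hypothesis on $\phi$ enters in an essential way.

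Granted these two identifications, evaluating both sides on an arbitrary test object $\sG'\in \Shv(\fY,\cD)$ yields
\[
\phi_!(\sF\scten \phi^!\sG)(\sG')\simeq \widetilde{\phi^!\sG}(\sF\otimes \phi^*\sG')\simeq \tilde{\sG}\bigl(\phi_\sharp(\sF\otimes \phi^*\sG')\bigr),
\]
and applying the sheaf-level projection formula from \Cref{locally_cont_linear} converts this to $\tilde{\sG}(\phi_\sharp\sF\otimes \sG')$, which is by definition the value of $\phi_\sharp(\sF)\scten\sG$ on $\sG'$. The main obstacle is not this pointwise verification, which is essentially tautological once the translations are in place, but rather coherence: one must check that the chain of equivalences is natural in $\sF$, $\sG$, and $\sG'$, and that the resulting identification assembles into a morphism of $\cD$-linear functors rather than merely of underlying functors. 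Organizing this coherence cleanly --- presumably by working throughout inside $\Mod_\cD(\Prl)$ and tracking everything as morphisms of module $\infty$-categories --- should account for most of the actual write-up.
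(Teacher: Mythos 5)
Your proposal is correct and follows essentially the same route as the paper: both identify cosheaves with $\cD$-linear functors on sheaves, evaluate both sides on a test sheaf using \Cref{formula_push_along_locally contractible} to get $\sG(\phi_\sharp(\sF\otimes\phi^*\sH))$ versus $\sG(\phi_\sharp\sF\otimes\sH)$, and then conclude via the $\Shv(\fY,\cD)$-linear projection formula of \Cref{locally_cont_linear}. The coherence concern you raise is handled in the paper exactly as you suggest, by noting that $\pf_\phi$ is itself a natural $\Shv(\fY,\cD)$-linear isomorphism, so pre-composition with it yields the desired isomorphism of $\cD$-linear functors.
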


\begin{proof}
By \Cref{formula_push_along_locally contractible}, 
we have an isomorphism of $\cD$-linear functors $\Shv(\fY,\cD)\to \cA$ in the variable $\sH$ of the form 
\[
\phi_!(\sF \scten \phi^!\sG)(\sH) \simeq  (\sF \scten \phi^!(\sG))(\phi^*\sH)
\simeq 
\phi^!\sG(\phi^*(\sH) \otimes \sF) \simeq 
\sG(\phi_\sharp(\phi^*\sH \otimes \sF)).
\]
Similarly, we have 
\[
(\phi_\sharp(\sF) \scten \sG)(\sH) \simeq \sG(\phi_\sharp(\sF)\otimes \sH).
\]
Since $\phi$ is locally contractible we have, by \Cref{locally_cont_linear}, a natural $\Shv(\fY,\cD)$-linear isomorphism
\[
    \pf_\phi \colon \phi_\sharp(\sF \otimes \phi^*\sH) \iso \phi_\sharp(\sF)\otimes \sH,
\] 
which induces the required isomorphism
via pre-composition.
\end{proof}

\subsubsection{Relative Tensor Product}
As for the tensor product of sheaves, we have a relative tensor product for a sheaf of modules and a cosheaf of modules over a sheaf of rings. Before we construct this relative tensor product, we first explain what is a cosheaf of modules.

Let $\fX$ be an $\infty$-topos and $\cD \in \Calg(\Prl)$. Then $\Shv(\fX,\cD)$ is a symmetric monoidal $\infty$-category, so that we can consider commutative rings inside this $\infty$-category, that is, $\cD$-valued sheaves of commutative rings.  For such a ring $\sR\in \Calg(\Shv(\fX,\cD))$, we can then define modules over $\sR$ inside every $\Shv(\fX,\cD)$-linear $\infty$-category.
Applying this construction to the $\Shv(\fX,\cD)$-linear $\infty$-category  $\CShv(\fX,\cA)$, we obtain the $\infty$-category
 of \tdef{cosheaves of modules} over $\sR$, given by 
\[
\mdef{\Mod_\sR(\CShv(\fX,\cA))}\simeq \Mod_{\sR}(\Shv(\fX,\cD))\otimes_{\Shv(\fX,\cD)} \CShv(\fX,\cA).
\]
Roughly, an object of $\Mod_\sR(\CShv(\fX,\cA))$ is a cosheaf $\sG$ together with an action map $\sR \scten \sG \to \sG$, which is coherently associative. 

The presentable $\infty$-category $\Mod_\sR(\CShv(\fX,\cA))$ is linear over $\Shv(\fX,\cD)$, and in particular, for a sheaf of $\sR$-modules $\sF$ and a cosheaf of $\sR$-modules $\sG$, we have a relative tensor product 
\[
\mdef{\sF\scten_\sR \sG} \in \Mod_\sR(\CShv(\fX,\cA)).  
\]

As for the usual tensor product, promoting a cosheaf of modules $\sG$ over $\sR$ to a functor on sheaves of $\sR$-modules, we have 
\[
(\sF \scten_\sR \sG)(\sF')\simeq \sG(\sF \otimes_\sR \sF'). 
\]

\subsubsection{Enriched Hom for Cosheaves}

We now turn to describe the right adjoint of $\scten$, the enriched hom functor.   
\begin{defn}
Let $\fX$ be an $\infty$-topos, let $\cD\in \Calg(\Prl)$ and let $\cA\in \Mod_{\cD}(\Prl)$. For cosheaves $\sG,\sG'\in \CShv(\fX,\cA)$ We denote by 
$\mdef{
\hom(\sG,\sG')
} \in \Shv(\fX,\cD)$
the sheaf characterized by
\[
\Map_{\Shv(\fX,\cD)}(\sF,\hom(\sG,\sG')) \simeq \Map_{\CShv(\fX,\cA)}(\sF \scten \sG,\sG'),
\]
and refer to it as the \tdef{$\cD$-enriched $\hom$ functor}. 
We denote by $\mdef{\Hom(\sG,\sG')}$ the global sections of $\hom(\sG,\sG')$.
\end{defn}
 Note that $\hom(\sG,\sG')$ exists because the tensor product operation $\scten$ preserves colimits in both variables. 
 Similarly, for $\sR \in \Calg(\Shv(\fX,\cD))$ we have an enriched $\hom$ for $\sR$-module cosheaves, denoted $\mdef{\hom_{\sR}(\sG,\sG')}$

In fact, the values of  $\hom(\sG,\sG')$ are easy to describe.
\begin{prop}
\label{internal_hom}
Let $\fX\in \RTop$, let $\cD\in \Calg(\Prl)$ and let $\cA\in \Mod_\cD(\Prl)$. 
For cosheaves $\sG,\sG'\in \CShv(\fX,\cA)$, the enriched $\hom$ functor between them is given by 
\[
\hom(\sG,\sG')(U) \simeq  \Hom(\sG|_U,\sG'|_U) \qin \cD. 
\]
\end{prop}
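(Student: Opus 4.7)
My plan is to reduce the proposition to verifying that the formation of enriched $\hom$ commutes with restriction along \'{e}tale geometric morphisms. Concretely, writing $j_U\colon\fX_{/U}\to\fX$ for the \'{e}tale geometric morphism associated with $U$, I will first establish a natural equivalence
\[
j_U^*\hom(\sG,\sG')\simeq\hom(\sG|_U,\sG'|_U)\qin\Shv(\fX_{/U},\cD).
\]
Granting this, the proposition follows immediately: the value of a $\cD$-valued sheaf $\sH$ at $U$ agrees with the global sections on $\fX_{/U}$ of $j_U^*\sH$, so the left-hand side, after taking global sections over $\fX_{/U}$, computes $\hom(\sG,\sG')(U)$, while the right-hand side computes $\Hom(\sG|_U,\sG'|_U)$ by the very definition of $\Hom$.

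To prove the local equivalence, I will use the Yoneda lemma in $\Shv(\fX_{/U},\cD)$. For $\sF'\in\Shv(\fX_{/U},\cD)$, the adjunction $j_{U!}\dashv j_U^*$ on sheaves and the defining property of the enriched $\hom$ give
\[
\Map(\sF',j_U^*\hom(\sG,\sG'))\simeq\Map(j_{U!}\sF',\hom(\sG,\sG'))\simeq\Map_{\CShv(\fX,\cA)}(j_{U!}\sF'\scten\sG,\sG').
\]
The crux is to identify the cosheaf $j_{U!}\sF'\scten\sG$ on $\fX$ with $j_{U!}(\sF'\scten\sG|_U)$. Since $j_U$ is \'{e}tale, it is locally contractible with $(j_U)_\sharp=j_{U!}$ by \Cref{etale_iff_locally_contractible_and_conservative}, and by \Cref{formula_push_along_locally contractible} the cosheaf pullback $j_U^!\sG$ coincides with the restriction $\sG|_U$. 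Invoking the projection formula \Cref{second_projection_formula} with $\phi=j_U$, $\sF=\sF'$ and target cosheaf $\sG$ yields
\[
j_{U!}\sF'\scten\sG\simeq (j_U)_!(\sF'\scten j_U^!\sG)=j_{U!}(\sF'\scten\sG|_U),
\]
which is exactly the desired identification.

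Continuing the chain of equivalences, the adjunction $j_{U!}\dashv j_U^!$ on cosheaves together with the defining property of the enriched $\hom$ for cosheaves on $\fX_{/U}$ give
\[
\Map(j_{U!}(\sF'\scten\sG|_U),\sG')\simeq\Map(\sF'\scten\sG|_U,\sG'|_U)\simeq\Map(\sF',\hom(\sG|_U,\sG'|_U)),
\]
and Yoneda now produces the equivalence $j_U^*\hom(\sG,\sG')\simeq\hom(\sG|_U,\sG'|_U)$. The main obstacle is purely bookkeeping: tracking that for the \'{e}tale $j_U$ the functor $j_U^!$ on cosheaves agrees with restriction to $\fX_{/U}$, and that the sheaf/cosheaf types line up correctly when applying \Cref{second_projection_formula}. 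Once these identifications are recorded, the proof amounts to a single application of the projection formula sandwiched between standard adjunction steps.
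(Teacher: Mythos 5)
Your proof is correct and uses the same essential ingredients as the paper's: the identification $j_{U\sharp}=j_{U!}$ for \'{e}tale $j_U$, the projection formula, and the adjunction $j_{U!}\dashv j_U^!$ on cosheaves. The paper computes $\hom(\sG,\sG')(U)$ directly as a chain of equivalences (functorially in $U$), whereas you package the same computation as a Yoneda argument establishing the local identification $j_U^*\hom(\sG,\sG')\simeq\hom(\sG|_U,\sG'|_U)$ first; these are two phrasings of the same argument.
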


\begin{proof}
Let $\one \in \cD$ be the unit and let $j\colon \fX_{/U}\to \fX$ be the \etale geometric morphism associated with $U$. By \Cref{formula_push_along_locally contractible} we have, functorialy in $U\in \fX$:
\begin{align*}
\hom(\sG,\sG')(U) &\simeq \Hom(j_\sharp j^*\one_\fX,\hom(\sG,\sG')) \simeq  
\Hom(j_\sharp j^*\one_{\fX} \scten \sG,\sG') \simeq \\ &\simeq\Hom(j_! j^!\sG,\sG') \simeq \Hom(j^!\sG,j^!\sG') = \Hom(\sG|_U,\sG'|_U).
\end{align*}

\end{proof}

\section{Sheaves on Semi-Algebraic Spaces} \label{shv:semialg}
Using the abstract theory of sheaves on $\infty$-topoi, we turn to study sheaves on semialgebraic spaces and Nash manifolds. In \Cref{subsec:sheaves_on_restricted_topological_spaces} we study sheaves on general restricted topological spaces. Then, in \Cref{subsec:comparison of topologies} we prove comparison results for local systems on a restricted topology and the topology it generates. In particular, we show (\Cref{Comparison_Shape_Semi_Algebraic_Space}) that these notions coincide for a semialgebraic space. Finally, in  \Cref{subsec:projection_formula_for_nash_submersion} we specialize to the case of Nash manifolds and show that every Nash submersion of Nash manifolds gives a locally contractible geometric morphism (\Cref{Nash_submersion_locally contractible}). 
\subsection{Sheaves on Restricted Topological Spaces}
\label{subsec:sheaves_on_restricted_topological_spaces}
    
    For a restricted topological space $X$, the poset $\Op(X)$ admits a Grothendieck topology in which the coverings are finite open covers. Consequently, we have an $\infty$-topos 
\[
\mdef{\Shv(X)}:=\Shv(\Op(X)).
\]
    and we can define the $\infty$-category $\mdef{\Shv(X,\cD)}$ as $\cD$-valued sheaves on this $\infty$-topos.
    Note that $\Shv(X)$ is, as usual, a left exact localization of the $\infty$-topos of presheaves $\PShv(X)$. Accordingly, $\Shv(X,\cD)$ is the full subcategory of $\PShv(X,\cD)= \Fun(\Op(X)^\op,\cD)$ spanned by the functors which satisfies descent with respect to finite covers. 

    A continuous map $\phi\colon X\to Y$ induces a geometric morphism $\phi\colon \Shv(X)\to \Shv(Y)$ such that $\phi_*\sF(U) \simeq \sF(\phi^{-1}(U))$. 
    The left adjoint $\phi^*$ takes a sheaf $\sF$ to the sheafification of the presheaf 
    \[
    V\mapsto \colim_{U\supseteq \phi(V)} \sF(U).
    \]
    
On restricted topological spaces, the sheaf condition
can be tested using only covers by pairs of open sets.  \begin{prop}
\label{shv_res_mv_norm}
Let $X$ be a restricted topological space and let $\cD\in\Prl$. A $\cD$-valued presheaf $\sF\in\PShv(X,\cD)$
is a sheaf if and only if it satisfies
\end{prop}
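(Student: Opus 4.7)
The ``only if'' direction is immediate: any sheaf satisfies descent with respect to every finite cover, in particular with respect to pair covers (and the empty cover, giving $\sF(\varnothing) \simeq \term$). The content of the proposition is the converse, which I would establish by induction on the size of a finite open cover. Concretely, I would unwind the sheaf condition as asserting that, for every finite cover $\{U_i\}_{i \in I}$ of $U \in \Op(X)$, the canonical map
\[
\sF(U) \longrightarrow \lim_{\varnothing \neq S \subseteq I} \sF\Bigl(\bigcap_{i \in S} U_i\Bigr)
\]
is an equivalence in $\cD$. For $|I| = 0$ this reduces to the normalization $\sF(\varnothing) \simeq \term$; for $|I| = 1$ it is automatic; for $|I| = 2$ it is exactly the Mayer--Vietoris pullback. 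Thus the problem reduces to propagating this to covers of size $n \geq 3$.

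For such a cover, I would pick $i_0 \in I$, set $I' := I \setminus \{i_0\}$ and $V := \bigcup_{i \in I'} U_i$, and apply the assumed Mayer--Vietoris condition to the pair $(V, U_{i_0})$ to obtain
\[
\sF(U) \simeq \sF(V) \times_{\sF(V \cap U_{i_0})} \sF(U_{i_0}).
\]
The inductive hypothesis, applied to the $(n{-}1)$-fold covers $\{U_i\}_{i \in I'}$ of $V$ and $\{U_i \cap U_{i_0}\}_{i \in I'}$ of $V \cap U_{i_0}$, identifies $\sF(V)$ and $\sF(V \cap U_{i_0})$ with the corresponding descent limits.

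The remaining step is to recognize the resulting pullback of limits as the descent limit for the original $n$-fold cover. This is a Fubini-style rearrangement: decompose the indexing poset $\{S \subseteq I : S \neq \varnothing\}$ according to whether $i_0 \in S$, and use the commutation of limits with pullbacks to reassemble the two partial limits, together with the summand $\sF(U_{i_0})$ coming from $S = \{i_0\}$, into the full limit. I expect the main (and only) obstacle to be organizing this poset decomposition cleanly; once the limits are seen to match, the induction closes and both directions of the proposition follow.
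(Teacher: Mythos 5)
Your plan is correct, and at bottom it is the same strategy as the paper's proof: induct on the size of the cover and let the binary Mayer--Vietoris axiom drive the inductive step. The two arguments organize that step differently, though. You peel off one member $U_{i_0}$, apply Mayer--Vietoris to the pair $(V,U_{i_0})$ with $V=\bigcup_{i\neq i_0}U_i$, feed the inductive hypothesis into $\sF(V)$ and $\sF(V\cap U_{i_0})$, and then reassemble the full descent limit by splitting the poset of nonempty subsets of $I$ according to whether $i_0\in S$; this is a limit over the collage of the functor $S'\mapsto S'\cup\{i_0\}$, and it is indeed the pullback of the two partial limits over the limit of the mixed terms, so your induction closes. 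The paper instead proves the stronger presheaf-level claim that $\sF_U\to C(\cU,\sF)$ is an equivalence, exploiting two facts: the value $C(\cU,\sF)(W)$ depends only on a subfamily of $\cU$ covering $W$, and $C(\cU,\sF)$ itself satisfies Mayer--Vietoris, being a limit of functors that do; an arbitrary $W$ is then written as $W'\cup W''$ with each piece covered by a proper subfamily. In effect the paper hides your Fubini step inside the Mayer--Vietoris property of the \v{C}ech presheaf, while your version makes the limit decomposition explicit; the price you pay is that you must also justify (or cite) replacing the $\Delta$-indexed \v{C}ech limit by the limit over the finite poset of nonempty subsets of $I$ --- true for covers by open, i.e.\ $(-1)$-truncated, subobjects, but not purely formal, so it deserves a sentence in the final write-up.
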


\begin{enumerate}
\item (\emph{Normalization}) $\sF(\es)$ is a final object of $\cD$. 
\item (\emph{Mayer-Vietoris}) For every pair of open sets $U,V\subseteq X$ we have 
\[
    \sF(U\cup V)\iso \sF(U)\underset{\sF(U\cap V)}{\times}\sF(V).
\]
\end{enumerate}
\begin{proof}
    Clearly, the sheaf condition implies the normalization and Mayer-Vietoris properties.  We shall show the converse.
    For an open set $V\subseteq X$ let $\sF_V$ be the sheaf given by $\sF_V(W) = \sF(V\cap W)$. For a finite open cover $\cU = \{U_i\}_{i\in I}$, consider the sheaf
\[
    \mdef{C(\cU,\sF)} := 
    \invlim_{\Delta}\left(\xymatrix{\prod\limits_{i\in I}\sF_{U_i} \ar@<0.5ex>[r] \ar@<-0.5ex>[r] &  \prod\limits_{(i,j)\in I^2} \sF_{U_i\cap U_j}\ar@<1ex>[r] \ar[r] \ar@<-1ex>[r] & \dots}\right),
\]
so that we have a canonical map $\comp\colon \sF \to C(\cU,\sF)$. 
The sheaf condition for $\sF$ with respect to the cover $\cU$ is equivalent to the condition that the map  that the map $\comp\colon \sF(U)\to C(\cU,\sF)(U)$ is an isomorphism. We shall prove, by induction on $|I|$, the stronger statement that $\sF_U \iso C(\cU,\sF)$. If $|I|=0$ this follows from the normalization axiom for $\sF$. For $|I|=1$ this is obvious, and does not depend on our assumptions on $\sF$. Hence, we may assume that $|I|\ge 2$. 

If an open set $W$ is contained in $\cup_{j\in J}U_j$ for some $J\subseteq I$, then, for $\cU' = \{U_j\}_{j\in J}$ we have $C(\cU,\sF)(W)\simeq C(\cU',\sF)(W)$. 
Thus, by induction, we may assume that $\comp$ is an isomorphism at open sets which contained in a union $\bigcup_{j\in J}U_j$ for a proper subset $J\subset I$. For every $W\subseteq U$, we can write $W = W' \cup W''$ where both $W'$ and $W''$ are contained in such a partial union.  

Now, we note that $C(\cU,\sF)$ itself satisfies the Mayer-Vietoris axiom, being a limit of functors that satisfy it. Hence, we can express $\comp$ as a composition of isomorphisms
\[
    \sF(W) \simeq \sF(W') \underset{\sF(W'\cap W'')} {\times} \sF(W'') \simeq C(\cU,\sF)(W') \underset{C(\cU,\sF)(W'\cap W'')}{\times} C(\cU,\sF)(W'')  \simeq  C(\cU,\sF)(W).  
\]
 \end{proof}

This result has immediate implications regarding the $\infty$-categories $\Shv(X,\cD)$. 

\begin{prop}\label{cpt_gen_shv_res}
Let $\cD$ be a compactly generated $\infty$-category. Then 
\begin{enumerate}
\item For every restricted topological space $X$, the full subcategory $\Shv(X,\cD)\subseteq \PShv(X,\cD)$ is closed under filtered colimits. In particular, $\Shv(X)$ is compactly generated, with compact generators the sheaves represented by open sets in $X$. 
\item For every continuous map $\phi\colon X\to Y$ of restricted topological spaces, the functor $\phi_*\colon \Shv(X,\cD)\to \Shv(Y,\cD)$ preserves filtered colimits.
\end{enumerate}
\end{prop}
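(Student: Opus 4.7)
The plan is to apply \Cref{shv_res_mv_norm} to reduce the sheaf condition to a finite-limit condition (normalization plus Mayer--Vietoris), and then exploit the fact that in a compactly generated $\infty$-category filtered colimits commute with finite limits. The latter holds because a set of compact generators for $\cD$ yields a jointly conservative, filtered-colimit-preserving family of functors $\cD \to \Spaces$, and filtered colimits in $\Spaces$ are left exact.

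For the first assertion of (1), I would take a filtered diagram $\{\sF_i\}_{i\in I}$ in $\Shv(X,\cD)$ and form its pointwise colimit $\sF := \colim_i \sF_i$ in $\PShv(X,\cD) = \Fun(\Op(X)^\op, \cD)$. To conclude that $\sF$ lies in $\Shv(X,\cD)$, I would verify the two conditions of \Cref{shv_res_mv_norm}: the normalization condition reduces to the identity $\colim_i \term_\cD \simeq \term_\cD$, and the Mayer--Vietoris square for $\sF$ is the filtered colimit of the Mayer--Vietoris squares for the $\sF_i$, which is a pullback by left exactness of filtered colimits in $\cD$.

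For the compact-generation statement, I would specialize the first part of (1) to $\cD = \Spaces$. Then the inclusion $\Shv(X) \into \PShv(X)$ preserves filtered colimits, so its left adjoint, the sheafification functor $L$, preserves compact objects. For each $U \in \Op(X)$, the representable $h_U := \Map_{\Op(X)}(-,U)$ is compact in $\PShv(X)$ because mapping out of it is the evaluation $\sF \mapsto \sF(U)$, which preserves all colimits (colimits in $\PShv(X)$ being pointwise). Hence $L(h_U)$ is compact in $\Shv(X)$, and since every presheaf is a colimit of representables and $L$ is colimit-preserving, the family $\{L(h_U)\}_{U \in \Op(X)}$ generates $\Shv(X)$ under colimits.

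Part (2) will then be a direct consequence of (1): the pushforward $\phi_*$ is the restriction to sheaves of the presheaf-level functor $\sF \mapsto (V \mapsto \sF(\phi^{-1}V))$, which is a composition of evaluation functors and therefore preserves all colimits at the presheaf level; since by (1) filtered colimits of sheaves are computed in presheaves both on $X$ and on $Y$, $\phi_*$ preserves filtered colimits. The only non-formal ingredient in the entire argument is the commutation of filtered colimits with finite limits in $\cD$; this is the main (minor) obstacle, to be dealt with via the conservative-generator argument above or by citation to \cite{HTT}.
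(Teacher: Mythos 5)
Your proposal is correct and follows essentially the same route as the paper: reduce the sheaf condition to the finite-limit conditions of \Cref{shv_res_mv_norm} and use that filtered colimits commute with finite limits in a compactly generated $\infty$-category, deduce compact generation from the fact that sheafification preserves compact objects, and obtain (2) by computing filtered colimits of sheaves at the presheaf level on both sides. The only difference is cosmetic: you spell out directly the argument the paper delegates to \cite[Lemma 5.5.1.4]{HTT}.
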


\begin{proof}
    For $(1)$, since the sheaf conditions in \Cref{shv_res_mv_norm} involve only finite limits, the claim follows from the fact that in a compactly generated $\infty$-categoy filtered colimits commute with finite limits. For the ``in particular" part, it follows from \cite[Lemma 5.5.1.4]{HTT} that the sheafification functor $L\colon \PShv(X)\to \Shv(X)$ carries compact generators to compact generators. Hence, the claim follows from the fact that the Yoneda image of $\Op(X)$ in $\PShv(X)$ consists of compact generators. 

    For $(2)$, we can decompose $\phi_*$ as
\[
    \Shv(X,\cD) \into \PShv(X,\cD) \oto{\phi_*} \PShv(Y,\cD) \oto{L_Y}\Shv(Y,\cD).
\]
The first inclusion preserves filtered colimits by point $(1)$, the second for trivial reasons, and the last since $L_Y$ is a left adjoint. 
\end{proof}

\subsubsection{Closed Emdeddings}

Open embeddings give \'{e}tale geometric morphisms and hence are well understood. 
We turn to study the geometric morphisms induced from another simple type of continuous maps: closed embeddings. 
For closed embeddings, we have the following explicit formula for the pullback. 
\begin{prop}
\label{pull_closed_embedding}
    Let $\cD$ be a compactly generated $\infty$-category and let  $i\colon Z\into X$
   be a closed embedding. For $\sF\in\Shv(X,\cD)$,
    we have 
    \[
        i^{*}\sF(U)\simeq\colim\limits_{V\cap Z \supseteq U}\sF(V),
    \]
\end{prop}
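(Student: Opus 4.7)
The plan is to verify the formula in two stages: first at the level of presheaves, then by showing that the resulting presheaf is already a sheaf. At the presheaf level, the pushforward $i_*^{\mathrm{pre}}\sG(V) := \sG(V \cap Z)$ agrees with $i_*\sG$ on sheaves, and the standard left Kan extension formula for its left adjoint produces
\[
    i^{*,\mathrm{pre}}\sF(U) = \colim_{V \in I_U} \sF(V), \qquad I_U := \{V \in \Op(X) : V \cap Z \supseteq U\},
\]
with structure maps given by the restrictions of $\sF$ along inclusions $V \supseteq V'$. It remains to show that $i^{*,\mathrm{pre}}\sF$ is a sheaf on $Z$, since then $i^*\sF$ coincides with it after sheafification.

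By \Cref{shv_res_mv_norm}, I need only verify normalization and Mayer-Vietoris. Normalization is immediate: in $I_\emptyset = \Op(X)$, the empty set receives a restriction map from every other $V$, so the colimit reduces to $\sF(\emptyset)$, which is terminal in $\cD$ by the sheaf property of $\sF$. For Mayer-Vietoris, fix $U_1, U_2 \in \Op(Z)$. Each $I_U$ is filtered, being closed under binary intersection in $\Op(X)$, and since $\cD$ is compactly generated, filtered colimits commute with finite limits. The key input is the Mayer-Vietoris identity $\sF(V_1 \cup V_2) \simeq \sF(V_1) \times_{\sF(V_1 \cap V_2)} \sF(V_2)$ for $\sF$, which I propose to propagate through the colimits via a joint filtered category $J$ of triples $(V_1, V_2, W)$ with $V_i \in I_{U_i}$, $W \in I_{U_1 \cap U_2}$, and $W \subseteq V_1 \cap V_2$. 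The three projections $J \to I_{U_1}, I_{U_2}, I_{U_1 \cap U_2}$ have filtered fibers and hence are cofinal, which rewrites each factor of the target Mayer-Vietoris pullback as a colimit over $J$.

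The substantive step, and the main obstacle, is the final identification of $\colim_J (\sF(V_1) \times_{\sF(W)} \sF(V_2))$ with $\colim_{V \in I_{U_1 \cup U_2}} \sF(V)$. This reduces to a further cofinality argument and a shrinking procedure on pairs $(V_1, V_2)$ forcing their intersection to fit inside a prescribed $W$; once achieved, applying the Mayer-Vietoris identity to $V_1 \cup V_2$ closes the loop. Here the closedness of $Z$ in $X$ is essential: every open $U \subseteq Z$ lifts to an open $\tilde U \subseteq X$ with $\tilde U \cap Z = U$, and these liftings provide the geometric flexibility to perform the required shrinkings.
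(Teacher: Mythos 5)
Your overall strategy coincides with the paper's: compute $i^*$ at the presheaf level via the left Kan extension formula, then show that the presheaf $\sF'(U)=\colim_{V\cap Z\supseteq U}\sF(V)$ already satisfies the two conditions of \Cref{shv_res_mv_norm}, using that filtered colimits commute with finite limits in a compactly generated $\cD$. The normalization step is fine (the empty set is terminal in the filtered index poset). The problem is that the step you yourself single out as ``the substantive step, and the main obstacle'' is exactly the mathematical content of the Mayer--Vietoris verification, and you do not carry it out. Whether you phrase it as cofinality of the sub-poset $\{W=V_1\cap V_2\}$ inside your category $J$, or --- as the paper does, taking $W=W_1\cap W_2$ from the outset and working over the product poset $T_1\times T_2$ of opens containing $U_1$ and $U_2$ respectively --- as cofinality of $(W_1,W_2)\mapsto W_1\cap W_2$ into $I_{U_1\cap U_2}$, the claim you need is a separation statement: for opens $U_1,U_2\subseteq Z$ and any open $V\subseteq X$ with $V\supseteq U_1\cap U_2$, there exist opens $W_1\supseteq U_1$ and $W_2\supseteq U_2$ in $X$ with $W_1\cap W_2\subseteq V$.

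This is not a formal consequence of $Z$ being closed, and your proposed justification --- that opens of $Z$ lift to opens of $X$ --- holds for arbitrary subspaces and provides no separation. Indeed the statement can fail for a closed embedding of restricted topological spaces: take $X=\{p,q,r\}$ with opens $\emptyset,\{r\},\{p,r\},\{q,r\},X$ and $Z=\{p,q\}$ (closed and discrete in the induced topology); every open containing $p$ and every open containing $q$ both contain $r$, so $U_1=\{p\}$, $U_2=\{q\}$, $V=\emptyset$ cannot be separated, and for $\sF=k_*G$ with $k\colon\{r\}\into X$ the right-hand side of the proposition at $U=Z$ is $\sF(X)=G(\{r\})$ while $i^*\sF(Z)=G(\{r\})\times G(\{r\})$. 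So the ``shrinking procedure'' genuinely requires a geometric input about the pair $(X,Z)$ (it does hold in the semialgebraic setting in which the proposition is applied), and without supplying it your argument does not close. To be fair, the paper's own proof compresses this same point into the phrase ``a straight-forward cofinality argument''; but since you explicitly isolate it as the crux and then stop, the proof as written has a gap precisely there, and the auxiliary third component $W$ of your category $J$ only adds bookkeeping without removing the difficulty.
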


\begin{proof}
Let $\sF'\in \PShv(Z,\cD)$ be the functor determined by the formula 
\[
    \mdef{\sF'(U)} = \colim\limits_{V\cap Z \supseteq U}\sF(V).
\] 
Then $i^*\sF$ is the sheafification of $\sF'$, so it would suffice to check that $\sF'$ is a sheaf. The normalization axiom for $\sF'$ is clear, so we have to prove the Mayer-Vietoris axiom. Let $U_1$ and $U_2$ be open sets in $X$, and let $T_i$ be the poset of open sets in $X$ which contain $U_i$. Then, by a straight-forward cofinality argument, we have 
\[
    \sF'(U_1 \cap U_2) \simeq \colim_{W_1\in T_1,W_2 \in T_2} \sF(W_1 \cap W_2)
\]
and 
\[
    \sF'(U_1 \cup U_2) \simeq \colim_{W_1\in T_1,W_2 \in T_2} \sF(W_1 \cup W_2).
\]

Since $\cD$ is compactly generated, filtered colimits in $\cD$ preserve pullbacks, so that 
\begin{align*}
&\sF'(U_1 \cup U_2) \simeq \colim_{W_1\in T_1,  W_2\in T_2} \sF(W_1\cup W_2) \simeq 
\colim_{W_1\in T_1,  W_2\in T_2} \sF(W_1)\underset{\sF(W_1\cap W_2)}{\times} \sF(W_2) \simeq \\ 
&\simeq \sF'(U_1) \underset{\sF'(U_1 \cap U_2)}{\times} \sF'(U_2). 
\end{align*}
\end{proof}

This formula for the pullback along a closed embedding allows us to show that a closed embedding of restricted topological spaces induces a closed immersion of $\infty$-topoi. Indeed, we have
\begin{prop} \label{open_closed_res_spc_shv}
    Let $X$ be a restricted topological space, and let $U\subseteq X$ be an open set with complement $i\colon Z \into X$. The geometric morphism $i\colon \Shv(Z)\to \Shv(X)$ exhibits $\Shv(Z)$ as a closed complement of the $(-1)$-truncated object  $U \in \Shv(X)$. In other words, we have 
\[
    \Shv(X\setminus U)\simeq \Shv(X)\setminus U.
\]
\end{prop}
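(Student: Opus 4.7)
The plan is to identify the geometric morphism $i\colon\Shv(Z)\to\Shv(X)$ with the inclusion of the closed complement of $U\in\Shv(X)$, and then invoke \Cref{top_rec}. First, the representable sheaf on $U\in\Op(X)$ is a subobject of the terminal sheaf on $X$: its value at $V$ is a point when $V\subseteq U$ and empty otherwise, so $U\times U\simeq U$ in $\Shv(X)$. Hence $U$ is $(-1)$-truncated, and by \Cref{top_rec}, $\Shv(X)$ is a recollement of $\Shv(X)_{/U}$ and $\Shv(X)\setminus U$. It therefore suffices to show that $i_*\colon\Shv(Z)\to\Shv(X)$ is fully faithful with essential image $\Shv(X)\setminus U=\{\sF\in\Shv(X):j^*\sF\simeq\term\}$.

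For full faithfulness I would unwind the counit via the formula of \Cref{pull_closed_embedding}: for $W\in\Op(Z)$,
\[
i^*i_*\sG(W) \simeq \colim_{V\in\Op(X),\, V\cap Z\supseteq W}\sG(V\cap Z).
\]
The assignment $V\mapsto V\cap Z$ factors this colimit through the poset $J=\{W'\in\Op(Z):W'\supseteq W\}$, with arrows $W_1'\to W_2'$ when $W_1'\supseteq W_2'$; its fibres are filtered (closed under binary intersection), hence weakly contractible, so the colimit collapses to $\colim_{W'\in J}\sG(W')$. Since $W$ is the terminal object of $J$, this colimit equals $\sG(W)$, yielding $i^*i_*\iso\Id$.

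For the essential image, the direction that $i_*$ lands in $\Shv(X)\setminus U$ is immediate: if $U'\subseteq U$, then $(i_*\sG)(U')=\sG(U'\cap Z)=\sG(\varnothing)\simeq\term$. For the reverse direction, I would take $\sF\in\Shv(X)$ with $\sF|_U\simeq\term$ and show that the unit $\sF\to i_*i^*\sF$ is an equivalence at every $V\in\Op(X)$, i.e.\ that the map
\[
\sF(V)\to i^*\sF(V\cap Z)\simeq\colim_{V'\in I}\sF(V'),\quad I=\{V'\in\Op(X):V'\cap Z\supseteq V\cap Z\},
\]
is an isomorphism. The subposet $I'=\{V'\in I:V'\subseteq V,\,V'\cap Z=V\cap Z\}$ is cofinal in $I$, because for any $V'\in I$ the intersection $V\cap V'$ lies in $I'$ and is contained in $V'$, and pairs in the slice $I'_{V'/}$ are dominated by their intersection. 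For $V'\in I'$ the decomposition $V=V'\cup(V\cap U)$ has intersection $V'\cap U$, so the Mayer-Vietoris axiom of \Cref{shv_res_mv_norm} gives
\[
\sF(V)\simeq \sF(V')\times_{\sF(V'\cap U)}\sF(V\cap U)\simeq\sF(V'),
\]
via the restriction map, using $\sF|_U\simeq\term$. Thus every transition map in the diagram $\sF|_{I'}$ is an equivalence, so $\sF|_{I'}$ is equivalent to the constant diagram with value $\sF(V)$; since $V$ is an initial object of $I'$ the classifying space $|I'|$ is contractible, and the colimit is $\sF(V)$.

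The main technical step is the combination of this Mayer-Vietoris collapse on $I'$ with the cofinality of $I'\subseteq I$; both are elementary once the decomposition $V=V'\cup(V\cap U)$ for $V'\in I'$ is observed, together with the assumption $\sF|_U\simeq\term$ that trivialises the two Mayer-Vietoris factors supported on $U$.
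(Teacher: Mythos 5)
Your proof is correct, and for the essential-image computation it follows the paper's own strategy: compute the unit $\sF\to i_*i^*\sF$ at a test open $V$ via the colimit formula of \Cref{pull_closed_embedding}, pass to the cofinal subposet $\{V'\colon V\cap Z\subseteq V'\subseteq V\}$, and collapse every transition map by Mayer--Vietoris for the cover $V=V'\cup(V\setminus Z)$ together with $\sF|_U\simeq\term$. You are more careful than the paper in two places: you prove the full faithfulness of $i_*$ (which the paper treats as given) from the same colimit formula, and you make explicit the cofinality of $I'\subseteq I$, which the paper silently uses when reindexing the colimit. The one justification worth adjusting is in the full-faithfulness step: you invoke contractibility of the \emph{fibres} of $V\mapsto V\cap Z$, but cofinality in the $\infty$-categorical sense requires weak contractibility of the comma categories $W'/\pi$ (Quillen's Theorem A); the fibre criterion suffices only after first checking that $\pi$ is a cocartesian fibration. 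Here the relevant comma categories are nonempty and closed under binary intersection -- exactly the check you carry out for $I'\subseteq I$ in the other step -- so the conclusion is unaffected once the reasoning is phrased in those terms.
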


\begin{proof}
Let $j\colon U\into X$ denote the embedding. Since $i_*\colon \Shv(Z)\to \Shv(X)$ is fully faithful, it suffices to show that its essential image consist of those sheaves $\sF$ such that $j^*\sF \simeq \term$. Namely, we wish to show that if $j^*\sF\simeq \term$ then the unit map $\sF\to i_*i^*\sF$ is an isomorphism. 

By \Cref{pull_closed_embedding}, the value of this unit map at $V\in \Op(X)$ is the canonical map 
\[
\sF(V) \simeq \colim_{V\cap Z \subseteq W\subseteq V} \sF(V) \to \colim_{V\cap Z \subseteq W\subseteq V} \sF(W). 
\]
We will finish by showing that, for every $W\in \Op(X)$ such that $V\cap Z\subseteq W \subseteq V$, the restriction map $\sF(V)\to \sF(W)$ is an isomorphism. 

The open cover $V = W \cup V\setminus Z$ gives a pullback square 
\[
\xymatrix{
\sF(V) \ar[r]\ar[d] & \sF(V\setminus Z) \ar[d]\\ 
\sF(W)\ar[r] & \sF(W\setminus Z). 
}
\]
Since $\sF$ takes open subsets of $U$ to the terminal object, we deduce that 
\[
\sF(V\setminus Z)\simeq \sF(W\setminus Z) \simeq \term \qin \Spaces
\]
and hence the right vertical map in the above square is an isomorphism. We deduce that the restriction map $\sF(V)\to \sF(W)$ is an isomorphism as well. 
\end{proof}

As a corollary, we obtain 

\begin{corl} \label{op_cl_dec_joint_cons_seqs}
Let $X$ be a restricted topological space and let $j\colon U\into X$ be an open embedding, with closed complement $i\colon Z\into X$. For every compactly generated $\infty$-category $\cD$, the functors $j^*\colon \Shv(X,\cD)\to \Shv(Z,\cD)$ and $i^*\colon \Shv(X,\cD) \to \Shv(U,\cD)$ are jointly conservative. If $\cD$ is also stable, then we have cofiber sequences 
\[
j_!j^* \to \Id \to i_*i^* 
\]
and
\[
i_*i^! \to \Id \to j_*j^* 
\]
in $\End(\Shv(X,\cD))$. 
\end{corl}

\begin{proof}
By \Cref{open_closed_res_spc_shv} for restricted topological spaces and \cite[Corollary 7.3.2.10]{HTT}, we have $\Shv(Z)\simeq \Shv(X)\setminus U$. Hence, by \Cref{shv_rec} we see that $\Shv(X,\cD)$ is a recollement of $\Shv(U,\cD)$ and $\Shv(Z,\cD)$. The result now follows from \Cref{rec_props}.  
\end{proof}

We now derive a version of the Mayer-Vietoris property for closed sets. 
For a restricted topological space $X$, a sheaf $\sF\in \Shv(X,\cD)$ and a subset $S\subseteq X$ we denote by $\sF(S)$ the global sections of the restriction of $\sF$ to $S$. 

\begin{prop}
\label{Mayer_Vietoris_closed_cover} Let $X$ be a (restricted)
topological space and let $\cD$ be a compactly generated $\infty$-category. Let $Z_1,Z_2\subseteq X$ be closed subsets of $X$. For every sheaf $\sF \in \Shv(X,\cD)$,  we have 
\[
\sF(Z_1 \cup Z_2) \iso \sF(Z_1)\underset{\sF(Z_1 \cap Z_2)}{\times} \sF(Z_2)
\]

\end{prop}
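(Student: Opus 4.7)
The plan is to reduce the closed Mayer--Vietoris statement to the open Mayer--Vietoris axiom of \Cref{shv_res_mv_norm}, by expressing sections over closed sets as filtered colimits over their open neighborhoods and then commuting these colimits with the fiber product.

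First I would reduce to the case $X = Z_1 \cup Z_2$. Since $\sF(S)$ depends only on the restriction $\sF|_S$, and all four subsets appearing in the statement are contained in the closed subspace $Y := Z_1 \cup Z_2$, replacing $\sF$ by its pullback along the closed embedding $Y \hookrightarrow X$ does not alter any value involved. Next, applying \Cref{pull_closed_embedding} to each of the closed embeddings $Z_1, Z_2, Z_1 \cap Z_2 \hookrightarrow X$ and evaluating at the total closed set gives
\[
\sF(Z) \simeq \colim_{V \supseteq Z} \sF(V),
\]
the colimit being indexed by the filtered poset of open neighborhoods of $Z$ in $X$.

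The key cofinality observation, which is the only place the reduction is genuinely used, is that for any open $W \supseteq Z_1 \cap Z_2$ the open sets $V_i := W \cup (X \setminus Z_{3-i})$ satisfy $V_i \supseteq Z_i$ and $V_1 \cap V_2 = W$, since $(X \setminus Z_1) \cap (X \setminus Z_2) = X \setminus (Z_1 \cup Z_2) = \emptyset$. This shows that $(V_1, V_2) \mapsto V_1 \cap V_2$ is a cofinal functor from pairs with $V_i \supseteq Z_i$ to open neighborhoods of $Z_1 \cap Z_2$, so
\[
\sF(Z_1 \cap Z_2) \simeq \colim_{V_1 \supseteq Z_1,\, V_2 \supseteq Z_2} \sF(V_1 \cap V_2),
\]
indexed by the same filtered poset that computes $\sF(Z_1)$ and $\sF(Z_2)$.

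Finally, since $\cD$ is compactly generated, filtered colimits commute with the finite limit defining the fiber product, so
\[
\sF(Z_1) \underset{\sF(Z_1 \cap Z_2)}{\times} \sF(Z_2) \simeq \colim_{V_1, V_2} \Bigl(\sF(V_1) \underset{\sF(V_1 \cap V_2)}{\times} \sF(V_2)\Bigr) \simeq \colim_{V_1, V_2} \sF(V_1 \cup V_2),
\]
the second equivalence being the open Mayer--Vietoris axiom of \Cref{shv_res_mv_norm}. Since $V_i \supseteq Z_i$ forces $V_1 \cup V_2 \supseteq Z_1 \cup Z_2 = X$, the indexing diagram is constant on $\sF(X) = \sF(Z_1 \cup Z_2)$, giving the desired equivalence. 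The only substantive obstacle is the cofinality in the third paragraph; everything else is formal manipulation of the sheaf axioms and the colimit--limit commutation available in compactly generated $\infty$-categories.
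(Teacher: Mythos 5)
Your proof is correct, but it takes a genuinely different route from the paper's. The paper proves the stronger, sheaf-level statement $\sF\iso (i_1)_*i_1^*\sF\times_{i_*i^*\sF}(i_2)_*i_2^*\sF$ and verifies it after applying the jointly conservative, left exact functors $i^*$ (restriction to $Z_1\cap Z_2$, where the square becomes trivial) and $j^*$ (restriction to the open complement of $Z_1\cap Z_2$, where $Z_1$ and $Z_2$ become complementary closed sets, hence open, so the open Mayer--Vietoris axiom applies); this leans on the recollement machinery of \Cref{op_cl_dec_joint_cons_seqs}. You instead compute directly: sections over a closed set are a filtered colimit over its open neighborhoods via \Cref{pull_closed_embedding}, your explicit cofinality ($V_i=W\cup(X\setminus Z_{3-i})$ gives $V_1\cap V_2=W$ exactly, once one has reduced to $X=Z_1\cup Z_2$) aligns the three colimits over a single filtered index poset of pairs, and compact generation of $\cD$ lets the filtered colimit pass through the pullback, reducing everything to the open Mayer--Vietoris axiom term by term. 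Your route is more elementary and self-contained, avoiding the recollement formalism entirely (it is essentially the same style of argument the paper itself uses to prove \Cref{pull_closed_embedding}); the paper's route is shorter given the machinery already in place and yields the local, sheaf-level version of the statement rather than only its global sections. Both arguments ultimately rest on the same two inputs: the colimit formula for pullback along a closed embedding and the open Mayer--Vietoris property of sheaves.
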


\begin{proof}
Replacing $X$ by $Z_1 \cup Z_2$ we may assume that $Z_1$ and $Z_2$ cover $X$. Let $i_1$ and $i_2$ denote the inclusions of $Z_1$ and $Z_2$ into $X$, and let $i$ be the inclusion of their intersection. 
It would suffice to show that 
\[
    \sF \iso (i_1)_* i_1^*\sF \underset{i_*i^*\sF}{\times} (i_2)_*i_2^*\sF
\]
Let $j\colon X\setminus(Z_1 \cap Z_2)\into X$ be the inclusion, so that by \Cref{op_cl_dec_joint_cons_seqs}, the functors
$j^{*}$ and $i^{*}$ are jointly conservative and left exact. After
applying $i^{*}$, the statement reduces to the case where $Z_1 = Z_2 = X$, in which it is clear. 
After applying $j^{*}$, we arrive at a situation in which $Z_{1}$ and $Z_{2}$ are disjoint and hence open.
The statement, in this case, is a consequence of the Mayer-Vietoris
property for pairs of \emph{open} sets.
\end{proof}

\subsubsection{Homotopy Dimension}
To a restricted topological space $X$, we have associated an $\infty$-topos $\Shv(X)$, which allows us to study invariantly sheaves on $X$ valued in $\infty$-categories. In general, we expect properties of $X$ to be reflected by $\Shv(X)$. We now show that this is the case for the dimension; the inductive dimension of $X$ is closely related to the \emph{homotopy dimension} of $\Shv(X)$, in the sense of \cite[\S 7.2]{HTT}. 

Recall that an object $U$ in an $\infty$-topos $\fX$ has homotopy sheaves $\pi_k(U)\in \Shv(\fX_{/U},\Sets)$ (see \cite[Denition 6.5.1.1]{HTT}). For $d\in \NN$, we say that $U$ is \tdef{$d$-connective} if it covers the terminal object and $\pi_i(f) \simeq \term$ for every $i<d$.
\begin{defn} (\cite[Definition 7.2.1.1]{HTT})
    An $\infty$-topos $\fX$ is of \tdef{homotopy dimension $\le d$} if every $d$-connective object $U\in \fX$ admits a global section $\term_\fX \to U$.    
\end{defn}

Under mild assumptions, the inductive dimension of $X$ bounds the homotopy dimension of $\Shv(X)$. 

\begin{defn} \label{def:Proper_Refinements} 
    Let $X$ be a closureful restricted
    topological space, and $\{U_i\}_{i=1}^n$ be an open cover of $X$. A \tdef{proper refinement} of $\{U_i\}_{i=1}^n$ is an open cover $\{V_i\}_{i=1}^n$ of $X$ such that, for every $1\le i \le n$ we have $\overline{V_i} \subseteq U_i$. 
\end{defn}

\begin{prop} \label{hypercompleteness_nice_restricted}
    Let $X$ be a closureful
    restricted topological space of inductive dimension $\le d$. 
    If every open cover of $X$ admits a proper refinement, 
    then $\Shv(X)$ is of homotopy dimension $\le d$. 
\end{prop}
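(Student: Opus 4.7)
The natural approach is induction on $d$. The base case $d = -1$ forces $X = \emptyset$ (by \Cref{def:Dimension_Restricted_Topological_Space}), so $\Shv(X)$ is the trivial $\infty$-category and every object admits a (unique) global section, giving homotopy dimension $\le -1$.

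For the inductive step, fix $U \in \Shv(X)$ which is $d$-connective; the goal is to produce a global section $\term_{\Shv(X)} \to U$. Since $U$ is in particular $0$-connective, the map $U \to \term$ is an effective epimorphism, so there is a finite open cover $\{V_i\}_{i=1}^n$ of $X$ (finite because covers in the restricted topology are finite) together with sections $s_i \in U(V_i)^\simeq$. Apply the proper-refinement hypothesis to produce opens $\{W_i\}_{i=1}^n$ covering $X$ with $Z_i := \overline{W_i} \subseteq V_i$, so $\{Z_i\}$ is a closed cover of $X$. Note that $\partial W_i = Z_i \setminus W_i$ has inductive dimension $\le d-1$ by the definition of inductive dimension.

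I would then construct sections $\sigma_k \in U(Y_k)^\simeq$ on $Y_k := Z_1 \cup \cdots \cup Z_k$ by induction on $k$, taking $\sigma_1 := s_1|_{Z_1}$ and arguing that $\sigma_n$ supplies the desired global section on $Y_n = X$. At each stage, Mayer--Vietoris for closed covers (\Cref{Mayer_Vietoris_closed_cover}) gives
\[
    U(Y_k) \simeq U(Y_{k-1}) \times_{U(Y_{k-1} \cap Z_k)} U(Z_k),
\]
so extending $\sigma_{k-1}$ reduces to finding $\tau \in U(Z_k)$ whose restriction to $Y_{k-1} \cap Z_k$ is homotopic to $\sigma_{k-1}|_{Y_{k-1} \cap Z_k}$, i.e., showing that the fiber of the restriction map $U(Z_k) \to U(Y_{k-1} \cap Z_k)$ at $\sigma_{k-1}|_{Y_{k-1} \cap Z_k}$ is nonempty. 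The candidate $s_k|_{Z_k}$ comes close but its restriction need not agree with that of $\sigma_{k-1}$.

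\textbf{Main obstacle.} The crux is this gluing step, which demands a dimension-reduction argument. Heuristically, the two candidate sections $\sigma_{k-1}|_{Y_{k-1}\cap Z_k}$ and $s_k|_{Y_{k-1}\cap Z_k}$ both live in $U(Y_{k-1}\cap Z_k)$; the obstruction to realizing them as restrictions of a common section over $Z_k$ should be measurable by a sheaf on a neighborhood of $\partial W_k$ (the ``transition region'' where the gluing occurs), constructed as an appropriate loop/path-space sheaf of $U|_{V_k}$ pointed at the two sections. One expects this auxiliary sheaf to be $(d-1)$-connective, so that applying the inductive hypothesis to a closed subspace of $\partial W_k$ (which has inductive dimension $\le d-1$ by \Cref{Locally_closed_subset_weakly_smaller_dimension}) kills the obstruction. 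To make this rigorous I would (a) verify that the relevant closed subspaces inherit closureful-ness and the proper-refinement property so the induction hypothesis applies, and (b) strengthen the induction hypothesis to a \emph{relative} statement about $(d-1)$-connectivity of the fibers of restriction maps $U(B) \to U(A)$ when $A \subseteq B$ is a closed inclusion whose ``defect'' lies in a space of dimension $\le d-1$. This mirrors the strategy of \cite[Theorem 7.2.3.6]{HTT}, adapted from the paracompact topological setting to the restricted-topology setting using the proper-refinement hypothesis in place of paracompactness.
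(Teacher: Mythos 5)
Your skeleton --- induction on $d$, passage to a proper refinement, gluing along a closed cover via \Cref{Mayer_Vietoris_closed_cover}, and dimension reduction on a boundary --- is the same as the paper's, but the decisive gluing step is left as a heuristic, and the way you have set it up makes it genuinely problematic. You propose to glue over $Y_{k-1}\cap Z_k$. This overlap is a closed subset of $X$ which in general has the \emph{same} inductive dimension as $X$: the $Z_i$ are closures of members of an open cover and can overlap on full-dimensional pieces, so the obstruction you need to kill does not live on a space of dimension $\le d-1$, and localizing it to ``a neighborhood of $\partial W_k$'' is not justified. The relative strengthening of the induction hypothesis you allude to (in the spirit of HTT's treatment of paracompact spaces) is a substantial argument in its own right and is not carried out here; as written, the proof does not close.

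The paper's proof avoids all of this with two moves. First, it uses only a \emph{two-piece} closed cover, $K_1=\overline{U_1}$ and $K_2=X\setminus U_1$ (handling $K_2$ by a secondary induction on the number $n$ of opens in the cover, since $K_2$ is covered by the remaining $n-1$ opens and inherits closurefulness, dimension $\le d$, and the proper-refinement property). The point of this choice is that the overlap is exactly $K_1\cap K_2=\partial U_1$, which has inductive dimension $\le d-1$ by the very definition of inductive dimension. Second, it never attempts to match two particular sections: since the sheaf $\sF$ is $d$-connective and, by the induction on $d$, $\Shv(\partial U_1)$ has homotopy dimension $\le d-1$, \cite[Lemma 7.2.1.7]{HTT} shows that the space $\sF(\partial U_1)$ is \emph{connected}. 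A homotopy fiber product of non-empty spaces over a connected space is automatically non-empty, and by \Cref{Mayer_Vietoris_closed_cover} the fiber product $\sF(K_1)\times_{\sF(\partial U_1)}\sF(K_2)$ is $\sF(X)$. Replacing your section-matching step with this connectivity argument over $\partial U_1$ (and the associated double induction) is what is needed to complete the proof.
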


\begin{proof}
    We prove the result by induction on $d$, the case $d=-1$ being trivial. 
    Assume that $d\ge 0$ and let $\sF\in \Shv(X)$ be a $d$-connective object.  
    We wish to show that $\sF$ admits a global section.

    Since $\sF$ is $d$-connective, the terminal map $\sF \to \term$ is an effective epimorphism, and in particular, there is a cover $X=\bigcup_{i=1}^n U_i$ such that $\sF(U_i)\ne \es$. By passing to a proper refinement, we may assume that $\sF(\overline{U_1})\ne \es$. 
    We shall proceed by induction on $n$, starting from $n=1$, for which the claim holds by the
    choice of $U_{1}$. 

    Let $\mdef{K_{1}}:=\overline{U_{1}}$,  $\mdef{K_{2}}:=X\backslash U_{1}$, and 
    $\mdef{K}:=K_{1}\cap K_{2}=\partial U_{1}$. By definition, $K$ is
    closureful of inductive dimension $\le d-1$, and it is clear that
    open covers of $K$ admit proper refinements. By the inductive hypothesis on $d$,
    the $\infty$-topos $\Shv(K)$ is of homotopy dimension $\le d-1$.
    It follows from \cite[Lemma 7.2.1.7]{HTT} that the space $\sF(K)$
    is a \emph{connected} space. By the
    inductive hypothesis on $n$, the space $\sF(K_2)$ in non-empty. 
    Finally, by \Cref{Mayer_Vietoris_closed_cover} we have $\sF(X)\simeq \sF(K_1)\times_{\sF(K)}\sF(K_2)$, so that $\sF(X)$ is non-empty as well. 
\end{proof}

In the special case where $X$ is a semi-algebraic space, we get
\begin{thm} \label{semialg_hom_dim} 
Let $X$
be a semi-algebraic space. The homotopy dimension of $\Shv(X)$ is
bounded by the dimension of $X$. 
\end{thm}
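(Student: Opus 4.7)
My plan is to apply the abstract bound in \Cref{hypercompleteness_nice_restricted}. By \Cref{Semialgebraic_Spaces_closureful_and_finite_dimensional} every semi-algebraic space is closureful of finite inductive dimension, and \Cref{Semi_algebraic_set_closureful_finite_dimension} together with \Cref{Inductive_dimension_bounded_by_cover} identifies the inductive dimension with the usual semi-algebraic dimension. Hence, the entire content of the theorem reduces to verifying the remaining hypothesis of \Cref{hypercompleteness_nice_restricted}: every finite open cover of $X$ admits a proper refinement.

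I will first establish the affine case. Given a semi-algebraic set $X\subseteq \RR^n$ with a finite open semi-algebraic cover $\{U_i\}_{i=1}^k$, each complement $X\setminus U_i$ is closed semi-algebraic, so the continuous semi-algebraic distance functions $f_i(x):= d(x, X\setminus U_i)$ (with $f_i \equiv 1$ when $U_i=X$) are strictly positive precisely on $U_i$. Writing $g := \max_i f_i$, which is strictly positive on all of $X$ since the $U_i$ cover, the open semi-algebraic sets
\[
V_i := \{x\in X : 2 f_i(x) > g(x)\}
\]
cover $X$ and satisfy $\overline{V_i}\subseteq \{x : 2 f_i(x)\ge g(x)\}\subseteq U_i$, giving the desired proper refinement.

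For a general semi-algebraic space $X$, I will fix a finite affine open cover $X = \bigcup_{\alpha=1}^m X_\alpha$, construct by induction on $m$ a proper refinement $\{X_\alpha'\}$ of this atlas satisfying $\overline{X_\alpha'}\subseteq X_\alpha$, and then apply the affine case inside each chart $X_\alpha$ to the cover $\{U_i\cap X_\alpha\}_i$ to obtain $V_{i,\alpha}\subseteq U_i\cap X_\alpha$ with $\overline{V_{i,\alpha}}^{X_\alpha}\subseteq U_i\cap X_\alpha$. Setting $V_i := \bigcup_{\alpha}(V_{i,\alpha}\cap X_\alpha')$ yields an open cover of $X$, and because $\overline{X_\alpha'}\subseteq X_\alpha$, for any $S\subseteq X_\alpha'$ we have $\overline{S}^X\subseteq \overline{X_\alpha'}^X\subseteq X_\alpha$, so $\overline{S}^X=\overline{S}^{X_\alpha}$; applied to $S = V_{i,\alpha}\cap X_\alpha'$ and taking the finite union in $\alpha$, this gives $\overline{V_i}\subseteq U_i$ in $X$. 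The main obstacle is the inductive atlas-shrinking step: one must produce an open $X_1'$ with $Z_1:= X\setminus\bigcup_{\alpha\ge 2}X_\alpha \subseteq X_1'$ and $\overline{X_1'}^X\subseteq X_1$, after which the inductive hypothesis applies to the smaller atlas $\{X_1', X_2,\ldots, X_m\}$. This is handled by a semi-algebraic Urysohn-type construction inside the affine chart $X_1$, using the distance function from $Z_1$ in $X_1$ together with a bump-function cutoff that prevents $X_1'$ from accumulating on $X\setminus X_1$, so that the closure computed in the ambient space $X$ remains inside $X_1$.
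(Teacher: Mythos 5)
Your reduction to \Cref{hypercompleteness_nice_restricted} is exactly the paper's; the difference lies in how the proper-refinement hypothesis is verified. For the affine case the paper embeds $X$ as a locally closed subset of a Nash manifold and cites \cite[Proposition A.3.3]{SchNash}, whereas you build the refinement directly from the semi-algebraic distance functions $f_i=d(\cdot,X\setminus U_i)$ and the sets $V_i=\{2f_i>\max_j f_j\}$. That construction is correct: for $x\in X$ one has $f_i(x)>0$ iff $x\in U_i$ because $X\setminus U_i$ is closed in $X$, the $V_i$ cover since some $f_i$ attains the maximum, and $\overline{V_i}\subseteq\{2f_i\ge \max_j f_j\}\subseteq U_i$, with all sets involved semi-algebraic by Tarski--Seidenberg. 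This is a more self-contained route than the paper's citation. For the globalization, the paper disposes of the matter with the single assertion that the proper-refinement claim ``is local over $X$''; you instead make the gluing explicit, and your bookkeeping (once $\overline{S}^X\subseteq X_\alpha$ the closures in $X$ and in $X_\alpha$ agree, so $\overline{V_i}\subseteq U_i$ follows from the chart-level containments) is right, \emph{granted} the shrunken atlas $\{X_\alpha'\}$.

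That granted step — producing $X_1'$ with $Z_1\subseteq X_1'$ and $\overline{X_1'}^X\subseteq X_1$ — is the one genuine gap, and it is not a routine Urysohn argument inside the chart: the set $X\setminus X_1$ that $\overline{X_1'}$ must avoid is not visible from within $X_1$, so no function defined on $X_1$ alone (distance from $Z_1$, bump cutoffs, etc.) can control it. What is really needed is a normality/shrinking property of the restricted topology of $X$ itself, and this can fail for non-separated semi-algebraic spaces: for the line with doubled origin covered by its two copies of $\RR$, every open set containing the first origin has the second origin in its closure, so no proper refinement of that atlas exists. Thus the step requires either a separation hypothesis on $X$ (under which one can, for instance, separate $Z_1$ from $\partial_X(X\setminus X_1)$ inside a common affine chart, or reduce to the affine case outright via an embedding theorem) or an actual construction replacing the asserted ``bump-function cutoff.'' In fairness, the paper's ``the claim is local over $X$'' conceals exactly the same issue; you have at least located where the work is, but you have not done it.
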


\begin{proof}
Given \Cref{hypercompleteness_nice_restricted}, it suffices to show that open covers in $X$ admit proper refinements.
This claim is local over $X$ and hence it suffices to show this when $X$ is affine. We can then embed $X$ as a locally closed subset of a Nash manifold, and then the result follows from \cite[Proposition A.3.3]{SchNash}.  
\end{proof}

\begin{rmk}
    Though we shall not need it, this bound on the homotopy dimension of $\Shv(X)$ is sharp.
\end{rmk}

Our primary application of this result is the following criterion for a map to induce an isomorphism on sheafifications. 

\begin{prop} \label{base_gen_shv}
    Let $X$ be a semi-algerbaic space and let $B$ be a basis for the restricted topology of $X$. Let $\cD$ be a compactly generated $\infty$-category. If a map $\sF \to \sF'$ in $\PShv(X,\cD)$  induces an isomorphism $\sF(U)\iso \sF'(U)$ for every $U\in B$, then it induces an isomorphism on sheafifications
    \[
    L_X(\sF) \iso L_X(\sF').
    \]
    In particular, the sheaves represented by the elements of $B$ generate $\Shv(X)$ under colimits.
\end{prop}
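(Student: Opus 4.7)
The plan is to prove the ``in particular'' statement first---that the sheaves $\{L_X h_U : U \in B\}$ generate $\Shv(X, \cD)$ under colimits---and derive the main equivalence as a consequence. The central tool is an $\infty$-categorical Grothendieck comparison lemma for the inclusion $\iota \colon B \into \Op(X)$, where $B$ carries the induced Grothendieck topology in which a finite family $\{U_i\} \subseteq B$ covers $U \in B$ iff $U = \bigcup U_i$.

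First I would verify that the restriction functor $\iota^*$ takes sheaves to sheaves and establish that the induced functor $\iota^* \colon \Shv(X, \cD) \to \Shv(B, \cD)$ is an equivalence of $\infty$-categories. The nontrivial direction amounts to showing that every $X$-sheaf $\sG$ is recoverable from its restriction to $B$ via the formula $\sG(V) \simeq \lim_{U \in B, U \subseteq V} \sG(U)$, for which I would iteratively apply Mayer-Vietoris (\Cref{shv_res_mv_norm}) to finite basis covers $V = V_1 \cup \dots \cup V_n$, decomposing the intersections appearing in the \v{C}ech nerve again into basis covers, and so on. Convergence of this iterative procedure is ensured by the finite homotopy dimension of $\Shv(X)$ from \Cref{semialg_hom_dim}, which allows truncation of the associated Postnikov tower at a finite stage. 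A reduction to $\cD = \Spaces$ via the Lurie tensor product identification $\Shv(-, \cD) \simeq \Shv(-) \otimes \cD$ keeps the coefficient category unobtrusive.

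Granting the equivalence $\iota^* \colon \Shv(X, \cD) \iso \Shv(B, \cD)$, the main statement follows formally. Sheafification $L_X$ is left adjoint to the inclusion $\Shv(X, \cD) \into \PShv(X, \cD)$, and taking right adjoints in the comparison equivalence shows that $\iota^*$ commutes with the inclusion of sheaves into presheaves; consequently one obtains a natural equivalence $\iota^* \circ L_X \simeq L_B \circ \iota^*$, i.e., the square
\[
\xymatrix{
\PShv(X, \cD) \ar^{L_X}[r] \ar_{\iota^*}[d] & \Shv(X, \cD) \ar^{\iota^*}_{\wr}[d] \\
\PShv(B, \cD) \ar^{L_B}[r] & \Shv(B, \cD)
}
\]
commutes. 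Hence if $\sF \to \sF'$ is an iso on $B$, then $\iota^*\sF \simeq \iota^*\sF'$, so $L_B \iota^*\sF \simeq L_B \iota^*\sF'$, so $\iota^* L_X\sF \simeq \iota^* L_X\sF'$, and finally $L_X\sF \simeq L_X\sF'$ by invertibility of $\iota^*$ on sheaves. The ``in particular'' clause follows because the representable $B$-sheaves $L_B h_U$ for $U \in B$ generate $\Shv(B, \cD)$, and under the equivalence they correspond to $L_X h_U$.

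The main obstacle is the comparison lemma itself. The iterated Mayer-Vietoris argument requires care because intersections of basis elements need not themselves be basis elements, so the iteration a priori threatens to fail to terminate. The finite homotopy dimension input (\Cref{semialg_hom_dim}) is what turns this into a finite argument, bounding the depth of descent iterations needed to reconstruct sheaf values from their restrictions to the basis.
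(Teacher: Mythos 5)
Your route is genuinely different from the paper's. The paper does not prove a comparison lemma $\Shv(X,\cD)\simeq\Shv(B,\cD)$; instead it reduces to $\cD=\Spaces$ via the jointly conservative family $\Map(A,-)$ for compact $A\in\cD$, then invokes finite homotopy dimension through \cite[Corollary 7.2.1.12]{HTT}: because $\Shv(X)$ has finite homotopy dimension, isomorphisms in $\Shv(X)$ are detected on homotopy sheaves, so the claim reduces to the $0$-truncated ($\cD=\Sets$) case, which is the classical assertion about bases. For the "in particular," the paper observes that the restricted Yoneda functor $\Shv(X)\to\PShv(B)$ is conservative and cites \cite[Proposition 2.3.2]{yanovski2021monadic} to conclude its left adjoint generates under colimits. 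So the paper's argument is a direct "detection" argument, while yours proves the stronger statement that $\iota^*$ is an equivalence of sheaf categories and deduces the proposition formally from it.

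Your approach is plausible but the central step — the equivalence $\iota^*\colon\Shv(X,\cD)\iso\Shv(B,\cD)$ — is left quite sketchy, and there are two places that need real work. First, you need to specify the Grothendieck topology on $B$ carefully: since $B$ is not closed under intersections, the covering families can't simply be "$\{U_i\}\subseteq B$ with $\bigcup U_i=U$" with stability under pullback taken for granted; the right definition is in terms of sieves whose union is $U$, and you should verify the pullback axiom using that $B$ is a basis (so $W_i\cap V$ is covered by elements of $B$ contained in the sieve). Second, your "iterated Mayer--Vietoris" to prove $\sG(V)\simeq\lim_{U\in B_{/V}}\sG(U)$ is really a hypercover argument: covering $V$ by basis elements, covering the intersections by basis elements, and so on, produces a hypercover of $V$ by basis elements, and what you need is hyperdescent for $\sG$ along this hypercover. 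This does follow from finite homotopy dimension (which gives hypercompleteness), but it isn't a straightforward "Postnikov truncation" of a Mayer--Vietoris iteration; you still have to identify the totalization over the hypercover with the Kan-extension limit $\lim_{U\in B_{/V}}\sG(U)$, which is an additional cofinality-type step you haven't addressed. Both you and the paper lean on \Cref{semialg_hom_dim}, but the paper's use (detect isomorphisms on homotopy sheaves) yields the proposition with much less machinery than the comparison lemma, which gives you more than you need at the cost of more to check.
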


\begin{proof}
Let $A\in \cD$ be a compact object. Then, the functors $\Map(A,-)\colon \PShv(X,\cD) \to \PShv(X)$ commute with evaluations at open sets and with the sheafification functor. Since they are jointly conservative, we may apply these functors to our map and reduce the case $\cD \simeq \Spaces$. 

Let $f\colon \sF \to \sF'$ be a map satisfying the condition in the corollary. 
Since $\Shv(X)$ is of finite homotopy dimension, isomorphisms in $\Shv(X)$ are detected by the homotopy sheaves (\cite[Corollary 7.2.1.12]{HTT}). Hence, to check that $f\colon L_X(\sF) \iso L_X(\sF')$, it would suffice to show that the map $\pi_0(f) \colon \pi_0(\sF) \to \pi_0 (\sF')$ induces an isomorphism on sheafifications, and that for every open subset $U\subseteq X$ and section $u\in \sF(U)$, the map $\pi_k(f)\colon \pi_k(\sF,u) \to \pi_k(\sF',f(u))$ induces an isomorphism on sheafifications. All these maps are maps of presheaves of \emph{sets} satisfying the assumption of the corollary. Hence, we reduced to the case $\cD = \Sets$, which is clear.  

Finally, in particular we see that the restricted Yoneda functor $\Shv(X)\to \PShv(B)$ is conservative, so that by \cite[Proposition 2.3.2]{yanovski2021monadic} its left adjoint generates $\Shv(X)$ under colimits. This implies that the sheaves represented by members of $B$ generate $\Shv(X)$.  
\end{proof}

\subsection{Comparison of Topologies} \label{subsec:comparison of topologies}

For a restricted topological space $X$, there is a canonical morphism of sites $\real\colon X^{\topify}\to X$, which induces a geometric morphism $\real\colon\Shv(X^{\topify})\to\Shv(X)$. 

\begin{prop}
\label{Comparison_map_fuly_faithful} Let $X$ be a restricted
topological space, and let $\cD\in \Prl$. 
The functor 
\[
\real_* \colon \Shv(X^{\topify},\cD)\to \Shv(X,\cD)
\]
 is fully faithful. 
\end{prop}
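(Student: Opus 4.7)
The plan is to realize $\real_*$ as a factor of a fully faithful functor into a presheaf $\infty$-category. Let
\[
r\colon \Shv(X^{\topify},\cD) \to \PShv(\Op(X),\cD),\qquad r(\sF)(U) = \sF(U),
\]
denote the restriction along the inclusion of posets $\Op(X) \subseteq \Op(X^{\topify})$. Since every finite cover in $\Op(X)$ remains a cover in $\Op(X^{\topify})$, the presheaf $r(\sF)$ inherits the Mayer--Vietoris and normalization conditions of \Cref{shv_res_mv_norm} from the full descent satisfied by $\sF$, so $r(\sF)$ lies in $\Shv(X,\cD)$. Thus $r$ factors as
\[
\Shv(X^{\topify}, \cD) \oto{\real_*} \Shv(X, \cD) \hookrightarrow \PShv(\Op(X), \cD),
\]
with the second arrow tautologically fully faithful. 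Proving that $r$ is fully faithful therefore implies the full faithfulness of $\real_*$.

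The full faithfulness of $r$ amounts to saying that a sheaf on $X^{\topify}$, together with its morphisms to other such sheaves, is determined by its values on $\Op(X)$. This rests on the observation that $\Op(X)$ is a basis for the topology of $X^{\topify}$ which is closed under finite intersections: any $V \in \Op(X^{\topify})$ may be written as $V = \bigcup_{i\in I} U_i$ with $U_i \in \Op(X)$, and the finite intersections $U_{i_1} \cap \cdots \cap U_{i_n}$ again lie in $\Op(X)$. The sheaf condition for $\sF \in \Shv(X^{\topify},\cD)$ with respect to $\{U_i\}$ then expresses
\[
\sF(V) \simeq \lim_{\Delta} \prod_{i_1,\dots,i_n} \sF(U_{i_1} \cap \cdots \cap U_{i_n}),
\]
a limit computed entirely from the values of $\sF$ on $\Op(X)$. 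Applied to morphisms, the same formula shows that any natural transformation on $\Op(X)$ extends uniquely to a natural transformation of sheaves on all of $\Op(X^{\topify})$, which is precisely the full faithfulness of $r$.

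The principal obstacle is the $\infty$-categorical coherence of the extension procedure. To promote the assignment $V \mapsto \lim_{\Delta}\prod \sF(U_{i_1}\cap\cdots\cap U_{i_n})$ to a homotopy-coherent functor independent of the chosen cover, one realizes it as a right Kan extension along $\Op(X) \hookrightarrow \Op(X^{\topify})$ and invokes the universal property of sheafification. Alternatively, one can argue directly on the counit $\real^*\real_*\sF \to \sF$: on $V \in \Op(X)$ it reduces to the identity, since the presheaf $\colim_{U \supseteq V,\, U \in \Op(X)} \sF(U)$ has $V$ as an initial index, and any presheaf on $\Op(X^{\topify})$ whose restriction to $\Op(X)$ vanishes has trivial sheafification because sheafification is controlled by values on a covering basis.
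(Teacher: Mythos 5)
Your argument is correct in substance and rests on the same key fact as the paper's proof: $\Op(X)$ is a basis of $\Op(X^{\topify})$ closed under finite intersections, so a sheaf on $X^{\topify}$ is the right Kan extension of its restriction to $\Op(X)$, whence restriction is fully faithful. The difference is in how the two proofs handle general coefficients: the paper proves the statement only for $\cD=\Spaces$ by citing the discussion above \cite[Warning 7.1.1.4]{HTT}, and then deduces the case of arbitrary $\cD\in\Prl$ formally, by tensoring the fully faithful functor with $\cD$ via \cite[Lemma 5.2.1]{AmbiHeight}; you instead run the right-Kan-extension argument directly with $\cD$-coefficients. Your route is fine (the universal property of a right Kan extension identifies mapping spaces regardless of the target category, and presentability of $\cD$ supplies the needed limits), but it silently re-proves the coefficient-independent part of the HTT statement: in particular the cofinality step identifying the \v{C}ech limit over a chosen cover with the limit over the full poset $\{U\in\Op(X): U\subseteq V\}$ is asserted rather than carried out, which is exactly the content the paper outsources to \cite{HTT}. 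One further caution: your ``alternative'' argument via the counit relies on the claim that a presheaf on $\Op(X^{\topify})$ whose restriction to the basis is trivial has trivial sheafification; in the non-hypercomplete $\infty$-categorical setting this is not obviously true for an arbitrary topological space (it is essentially equivalent to the statement being proved), so the right Kan extension argument should be regarded as the actual proof and the counit sketch discarded or justified separately.
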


\begin{proof}
In the case where $\cD=\Spaces$, this follows directly from the fact that $\Op(X)$ is a basis of $\Op(X^\topify)$ closed under finite intersections, as in the discussion above \cite[Warning 7.1.1.4]{HTT}. The general case follows from this case by tensoring with $\cD$ via \cite[Lemma 5.2.1]{AmbiHeight}.   
\end{proof}
In general, $\real_{*}$ is not an isomorphism, as the following example
shows.
\begin{example}
Let $X$ be a Nash manifold, and let $\mathcal{O}_{X}^{\temp}\in\Shv(X,\RR)$
be the sheaf of tempered functions on $X$ (see \cite[Definition 4.5.1]{SchNash}). Since every
smooth function is locally tempered with respect to the classical topology, we have
$\real^{*}\mathcal{O}_{X}^{\temp}\simeq C_{X^{\topify}}^{\infty}$. On
the other hand, not every section of $\real_{*}C_{X^{\topify}}^{\infty}$ is a tempered function. 
\end{example}
 This example suggests that we may think
of $\real_{*}\real^{*}$, informally, as the functor of ``eliminating
growth conditions'' from a sheaf $\sF$ on $X$.

Let $\phi\colon X\to Y$
be a continuous map of restricted topological spaces and let $\cD\in\Prl$.
We obtain a commutative square 
\begin{equation}
\label{eq:Comparison_square}
\xymatrix{\Shv(Y,\cD)\ar^{\phi^{*}}[r]\ar^{\real^{*}}[d] & \Shv(X,\cD)\ar^{\real^{*}}[d]\\
\Shv(Y^{\topify},\cD)\ar^{\phi^{*}}[r] & \Shv(X^{\topify},\cD)
}
\end{equation}
since the maps in this square have various adjoints,
we obtain several Beck-Chevalley maps.
\begin{prop}\label{Base_change_comparison} 
Let $\cD$ be a compactly generated $\infty$-category. Then
\begin{itemize}
    \item For an open embedding $j\colon U\into X$ of restricted topological spaces, we have $j^*\real_*\iso \real_*j^*$. 
    \item For a closed embedding $i\colon Z\into X$, we have $\real^*i_* \iso i_*\real^*$. 
\end{itemize}
\end{prop}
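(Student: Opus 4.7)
Both statements are Beck--Chevalley assertions attached to the commutative square (\ref{eq:Comparison_square}); the plan is to construct the natural transformations from units and counits of the relevant adjunctions, and then verify each is an isomorphism by a slightly different method in the two cases.

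For the open embedding case, I would build the Beck--Chevalley transformation $j^*\real_*\to \real_* j^*$ from $\real^* j^* \simeq j^* \real^*$ via the adjunction $j^*\dashv j_*$ (available for $j$ on both the restricted and the classical topological level). To show it is an isomorphism, I would evaluate both sides on an arbitrary $V\in \Op(U)$: such a $V$ lies in $\Op(X)$ and is also open in $X^\topify$, and since $j^*$ is simply restriction in both contexts, both $(j^*\real_*\sF)(V)$ and $(\real_*j^*\sF)(V)$ unwind to $\sF(V)$. Tracing through the construction, the Beck--Chevalley map is literally the identity on these values; since $\Op(U)$ covers the restricted topology of $U$, this forces an isomorphism of sheaves.

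For the closed embedding case, I would construct the map $\real^*i_*\to i_*\real^*$ by combining $i^*\real^*\simeq \real^*i^*$ with the counit $i^*i_*\simeq \Id$ (valid by \Cref{open_closed_res_spc_shv}) and then transposing via $i^*\dashv i_*$. To verify it is an isomorphism, I would apply the jointly conservative pair $(i^*,j^*)$ on $\Shv(X^\topify,\cD)$, where $j\colon U\into X$ is the open complement of $Z$. Applying $i^*$, both sides become $\real^*$ via $i^*i_*\simeq \Id$ and the map identifies with the identity (after careful naturality bookkeeping); applying $j^*$, both sides vanish because $j^*i_*\simeq \term$ in both the restricted- and classical-topology settings.

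The main obstacles are tracking naturality of units/counits in order to recognize the restriction along $i^*$ as the identity of $\real^*$, and verifying the joint conservativity needed on $\Shv(X^\topify,\cD)$. The latter follows by exactly the same argument as for \Cref{op_cl_dec_joint_cons_seqs} and \Cref{open_closed_res_spc_shv}, applied to the classical topological space $X^\topify$ and its open/closed decomposition $U^\topify\sqcup Z^\topify$; the proof given there uses only Mayer--Vietoris and the recollement structure, both of which are available verbatim in the classical-topology setting.
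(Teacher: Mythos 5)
Your first item is verified exactly as in the paper: evaluate on $V\in\Op(U)$ and observe both sides give $\sF(V)$. For the second item you take a genuinely different route. The paper handles it for $\cD=\Spaces$ by identifying $\Shv(Z)$ and $\Shv(Z^\topify)$ with the closed complements of $U$ in $\Shv(X)$ and of $U^\topify$ in $\Shv(X^\topify)$ (via \Cref{top_rec}) and then invoking \cite[Proposition 7.3.2.12]{HTT}, which says closed subtopoi pull back along geometric morphisms; the general case follows by tensoring with $\cD$ as in \Cref{rec_ten}. You instead verify the Beck--Chevalley map directly by applying the jointly conservative pair $(i^*,j^*)$ on $\Shv(X^\topify,\cD)$: after $i^*$ it becomes the identity of $\real^*$ by the triangle identities, and after $j^*$ both sides are terminal since $j^*i_*\simeq\term$ in both topologies (note: for non-pointed $\cD$ "vanish" should read "is terminal", but a map between terminal objects is still an isomorphism, so the argument stands). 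Your route is more self-contained in that it only uses recollement facts, but it does quietly require the classical-topology analogue of \Cref{open_closed_res_spc_shv} — i.e.\ that $i_*\colon\Shv(Z^\topify,\cD)\to\Shv(X^\topify,\cD)$ is fully faithful with image the closed complement of $U^\topify$ — which is ultimately the same HTT input (\cite[Corollary 7.3.2.10]{HTT} plus \Cref{rec_ten}) that the paper leans on; and the "naturality bookkeeping" you defer is a routine but genuine triangle-identity computation. With those two points filled in, the proof is correct.
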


\begin{proof}
For the first item, we have $j^*\real_*\sF(V)\simeq \real_*\sF(V) \simeq \real_*j^*\sF(V)$ for every $V\subseteq U$. 
The second item follows from \Cref{top_rec} and \cite[Proposition 7.3.2.12]{HTT} for the case $\cD\simeq \Spaces$, and the general case follows from it by tensoring with $\cD$. 
\end{proof}

In general, the Beck-Chevalley map $\real^{*}j_{*}\to j_{*}\real^{*}$
is not an isomorphism.
\begin{example} For $x\in \RR$, let $\delta_x$ be the skyscraper real-valued sheaf at $x$.  
Consider the open subset $j\colon \RR-\{0\}\into \RR$. 
Set ${\displaystyle \sF=\bigoplus_{n=1}^{\infty}\delta_{\frac{1}{n}}}$.
Then 
\[
{\displaystyle j_{*}\real^{*}\sF(V)=\prod_{\frac{1}{n}\in V}\RR}
\]
while $\real^{*}j_{*}\sF(V)$ is its subspace ${\displaystyle \bigoplus_{\frac{1}{n}\in V}}\RR$
for every $V$ such that $0\in V$. 
\end{example}

Similarly, the Beck-Chevalley map $ i^{*}\real_{*}\to \real_{*}i^{*}$
might not be an isomorphism.
\begin{example}
Let $X=\RR^{2}$ with its canonical semialgebraic structure, and
let 
\[
\sF=i_{*}\RR_{Z}\qin\Shv(X,\RR)
\] 
for $i\colon Z\into(\RR^{2})^{\topify}$
the inclusion of $Z:=\left\{ (t,e^{-|t|}):t\in\RR\right\} $. Then $\real_{*}i^{*}\sF\simeq0$
while the section $1\in\sF(\RR^{2})$ does not restrict to $0$
in $i^{*}\real_{*}\sF(\RR)$. Indeed, if it would there would
be a cover of a semi-algebraic neighborhood of $\RR$ by open sets
on which the restriction of this section vanishes, and such cover
does not exist. 
\end{example}
We note that in this example, the problem is the unboundedness of $Z$. In particular, if $Z$ is a point, this deficiency disappears. 

\begin{prop}
\label{realization_stalks}
Let $X$ be a restricted topological space and let $x\in X$. For every $\cD\in \Prl$ and every sheaf $\sF \in \Shv(X^\topify,\cD)$, 
the canonical (Beck-Chevalley) map 
$\sF_x \to (\real_*\sF)_x$ is an isomorphism. 
In other words, the functor $\real_*$ does not change the stalks of a sheaf. 
\end{prop}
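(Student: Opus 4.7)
My plan is to compute both stalks as filtered colimits over neighborhoods and identify them by a cofinality argument.

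For any restricted topological space $Y$, any presentable $\cD$, any point $y \in Y$, and any $\sG \in \Shv(Y,\cD)$, I will use the formula
\[
(i_y)^*\sG \simeq \colim_{U \ni y,\, U \in \Op(Y)} \sG(U)
\]
in $\cD$; this holds at the presheaf level by the definition of the pullback, and passes to sheaves because the colimit is filtered and sheafification commutes with filtered colimits. Applying this both to $\sF \in \Shv(X^\topify,\cD)$ and to $\real_*\sF \in \Shv(X,\cD)$, and using that $(\real_*\sF)(U) = \sF(U)$ for every $U \in \Op(X)$, I obtain
\[
\sF_x \simeq \colim_{V \ni x,\, V \in \Op(X^\topify)} \sF(V), \qquad (\real_*\sF)_x \simeq \colim_{U \ni x,\, U \in \Op(X)} \sF(U).
\]

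The heart of the argument is then a cofinality check: I claim that the inclusion of index posets
\[
\{U \in \Op(X) : x \in U\}^{\op} \hookrightarrow \{V \in \Op(X^\topify) : x \in V\}^{\op}
\]
is cofinal. Both are filtered under reverse inclusion (finite intersections of neighborhoods of $x$ are again neighborhoods of $x$), so by the filtered criterion it suffices to exhibit, for each classical open $V$ containing $x$, a restricted open $U \in \Op(X)$ with $x \in U \subseteq V$. This is immediate from the fact that $\Op(X)$ is a basis for the topology on $X^\topify$: write $V = \bigcup_i U_i$ with $U_i \in \Op(X)$; then $x \in V$ forces $x \in U_i$ for some $i$, and we take $U := U_i$.

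To conclude, I need only verify that, under the colimit presentations above, the Beck-Chevalley map of the statement coincides with the comparison induced by this cofinal inclusion. This is a direct unwinding of the construction of the Beck-Chevalley transformation from the identity of geometric morphisms $i_x = \real \circ i_x^\topify$, which yields $(i_x)^* \simeq (i_x^\topify)^* \circ \real^*$; the resulting map is then visibly the colimit comparison map. I do not expect any genuine obstacle here — the only content is the cofinality step, and it reduces at once to the basic fact that $\Op(X)$ generates the topology on $X^\topify$. As a sanity check, the same conclusion also follows abstractly from \Cref{Comparison_map_fuly_faithful}, since the counit $\real^*\real_* \to \Id$ is an isomorphism whenever $\real_*$ is fully faithful.
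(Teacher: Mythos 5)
Your proposal is correct and follows essentially the same route as the paper: both compute the two stalks as filtered colimits over neighborhoods of $x$ (in $\Op(X^\topify)$ and in $\Op(X)$ respectively, using $(\real_*\sF)(U)=\sF(U)$) and conclude by observing that the inclusion of index posets is cofinal because $\Op(X)$ is a basis for the topology on $X^\topify$. The extra details you supply (the filtered-cofinality criterion and the identification of the Beck--Chevalley map with the colimit comparison map) are exactly the steps the paper leaves implicit.
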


\begin{proof}
We have $\sF_x \simeq \colim_{x\in V} \sF(V)$ where the colimit is over open sets in $X^\topify$. 
Similarly, we have $(\real_*\sF)_x \simeq \colim_{x\in U} \sF(U^\topify)$, where the colimit is over open sets $U\subseteq X$. Via these identifications, the map $\sF_x \to (\real_*\sF)_x$ is induced from the morphism of posets
\[
\{U\in \Op(X): x\in U\} \to \{V\in \Op(X^\topify): x\in V\}.
\]
Since $\Op(X)$ is a basis to $X^\topify$, this map is cofinal, and the result follows.
\end{proof}

\subsubsection{Comparison for Local Systems}
In general, sheaves on $X$ and $X^\topify$ are very different objects, as sheaves on $X$ might contain information about ``growth conditions'' for sections. However, for local systems, the situation is different, and the comparison of topologies is better behaved.

For a (restricted) topological space $X$, we denote $|X|:=|\Shv(X)|$,
and say that $X$ is of constant shape if $|X|$ is pro-constant. For every restricted topological space $X$, the geometric morphism \[
\real\colon \Shv(X^\topify)\to \Shv(X)
\] induces a map $|X^\topify|\to |X|$.
\begin{defn}
\label{def-of_topological_shape}

Let $X$ be a restricted topological space. We say that $X$ is of \tdef{topological shape} if $|X|$ and $|X^\topify|$ are pro-constant, and the map $\real \colon \Shv(X^\topify)\to \Shv(X)$ induces an isomorphism $|X|\simeq |X^\topify|\in \Spaces$. 
We say that $X$ is \tdef{locally of topological shape} if every open subset of $X$ is of topological shape. 
\end{defn}

Note that, in particular, if $X$ is (locally) of topological shape, then it is (locally) of constant shape. For a restricted topological space of topological shape, local systems on $X$ and $X^\topify$ are comparable. 

\begin{prop}
\label{top_shape_loc_sys}
Let $X$ be a restricted topological space. If $X$ is locally of topological shape, then $X$ and $X^\topify$ are essential and the adjunction \[
\real_* \colon \Shv(X^\topify,\cD)\adj \Shv(X,\cD): \real^* 
\]
restricts to an equivalence 
\[
\Loc(X^\topify,\cD) \simeq \Loc(X,\cD).
\]
\end{prop}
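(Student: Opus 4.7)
My plan is to identify both $\Loc(X,\cD)$ and $\Loc(X^\topify,\cD)$ with sheaves on the respective shapes via \Cref{Local_systems_sheaves_on_shape}, and to deduce the equivalence from the topological shape hypothesis.

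First, I would verify that both $X$ and $X^\topify$ are essential. For $X$ this is immediate, since every open subset of $X$ is of topological shape, hence of constant shape. For $X^\topify$, I would note that $\Op(X)$ is a basis for $\Op(X^\topify)$, so the representables of opens in $X$ generate $\Shv(X^\topify)$ under colimits, and for each $U\in\Op(X)$ the slice $\Shv(X^\topify)_{/U}\simeq \Shv(U^\topify)$ is of constant shape by hypothesis. Applying \Cref{essential_locally_constant_shape} to the terminal geometric morphism then shows that $\Shv(X^\topify)$ is essential.

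Next, I would set up the commutative square of geometric morphisms
\[
\xymatrix{\Shv(X^\topify)\ar^{\real}[r]\ar_{\psi_{X^\topify}}[d] & \Shv(X)\ar^{\psi_X}[d]\\ \Shv(|X^\topify|)\ar^{\alpha}[r] & \Shv(|X|),}
\]
where $\alpha$ is induced by the canonical map $|X^\topify|\to |X|$ of shapes. Since $X$ itself is of topological shape (applying the hypothesis with $U=X$), $\alpha$ is an equivalence. Passing to left adjoints yields $\real^*\psi_X^*\simeq \psi_{X^\topify}^*\alpha^*$. Combining this with \Cref{Local_systems_sheaves_on_shape} shows that $\real^*$ carries local systems to local systems, and that the restricted functor $\real^*\colon \Loc(X,\cD)\to \Loc(X^\topify,\cD)$ corresponds, under the shape identifications, to the equivalence $\alpha^*$, hence is itself an equivalence.

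The main obstacle will then be to show that $\real_*$ likewise restricts to a functor $\Loc(X^\topify,\cD)\to \Loc(X,\cD)$; once this is done, the restricted $\real_*$ is automatically a quasi-inverse to the restricted $\real^*$ by adjointness. Given $\sF'\in \Loc(X^\topify,\cD)$, I would use that $\Op(X)$ is a basis for $\Op(X^\topify)$ to refine a trivializing cover of $\sF'$ to a cover $\{U_\beta\}_\beta$ consisting of opens of $X$ on which $\sF'|_{U_\beta^\topify}\simeq \Gamma_{U_\beta^\topify}^*A_\beta$ is constant. By \Cref{Base_change_comparison}, restriction to $U_\beta$ commutes with $\real_*$, so it suffices to show $\real_{U_\beta,*}\Gamma_{U_\beta^\topify}^*A_\beta\simeq \Gamma_{U_\beta}^*A_\beta$. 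For every open $V\subseteq U_\beta$, both sides assign to $V$ the cotensor of $A_\beta$ controlled by the shape of $V$, and the canonical comparison map is induced by the isomorphism $|V^\topify|\simeq|V|$ supplied by the topological shape of $V$. Patching these local identifications shows that $\real_*\sF'$ is indeed a local system on $X$, completing the proof.
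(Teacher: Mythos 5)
Your first two paragraphs reproduce the paper's argument essentially verbatim: essentiality of both topoi via \Cref{essential_locally_constant_shape}, and the identification of the restricted $\real^*$ with the equivalence $\alpha^*$ under the shape identifications of \Cref{Local_systems_sheaves_on_shape}. (The paper additionally reduces to $\cD\simeq\Spaces$ by tensoring, using $\Loc(\fX,\cD)\simeq\Loc(\fX)\otimes\cD$; your cotensor formulation sidesteps this, which is fine.)

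The gap is in your last step. A trivializing cover of $\sF'\in\Loc(X^\topify,\cD)$ lives in $\Shv(X^\topify)$; refining it through the basis $\Op(X)$ yields a family $\{U_\beta\}$ of opens of $X$ whose union is $X$, but this family need not cover the terminal object of $\Shv(X)$. In the restricted topology the coverings are \emph{finite}, so a family of representable opens covers $\term$ in $\Shv(X)$ precisely when some finite subfamily already covers $X$ --- and an open cover of $X^\topify$ by opens of $X$ generally admits no finite subcover (e.g.\ $\RR=\bigcup_n(n-1,n+1)$). Consequently, knowing that $(\real_*\sF')|_{U_\beta}$ is constant for every $\beta$ does not verify condition (*) in the definition of a local system on $X$: your ``patching'' has no trivializing cover in the sense of the topos $\Shv(X)$ to patch over. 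The repair is exactly the detour the paper takes, and you already hold the needed ingredient: your second paragraph shows $\real^*\colon\Loc(X,\cD)\to\Loc(X^\topify,\cD)$ is essentially surjective, so write $\sF'\simeq\real^*\sG$ with $\sG\in\Loc(X,\cD)$ and restrict along the members of a trivializing cover of $\sG$ taken \emph{in} $\Shv(X)$; this legitimately reduces to the constant case, where your computation $\real_*\Gamma^*A\simeq\Gamma^*A$ via $|V^\topify|\simeq|V|$ applies.
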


\begin{proof}
Since $X$ is locally of topological shape, it is in particular essential by \Cref{essential_locally_constant_shape}. Since the open sets of $X$ form a basis of $X^\topify$ closed under finite intersections, they generate $\Shv(X^\topify)$ under small colimits. Since, by our assumption that $X$ is locally of topological shape, the $\infty$-topos $\Shv(X^\topify)_{/U^\topify}$ is of constant shape for every $U\in \Op(X)$, we deduce that $\Shv(X^\topify)$ is essential as well. In particular, by \Cref{Local_systems_sheaves_on_shape} we deduce that $\Loc(X,\cD)\simeq \Loc(X)\otimes \cD$ and similarly for $X^\topify$, and hence the general case follows from the case $\cD\simeq \Spaces$ by tensoring with $\cD$. 

The commutative diagram of $\infty$-topoi 
\[
\xymatrix{
\Shv(X^\topify) \ar^\psi[d] \ar^\real[r] & \Shv(X) \ar^\psi[d] \\
\Shv(|X|) \ar^\sim[r]      & \Shv(|X^\topify|)
}
\]
shows that we can express $\real^*\colon \Loc(X)\to \Loc(X^\topify)$ as a composition of equivalences
\[
\Loc(X) \simeq \Shv(|X|)\simeq \Shv(|X^\topify|)\simeq \Loc(X^\topify).
\]
To show that its inverse is given by $\real_*$, it suffices to show that $\real_*$ takes local systems to local systems. 

Let $\sF \in \Loc(X^\topify)$. We wish to show that $\real_*\sF$ is a local system. Since $\Loc(X)\simeq \Loc(X^\topify)$, 
we have $\sF\simeq \real^*\sF'$ for a local system $\sF'$ on $X$. Restricting to a member of a trivializing cover of $\sF'$, we may assume without loss of generality that $\sF$ is a constant sheaf with value $A\in \Spaces$. In this case, since $X$ is locally of topological shape, we have
\[
    \real_*\sF(U)\simeq \sF(U^\topify)\simeq \Map(|U^\topify|,A)\simeq \Map(|U|,A)\simeq (\Gamma^*A)(U)
\]
which implies that $\real_*\sF\simeq \Gamma^*A$. In particular, $\real_*\sF$ is a constant sheaf, and hence a local system.
\end{proof}

For semi-algebraic spaces, we have
\begin{thm}
\label{Comparison_Shape_Semi_Algebraic_Space} Let $X$ be a semi-algebraic
space. Then $X$ is locally of topological shape. In particular, $\Shv(X)$ is essential, we have $|X| \simeq |X^\topify|$ and $\real \colon \Shv(X^\topify)\to \Shv(X)$ induces an equivalence
$
\Loc(X,\cD)\simeq\Loc(X^{\topify},\cD)
$ 
for every $\cD\in\Prl$. 
\end{thm}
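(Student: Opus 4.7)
The second assertion of the theorem is an immediate consequence of Proposition~\ref{top_shape_loc_sys} applied to the first. Hence it suffices to prove that $X$ is locally of topological shape. Since every open subset of a semi-algebraic space is itself a semi-algebraic space, the problem reduces to showing that every semi-algebraic space is of topological shape. The property is local on $X$, so I may further assume $X$ is affine, i.e., a semi-algebraic set in $\RR^n$.

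I would proceed by induction on the dimension $d$ of $X$, which by Proposition~\ref{Semi_algebraic_set_closureful_finite_dimension} agrees with its inductive dimension. The cases $d \le 0$ are trivial since then $X$ is a finite discrete set and $X^{\topify}$ coincides with $X$. For the inductive step, I apply Proposition~\ref{Existence_of_triangulation} to assume $X = \triangle_K$ for an almost simplicial complex $K$. On $\triangle_K$ I consider the open cover $\{\star_v\}_{v \in V(K)}$ by open stars of vertices; every nonempty finite intersection of such open stars is again an open star (of the simplex spanned by the vertices involved), and the combinatorial nerve of this cover is identical in the restricted and the classical topologies.

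The key local input is that each open star $\star_\sigma \subseteq \triangle_K$ is of topological shape with shape a point. Indeed, $\star_\sigma$ is semi-algebraically star-shaped with respect to the barycenter $b_\sigma$ of $\sigma$, and the straight-line homotopy $H \colon [0,1] \times \star_\sigma \to \star_\sigma$, $(t,x) \mapsto tx + (1-t)b_\sigma$, is semi-algebraic. Combined with the inductive hypothesis applied to the $1$-dimensional space $[0,1]$ (which forces both $|[0,1]|$ and $|[0,1]^\topify|$ to be a point), this contraction induces, at the level of the functors $\Gamma_* \Gamma^*$, a homotopy between the identity and a constant functor on $\Spaces$, so that both $|\star_\sigma|$ and $|\star_\sigma^\topify|$ are pro-constant and equal to a point, compatibly under $\real$.

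Finally, I would assemble these local computations by descent along the \v{C}ech nerve of the open star cover, using iterated Mayer--Vietoris for pairs of open sets from Proposition~\ref{shv_res_mv_norm}. Since the nerves of the star covers agree in the two topologies, and each multiple intersection has pro-constant shape a point, the comparison map $|X^\topify| \to |X|$ is an isomorphism of pro-constant shapes, completing the induction. The main obstacle will be making rigorous the homotopy invariance step: one must verify that a semi-algebraic contraction whose time parameter is the interval $[0,1]$ (inductively of trivial shape) forces the ambient space to have trivial shape as well, which requires a careful manipulation of $\Pro(\Spaces)$ and the functoriality of $\Gamma_*\Gamma^*$ under semi-algebraic homotopies.
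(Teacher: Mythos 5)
Your reduction to the affine case and the idea of covering a triangulated $X$ by open stars is reasonable, but the central step --- that a semi-algebraic straight-line contraction of an open star forces its \emph{restricted-topology} shape to be a point --- is a genuine gap, and you have correctly identified it as the crux without resolving it. Knowing (inductively) that $|[0,1]|$ and $|[0,1]^\topify|$ are points does not yield homotopy invariance of $\Gamma_*\Gamma^*$; what is actually needed is that the projection $p\colon U\times[0,1]\to U$ has fully faithful pullback $p^*\colon\Shv(U)\to\Shv(U\times[0,1])$, i.e.\ that the unit $\Id\to p_*p^*$ is an isomorphism. In the paper this is \Cref{triviality_of_unit_projection_from_interval}, and its proof (recollement along an open dense stratum plus induction on dimension) explicitly invokes \Cref{Comparison_Shape_Semi_Algebraic_Space} in the locally constant case. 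So your route is circular as stated, or at best defers its hardest step. A secondary defect: for the almost simplicial complex $K$ produced by \Cref{Existence_of_triangulation}, a multiple intersection of open stars of vertices is the open star of the spanned simplex $\sigma$, but $\sigma$ need not belong to $K$; then its barycenter does not lie in $\triangle_K$ and the open star need not even be connected, so the contraction you propose is unavailable precisely on the overlaps your \v{C}ech argument relies on.

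The paper's proof runs in the opposite direction and thereby avoids any semi-algebraic homotopy invariance. It takes as basis $B$ the \emph{topologically} contractible semi-algebraic open subsets (open stars of simplices belonging to $K$ suffice to show $B$ is a basis); for $U\in B$ the triviality of $|U^\topify|$ is immediate from classical topology. Essentiality of $\Shv(X^\topify)$ then follows from \Cref{essential_locally_constant_shape} together with \Cref{base_gen_shv}, and the comparison $|U|\simeq|U^\topify|$ is obtained by showing that the unit $\Gamma^*A\to\real_*\real^*\Gamma^*A$ is an isomorphism of sheaves --- a statement which, by \Cref{base_gen_shv} (resting on the finite homotopy dimension of $\Shv(X)$, \Cref{semialg_hom_dim}), may be checked on $B$, where it is the one-line computation $\real_*\real^*\Gamma^*A(U)\simeq\Map(|U^\topify|,A)\simeq A$. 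If you wish to salvage your approach you would have to supply an independent proof that semi-algebraic contractions trivialize the restricted-topology shape, which essentially amounts to reproving the theorem by the paper's method.
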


\begin{proof}
Let $B$ be the collection of contractible (in the topological sense) semialgebraic subsets of $X$. Then, $B$ is a basis of $X$ (e.g., every open star of a simplex in a triangulation on an open subset is contractible).  
For every $U\in B$, we have $|U^\topify|\simeq \term$, and in particular, it is pro-constant. Since the sheaves on $X^\topify$ represented members of $B$ generates $\Shv(X)$, their images under $\real^*$ generates $\Shv(X^\topify)$. We deduce from Propositions \ref{essential_locally_constant_shape} and \ref{base_gen_shv} that $X^\topify$ is essential. 
It remains to show that $\real$ induces an isomorphism $|U^\topify|\simeq |U|$ for every $U\in \Op(X)$. Replacing $X$ with $U$ if necessary, it would suffice to show this for $X$ itself. To show that $|X^\topify| \iso |X|$, we will prove more generally that for every constant sheaf $\sF\simeq \Gamma^*A$ on $X$, the unit $\un \colon \sF \to \real_*\real^*\sF$ is an iomorphism. 

Let us abuse notation and denote the constant \emph{presheaf} on $X$ with value $A$ again by $A$. We have a map $\alpha \colon A\to \real_*\real^*\sF$ which gives the map $\un$ after sheafifying $A$. Hence, by \Cref{base_gen_shv}, it would suffice to show that $\alpha$ induces an isomorphism 
$\alpha \colon A \iso \real_*\real^*\sF(U)$ for every $U\in B$. In fact, an inverse to $\alpha$ at $U$ is given by the composiiton of isomorphisms 
\[
\real_*\real^*\sF(U)\simeq (\Gamma^*A)(U^\topify) \simeq \Map_\Spaces(|U^\topify|,A)\simeq A.
\]

\end{proof}

Informally, this result means that, as long as locally constant sheaves are concerned, there is no difference between the classical and the semialgebraic topology. In other words, locally constant sheaves have no "growth conditions".

\subsection{Local Contrcatibility of Nash Submersions}
\label{subsec:projection_formula_for_nash_submersion}
For the proof of the relative de Rham theorem, we need a projection formula for the push-forward of cosheaves along a Nash submersion. A prototype for such projection formulas, in the context of locally contractible geometric morphisms, is established in \Cref{second_projection_formula}. Our aim in this section is to show that this result is applicable for Nash submersion by showing that Nash submersions induce locally contractible geometric morphisms on the $\infty$-topoi of sheaves (\Cref{Nash_submersion_locally contractible}). 
\subsubsection{Homotopy Invariance of the Shape}
Morally, Nash submersions give locally contractible geometric morphisms because they are locally contractible in a semialgebraic sense. This property, for Nash submersions of relative dimension 1, is the content of \Cref{fam_int_base}. To deduce the local contractibility of the geometric morphism associated with a Nash submersion from this "semialgebraic local contractability", we need to know that the topos-theoretic local contractability is, in the appropriate sense, invariant under semialgebraic homotopies. This is achieved by studying sheaves on the semialgebraic interval $I= [0,1]$. 

\begin{prop}
\label{Beck-Chevalley_for_interval} Let $I=[0,1]$ and let $i\colon X\into Y$ be a closed embedding
of semi-algebraic spaces. Let $p\colon X\times I\to X$ and $q\colon Y\times I\to Y$
denote the projections. The Beck-Chevalley map 
\[
\BC\colon i^{*}q_{*}\to p_{*}(i\times\Id_{I})^{*}
\qin \Fun(\Shv(Y\times I),\Shv(X))\]
induced from the square 
\[
\xymatrix{X\times I\ar^{i\times\Id_{I}}[r]\ar^{p}[d] & Y\times I\ar^{q}[d]\\
X\ar^{i}[r] & Y
}
\]
is a natural isomorphism.
\end{prop}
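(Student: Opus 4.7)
The plan is to evaluate both sides of $\BC$ at an open $U \in \Op(X)$, express them via \Cref{pull_closed_embedding} as filtered colimits over posets of semialgebraic neighborhoods, and then identify the Beck--Chevalley map with the morphism of diagrams induced by $V \mapsto V \times I$; the isomorphism will then follow from a semialgebraic tube lemma that makes this morphism of diagrams cofinal.

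First, since $\BC$ is natural in $\sF$, it is enough to show it is an isomorphism on sections for every $U \in \Op(X)$ and every $\sF \in \Shv(Y \times I)$. Applying \Cref{pull_closed_embedding} to the closed embeddings $i \colon X \into Y$ and $i \times \Id_I \colon X \times I \into Y \times I$, I get
\[
(i^* q_* \sF)(U) \simeq \colim_{\substack{V \in \Op(Y) \\ V \cap X \supseteq U}} \sF(V \times I),
\qquad
(p_* (i \times \Id_I)^* \sF)(U) \simeq \colim_{\substack{W \in \Op(Y \times I) \\ W \cap (X \times I) \supseteq U \times I}} \sF(W),
\]
and unwinding the adjoint definition of the Beck--Chevalley map shows that, under these identifications, it is the map of colimits induced by the monotone assignment $V \mapsto V \times I$; this is well-defined since $(V \times I) \cap (X \times I) = (V \cap X) \times I \supseteq U \times I$.

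Next, both indexing posets are filtered, being closed under finite intersections, so it suffices to establish cofinality of $V \mapsto V \times I$, i.e., to show that for every semialgebraic open $W \subseteq Y \times I$ with $W \cap (X \times I) \supseteq U \times I$ there exists a semialgebraic open $V \subseteq Y$ with $V \supseteq U$ and $V \times I \subseteq W$. I will construct such a $V$ via a semialgebraic tube lemma: set $Z := (Y \times I) \setminus W$, a closed semialgebraic subset, and consider the projection $\pi_Y \colon Y \times I \to Y$. By Tarski--Seidenberg, $\pi_Y(Z)$ is semialgebraic, and because $I = [0, 1]$ is semialgebraically compact, $\pi_Y$ is semialgebraically closed, so $\pi_Y(Z)$ is closed in $Y$. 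The hypothesis $U \times I \subseteq W$ gives $\pi_Y(Z) \cap U = \emptyset$, so $V := Y \setminus \pi_Y(Z)$ is the desired semialgebraic open.

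The only real obstacle is the semialgebraic tube lemma, which rests on the semialgebraic compactness of $I$ and the resulting closedness of projections along such factors; this is a classical statement of semialgebraic geometry, and once it is in hand everything else is formal -- the explicit description of $\BC$ as a morphism of filtered diagrams, the filteredness of both indexing posets, and the final cofinality argument.
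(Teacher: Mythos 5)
Your proof is correct and follows essentially the same route as the paper: both sides are computed at an open $U\subseteq X$ via \Cref{pull_closed_embedding}, the Beck--Chevalley map is identified with the comparison map of filtered colimits induced by $V\mapsto V\times I$, and the claim reduces to a cofinality statement. The paper leaves that last step as "a direct cofinality argument," which is exactly the semialgebraic tube lemma you spell out (Tarski--Seidenberg plus closedness of the projection along the compact factor $I$), so your write-up just makes the paper's final step explicit.
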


\begin{proof}
Let $\sF\in\Shv(Y\times I)$ and let $U\in X$. By \Cref{pull_closed_embedding} applied to $i$ and $i\times \Id_I$, the Beck-Chevalley
map at $\sF$ and $U$ is given by the comparison map of filtered colimits
\[
\colim\limits_{V\supseteq i(U)}\sF(V\times I) \to \colim\limits_{W\supseteq i(U)\times I}\sF(W)
\]
The result now follows by a direct cofinality argument. 
\end{proof}

From this, we deduce:
\begin{prop}
\label{triviality_of_unit_projection_from_interval} Let $X$
be a semi-algebraic space and let $I=[0,1]$ be the closed semi-algerbaic
interval. Let $p\colon X\times I\to X$ be the projection. The functor
$p^{*}\colon\Shv(X)\to\Shv(X\times I)$ is fully faithful. Equivalently,
the unit $\un_p\colon\Id\to p_{*}p^{*}$ is a natural isomorphism. 
\end{prop}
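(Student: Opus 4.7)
The plan is to use the Beck-Chevalley isomorphism \Cref{Beck-Chevalley_for_interval} as the main tool, reducing the claim that the unit $\un_p\colon \sF\to p_*p^*\sF$ is an isomorphism to the base case $X=\pt$, where it amounts to $|I|\simeq\term$. By \Cref{base_gen_shv}, it suffices to verify the unit is an isomorphism on sections over every $U\in\Op(X)$. The commutative square $p\circ(j_U\times\Id)=j_U\circ p_U$ combined with \Cref{Base_change_comparison} identifies the restriction of $p^*\sF$ to $U\times I$ with $p_U^*(\sF|_U)$, so the map $\sF(U)\to p_*p^*\sF(U)=p^*\sF(U\times I)$ becomes the global-sections value of $\un_{p_U}(\sF|_U)$. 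Thus it suffices to show that $\un_p(\sF)$ is an isomorphism on global sections for every semi-algebraic $X$ and every $\sF\in\Shv(X)$.

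Next, applying \Cref{Beck-Chevalley_for_interval} to a closed embedding $i\colon Z\into X$ together with the identity $(i\times\Id)^*p^*\simeq p_Z^*i^*$ yields a natural isomorphism $i^*p_*p^*\sF\simeq(p_Z)_*p_Z^*(i^*\sF)$, under which $i^*\un_p(\sF)$ corresponds to $\un_{p_Z}(i^*\sF)$. Specializing to $Z=\{x\}$ a single point reduces the problem to showing the unit for $p_x\colon I\to\pt$ is an isomorphism: via $\Shv(\pt)\simeq\Spaces$, this unit at $A\in\Spaces$ is the canonical map $A\to\Map_\Spaces(|I|,A)$, which is an isomorphism iff $|I|\simeq\term$. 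By \Cref{Comparison_Shape_Semi_Algebraic_Space} applied to the classically contractible interval $I^\topify=[0,1]$, one has $|I|\simeq|I^\topify|\simeq\term$, as required.

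The main obstacle is passing from the iso-on-point-stalks established above to a global isomorphism in $\Shv(X)$. For restricted-topology sheaves, topological stalks need not be a conservative family---for instance, the sheaves of tempered and smooth functions on a Nash manifold have identical stalks at every point but are distinct as sheaves---so a na\"{i}ve stalkwise argument does not suffice. My plan to bridge this gap is to argue by induction on $\dim X$, using a proper stratification of $X$ from \Cref{Classical_refinement_semialgebraic_space} together with joint conservativity of open/closed pullback in the recollement decomposition (\Cref{op_cl_dec_joint_cons_seqs}). The key is that the Beck-Chevalley identification of the preceding paragraph applies to \emph{any} closed embedding, not just a point: applied to a closed stratum $Z\subsetneq X$ of strictly smaller dimension, it reduces $i^*\un_p$ to $\un_{p_Z}$, which is an isomorphism by the induction hypothesis. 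The open complement is handled by iterating this procedure against a finer stratification, with the base case $\dim X=0$ handled directly by the point-stalk computation.
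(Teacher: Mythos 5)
Your setup is sound as far as it goes: the reduction to global sections, the use of \Cref{Beck-Chevalley_for_interval} to identify $i^*\un_p$ with $\un_{p_Z}(i^*\sF)$ for a closed embedding $i\colon Z\into X$, the stalk computation via $|I|\simeq|I^\topify|\simeq\term$, and --- importantly --- your recognition that point stalks are not conservative for restricted-topology sheaves. But the bridging induction does not close. In the recollement step, the closed stratum $Z$ has smaller dimension and is handled by the induction hypothesis, fine; the open complement $U$, however, has the \emph{same} dimension as $X$, so ``iterating against a finer stratification'' only reduces the number of strata, never the dimension of the top-dimensional open piece. The process terminates with a dense open subset of full dimension carrying a completely general sheaf $\sF|_U$, and at that point you have no argument: the base case $\dim X=0$ is never reached for that piece, and the point-stalk computation cannot substitute for it, precisely for the reason you yourself flagged.

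What is missing is the mechanism that makes the open stratum tractable. The paper's proof first uses \Cref{cpt_gen_shv_res} to reduce to $\sF$ a \emph{finite colimit of representables} (both $\Id$ and $p_*p^*$ preserve filtered colimits, and such sheaves generate), then chooses the open dense subset $U$ adapted to $\sF$ --- the complement of the boundaries of the finitely many open sets appearing in the presentation --- so that $\sF|_U$ is \emph{locally constant}. The locally constant case is then settled not by a stalk computation but by the shape comparison of \Cref{Comparison_Shape_Semi_Algebraic_Space}: for $\sF\simeq\Gamma^*A$ one writes $\sF(U)\simeq\Map(|U^\topify|,A)\simeq\Map(|(U\times I)^\topify|,A)\simeq p_*p^*\sF(U)$, and this is where the contractibility of $I$ actually enters for a positive-dimensional base (via classical homotopy invariance of $|{-}^\topify|$), rather than only at a point. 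Without the reduction to compact sheaves and the adapted choice of $U$, your induction has no way to handle the generic stratum, so the argument as written has a genuine gap.
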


\begin{proof}
    We prove the result by induction on the dimension of $X$. If $X\simeq\es$, there is nothing to prove. By \Cref{cpt_gen_shv_res}, both the source and
    the target of $\un$ preserve filtered colimits.
    Thus, it suffices to prove that the unit $\un\colon\sF\to p_{*}p^{*}\sF$ is an isomorphism for $\sF$ which is a finite colimit of sheaves representable by open sets in $X$. 
    Passing to the complement of the boundaries of all the open sets involved, we may find an open dense subset $U\subseteq X$ such that $\sF|_U$ is locally constant. 
    
    Let $Z$ be the closed complement of $U$, and let $i\colon Z\into X$ and $j\colon U\into X$ denote the embeddings. Then, by \Cref{op_cl_dec_joint_cons_seqs}, the functors $i^{*}$ and $j^{*}$ are jointly conservative, and so it suffices to prove that 
\[
    i^{*}\un_{p}\colon i^{*}\sF\to i^{*}p_{*}p^{*}\sF
\] 
    and 
\[
    j^*\un_p \colon j^*\sF \to j^*p_*p^*\sF
\] 
    are isomorphisms. 

    By \Cref{Beck-Chevalley_for_interval} and its (trivial) analog for open embeddings, we reduce to the cases where $X=U$ and $X=Z$, i.e., when $\sF$ is locally constant, or $X$ is of smaller inductive dimension. The latter follows by induction on the dimension, so it suffices to prove the first case. 

    Shrinking $U$ if necessary, we may further assume that $\sF\simeq \Gamma^*A$ for some $A\in \Spaces$. Then, using \Cref{Comparison_Shape_Semi_Algebraic_Space} we can write the unit $\sF(V) \to \real_*\real^*\sF(V)$ for $V\in \Op(X)$ as a composition of isomorphisms 
\[
\sF(U)\simeq \Map(|U|,A)\simeq \Map(|U^\topify|,A)\simeq \Map(|(U\times I)^\topify|,A)\simeq \Map(|U\times I|,A)\simeq \real_*\real^*\sF(U). 
\]
\end{proof}
This result allows us to show that the relative shape of a morphism is a
relative homotopy invariant in the following sense. 
\begin{defn}
Let $X_0,X_1$ and $Y$ be semi-algebraic
spaces. Let $\phi\colon X_{0}\to Y$ and $\psi\colon X_{1}\to Y$
be maps in $\Semialg$ and let $\eta,\kappa\colon X_{0}\to X_{1}$
be morphisms in $\Semialg_{/Y}$. We say that
$\eta$ and $\kappa$ are \tdef{homotopic relative to $Y$} if there
is map $G\colon X_{0}\times I\to X_{1}$ in $\Semialg_{/Y}$ such that
with $G|_{X_{0}\times\left\{ 0\right\} }=\eta$ and $G|_{X_{0}\times\left\{ 1\right\} }=\kappa$.
\end{defn}

As in classical homotopy theory, this allows us to define homotopy equivalence relative to $Y$. We say that $\phi$ and $\psi$ are \tdef{homotopy equivalent relative to $Y$} if there are maps $\phi\to\psi$ and $\psi\to\phi$ in $\Semialg_{/Y}$
which are inverse to one another up to relative homotopy. 

\begin{prop}
Let $X_0,X_1$ and $Y$ be semi-algebraic spaces. Let $\phi\colon X_{0}\to Y$
and $\psi\colon X_{1}\to Y$ be morphisms in $\Semialg$ and
let $\eta,\kappa\colon X_{0}\to X_{1}$ be 
maps in $\Semialg_{/Y}$. If $\eta$ and $\kappa$ are homotopic relative
to $Y$, then the induced natural transformations
\[
    \un_{\eta}\colon\psi_{*}\psi^{*}\to\phi_{*}\phi^{*}
\] 
and 
\[
    \un_{\kappa}\colon\psi_{*}\psi^{*}\to\phi_{*}\phi^{*}
\]
are homotopic in the space $\Map_{\End(\Shv(Y))}(\psi_{*}\psi^{*},\phi_{*}\phi^{*})$.
In particular, $\eta$ and $\kappa$ induce homotopic maps $|\phi|\to|\psi|$. 
\end{prop}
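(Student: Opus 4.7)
The plan is to use the relative homotopy $G\colon X_0\times I\to X_1$ to factor both $\un_\eta$ and $\un_\kappa$ through a single intermediate natural transformation, and to identify the two factorizations using \Cref{triviality_of_unit_projection_from_interval}. Let $p\colon X_0\times I\to X_0$ be the projection and let $i_0,i_1\colon X_0\to X_0\times I$ denote the endpoint inclusions, so that $p\circ i_k=\Id_{X_0}$, $G\circ i_0=\eta$, and $G\circ i_1=\kappa$. Set $\mu:=\psi\circ G=\phi\circ p\colon X_0\times I\to Y$. Then $G$ becomes a morphism $(X_0\times I,\mu)\to(X_1,\psi)$ in $\Semialg_{/Y}$, and each $i_k$ becomes a morphism $(X_0,\phi)\to(X_0\times I,\mu)$ in $\Semialg_{/Y}$.

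The unit construction $f\mapsto\un_f$ is functorial under composition: for composable morphisms $A\xrightarrow{f}B\xrightarrow{g}C$ over a common base, there is a canonical homotopy $\un_{gf}\simeq\un_f\circ\un_g$. Applied with $g=G$ and $f=i_k$, this yields canonical homotopies $\un_\eta\simeq\un_{i_0}\circ\un_G$ and $\un_\kappa\simeq\un_{i_1}\circ\un_G$ in $\Map_{\End(\Shv(Y))}(\psi_*\psi^*,\phi_*\phi^*)$, both factoring through the common intermediate object $\mu_*\mu^*\simeq\phi_*p_*p^*\phi^*$. It therefore suffices to produce a homotopy between $\un_{i_0}$ and $\un_{i_1}$ as natural transformations $p_*p^*\to\Id$ in $\End(\Shv(X_0))$; the desired homotopy in $\End(\Shv(Y))$ is then obtained by whiskering with $\phi_*(-)\phi^*$.

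Applying the same composition formula in the opposite order, the identities $p\circ i_k=\Id_{X_0}$ produce canonical homotopies between each composition $\un_{i_k}\circ\un_p$ and the identity natural transformation of $\Id\in\End(\Shv(X_0))$, exhibiting each $\un_{i_k}$ as a left inverse of $\un_p$. By \Cref{triviality_of_unit_projection_from_interval}, the unit $\un_p\colon\Id\to p_*p^*$ is an isomorphism, so the space of its left inverses in $\Map_{\End(\Shv(X_0))}(p_*p^*,\Id)$ is contractible. This produces a canonical homotopy $\un_{i_0}\simeq\un_{i_1}$, completing the proof of the main assertion. The \emph{in particular} statement then follows by post-composing the resulting homotopy with $\Gamma_*\colon\Shv(Y)\to\Spaces$, since a morphism of shapes $|\phi|\to|\psi|$ in $\Pro(\Shv(Y))$ is by definition a natural transformation $\Gamma_*\psi^*\to\Gamma_*\phi^*$.

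The main obstacle is to handle the steps above as coherent $\infty$-categorical data rather than as equalities of $1$-morphisms in a homotopy category. This is precisely what forces the argument to invoke contractibility of the space of inverses to an equivalence (via \Cref{triviality_of_unit_projection_from_interval}), rather than a bare uniqueness statement; once that proposition is in hand, all required higher coherences follow automatically from the functoriality of the unit construction.
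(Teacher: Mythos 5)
Your proof is correct and follows essentially the same approach as the paper's: reduce via the homotopy $G$ to comparing $\un_{i_0}$ and $\un_{i_1}$ for the endpoint inclusions, then observe that both are one-sided inverses to the isomorphism $\un_p$ given by \Cref{triviality_of_unit_projection_from_interval}, hence canonically homotopic. The only cosmetic difference is that you first descend to $\End(\Shv(X_0))$ and then whisker by $\phi_*(-)\phi^*$, whereas the paper carries out the one-sided-inverse argument directly in $\End(\Shv(Y))$.
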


\begin{proof}
Let $i_0,i_1 \colon X_0 \to X_0 \times I$ be the maps induced from the inclusions of the endpoints of $I$, and   
let $G\colon X_{0}\times I\to X_{1}$ be a semi-algebraic homotopy
between $\eta$ and $\kappa$ relative to $Y$. We have a commutative diagram 
\[
\xymatrix{
X_{0} \ar@/^2.0pc/^{\kappa}[rr] \ar@/^-2.0pc/^{\eta}[rr]  \ar@<-.5ex>_{i_1}[r] \ar@<.5ex>^{i_0}[r] & X_0 \times I \ar^G[r] & X_1 
}
\]
in $\Semialg_{/Y}$, and hence we can reduce to the case where $X_1 = X_0 \times I$, with structure map given by the composition $\widetilde{\phi}\colon X_0 \times I \oto{p} X_0 \oto{\phi} Y$, and that $\eta$ and $\kappa$ are the maps $i_0$ and $i_1$. 
In this case, we wish to show that the unit maps 
\[
\widetilde{\phi}_* \widetilde{\phi}^* \to \widetilde{\phi}_* (i_0)_* (i_0)^* \widetilde{\phi}^*\simeq  \phi_*\phi^*
\] 
and 
\[
\widetilde{\phi}_* \widetilde{\phi}^* \to \widetilde{\phi}_* (i_1)_* (i_1)^* \widetilde{\phi}^* \simeq  \phi_*\phi^*\] 
are homotopic. We finish by observing that both these maps are one-sided inverses to the isomorphism given by the unit map 
$\phi_*\phi^* \oto{\sim} \phi_* p_* p^* \phi^* \simeq \widetilde{\phi}_* \widetilde{\phi}^*$ (see \Cref{triviality_of_unit_projection_from_interval}).
\end{proof}

\begin{corl}[Homotopy Invariance] \label{hom_inv_shape} If $\phi\colon X_{0}\to Y$
and $\psi\colon X_{1}\to Y$ are relatively homotopy equivalent, then
\[
\phi_{*}\phi^{*}\simeq\psi_{*}\psi^{*}\qin\End(\Shv(Y))
\]
In particular, $|\phi|\simeq|\psi|$. 
\end{corl}

The homotopy invariance of the relative shape gives the desired "homotopical triviality" of pointed families of intervals.
\begin{prop}
\label{fam_int_triv_shape} 
Let $\phi\colon X\to Y$
be a pointed family of intervals over the Nash manifold $Y$. Then $\phi^{*}\colon \Shv(Y)\to \Shv(X)$ is fully faithful. In particular, we have $|\phi|\simeq\term_{Y}$. 
\end{prop}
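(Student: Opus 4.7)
The strategy is to exhibit $\phi$ and $\Id_Y$ as relatively homotopy equivalent in $\Semialg_{/Y}$ and then invoke the homotopy invariance result established in the proposition preceding \Cref{hom_inv_shape}. The given Nash section $\nu\colon Y\to X$ of $\phi$ satisfies $\phi\circ\nu=\Id_Y$ strictly, so both $\phi\colon(X,\phi)\to(Y,\Id_Y)$ and $\nu\colon(Y,\Id_Y)\to(X,\phi)$ are morphisms in $\Semialg_{/Y}$ with one composition already equal to the identity. It remains to build a relative homotopy $H\colon X\times[0,1]\to X$ over $Y$ from $\Id_X$ to $\nu\circ\phi$.

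Writing $\phi$ as the composition of an open embedding $j\colon X\into Y\times\RR$ with the projection $p\colon Y\times\RR\to Y$ and $\nu(y)=(y,\alpha(y))$ for a Nash function $\alpha\colon Y\to\RR$, the natural candidate is the fiberwise straight-line interpolation
\[
H\colon X\times[0,1]\to Y\times\RR,\qquad H\bigl((y,a),t\bigr)=\bigl(y,\,(1-t)a+t\alpha(y)\bigr).
\]
This map is polynomial in $(a,t,\alpha(y))$, hence continuous and semi-algebraic; it visibly lies over $Y$; and it restricts to $\Id_X$ and $\nu\circ\phi$ at $t=0$ and $t=1$ respectively. The only subtle point---and the place where the family-of-intervals hypothesis is essentially used---is that $H$ lands in $X\subseteq Y\times\RR$: each fiber $\phi^{-1}(y)$ is a connected open subset of $\{y\}\times\RR$, hence an open interval, hence convex, so the segment from $a$ to $\alpha(y)$ remains inside $\phi^{-1}(y)$ for every $(y,a)\in X$.

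The preceding proposition, applied to $H$, then gives $\un_{\nu\circ\phi}\simeq\un_{\Id_X}=\Id$ in $\End(\phi_*\phi^*)$. Unwinding the definition of $\un_\eta$, one sees that $\un_\phi\colon\Id\to\phi_*\phi^*$ is exactly the unit of the adjunction $\phi^*\dashv\phi_*$; that $\un_\nu\colon\phi_*\phi^*\to\Id$ is the map obtained by applying the unit $\Id\to\nu_*\nu^*$ to $\phi^*$ and simplifying via $\phi\nu=\Id_Y$; and that the construction is functorial under composition (with reversal), so $\un_\phi\circ\un_\nu=\un_{\nu\circ\phi}$ and $\un_\nu\circ\un_\phi=\un_{\phi\circ\nu}=\un_{\Id_Y}=\Id$. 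Hence $\un_\phi$ and $\un_\nu$ are mutually inverse, the unit $\Id\to\phi_*\phi^*$ of $\phi^*\dashv\phi_*$ is an equivalence, and $\phi^*$ is fully faithful. The shape assertion then follows: $|\phi|=\Gamma_{X,*}\phi^*\simeq\Gamma_{Y,*}\phi_*\phi^*\simeq\Gamma_{Y,*}$ in $\Pro(\Shv(Y))$, and $\Gamma_{Y,*}$ is corepresented by $\term_Y$. The only potential obstacle in this argument---the containment $H(X\times[0,1])\subseteq X$---is handled by the convexity of intervals, as noted above.
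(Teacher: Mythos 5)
Your proof is correct and follows essentially the same route as the paper: use the section $\nu$, observe that $\nu\circ\phi$ is homotopic to $\Id_X$ relative to $Y$ via the fiberwise linear homotopy (which stays in $X$ by convexity of the interval fibers), and conclude by homotopy invariance of $\phi_*\phi^*$ that the unit of $\phi^*\dashv\phi_*$ is an equivalence. The paper compresses the last step by citing \Cref{hom_inv_shape} together with an external lemma identifying an abstract equivalence $\phi_*\phi^*\simeq\Id$ with invertibility of the unit, whereas you verify directly that $\un_\phi$ and $\un_\nu$ are mutually inverse; both are fine.
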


\begin{proof}
    By \cite[Lemma 3.3.1]{AmbiHeight}, it suffices to show that $\phi_*\phi^*$ is homotopic to the identity functor of $\Shv(Y)$.   
    Let $s$ be a section of $\phi$. Then, $\phi s = \Id_Y$ and $s\phi$ is homotopic to the identity of $X$ relative to $Y$ via a linear homotopy in each fiber. Hence, by \Cref{hom_inv_shape} $s$ provides an equivalence 
\[
    \phi_*\phi^*\simeq (\Id_Y)_*(\Id_Y)^* \simeq \Id_{\Shv(Y)}.
\] 
\end{proof}

\subsubsection{The Proof of the Local Contractibility}
Using the triviality of families of intervals above, we are ready to show that Nash submersions induce locally contractible geometric morphisms.
\begin{thm}
\label{Nash_submersion_locally contractible}
Let $\phi\colon X\to Y$ be a Nash submersion. Then $\phi\colon\Shv(X)\to\Shv(Y)$
is a locally contractible geometric morphism. 
\end{thm}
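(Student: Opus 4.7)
The plan is to reduce local contractibility of a general Nash submersion $\phi\colon X\to Y$ to the case of the relative-dimension-one projection $\pi\colon\RR\times Z\to Z$ for a Nash manifold $Z$, and then establish this base case by applying the criterion of \Cref{Criterion_locally contractibleness_fuly_faithful} against the basis of pointed families of intervals provided by \Cref{fam_int_base}.

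For the reduction, I would invoke \Cref{locality_locally_contractible} to work locally on $X$. The Implicit Function Theorem (\Cref{Implicit_Function_Theorem}) lets us cover $X$ by opens $U$ on which $\phi|_U$ factors as an open embedding $U\hookrightarrow \RR^n\times Y$ composed with the projection $p_n\colon\RR^n\times Y\to Y$. Open embeddings are \'{e}tale and hence give locally contractible geometric morphisms (\Cref{etale_iff_locally_contractible_and_conservative}), so by \Cref{loc_contr_composition} it suffices to show that each $p_n$ is locally contractible. Writing $p_n$ as an iterated composition of relative-dimension-one projections $\RR\times(\RR^{k-1}\times Y)\to\RR^{k-1}\times Y$ and applying \Cref{loc_contr_composition} once more, we reduce to the case $\phi=\pi\colon\RR\times Z\to Z$.

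Now $\pi$ is a pointed family of intervals (via the zero section). Using \Cref{fam_int_base}, the opens $U\subseteq \RR\times Z$ for which $\pi|_U\colon U\to \pi(U)$ is itself a pointed family of intervals form a basis of the restricted topology, and \Cref{base_gen_shv} shows that the sheaves they represent generate $\Shv(\RR\times Z)$ under colimits. For each such $U$, \Cref{fam_int_triv_shape} provides that $(\pi|_U)^*$ is fully faithful and $|\pi|_U|\simeq\term_{\pi(U)}$, a pro-constant (indeed subterminal) object of $\Shv(Z)$. Via \Cref{essential_locally_constant_shape}, $\pi$ is therefore essential, and we can identify $\pi_\sharp U\simeq \pi(U)\in\Shv(Z)$.

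Under the identifications $\Shv(\RR\times Z)_{/U}\simeq \Shv(U)$ and $\Shv(Z)_{/\pi(U)}\simeq \Shv(\pi(U))$, the induced functor $\pi_\sharp\colon \fX_{/U}\to \fY_{/\pi_\sharp U}$ becomes the left adjoint $(\pi|_U)_\sharp$, whose right adjoint $(\pi|_U)^*$ is fully faithful by \Cref{fam_int_triv_shape}. \Cref{Criterion_locally contractibleness_fuly_faithful} then yields local contractibility of $\pi$, completing the chain of reductions. The main subtlety I anticipate is the identification of the induced relative functor with the geometric-morphism-level $(\pi|_U)_\sharp$ together with the computation $\pi_\sharp U\simeq \pi(U)$; both rest on combining the shape triviality for a pointed family of intervals from \Cref{fam_int_triv_shape} with the description of $\sharp$-pushforward along \'{e}tale geometric morphisms from \Cref{etale_then_essential}. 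All remaining steps are essentially bookkeeping within the general framework established earlier in the paper.
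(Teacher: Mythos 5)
Your proposal is correct and follows essentially the same route as the paper's proof: reduction via the Implicit Function Theorem, locality, and closure under composition to the projection $Y\times\RR\to Y$, then essentiality and local contractibility checked on the basis of pointed families of intervals using \Cref{fam_int_triv_shape}, \Cref{essential_locally_constant_shape} and \Cref{Criterion_locally contractibleness_fuly_faithful}. The subtlety you flag at the end — identifying the induced relative functor with $(\pi|_U)_\sharp$ and computing $\pi_\sharp U\simeq\pi(U)$ — is handled in the paper exactly as you propose.
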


\begin{proof}
    Let $\phi \colon X\to Y$ be a Nash submersion. By \Cref{Implicit_Function_Theorem}, the collection of open sets $U\subseteq X$ for which $\phi|_U$ can be written as a composition 
\[
    U\into Y\times \RR^k \onto Y
\] 
    form a basis for the restricted topology on $X$. Hence, by \Cref{locality_locally_contractible} and \Cref{base_gen_shv}, we may assume without loss of generality that $\phi$ is such a composition.  

    Next, since locally contractible geometric morphisms are closed under compositions (\Cref{loc_contr_composition}) and since \'{e}tale geometric morphisms are locally contractible (\Cref{etale_iff_locally_contractible_and_conservative}), we may further reduce further to the case where $\phi$ equals to the projection $\pi\colon Y\times \RR \to Y$. 

    We first show that $\pi$ is essential. By \Cref{fam_int_base} the open sets $U\subseteq Y\times \RR$ for which $\pi|_U \colon U\to \phi(U)$ is a pointed family of intervals form a basis for the restricted topology of $Y\times \RR$, and hence, using \Cref{essential_locally_constant_shape}, we see that it is enough to show that a pointed family of intervals has constant shape. By \Cref{fam_int_triv_shape} the shape of a pointed family of intervals is trivial and, in particular, constant.
    
    Finally, to show that $\pi$ is locally contractible, we use the criterion from \Cref{Criterion_locally contractibleness_fuly_faithful} for the basis $B$ of $Y\times \RR$ consisting of the open sets $U\subseteq Y\times \RR$ which are pointed families of intervals over their images in $Y$. Let us from now on abuse notation and denote the sheaf represented by an open set $U$ again by $U$. 
    By \Cref{fam_int_triv_shape} we have, for every $U\in B$, 
\[
    \pi_\sharp(U)\simeq \pi(U) \qin \Shv(Y),
\]
    and via this identification, the functor 
\[
    \pi_\sharp \colon \Shv(U)\simeq \Shv(Y\times \RR)_{/U} \to \Shv(Y)_{/\pi_\sharp(U)}\simeq \Shv(\pi(U)) 
\]
    is left adjoint to the pullback along the restricted map $\pi_U\colon U\to \pi(U)$. This pullback functor is fully faithful by \Cref{fam_int_triv_shape}. 
\end{proof}

Since every Nash submersion induces a locally contractible, and in particular, essential geometric morphism on the $\infty$-categories of sheaves, we obtain for every Nash submersion $\phi\colon X\to Y$ a functor $\phi_\sharp\colon \Shv(X,\cD)\to \Shv(Y,\cD)$, which we think of as a twist of the push-forward with proper support. In particular, we expect it to satisfy a form of proper base-change. Indeed, we have
\begin{thm}
\label{Proper base-change}
    Let 
\[
    \xymatrix{
    X\ar^{\psi'}[r]\ar^{\phi'}[d] & Y\ar^{\phi}[d] \\ 
    Z \ar^{\psi}[r] & W
    }
\]
    be a pullback square of Nash manifolds, where $\phi$ is a Nash submersion and let $\cD\in \Prl$. The Beck-Chevalley map 
\[
    \BC \colon \phi'_\sharp \psi'^* \to \psi^*\phi_\sharp \qin \Fun(\Shv(Y,\cD),\Shv(Z,\cD)).
\]
    is a natural isomorphism.
\end{thm}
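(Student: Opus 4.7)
The plan is to verify the Beck-Chevalley map $\BC$ is an isomorphism by reducing to elementary forms of $\phi$ via the Implicit Function Theorem. Both $\phi'_\sharp\psi'^*$ and $\psi^*\phi_\sharp$ preserve small colimits in the sheaf variable, so it suffices to check $\BC$ on a generating set of $\Shv(Y,\cD)$. The construction of the Beck-Chevalley map is compatible with composition of geometric morphisms (the $\BC$ for a composition $\phi=\phi_1\phi_2$ decomposes as the pasting of the $\BC$s for $\phi_1$ and $\phi_2$), and is \emph{local on $Y$}: if $\{Y_i\}$ is an open cover and $j_i\colon Y_i\to Y$ the open embeddings, the sheaves $j_{i,!}\sF_i$ generate $\Shv(Y,\cD)$ under colimits, and the standard base change for \'{e}tale morphisms (\Cref{etale_iff_locally_contractible_and_conservative}) identifies $\BC$ evaluated on such a sheaf with the Beck-Chevalley map for $\phi|_{Y_i}$. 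Applying \Cref{Implicit_Function_Theorem}, we may cover $Y$ by opens over which $\phi$ factors as an open embedding into $W\times\RR^n$ followed by the projection onto $W$. Thus, it suffices to treat two cases: (i) $\phi$ is an open embedding, and (ii) $\phi$ is a projection $\pi\colon W\times\RR^n\to W$.

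Case (i) is base change for \'{e}tale geometric morphisms, which is standard (and follows from \Cref{etale_iff_locally_contractible_and_conservative}). In case (ii), decomposing $\pi$ as a sequence of projections of relative dimension one and applying induction on $n$, we reduce to the case $\pi\colon W\times\RR\to W$, with pullback $\pi'\colon Z\times\RR\to Z$ and $\psi'=\psi\times\Id_\RR$. By \Cref{fam_int_base}, the open subsets $U\subseteq W\times\RR$ which are pointed families of intervals over $\pi(U)\subseteq W$ form a basis for the restricted topology on $W\times\RR$, and by \Cref{base_gen_shv} the sheaves they represent (tensored with objects of $\cD$) generate $\Shv(W\times\RR,\cD)$ under colimits. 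It therefore suffices to verify $\BC$ is an isomorphism at such sheaves.

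Given such a $U$ with Nash section $\nu=(\Id,\sigma)\colon\pi(U)\to U$, its preimage $U':=(\psi\times\Id_\RR)^{-1}(U)\subseteq Z\times\RR$ is itself a pointed family of intervals over $\pi'(U')=\psi^{-1}(\pi(U))$: it is open, its fibers over $z\in\pi'(U')$ coincide with the fibers of $\pi|_U$ over $\psi(z)$ (hence are connected intervals), and $\nu'(z):=(z,\sigma(\psi(z)))$ is a Nash section. Applying \Cref{fam_int_triv_shape} to both $\pi|_U$ and $\pi'|_{U'}$ yields natural identifications
\[
\pi_\sharp U\simeq \pi(U),\qquad \pi'_\sharp U'\simeq \pi'(U'),
\]
where we conflate open sets with their representable sheaves. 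Combined with the obvious identities $\psi'^* U\simeq U'$ and $\psi^*\pi(U)\simeq\pi'(U')$ for pullbacks of representable sheaves along continuous maps of restricted topological spaces, both the source and target of $\BC$ at $U$ are naturally identified with the sheaf represented by $\pi'(U')$.

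The principal obstacle will be to verify that this abstract identification is realized by the Beck-Chevalley map itself, rather than merely providing an isomorphism between its source and target. This requires tracing the unit of the adjunction $\pi_\sharp\dashv\pi^*$ through the construction of the shape trivialization of a pointed family of intervals, which is ultimately built from the linear homotopy between $\Id_U$ and $\nu\circ\pi|_U$ in each fiber. The required compatibility should follow from the naturality of the homotopy-invariance equivalence of \Cref{hom_inv_shape} under the pullback $\psi\times\Id_\RR$ that relates $\nu$ and $\nu'$.
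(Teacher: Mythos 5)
Your proposal follows essentially the same route as the paper: reduce via compatibility of Beck--Chevalley maps with composition and with restriction to a basis of the source, use \Cref{Implicit_Function_Theorem} to get to the projection $W\times\RR\to W$, and then check the map on the basis of pointed families of intervals using \Cref{fam_int_triv_shape} (your verification that $\psi'^{-1}(U)$ is again a pointed family of intervals is a detail the paper leaves implicit, and is worth having).

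The ``principal obstacle'' you flag at the end is not actually an obstacle, and no tracing through the homotopy-invariance equivalence is needed. After the identifications, both $\phi'_\sharp\psi'^*(U)$ and $\psi^*\phi_\sharp(U)$ are $(-1)$-truncated objects of $\Shv(Z)$, namely the sheaves represented by the open sets $\phi'(\psi'^{-1}(U))$ and $\psi^{-1}(\phi(U))$, which coincide. The space of maps between two subterminal objects is either empty or contractible, so $\BC$, being \emph{some} map between two equal subobjects of the terminal object, is automatically the canonical equivalence. This is exactly how the paper's proof concludes (``Via these identifications, the map $\BC$ corresponds to the equality of open sets\dots''); with this observation your argument is complete.
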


\begin{proof}
    First, note that the map $\BC$ for $\cD$-valued sheaves is obtained from the map $\BC$ for $\Spaces$-valued sheaves by tensoring with $\cD$. Hence, it suffices to show the result in the case $\cD= \Spaces$. 

    Next, we note that the collection of Nash submersions for which the statement holds is closed under composition and restriction to a basis of $Y$. Moreover, the result holds if $\phi$ is an open embedding. Hence, as in the proof of \Cref{Nash_submersion_locally contractible}, it suffices to prove the claim for $Y=W\times \RR$ and $\phi$ the projection onto $W$. 

    Both the source and target of $\BC$ preserve all small colimits, and hence, by \Cref{base_gen_shv}, it suffices to check that $\BC$ is an isomorphism at $U\in \Shv(Y)$ which is a pointed family of intervals over its image in $W$ (here we keep identifying open sets with the sheaves they represent). In this case, by \Cref{fam_int_triv_shape}, we have 
\[
    \psi^*\phi_\sharp(U) \simeq \psi^*\phi(U)\simeq \psi^{-1}(\phi(U)).
\]
    Similarly, we have 
\[
    \phi'_\sharp \psi'^*(U) \simeq \phi'_\sharp\psi'^{-1}(U) \simeq \phi'(\psi'^{-1}(U)).
\]
    Via these identifications, the map $\BC$ corresponds to the equality of open sets 
\[
    \phi'(\psi'^{-1}(U)) = \psi^{-1}(\phi(U)) \qin \Op(Z)
\]
    and hence it is an isomorphism.
\end{proof}

\section{Relative de Rham Theorem}
\label{sec:rel_der}

In the previous section, we discussed the general theory of sheaves and cosheaves on restricted topological spaces. 
In this section, we describe the $\infty$-categorical version of cosheaves of Schwartz sections and prove the main result of this paper: the relative de Rham theorem (\Cref{global_relative_de_rham}).




\subsection{Cosheaves of Schwartz Sections }
\label{subsec:the_cosheaf_of_schwartz_sections}
For a Nash vector bundle $\sE$ on a Nash manifold $X$, the functor that takes an open set $U\subseteq X$ to the Fr\'{e}chet topological vector space of Schwartz sections $ \Sc(U,\sE)$ forms a cosheaf for the restricted topology on $X$. These cosheaves admit a canonical structure of modules over the sheaf of tempered functions on $X$ and consequently have partition unity. In particular, they are ''acyclic" sheaves. 
In this section we formalize this acyclicity by regarding the cosheaf of Schwartz sections, $\Sc_\sE$, as a functor valued in an $\infty$-category of Fr\'{e}chet complexes. Then, the acyclicity of $\Sc_\sE$ as a functor valued in Fr\'{e}chet \emph{spaces},  translates to the usual cosheaf property of $\Sc_\sE$ as a functor valued in Fr\'{e}chet \emph{complexes}.

\subsubsection{Nash Vector Bundles \& Nash connections}
For a Nash manifold $X$, let $\mdef{\sO_X}$ denote the sheaf of Nash functions on $X$.  
\begin{defn}
    Let $X$ be a Nash manifold. A \tdef{Nash vector bundle} on $X$ is a locally free sheaf of modules over $\mathcal{O}_{X}$.
\end{defn}

\begin{example}
    The tangent bundle $T_{X}$, the cotangent bundle $\Omega_{X}$ and
    their exterior powers are Nash vector bundles (see, e.g., \cite[Theorem 3.4.3]{SchNash}). 
\end{example}

As for smooth bundles, we have the notion of a flat connection on a Nash vector bundle.

\begin{defn}
    Let $\sE$ be a vector bundle on
    a Nash manifold $X$. A \tdef{Nash flat connection} on $\sE$ is
    a map of sheaves $\nabla:\sE\to \sE\otimes\Omega_{X}$ satisfying
    the Leibnitz rule and the flatness condition $\nabla^2 = 0$ for smooth sections.
\end{defn}

For a flat connection $(\sE,\nabla)$ on a Nash manifold $X$,
we denote by 
\[
\mdef{\flat(\sE,\nabla)}:=\ker(\nabla) \qin \Shv(X,\RR)
\]
 the sheaf of \tdef{flat sections} of $\sE$. If $\nabla$ is clear
from the context we denote it simply by $\flat(\sE)$. 

Let $\Vect_X^\nabla$ be the category of Nash vector bundles on $X$, and $\Loc(X,\RR)^\fin$ the category of finite-dimensional local systems of real vector spaces on $X$. We have a pair of adjoint functors 
\[
(-)\otimes_\RR \sO_X \colon \Shv(X,\RR) \adj \Vect_X^\nabla : \flat  
\]
However, unlike the situation for smooth manifolds, in our context, this construction \emph{does not} give an equivalence of categories. Instead, we have only the following

\begin{prop}
\label{Riemann-Hilbert_semi_alg}
Let $X$ be a Nash manifold.
The functor 
\[
(-)\otimes_\RR \sO_X \colon \Loc^\fin(X,\RR) \to \Vect^{\nabla}_X
\] 
is a fully faithful embedding, with essential image those vector bundles with a flat connection, locally generated over $\sO_X$ by their flat sections.   
\end{prop}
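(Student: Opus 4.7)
The plan is to deduce both assertions from a single key lemma: for every finite-dimensional local system $\sF$ on $X$, the natural map $\sF \to \flat(\sF \otimes_\RR \sO_X)$ defined by $s \mapsto s \otimes 1$ is an isomorphism of sheaves on $X$. Since both sides are sheaves, this can be verified on a trivializing cover of $\sF$, reducing it to the case where $\sF \cong V \otimes_\RR \RR_X$ for a finite-dimensional real vector space $V$. Then $\sF \otimes_\RR \sO_X \cong V \otimes_\RR \sO_X$ with connection $\mathrm{Id}_V \otimes d$, and the assertion reduces to the identity $\flat(\sO_X, d) = \RR_X$, i.e., to the statement that a Nash function on a semi-algebraically connected open subset of $X$ with vanishing exterior derivative is constant. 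This follows from the smoothness of Nash functions together with the topological connectedness of semi-algebraically connected open sets.

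For fully faithfulness, a morphism $\sF \otimes \sO_X \to \sF' \otimes \sO_X$ of Nash vector bundles with flat connection is exactly a flat global section of the internal hom sheaf $\underline{\Hom}_{\sO_X}(\sF \otimes \sO_X, \sF' \otimes \sO_X)$. This internal hom identifies naturally with $\underline{\Hom}_{\RR}(\sF, \sF') \otimes_\RR \sO_X$, equipped with the connection induced from the canonical flat structure on the local system $\underline{\Hom}_\RR(\sF, \sF')$. Applying the key lemma to this local system and passing to global sections recovers $\Hom_{\Loc(X,\RR)^\fin}(\sF, \sF')$, as desired.

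For the description of the essential image, one direction is immediate: if $\sE \cong \sF \otimes \sO_X$ for a local system $\sF$, then the sections $s \otimes 1$ with $s$ a local section of $\sF$ are flat and generate $\sE$ over $\sO_X$. Conversely, suppose $(\sE, \nabla)$ is locally generated over $\sO_X$ by flat sections, and set $\sF := \flat(\sE, \nabla)$. The evaluation map $\sF \otimes_\RR \sO_X \to \sE$ is a morphism of bundles with flat connection, and it remains to show that it is an isomorphism and that $\sF$ is a finite-dimensional local system, both local claims. Fix $x \in X$ and a semi-algebraic neighborhood $U$ on which $\sE$ has rank $n$ and admits flat generators $t_1, \ldots, t_k \in \sF(U)$, and choose $t_{i_1}, \ldots, t_{i_n}$ whose values at $x$ are linearly independent in the fiber $\sE_x$. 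Non-vanishing of the appropriate determinant of Nash sections is a semi-algebraic open condition, so the $t_{i_j}$ remain pointwise linearly independent on a smaller semi-algebraic neighborhood $V \subseteq U$ of $x$, and hence form an $\sO_V$-basis of $\sE|_V$. Any $\sO_V$-linear relation $\sum f_j t_{i_j} = 0$ yields $\sum df_j \otimes t_{i_j} = 0$ after applying $\nabla$; linear independence of the $t_{i_j}$ then forces each $df_j = 0$, and shrinking $V$ to a semi-algebraically connected neighborhood forces each $f_j$ to be locally constant. Therefore $\sF|_V$ is the constant sheaf of rank $n$ spanned by the $t_{i_j}$, and the evaluation map is an isomorphism over $V$.

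The main obstacle, and the place where the Nash hypothesis enters essentially, is the passage from local flat generators to a local basis of flat sections: in the smooth category, a flat connection always admits local bases of flat sections by solving an ODE, but these solutions need not be Nash, which is precisely why the functor in the statement fails to be essentially surjective onto all Nash flat bundles. The hypothesis that the bundle is locally generated by flat sections postulates the existence of Nash flat generators; once these are given, the linear-independence argument above stays entirely within the semi-algebraic and Nash categories.
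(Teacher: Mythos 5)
Your proof is correct and follows essentially the same route as the paper's: both reduce the unit isomorphism $\sF \to \flat(\sF\otimes_\RR\sO_X)$ to the case of a trivial local system, and both identify the essential image by checking locally that the counit $\flat(\sE,\nabla)\otimes_\RR\sO_X\to\sE$ is an isomorphism when $\sE$ is locally generated by flat sections; you supply more detail than the paper, including the verification that $\flat(\sE,\nabla)$ is a finite-rank local system. One cosmetic slip in the essential-image step: the computation $\nabla\bigl(\sum_j f_j t_{i_j}\bigr)=\sum_j df_j\otimes t_{i_j}$ should be applied to an arbitrary flat section expanded in the frame $\{t_{i_j}\}$ (i.e., to the equation $\nabla s=0$), not to an $\sO_V$-linear relation $\sum_j f_j t_{i_j}=0$ --- the latter already forces $f_j=0$ pointwise since the $t_{i_j}$ are a frame and says nothing about flat sections --- but the conclusion you draw is the correct one.
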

Note that here $\sL\otimes \sO_X$ is endowed with the connection $1\otimes d$.
\begin{proof}
For $\sL\in \Loc^\fin(X,\RR)$, the unit map $\sL \to \flat(\sL \otimes \sO_X)$ is an isomorphism. Indeed, this can be checked locally over $X$, so we can reduce to the case of a trivial local system, which is clear. 

On the other hand, is $\sE \in \Vect_X^\nabla$ is locally generated over $\sO_X$ by flat sections, then the counit map 
$\flat(\sE,\nabla) \otimes \sO_X \to \sE$ is an isomorphism locally on $X$, and hence an isomorphism of sheaves. 
\end{proof}

\subsubsection{The \texorpdfstring{$\infty$-}{Infinity }-Category of Fr\'{e}chet Complexes}
We shall now introduce our presentable $\RR$-linear $\infty$-category of complexes of Fr\'{e}chet spaces. Our choice is relatively arbitrary and should not be considered an exceptionally efficient $\infty$-category for performing derived functional analysis.
However, since the analysis in this paper is relatively soft, it will suffice for our needs.

Recall that, for an additive 1-category $\cA$, there is an $\infty$-category $\Ch(\cA)$ of chain complexes over $\cA$, with a canonical full subcategory $\Ch_b(\cA)\subseteq \Ch(\cA)$ spanned by the bounded complexes. The $\infty$-category $\Ch_b(\cA)$ is a stable $\infty$-category, and moreover, it is a universal stable $\infty$-category which receives an additive functor from $\cA$. Namely, the embedding $\cA \into \Ch_b(\cA)$ of the complexes concentrated in degree $0$, induces an equivalence 
\[
    \Fun^{\rex}(\Ch_b(\cA),\cD) \simeq \Fun^{\sqcup}(\cA,\cD)
\]
for every stable $\infty$-category $\cD$. Here, on the left-hand side, we have the $\infty$-category of right-exact functors, and on the right-hand side, the $\infty$-category of additive functors. 
(to see this, apply \cite[Corollary 7.59]{bunke2019controlled} to the exact structure consisting of only split sequences). 

Let $\kappa$ be a large enough cardinal and let $\Fre^{\kappa}$ be the category of Fr\'{e}chet topological vector spaces of dimension $\le \kappa$. 

\begin{defn}
We set
\[
    \mdef{\infFre}:=\Ind(\Ch_{b}(\Fre^{\kappa})).
\]
\end{defn}
 
Let $\mdef{\Der(\RR)}$ be the derived $\infty$-category of $\RR$-vector spaces (in the sense of \cite[\S 1.3.2]{HA}), and $\mdef{\Der(\RR)^\fin}\subseteq \Der(\RR)$ be the full subcategory spanned by the bounded complexes of finite dimensional vector spaces, so that $\Der(\RR)\simeq \Ind(\Der(\RR)^\fin)$. 
Clearly $\Ch_b(\Fre^\kappa)$ is tensored over $\Der(\RR)^\fin$, and hence $\infFre$ is a compactly generated $\infty$-category, linear over $\Der(\RR)$.
Moreover, we have a fully faithful embedding of stable $\infty$-categories $\Ch_{b}(\Fre^{\kappa})\into\infFre$. These are essentially the only properties we shall need from $\infFre$. 

For every 1-category $\cC$, we have an additive functor
$\Fun(\cC,\Fre^\kappa) \to \Fun(\cC,\infFre)$ which corresponds to an exact functor
\[
    \mdef{[-]}\colon \Ch_b(\Fun(\cC,\Fre^\kappa)) \to \Fun(\cC,\Ch_b(\Fre^\kappa) \into \Fun(\cC,\infFre).
\]
We refer to $[-]$ as the \tdef{realization functor}. Specifically, the value of $[\sG_\bullet]$ on an object $U\in \cC$ is the complex
\[
\dots\to \sG_k(U) \to \sG_{k-1}(U) \to \dots,
\]
considered as an object of $\infFre$.

Note that, if $\cC$ admits a Grothendieck topology, even if all the $\sG_k$-s are $\Fre^\kappa$-valued cosheaves, the realization of the complex $\sG_\bullet$ might not be a $\infFre$-valued cosheaf. However, it is a cosheaf if $\sG_\bullet$ is bounded and the $\sG_k$-s are \emph{acyclic}; that is, each of them is a cosheaf when considered as a $\infFre$-valued functor.   
\subsubsection{Schwartz Sections of Vector Bundles}

A Nash vector bundle has a notion of a \emph{Schwartz section}, as described in \cite[Definition 5.1.3]{SchNash}. The
space of Schwartz sections of a Nash vector bundle is a (nuclear) Fr\'{e}chet topological vector space.  
\begin{defn}
Let $\sE$ be a Nash vector bundle on a Nash manifold $X$.
We denote by 
\[
\mdef{\Sc_{\sE}}:\Op(X)\to\Fre^{\kappa}
\] 
the functor assigning to an open set $U\subseteq X$ the space of Schwartz
sections of $\sE$ on $U$.
\end{defn}

It is well known (see, e.g., \cite[Proposition 4.4.4]{SchNash}) that ${\cal S}_{\sE}$
is a cosheaf of real vector spaces with respect to the  restricted topology. We shall now show that it remains a cosheaf when considered as a $\infFre$-valued functor.
\begin{prop}
\label{schwarz_sections_cosheaf}
The composition 
\[\Op(X)\oto{{\cal S}_{\sE}}\Fre^{\kappa}\into\Fre_\infty\]
is a $\infFre$-valued cosheaf on $X$.
\end{prop}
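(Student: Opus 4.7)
The plan is to verify the cosheaf axioms using the cosheaf analog of \Cref{shv_res_mv_norm}, so it suffices to check two things: (normalization) that $\Sc_\sE(\varnothing) = 0$ is the initial object of $\infFre$, and (Mayer--Vietoris) that for every pair of open sets $U,V \subseteq X$, the square
\[
\xymatrix{
\Sc_\sE(U\cap V) \ar[r]\ar[d] & \Sc_\sE(U) \ar[d] \\
\Sc_\sE(V) \ar[r] & \Sc_\sE(U\cup V)
}
\]
is a pushout in $\infFre$. Normalization is immediate since $\Sc_\sE$ takes the empty set to the zero Fr\'{e}chet space, which is a zero object in $\Ch_b(\Fre^\kappa)$ and hence initial in the stable $\infty$-category $\infFre$.

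For Mayer--Vietoris, the core point is to promote the known cosheaf property of $\Sc_\sE$ as a functor valued in Fr\'{e}chet \emph{spaces} to the $\infty$-categorical setting. Since $\infFre$ is stable, the square above is a pushout if and only if the sequence
\[
\Sc_\sE(U\cap V) \oto{} \Sc_\sE(U) \oplus \Sc_\sE(V) \oto{} \Sc_\sE(U\cup V)
\]
is a cofiber sequence in $\infFre$. I will produce this cofiber sequence by exhibiting a split short exact sequence in $\Fre^\kappa$, which is then automatically a cofiber sequence in $\Ch_b(\Fre^\kappa)$ (split additive sequences are preserved by any additive functor to a stable $\infty$-category, which includes the embedding $\Fre^\kappa \to \Ch_b(\Fre^\kappa) \into \infFre$).

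The splitting comes from a tempered partition of unity. Given open sets $U,V\subseteq X$, choose a tempered partition of unity $\chi_U + \chi_V = 1$ subordinate to the cover $\{U,V\}$ of $U\cup V$, with $\chi_U$ tempered-supported in $U$ and $\chi_V$ tempered-supported in $V$ (cf.\ \cite[Theorem 4.4.1]{SchNash}). Multiplication by $\chi_U$ and $\chi_V$ defines continuous $\RR$-linear maps $\Sc_\sE(U\cup V) \to \Sc_\sE(U)$ and $\Sc_\sE(U\cup V) \to \Sc_\sE(V)$ whose sum splits the surjection $\Sc_\sE(U) \oplus \Sc_\sE(V) \to \Sc_\sE(U\cup V)$; dually, one checks that the kernel of this surjection is precisely the image of the diagonal-with-signs map from $\Sc_\sE(U\cap V)$, yielding a split short exact sequence in $\Fre^\kappa$.

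The main obstacle is conceptual rather than computational: ensuring that the splitting in $\Fre^\kappa$ produced by the tempered partition of unity really does translate into a pushout square in the stable $\infty$-category $\infFre$. This is handled by the universal property of $\Ch_b(\Fre^\kappa)$ as the universal stable $\infty$-category receiving an additive functor from $\Fre^\kappa$: a split short exact sequence $0 \to A \to B \to C \to 0$ in $\Fre^\kappa$ yields an isomorphism $B \simeq A\oplus C$, hence a cofiber sequence in any stable recipient, in particular in $\infFre$. Combined with normalization, this gives the cosheaf condition for all pairs, and hence by (the cosheaf analog of) \Cref{shv_res_mv_norm}, for all finite covers.
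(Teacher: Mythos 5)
Your proof is correct and follows essentially the same approach as the paper: reduce to the Mayer--Vietoris condition via the (dual of the) criterion in \Cref{shv_res_mv_norm}, then use a tempered partition of unity to show the resulting square is a pushout in $\infFre$. The only stylistic difference is that the paper directly produces a (continuous) contracting homotopy for the three-term complex $\Sc_\sE(U\cap V)\to\Sc_\sE(U)\oplus\Sc_\sE(V)\to\Sc_\sE(U\cup V)$, whereas you package the same data as a splitting of the short exact sequence and invoke the fact that additive functors to stable $\infty$-categories carry split exact sequences to cofiber sequences; these are equivalent formulations of the same null-homotopy.
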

\begin{proof}
Since ${\Sc}_{\sE}(\es)=0$, by (the dual of) \Cref{shv_res_mv_norm} its enough to verify the
Mayer-Vietoris axiom for ${\Sc}_{\sE}$. Namely, we have to show that for every pair of open sets $U,V\in \Op(X)$, the square 
\[
\xymatrix{{\cal S}_{\sE}(U\cap V)\ar[r]\ar[d] & {\cal S}_{\sE}(U)\ar[d]\\
{\cal S}_{\sE}(V)\ar[r] & {\cal S}_{\sE}(U\cup V)
}
\]
is a pushout square in $\infFre$. Since $\infFre$ is stable, this is the same as saying that the sequence
\[
0\to \Sc_\sE(U\cap V)\to\Sc_\sE(V)\oplus\Sc_\sE(U)\to\Sc_\sE(U\cup V) \to 0
\]
is (continuously) null-homotopic. Such a null homotopy is provided by a tempered partition of unity for the cover $\{U,V\}$ of $U\cup V$.  
\end{proof}

By abuse of notation, we denote the composition $\Op(X)\oto{\Sc_\sE}\Fre^{\kappa}\into\infFre$
again by $\Sc_{\sE}$.
For various $\sE$-s, the cosheaves complexes $\Sc_{\sE}$ are related. Indeed, let $\sO_X \in \Shv(X,\RR)$ be the sheaf of Nash functions on $X$. The action of $\sO_X$ on  Schwartz sections of $\sE$ gives $\Sc_{\sE}$ the structure of cosheaf of $\sO_X$-modules. Moreover, we have

\begin{prop}
\label{Sc_sect_change_values}
Let $X$ be a Nash manifold and let $\sE,\sE'\in \Vect_X$. We have a canonical isomorphism 
\[
\sE \scten_{\sO_X} \Sc_{\sE'} \simeq \Sc_{\sE\otimes \sE'}. 
\]
\end{prop}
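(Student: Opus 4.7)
The strategy is to construct a canonical comparison morphism $\mu \colon \sE \scten_{\sO_X} \Sc_{\sE'} \to \Sc_{\sE \otimes \sE'}$ and verify that it is an equivalence by reducing to the case when $\sE$ is a free $\sO_X$-module. For the construction, note that the exterior multiplication yields for every open $U \subseteq X$ a continuous $\sO_X(U)$-bilinear pairing
\[
\sE(U) \times \Sc_{\sE'}(U) \to \Sc_{\sE \otimes \sE'}(U), \qquad (e,s) \mapsto e \otimes s.
\]
These pairings are natural in $U$ and assemble into an $\sO_X$-bilinear pairing of $\infFre$-valued cosheaves. By the universal property of the relative tensor product in $\Mod_{\sO_X}(\CShv(X, \infFre))$, this pairing factors through a canonical morphism $\mu$ of $\sO_X$-module cosheaves.

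To show $\mu$ is an equivalence, I argue locally on $X$. Since $\sE$ is locally free over $\sO_X$, choose an open cover $X = \bigcup_i U_i$ trivializing $\sE$, so that $\sE|_{U_i} \simeq \sO_{U_i}^{n_i}$. Both source and target of $\mu$ are cosheaves on $X$, so by the cosheaf property it suffices to check that $\mu|_{U_i}$ is an equivalence for every $i$. After the chosen trivialization, the right-hand side becomes $\Sc_{(\sE')^{n_i}}|_{U_i} \simeq \Sc_{\sE'}|_{U_i}^{n_i}$, since $\Sc_\bullet$ carries finite direct sums of Nash bundles to finite direct sums of Fr\'{e}chet spaces. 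On the left-hand side, the functor $(-) \scten_{\sO_{U_i}} \Sc_{\sE'}|_{U_i}$ is a left adjoint and hence preserves finite coproducts; combined with the fact that $\sO_{U_i}$ is the monoidal unit of $\Mod_{\sO_{U_i}}(\Shv(U_i, \Der(\RR)))$, we obtain $\sO_{U_i}^{n_i} \scten_{\sO_{U_i}} \Sc_{\sE'}|_{U_i} \simeq \Sc_{\sE'}|_{U_i}^{n_i}$. By construction, $\mu|_{U_i}$ corresponds to the identity under these identifications.

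The principal technical obstacle is rigorously producing the comparison map $\mu$ in the $\infty$-categorical setup, because $\sE \scten_{\sO_X} \Sc_{\sE'}$ is defined abstractly through the $\Mod_{\sO_X}(\Shv(X, \Der(\RR)))$-linear structure on $\Mod_{\sO_X}(\CShv(X, \infFre))$, whereas the exterior multiplication is most naturally defined at the 1-categorical level of Fr\'{e}chet spaces. A convenient route is to first verify that $\sE \mapsto \Sc_\sE$ refines to a functor from Nash vector bundles to $\sO_X$-module cosheaves in $\Fre^\kappa$, then to extract the bilinear pairing of 1-categorical $\sO_X$-module cosheaves from exterior multiplication, and finally to embed this data into $\infFre$ via the realization functor $[-]$ to obtain $\mu$. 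The equivalence claim then reduces to the local computation above.
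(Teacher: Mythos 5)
Your proposal is correct and follows essentially the same route as the paper: construct the comparison map from the operation of tensoring Nash sections with Schwartz sections, then check it is an isomorphism locally by trivializing $\sE$ and using that both sides commute with finite direct sums in the $\sE$-variable, reducing to $\sE\simeq\sO_X$. The one technical obstacle you flag — producing $\mu$ rigorously in the $\infty$-categorical setting — is handled in the paper by the adjunction $\scten_{\sO_X}\dashv\hom_{\sO_X}$ together with \Cref{internal_hom}, which reduces the construction exactly to giving the $1$-categorical maps $\sE(U)\to\Hom_{\sO_U}(\Sc_{\sE'|_U},\Sc_{(\sE\otimes\sE')|_U})$ naturally in $U$, so your bilinear-pairing formulation is the right idea and the adjunction is the clean way to implement it.
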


\begin{proof}
First, we shall construct a map  
\[
\sE \scten_{\sO_X} \Sc_{\sE'} \to \Sc_{\sE\otimes \sE'}. 
\]
By the definition of the enriched $\hom$, this is the same as constructing a map 
\[
\sE \to \hom_{\sO_X}(\Sc_{\sE'},\Sc_{\sE\otimes \sE'}),
\]
which, by \Cref{internal_hom} amounts to constructing maps 
$\sE(U)\to \Hom_{\sO_U}(\sE_{\sE'|_U},\sE_{(\sE\otimes \sE')|_U})$, natural in $U\in \Op(X)$. Such a map is provided by the tensor product of Schwartz sections with Nash sections. 

Next, we verify that the resulting map is an isomorphism. It suffices to check this locally on $X$ and hence we may assume that $\sE$ and $\sE'$ are both a sum of copies of the trivial bundle. Since both the source and target of the map $\sE \scten_{\sO_X} \Sc_{\sE'} \to \Sc_{\sE\otimes \sE'},$ regarded as natural transformation of functors in the $\sE$-variable, commute with direct sums, we reduce to the case where $\sE\simeq \sO_X$, which is clear.   
\end{proof}
\subsection{The Relative de Rham Complex}
Let $\phi\colon X\to Y$ be a Nash submersion of relative dimension $n$, and let $\sE$ be a Nash vector bundle on $Y$. For every $0\le i< n$, we have a first-order differential operator 
\[
d\colon C^\infty(X,\phi^*\sE \otimes \Omega^i_\phi) \to C^\infty(X,\phi^*\sE \otimes \Omega^{i+1}_\phi).
\]
It is easy to check that this operator restricts to a map of Schwartz sections and gives a morphism of cosheaves 
\[
    d\colon \Sc_{\phi^*\sE \otimes \Omega^i_\phi} \to \Sc_{\phi^*\sE \otimes \Omega^{i+1}_\phi}.
\]

These maps assemble into a complex in $\CShv(X,\Fre^\kappa)$, which we can realize to an element of 
$\CShv(X,\infFre).$

\begin{defn}
Let $\phi:X\to Y$ be a Nash submersion of relative dimension $n$. For a vector bundle $\sE$
on $Y$, we define
\[
 \mdef{\Sdr_{\phi,\sE}} := 
 [\Sc_{\phi^*\sE} \oto{d} \Sc_{\phi^*\sE\otimes \Omega^1_\phi} \oto{d}... \oto{d} \Sc_{\phi^*\sE\otimes \Omega^n_\phi}]. 
\]
\end{defn}

For the proof of the relative de Rham theorem, it will be convenient to work with a slightly modified version of $\Sdr_{\phi,\sE}$. To introduce it, we recall the \emph{dualizing local system} of Nash submersions. 
\begin{defn}
For a Nash submersion $\phi\colon X\to Y$ of relative dimension $n$, let $\mdef{\Or_\phi} \in \Loc(X,\RR)^\fin$ be the Nash local system of \tdef{relative orientations} along $\phi$ (see \cite[A.1.1]{SchNash}). We denote by 
\[
\mdef{\Du_\phi} \simeq  \Or_\phi[n] \qin \Loc(X,\Der(\RR))
\] 
the \tdef{dualizing local system} of $\phi$. 
\end{defn}

We can now define the twisted relative de Rham-Schwartz complex of a Nash submersion.

\begin{defn}\label{def:tw_rel_der_comp}
Let $\phi\colon X\to Y$ be a Nash submersion, and let $\sE$ be a Nash vector bundle on $Y$. We define
\[
\mdef{\Sdrt_{\phi,\sE}} := \Du_\phi \scten_\RR \Sdr_{\phi,\sE} \qin \CShv(X,\infFre). 
\]
\end{defn} 

The complex $\Sdrt_{\phi,\sE}$, which is a-priori defined in terms of our abstract tensor product operation,
can be described explicitly as the realization of a complex of $\Fre^\kappa$-valued cosheaves. Indeed, the local system $\Or_\phi$ corresponds via \Cref{Riemann-Hilbert_semi_alg} to a vector bundle with flat connection $(\sOr_\phi,\nabla)$ on $X$. Accordingly, for every $0\le i \le n$, we have a covariant differentiation maps 
\[
    \nabla \colon \Sc_{\phi^*\sE \otimes \Omega^i_\phi \otimes \sOr_\phi} \to \Sc_{\phi^*\sE \otimes \Omega^{i+1}_\phi \otimes \sOr_\phi}
\]
which, by the flatness of $\nabla$, glue to a complex 
\[
\Sc_{\phi^*\sE \otimes \sOr_\phi} \oto{\nabla} \Sc_{\phi^*\sE \otimes \Omega^1_\phi \otimes \sOr_\phi} \to ... 
\]

\begin{prop}
The twisted de Rham-Schwartz complex of a Nash submersion $\phi \colon X\to Y$ with values in the vector bundle $\sE$ is given by  
\[
\Sdrt_{\phi,\sE} = [\Sc_{\phi^*\sE \otimes \sOr_\phi} \oto{\nabla}  \Sc_{\phi^*\sE \otimes \Omega^1_\phi \otimes \sOr_\phi} 
\to \dots \oto{\nabla}
\Sc_{\phi^*\sE \otimes \Omega^n_\phi \otimes \sOr_\phi}] \qin \infFre,
\] 
where the term $\Sc_{\phi^*\sE \otimes \Omega^n_\phi \otimes \sOr_\phi}$ lies in degree 0. 
\end{prop}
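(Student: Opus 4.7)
The plan is to unwind the definition $\Sdrt_{\phi,\sE} = \Du_\phi \scten_\RR \Sdr_{\phi,\sE}$, using $\Du_\phi \simeq \Or_\phi[n]$, and to identify each term and each differential of the resulting complex by means of \Cref{Sc_sect_change_values} and \Cref{Riemann-Hilbert_semi_alg}. The shift $[n]$ only reindexes the complex so as to place $\Sc_{\phi^*\sE \otimes \Omega^n_\phi \otimes \sOr_\phi}$ in degree $0$; it is therefore enough to produce a natural isomorphism between $\Or_\phi \scten_\RR \Sdr_{\phi,\sE}$ and the realization of the complex whose $i$-th term is $\Sc_{\phi^*\sE \otimes \Omega^i_\phi \otimes \sOr_\phi}$ with differential $\nabla$.

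First I would argue that tensoring with $\Or_\phi$ over $\RR_X$ is exact: since $\Or_\phi$ is a locally constant sheaf of one-dimensional real vector spaces, it is (locally) the monoidal unit, hence $\Or_\phi \scten_\RR (-)$ commutes with the bounded-realization functor $[-]$. Thus the problem reduces to two termwise assertions. The first is that for each $0\le i\le n$ there is a natural isomorphism of $\sO_X$-module cosheaves
\[
\Or_\phi \scten_\RR \Sc_{\phi^*\sE \otimes \Omega^i_\phi} \;\simeq\; \Sc_{\phi^*\sE \otimes \Omega^i_\phi \otimes \sOr_\phi}.
\]
The second is that, under these isomorphisms, the differential $d \scten \Id_{\Or_\phi}$ corresponds to the covariant derivative $\nabla$ on the target.

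For the first assertion, I would invoke \Cref{Riemann-Hilbert_semi_alg} to rewrite $\sOr_\phi \simeq \Or_\phi \otimes_{\RR_X} \sO_X$, and then use a change-of-rings identity (tensoring an $\sO_X$-module cosheaf with an $\RR_X$-module over $\RR_X$ agrees with tensoring over $\sO_X$ with the corresponding free $\sO_X$-module) to obtain
\[
\Or_\phi \scten_\RR \Sc_{\phi^*\sE \otimes \Omega^i_\phi} \;\simeq\; \sOr_\phi \scten_{\sO_X} \Sc_{\phi^*\sE \otimes \Omega^i_\phi} \;\simeq\; \Sc_{\phi^*\sE \otimes \Omega^i_\phi \otimes \sOr_\phi},
\]
where the last isomorphism is \Cref{Sc_sect_change_values}. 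For the second assertion, the identification in (1) is compatible with restriction to open sets, so I would verify the differential matching locally, using a flat frame $\omega$ of $\sOr_\phi$: then any local section has the form $s\otimes\omega$, and by flatness of $\omega$ one has
\[
\nabla(s\otimes\omega) \;=\; d(s)\otimes\omega + s\otimes\nabla(\omega) \;=\; d(s)\otimes\omega,
\]
which is exactly the local expression of $d\scten\Id_{\Or_\phi}$ under the identification $\omega \leftrightarrow 1\otimes e$ for the corresponding flat generator $e$ of $\Or_\phi$.

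The main obstacle is the change-of-rings identity invoked in the first step, since the $\RR_X$-linear and $\sO_X$-linear tensor-product structures on $\CShv(X,\infFre)$ are defined through the abstract symmetric-monoidal machinery on $\Shv(X,\Der(\RR))$ rather than through an explicit bar construction. Making this identification precise requires carefully promoting $\Sc_{\phi^*\sE \otimes \Omega^i_\phi}$ to an $\sO_X$-module cosheaf in $\CShv(X,\infFre)$ (as opposed to merely a cosheaf of $\RR$-vector spaces) and checking that the resulting extension of scalars matches the symmetric-monoidal free-forgetful adjunction between $\RR_X$-modules and $\sO_X$-modules inside $\Shv(X,\Der(\RR))$. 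Once this formal verification is in place, the rest of the argument is a direct comparison carried out locally with a flat frame.
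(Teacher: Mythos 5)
Your proposal is correct and follows essentially the same route as the paper's proof: a termwise identification via $\Or_\phi \simeq \flat(\sOr_\phi)$, the change-of-rings isomorphism $\flat(\sOr_\phi)\scten_\RR(-) \simeq (\flat(\sOr_\phi)\otimes_\RR\sO_X)\scten_{\sO_X}(-)$ together with \Cref{Riemann-Hilbert_semi_alg} and \Cref{Sc_sect_change_values}, followed by matching the differentials via flatness of $\nabla$ on $\sOr_\phi$ and commuting the tensor operation past the realization functor. The only cosmetic difference is that the paper justifies the last step simply by noting that $\Du_\phi\scten_\RR(-)$ preserves colimits (which suffices for any sheaf, not just an invertible one), rather than via local triviality of $\Or_\phi$.
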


\begin{proof}
First, for each individual term of the complex, using formal properties of relative tensor products and \Cref{Sc_sect_change_values} we obtain 
\begin{align*}
&\Du_\phi \scten_\RR \Sc_{\phi^*\sE \otimes \Omega_\phi^i} \simeq 
(\Or_\phi \scten_\RR \Sc_{\phi^*\sE \otimes \Omega_\phi^i}) [n] =
(\flat(\sOr_\phi) \scten_\RR \Sc_{\phi^*\sE \otimes \Omega_\phi^i}) [n] \simeq \\
&(\flat(\sOr_\phi) \otimes_\RR \sO_X \scten_{\sO_X} \Sc_{\phi^*\sE \otimes \Omega_\phi^i}) [n] \simeq  
(\sOr_\phi \scten_{\sO_X} \Sc_{\phi^*\sE \otimes \Omega_\phi^i}) [n] \simeq \\
& (\Sc_{\phi^*\sE \otimes \Omega_\phi^i \otimes \sOr_\phi}) [n]
\end{align*}
Moreover, by the construction of the flat connection $\nabla$ on $\sOr_\phi$, the operation $(-)\scten \Or_\phi$ takes the de Rham differential  
$d$ to the covariant derivative $\nabla$. The result now follows from the fact that the functor $\Du_\phi \scten_\RR(-)$ is colimit preserving and hence
\begin{align*}
&\Sdrt_{\phi,\sE}= \Du_\phi \scten_\RR[\Sc_{\phi^*\sE} \oto{d} \Sc_{\phi^*\sE\otimes \Omega^1_\phi} \oto{d}...] \simeq   
\Or_\phi \scten_\RR[\Sc_{\phi^*\sE} \oto{d} \Sc_{\phi^*\sE\otimes \Omega^1_\phi} \oto{d}...][n] \simeq \\
&[\Or_\phi \scten_\RR\Sc_{\phi^*\sE} \oto{d} \Or_\phi \scten_\RR\Sc_{\phi^*\sE\otimes \Omega^1_\phi} \oto{d}...][n]
\simeq 
[\Sc_{\phi^*\sE \otimes \sOr_\phi} \oto{\nabla}  \Sc_{\phi^*\sE \otimes \Omega^1_\phi \otimes \sOr_\phi} 
\to \dots]
\end{align*}
\end{proof}

\subsubsection{The Integration Map}
The last term of the complex $\Sdrt_{\phi,\sE}$ is the cosheaf $\Sc_{\phi^*\sE \otimes \Omega^n_\phi \otimes \sOr_\phi}$, which is the cosheaf of $\sE$-valued Schwartz relative measures along $\phi$. Hence, for every open set $U\subseteq Y$, we have an integration map $\Sc_{\phi^*\sE \otimes \Omega^n_\phi \otimes \sOr_\phi}(\phi^{-1}(U)) \to \Sc_\sE(U)$.
These maps assemble to a morphism of cosheaves 
\[
     \phi_!\Sc_{\phi^*\sE \otimes \Omega^n_\phi \otimes \sOr_\phi} \to \Sc_\sE 
\]
which vanishes after composition with the covariant derivative, by Stokes Theorem. Hence, we obtain a morphism
\[
\mdef{\smallint\nolimits_{\phi}}:\phi_{!}\mathcal{S}DR_{\phi,\sE}^{t}\to\mathcal{S}_{\sE}\qin\CShv(Y,\infFre)
\]
which we refer to as integration along the fibers of $\phi$. 

The primary step in the proof of the relative de Rham Theorem is a local statement about the map $\smallint_\phi$. To state it, we need a local version of the integration map. 
\begin{defn}
    Let $\phi\colon X\to Y$ be a Nash submersion.
    The map $\smallint_\phi \colon \phi_!\Sdrt_{\phi,\sE} \to  \Sc_{\sE}$ induces, by the adjunction $\phi_! \dashv \phi^!$, 
    a map 
\[
    \mdef{\smallint\nolimits^\vee_\phi} \colon \Sdrt_{\phi,\sE} \to \phi^!\Sc_{\sE} \qin \CShv(X,\infFre),
\]
\end{defn}

We may think of $\smallint\nolimits^\vee_\phi$ as the map of integration along the fibers of $\phi$, localized over $X$.

\subsection{The Proof of the Relative de Rham Theorem }
\label{subsec:the_proof_of_RDS}
In this section, we finally turn to prove the relative de Rham Theorem. 
We start by proving a version of Poincar\'{e} Lemma for families of intervals. Then, using the structure theory of Nash submersions, we derive a local version of the general theorem (\Cref{local_relative_de_rham}), and finally, using the local contractability of Nash submersions, we obtain the global version (\Cref{global_relative_de_rham}). 
\subsubsection{The Local Relative de Rham Theorem}
We shall now prove the local version of the relative de Rham theorem. 
We start with
\begin{prop}[Poincar\'{e} Lemma] \label{poincare_lemma}
Let $\pi \colon X\to Y$ be a pointed family of intervals. The map 
\[
\smallint_\phi \colon \phi_!\Sdrt_{\phi,\sE} \to \Sc_{\sE} 
\]
is an isomorphism.
\end{prop}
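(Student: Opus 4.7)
Since $\smallint_\phi$ is a morphism of $\infFre$-valued cosheaves on $Y$, it is an isomorphism precisely when it induces an equivalence on every section. For any open $U\subseteq Y$, the restriction $\phi|_{\phi^{-1}(U)}\colon \phi^{-1}(U)\to U$ is again a pointed family of intervals (the Nash section restricts), so it suffices to prove that the global-sections map
\[
    \smallint\nolimits_\phi\colon \Sdrt_{\phi,\sE}(X)\longrightarrow \Sc_\sE(Y)
\]
is a homotopy equivalence in $\Ch_b(\Fre^\kappa)$. Because $\phi$ has relative dimension one, the source is the two-term complex
$\Sc(X,\phi^*\sE\otimes \sOr_\phi) \oto{\nabla} \Sc(X,\phi^*\sE\otimes \Omega^1_\phi\otimes \sOr_\phi)$
placed in degrees $1$ and $0$, with $\smallint_\phi$ being fibrewise integration out of degree $0$.

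The plan is to build a continuous section $s$ of $\smallint_\phi$ together with a contracting homotopy $h$ satisfying $\nabla h = \Id - s\circ \smallint_\phi$. Using the open embedding $j\colon X\hookrightarrow Y\times\RR$ and the Nash section $\nu(y)=(y,\alpha(y))$, trivialise $\Omega^1_\phi\otimes\sOr_\phi$ by $dt$ along the fibres. Then, using tempered partitions of unity applied to a semi-algebraically chosen tubular neighbourhood of $\nu(Y)\subseteq X$, produce a Schwartz relative density $\rho\in \Sc(X,\Omega^1_\phi\otimes \sOr_\phi)$ of fibrewise integral $1$, and set $s(f):=\rho\cdot \phi^*f$, so that $\smallint_\phi\circ s=\Id$ by construction. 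For $h$, take the fibrewise antiderivative based at $\nu$:
\[
    h(\omega)(x) = \int_{\nu(\phi(x))}^{x}\bigl(\omega - s(\smallint\nolimits_\phi \omega)\bigr)\bigm|_{X_{\phi(x)}},
\]
integrating along the fibre $X_{\phi(x)}$ (whose connectedness ensures this makes sense). The fibrewise fundamental theorem of calculus then gives $\nabla h = \Id - s\circ\smallint_\phi$ on the nose.

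The main obstacle, and essentially the only nontrivial analytic point, is verifying that $h(\omega)$ really lies in $\Sc(X,\phi^*\sE\otimes\sOr_\phi)$, continuously in $\omega$. This is a tempered, parametric version of the elementary fact that a Schwartz function on $\RR$ with vanishing integral has a Schwartz primitive. At the infinite ends of a fibre, the identity $\smallint_\phi(\omega - s(\smallint_\phi\omega))=0$ combined with the usual Schwartz decay of $\omega-s(\smallint_\phi\omega)$ yields the required tempered bounds on $h(\omega)$. The delicate case is near the semi-algebraic boundary $\overline{X}\setminus X\subseteq Y\times\RR$, corresponding to finite fibre endpoints, where the infinite-order tempered vanishing of $\omega-s(\smallint_\phi\omega)$ must be transferred to its fibrewise primitive. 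I expect this to follow from Lojasiewicz-type estimates on the fibrewise distance-to-boundary function together with the description of Schwartz seminorms on open semi-algebraic subsets of Nash manifolds recalled in Section~\ref{sec:semialg}. Once these uniform seminorm bounds are in hand, continuity of $s$ and $h$ is automatic, and the pair $(s,h)$ exhibits $\smallint_\phi$ as a chain deformation retract, completing the proof.
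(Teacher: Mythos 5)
Your overall strategy --- a continuous section $s$ of $\smallint_\phi$ together with a fibrewise antiderivative serving as a contracting homotopy --- is close in spirit to the paper's, but the basepoint of your antiderivative is wrong, and this is a genuine error rather than a technical wrinkle to be absorbed by estimates. Write a fibre of $\phi$ over $y$ as $\{y\}\times(a,b)\subseteq Y\times\RR$ and set $u=\omega-s(\smallint_\phi\omega)$, so that $\int_a^b u(y,\tau)\,d\tau=0$. Your $h(\omega)(y,t)=\int_{\alpha(y)}^t u(y,\tau)\,d\tau$ tends, as $t\to a^+$ and as $t\to b^-$, to the \emph{same} value $-\int_a^{\alpha(y)}u(y,\tau)\,d\tau$, which is in general nonzero. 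Concretely, $h(\omega)=H(u)-\phi^*c$ with $H(u)(y,t)=\int_a^t u$ and $c(y)=\int_a^{\alpha(y)}u$: it fails to vanish (let alone to infinite order) at the finite endpoints of the fibres and fails to decay at the infinite ones, so no boundary estimate --- Lojasiewicz-type or otherwise --- can place it in $\Sc(X,\phi^*\sE\otimes\sOr_\phi)$. The same misplaced basepoint breaks the degree-one identity: one computes $h(\nabla g)=g-\phi^*(\nu^*g)$ rather than $g$, so $(s,h)$ is not a chain contraction onto $\Sc_\sE(Y)$.

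The fix is exactly the paper's choice: base the primitive at $-\infty$ of the ambient line, $H(u)(y,t)=\int_{-\infty}^t u(y,\tau)\,d\tau$, viewing Schwartz sections on $X$ as Schwartz sections on $Y\times\RR$ that are flat on the complement of $X$. The vanishing of the total fibre integral gives the second formula $H(u)(y,t)=-\int_t^{+\infty}u(y,\tau)\,d\tau$; the first shows $H(u)$ vanishes with all derivatives below each fibre, the second above it, and elementary decay bounds on $u$ control $Dt^\ell\partial_t^m H(u)$, so $H(u)$ is Schwartz on $X$ with no semi-algebraic boundary analysis at all --- the ``main obstacle'' you defer simply disappears. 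Two smaller points: the existence of a Schwartz relative density $\rho$ with fibrewise integral exactly $1$ is itself a nontrivial claim that you assert without proof; the paper avoids it by invoking only the surjectivity of $\smallint_\phi$ with a continuous section (from \cite{SchStack}) and then proving exactness of the resulting three-term complex. It also first reduces, via \Cref{base_gen_shv}, to $U\simeq\RR^k$ with $\sE$ trivial, which is what legitimizes the global coordinate computation.
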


\begin{proof}
By \Cref{base_gen_shv}, it would suffice to show that, for every open set $U\subseteq Y$ such that $U\simeq \RR^k$ and $\sE$ is trivial on $\sE$, the map $\smallint_\phi\colon \Sdrt_{\phi,\sE}(\phi^{-1}(U))\to \Sc_\sE(U)$ is a homotopy equivalence. Without loss of generality, we may assume that $Y$ itself satisfies these properties and we have to show that $\smallint_\phi$ induces an isomorphism on the global sections. 

Using the decomposition of $\phi$ into an open embedding
$X \into Y\times \RR$ followed by the projection $Y\times \RR \onto Y$,
and a coordinate $t$ on $\RR$, we can write the cofiber of the map $\smallint_\phi$ above as the realization of a 3-step complex 
\[
    0\to \Sc(X)\oto{\partial_t} \Sc(X) \oto{\int_{-\infty}^\infty(-)dt} \Sc(Y) \to 0.
\]
Moreover, we can identify Schwartz functions on $X$ with Schwartz functions on $Y\times \RR$, which are flat on the complement of $X$. 

Since $\phi$ is surjective, by (the proof of) \cite[ Proposition 3.1.2]{SchStack} the map $\smallint_\phi$ admits a continuous section, and hence it would suffice to show that this complex is exact. 

For every $f\in \Ker(\smallint_\phi)$, set 
\begin{equation}
\label{def_H_1}
\mdef{H(f)(x,t)} := \int_{-\infty}^t f(x,\tau) d\tau
\end{equation}
and note that since $f\in \Ker(\smallint_\phi)$ we also have
\begin{equation}
\label{def_H_2}
H(f)(x,t) = -\int_{t}^\infty f(x,\tau) d\tau.
\end{equation}

We claim that for $f\in \Ker(\smallint_\phi)$, the function $H(f)$ is a Schwartz function  on $X$ which satisfies $\partial_t H(f) = f$. The second claim is a direct application of the Newton-Lienbnitz rule, so we shall now prove the first statement.  

We need to prove that, for every polynomial differential operator $D$ on $Y\simeq \RR^k$ and every $k,\ell\ge 0$, the function $D t^\ell \partial_t^m H(f)$ is a bounded function on $Y\times \RR$ which vanishes outside of $X$. By interchanging the derivative and integral, we get 
 \[
D t^\ell \partial_t^m H(f) =t^\ell \partial_t^m H(D f) = 
\begin{cases} 
t^\ell \partial_t^{m-1}D f & \text{ if } m > 0 \\ 
t^\ell H(D f) & \text{ if } m = 0 \\ 
\end{cases}
\]
Hence, for $m>0$, we get a function obtained from $f$ by applying a polynomial differential operator, and the result is clear. For $m=0$, replacing $f$ by $D f$ if necessary, we may assume that $D=1$. 

It remains to show that $t^\ell H(f)$ vanishes outside of $X$ and that it is bounded. 
 Let $y \in Y$ and let $(a,b)$ be the (possibly infinite) interval such that $\phi^{-1}(y) = (a,b)\times \{y\}$. For $t
 \le a$ we have $H(f)(y,t) = 0$ by formula (\ref{def_H_1}) for $H(f)$, and for $t\ge b$ we have $H(f)(y,t) = 0$ by formula (\ref{def_H_2}) for $H(f)$. Finally, since $f$ is a Schwartz function, there is a constant $C>0$ such that 
 \[
 |f(y,t)|\le \frac{C}{(|t|+1)^{\ell+1}}
 \]
 and hence we have
 \[
 |H(f)(y,t)| \le C |t|^\ell  \int_{|t|}^\infty \frac{1}{(1 + |\tau|)^{\ell + 1}} d\tau \le 2C.
 \]

\end{proof}

Using this result, we prove the local version of the relative de Rham theorem. 

\begin{thm}[Local Relative de Rham Theorem]
\label{local_relative_de_rham}
Let $\phi \colon X\to Y$ be a Nash submersion and let $\sE\in \Vect_Y$. The map 
\[
\smallint\nolimits^\vee_\phi \colon \Sdrt_{\phi,\sE} \to \phi^!\Sc_{\sE}
\]
is an isomorphism. 
\end{thm}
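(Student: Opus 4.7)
The strategy is to check the isomorphism locally on $X$, reducing to iterated applications of Poincar\'{e} Lemma (\Cref{poincare_lemma}). Since both sides of $\smallint^\vee_\phi$ are cosheaves on $X$, it suffices to verify the statement on a basis of the restricted topology. Combining the Implicit Function Theorem (\Cref{Implicit_Function_Theorem}) with iterated application of \Cref{fam_int_base}, we can take a basis $B$ of $X$ consisting of those open $U\subseteq X$ for which the restriction $\phi|_U\colon U\to \phi(U)$ factors as a composition of pointed families of intervals
\[
U = U_n \oto{\rho_n} U_{n-1} \oto{\rho_{n-1}} \cdots \oto{\rho_1} U_0 = \phi(U).
\]
Indeed, IFT lets us assume locally that $\phi|_U$ is the restriction of a projection $Y\times \RR^n \to Y$, and iteratively applying the relative-dimension-one basis theorem to each intermediate projection $Y\times \RR^k \to Y\times \RR^{k-1}$ produces the desired factorization after further shrinking.

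For such a $U$, we first identify both sides of $\smallint^\vee_\phi(U)$. By \Cref{fam_int_triv_shape}, each $\rho_k$ has trivial relative shape, i.e.,  $(\rho_k)_\sharp(U_k) \simeq U_{k-1}$ in $\Shv(U_{k-1})$. Iterating and using the composition-compatibility of $(-)_\sharp$, we obtain $\phi_\sharp U \simeq \phi(U)$ in $\Shv(Y)$, whence by \Cref{formula_push_along_locally contractible} we have $(\phi^!\Sc_\sE)(U) \simeq \Sc_\sE(\phi(U))$. Unwinding the adjunction $\phi_! \dashv \phi^!$, the evaluated map $\smallint^\vee_\phi(U)$ factors as extension by zero from $U$ to $\phi^{-1}(\phi(U))$ followed by the fiberwise integration $\smallint_\phi$; since the extension vanishes on $\phi^{-1}(\phi(U))\setminus U$, this composite equals the fiberwise integration $\smallint_{\phi|_U}(\phi(U))$ along the fibers of $\phi|_U$ itself.

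It therefore remains to show that $\smallint_{\phi|_U}(\phi(U))$ is an isomorphism. Using the short exact sequences of relative differentials induced by the factorization $\phi|_U = \rho_1\circ\cdots\circ\rho_n$, one obtains a filtration on $\Sdrt_{\phi|_U,\sE}$ whose associated graded pieces separate the contributions of each $\rho_k$. A Fubini-type argument then identifies $\smallint_{\phi|_U}(\phi(U))$ with the composite of successive integrations along the fibers of each $\rho_k$, each of which is an isomorphism by Poincar\'{e} Lemma (\Cref{poincare_lemma}) applied to $\rho_k$. Consequently, the composite is also an isomorphism, completing the proof.

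The principal obstacle is the Fubini step: formally identifying the iterated integration with $\smallint_{\phi|_U}$ requires careful tracking of the filtration on the relative de Rham complex and of the way the de Rham differential splits into horizontal and vertical parts along the factorization $\phi|_U = \rho_1 \circ \cdots \circ \rho_n$. In particular, one must verify that at each stage the pushforward of the filtered complex collapses, via Poincar\'{e} Lemma for the corresponding $\rho_k$, to the relative de Rham complex of the remaining composition, with integration maps matching up coherently.
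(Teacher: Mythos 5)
Your proposal is correct in spirit and closely parallels the paper's proof, but the organization differs in a way worth noting.

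The paper does \emph{not} work with a basis of open sets on which $\phi$ factors as an iterated tower of pointed families of intervals. Instead, after invoking the Implicit Function Theorem to write $\phi$ as $X\into Y\times\RR^n\to Y$, it observes that $\smallint^\vee_\phi$ is the restriction of $\smallint^\vee_{\pi_n}$ to $X$, and so reduces the whole theorem to the single projection $\pi_n\colon Y\times\RR^n\to Y$. It then inducts on $n$: splitting $\RR^n = \RR^{n-1}\times\RR$, the cofiber of $\smallint^\vee_{\pi_n}$ is presented as the total complex of an explicit double complex whose columns are (twisted) $\Sdrt$-complexes of $\pi_1$; Poincar\'e Lemma applied column-by-column produces cofiber sequences that assemble into an identification of the two-row total complex with $\pi_1^!\Sdrt_{\pi_{n-1},\sE}$, after which the inductive hypothesis for $\pi_{n-1}$ finishes. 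This is precisely the ``Fubini step'' you flag as the principal obstacle --- the paper discharges it by giving the double complex and the assembly of cofiber sequences explicitly, and the coordinate splitting of $\RR^n$ makes the de Rham differential split canonically into horizontal and vertical parts (so one has an actual bicomplex rather than merely a filtration).

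Your route buys a slightly more flexible reduction (you never pass from $X$ to the ambient $Y\times\RR^n$), at the cost of requiring the tower of pointed families of intervals to be produced simultaneously --- which forces iterated compatible shrinking at each level --- and of having to run the Fubini/filtration argument over each basis element, where the splitting of $\Omega_{\phi|_U}$ is only canonical thanks to the ambient product coordinates you inherited from IFT. The paper's single global reduction to $\pi_n$ buys a cleaner induction statement (about cosheaves on $Y\times\RR^n$ rather than about global sections at basis elements) and an explicit bicomplex. One small imprecision in your write-up: $\smallint_{\phi|_U}(\phi(U))$ is not literally a composite of the integrations $\smallint_{\rho_k}$; rather, the cofiber of $\smallint^\vee$ admits a finite filtration whose vanishing is checked stage-by-stage via the cofiber sequences produced by Poincar\'e Lemma. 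You acknowledge this is the gap to fill, and it is exactly where the paper's calculation lives, so I would regard your proposal as a correct outline with the core computation left open, rather than as a wrong approach.
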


\begin{proof}
First, note that the statement is local over $X$, so by \Cref{Implicit_Function_Theorem}, we may assume that $\phi$ is a composition 
$X\into Y\times \RR^n \oto{\pi_n} Y$. Next, for such a map, the morphism $\smallint^\vee_\phi$ is just the restriction to $X$ of $\smallint_{\pi_n}^\vee$, and hence it would suffice to prove that the map 
\[
\smallint\nolimits_{\pi_n}^\vee \colon \Sdrt_{\pi_n,\sE} \to \pi_n^!\Sc_{\sE}
\]
is an isomorphism. We prove this result by induction on $n$. 

Consider first the case $n=1$. In this case, by \Cref{base_gen_shv} and \Cref{fam_int_base}, it would suffice to show that, for every $U\subseteq Y\times \RR$ which is a pointed family of intervals over its image, the map $\smallint_{\pi_1}^\vee \colon \Sdrt_{\pi_1,\sE}(U) \to \pi_1^!\Sc_\sE(U)$ is an isomprphism in $\infFre$. 
We have, by \Cref{fam_int_triv_shape} and \Cref{formula_push_along_locally contractible}:
\[
\pi_1^!\Sc_\sE(U) \simeq \Sc_{\sE}({\pi_1}_\sharp U) \simeq  \Sc_{\sE}(\pi_1(U)).
\]
Via this identification, the map $\smallint_{\pi_1}^\vee \colon \Sdrt_{\pi_1,\sE}(U) \to (\pi_1^!\Sc_\sE)(U)$ corresponds to the integration of sections 
\[
\smallint_{\pi_1}\colon  \Sdrt_{\pi_1,\sE}(U)\to \Sc_{\sE}(\pi_1(U)),
\]
which is an isomorphism by \Cref{poincare_lemma}.

Next, let $n>1$ and assume the result for all smaller $n$-s. 
Choose a splitting $\RR^n = \RR^{n-1} \times \RR$ and let $x_1,..,x_{n-1}$ be coordinates on $\RR^{n-1}$ and $t$ be a coordinate on $\RR$. Let $\pi_1 \colon Y\times \RR^n \to Y\times \RR^{n-1}$ and $\pi_{n-1}\colon Y\times \RR^{n-1} \to Y$ be the projections. 
Using our coordinates, we can present the cofiber of the map $\smallint_{\pi_n}^\vee \colon \Sdrt_{\pi_n,\sE} \to \Sc_{\sE}$ as the realization of the total complex of
\begin{equation}
\label{eq:decom_de_rham}
\xymatrix{
\Sc_{\pi_n^* \sE} \ar^-{d}[r] \ar^{\partial_t(-)dt}[d] &\Sc_{\pi_n^* \sE \otimes \pi_1^*\Omega_{\pi_{n-1}}} \ar^-{d}[r] \ar^{\partial_t(-)dt}[d]& \hdots \ar^-{d}[r]& \Sc_{\pi_n^* \sE \otimes \pi_1^*\Omega_{\pi_{n-1}}^{n-1}}\ar^{\partial_t(-)dt}[d]  \\ 
\Sc_{\pi_n^* \sE \otimes \Omega_{\pi_1}} \ar^-{d}[r]\ar^0[d] &\Sc_{\pi_n^* \sE \otimes \pi_1^*\Omega_{\pi_{n-1}} \otimes \Omega_{\pi_1}} \ar^-{d}[r]\ar^0[d] & \hdots \ar^-{d}[r]& \Sc_{\pi_n^* \sE \otimes \pi_1^*\Omega_{\pi_{n-1}}^{n-1} \otimes \Omega_{\pi_1}} \ar^{\smallint_{\pi_n}^\vee}[d] \\ 
0 \ar[r] &0 \ar[r] & \hdots  \ar[r]& \pi_n^! \Sc_{\sE} \\
}    
\end{equation}

By the case $n=1$, we have cofiber sequences
\[
\Sc_{\pi_n^*\sE \otimes \Omega_{\pi_{n-1}}^k} \oto{\partial_t(-)dt} 
\Sc_{\pi_n^*\sE \otimes \Omega_{\pi_{n-1}}^k\otimes \Omega_{\pi_1}} \oto{\smallint_{\pi_1}^\vee} \pi_1^! \Sc_{\pi_{n-1}^*\sE\otimes \Omega_{\pi_{n-1}}^k}.
\]
These cofiber sequences assemble into an isomorphism of the cosheaf $\pi_1^! \Sdrt_{\pi_{n-1},\sE}$ with the total complex of the first two rows of (\ref{eq:decom_de_rham}), in such a way that the map $\smallint_{\pi_n}^\vee$ correspond to the map
\[
\pi_1^! \smallint\nolimits_{\pi_{n-1}}^\vee\colon \pi_1^! \Sdrt_{\pi_{n-1},\sE}\to \pi_1^!\pi_{n-1}^!\Sc_\sE \simeq \pi_n^! \Sc_\sE.
\] 
The result now follows from the inductive assumption that $\smallint_{\pi_{n-1}}^\vee$ is an isomorphism.  

\end{proof}

\subsubsection{The Global Relative de Rham Theorem}

We now turn to the global version of the relative de Rham theorem. Given the local contractability of Nash submersions, it is a formal consequence of the local version. 
\begin{thm}[Global Relative de Rham Theorem]
\label{global_relative_de_rham}
Let $\phi\colon X\to Y$ be a Nash submersion and let $\sE$ be a Nash vector bundle on $Y$. We have a natural isomorphism 
\[
\phi_!\Sdrt_{\phi,\sE} \simeq \phi_\sharp(\RR_X) \scten \Sc_{\sE}.
\]
\end{thm}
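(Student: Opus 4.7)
The plan is to combine the local relative de Rham theorem (\Cref{local_relative_de_rham}) with the abstract projection formula established in \Cref{second_projection_formula}. Let me outline the two ingredients and how they fit together.

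First, by \Cref{local_relative_de_rham}, the map $\smallint^\vee_\phi \colon \Sdrt_{\phi,\sE} \to \phi^! \Sc_\sE$ is an isomorphism in $\CShv(X,\infFre)$. Applying the functor $\phi_!$, we obtain an isomorphism
\[
\phi_! \Sdrt_{\phi,\sE} \iso \phi_! \phi^! \Sc_\sE \qin \CShv(Y,\infFre).
\]
Thus the theorem reduces to producing a natural isomorphism $\phi_!\phi^!\Sc_\sE \simeq \phi_\sharp(\RR_X) \scten \Sc_\sE$ in $\CShv(Y,\infFre)$.

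Second, I will invoke \Cref{second_projection_formula} with $\cD = \Der(\RR)$, $\cA = \infFre$, and take the sheaf to be the monoidal unit $\RR_X \simeq \phi^*\RR_Y \in \Shv(X,\Der(\RR))$. By \Cref{Nash_submersion_locally contractible}, the geometric morphism $\phi \colon \Shv(X) \to \Shv(Y)$ is locally contractible, and this property passes to $\Der(\RR)$-valued sheaves via the Lurie tensor product (so the hypothesis of \Cref{second_projection_formula} applies). Applied to the cosheaf $\Sc_\sE \in \CShv(Y,\infFre)$, the projection formula then gives a natural isomorphism
\[
\phi_! \bigl( \RR_X \scten \phi^! \Sc_\sE \bigr) \iso \phi_\sharp(\RR_X) \scten \Sc_\sE.
\]
Since $\RR_X$ is the monoidal unit of $\Shv(X,\Der(\RR))$, we have $\RR_X \scten \phi^!\Sc_\sE \simeq \phi^!\Sc_\sE$, which identifies the left-hand side with $\phi_!\phi^!\Sc_\sE$. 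Chaining this with the isomorphism from the previous paragraph yields the desired equivalence.

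The proof is essentially formal once the two deep inputs (the local Poincaré-type computation in \Cref{local_relative_de_rham} and the abstract projection formula of \Cref{second_projection_formula}) are combined. The only subtle point to verify is that \Cref{second_projection_formula} may indeed be applied in our setting, namely that $\phi$ remains locally contractible after extending to $\Der(\RR)$-valued sheaves; this is immediate since local contractibility is inherited by $-\otimes \Der(\RR)$ (the proof of \Cref{locally_cont_linear} uses only that $\phi_\sharp \colon \Shv(X) \to \Shv(Y)$ is $\Shv(Y)$-linear, which tensoring preserves). No further computation is required.
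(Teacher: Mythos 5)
Your proof is correct and follows exactly the same route as the paper: apply $\phi_!$ to the isomorphism from the local relative de Rham theorem, then invoke the projection formula of \Cref{second_projection_formula} with the constant sheaf $\RR_X$. The only difference is your closing remark about local contractibility passing to $\Der(\RR)$-valued sheaves; this is unnecessary, since \Cref{second_projection_formula} is already stated for a locally contractible geometric morphism of $\infty$-topoi together with arbitrary coefficients $\cD \in \Calg(\Prl)$, and \Cref{Nash_submersion_locally contractible} supplies precisely that hypothesis at the topos level.
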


\begin{proof}
By \Cref{local_relative_de_rham}, the map $\smallint_\phi^\vee \colon \Sdrt_{\phi,\sE} \to \phi^!\Sc_\sE$ is an isomorphism. Applying $\phi_!$ we get an isomorphism
\[
\phi_! \Sdrt_{\phi,\sE} \simeq \phi_!\phi^!\Sc_\sE.
\]
Finally, by \Cref{second_projection_formula} applied to $\Sc_\sE$ and the constant sheaf on the value $\RR$, we obtain 
\[
\phi_! \phi^! \Sc_\sE \simeq \phi_!(\RR_X \scten \phi^! \Sc_\sE)\simeq  \phi_\sharp(\RR_X)\scten \Sc_\sE.
\]
\end{proof}

We can derive a similar result to the Schwartz sections of the relative de Rham complex. For a Nash submersion $\phi\colon X\to Y$, set 
\[
\mdef{\phi_!(\sF)} := \phi_\sharp(\sF \otimes \Du_\phi^{-1}).
\]

\begin{thm} [Global relative de Rham Theorem, untwisted version]
Let $\phi\colon X\to Y$ be a Nash submersion, and let $\sE$ be a Nash vector bundle on $Y$. We have a natural isomorphism 
\[
\phi_!\Sdr_{\phi,\sE} \simeq \phi_!(\RR_X)\scten \Sc_{\sE}. 
\]
\end{thm}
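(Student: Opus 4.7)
The plan is to deduce the untwisted version from the twisted relative de Rham theorem (\Cref{global_relative_de_rham}) by tensoring with the inverse of the dualizing local system, $\Du_\phi^{-1}\in \Loc(X,\Der(\RR))$. Since $\Du_\phi\simeq \Or_\phi[n]$ is invertible (the relative orientation local system is a rank-one real local system), the relation $\Sdrt_{\phi,\sE}=\Du_\phi\scten_\RR \Sdr_{\phi,\sE}$ of \Cref{def:tw_rel_der_comp} can be inverted to give
\[
\Sdr_{\phi,\sE}\simeq \Du_\phi^{-1}\scten_\RR \Sdrt_{\phi,\sE}\qin\CShv(X,\infFre),
\]
using the associativity of the $\Shv(X,\Der(\RR))$-linear structure on $\CShv(X,\infFre)$.

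The key step is to combine this with the local form of the relative de Rham theorem (\Cref{local_relative_de_rham}), which identifies $\Sdrt_{\phi,\sE}\simeq \phi^!\Sc_{\sE}$, and then apply the cosheaf-theoretic projection formula for locally contractible geometric morphisms (\Cref{second_projection_formula}). Indeed, \Cref{Nash_submersion_locally contractible} ensures that $\phi$ induces a locally contractible geometric morphism, so we may compute
\[
\phi_!\Sdr_{\phi,\sE}\simeq \phi_!\bigl(\Du_\phi^{-1}\scten_\RR \phi^!\Sc_{\sE}\bigr)\simeq \phi_\sharp(\Du_\phi^{-1})\scten \Sc_{\sE},
\]
where on the left $\phi_!$ denotes the pushforward of cosheaves and in the middle we apply \Cref{second_projection_formula} with $\sF=\Du_\phi^{-1}$ and $\sG=\Sc_{\sE}$.

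It remains only to recognize the right-hand side as $\phi_!(\RR_X)\scten \Sc_{\sE}$, which is immediate from the definition $\phi_!(\sF):=\phi_\sharp(\sF\otimes \Du_\phi^{-1})$ together with $\RR_X\otimes \Du_\phi^{-1}\simeq \Du_\phi^{-1}$. No real obstacle is expected here, as the entire argument is a bookkeeping consequence of the twisted theorem and the invertibility of $\Du_\phi$; the only point worth checking carefully is the associativity and invertibility of the $\scten_\RR$ action by local systems, which follows formally from the $\Shv(X,\Der(\RR))$-module structure on $\CShv(X,\infFre)$ constructed in \Cref{subsec:cosheaves}.
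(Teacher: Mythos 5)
Your proposal is correct and follows essentially the same route as the paper: invert the defining relation $\Sdrt_{\phi,\sE}\simeq\Du_\phi\scten\Sdr_{\phi,\sE}$ using the local relative de Rham theorem to write $\Sdr_{\phi,\sE}\simeq\Du_\phi^{-1}\scten\phi^!\Sc_\sE$, apply the projection formula of \Cref{second_projection_formula}, and identify $\phi_\sharp(\Du_\phi^{-1})$ with $\phi_!(\RR_X)$ by definition. The extra care you take about invertibility of $\Du_\phi$ and associativity of the $\scten$ action is exactly the bookkeeping the paper leaves implicit.
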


\begin{proof}
By definition, 
$\Sdrt_{\phi,\sE} \simeq \Du_\phi \scten \Sdr_{\phi,\sE}$. Consequently, by \Cref{local_relative_de_rham} we have 
\[
\Sdr_{\phi,\sE} \simeq \Du_\phi^{-1} \scten\phi^!\Sc_{\sE}.
\] 
As for the twisted version, this gives
\[
\phi_!(\Sdr_{\phi,\sE}) \simeq \phi_!(\Du_\phi^{-1} \scten\phi^!\Sc_{\sE}) \simeq 
\phi_\sharp(\Du_\phi^{-1})\scten \Sc_\sE \simeq \phi_!(\RR_X)\scten \Sc_\sE.
\]
\end{proof}

\section{Schwartz Sections of Constructible Sheaves}
\label{sec:Sch_const}

The relative de Rham theorem identifies the relative de Rham complex of a Nash submersion $\phi\colon X\to Y$, with the cosheaf $\Sc_\sE \scten \phi_\sharp \RR_X$. To effectively use this result, we need to control the sheaves of the form $\phi_\sharp \RR_X$ and their interaction with tensor product operation. 
It turns out that those sheaves are always \emph{constructible}, and that this property has significant effect on the resulting cosheaves $\Sc_\sE \scten \phi_\sharp \RR_X$.

In this section,
we study the behavior of the cosheaves of the form $\sF \scten \Sc_\sE$ for a general  constructible sheaf valued in $\Der(\RR)$. In other words, we consider $\sF$-s, which are bounded and have constructible homologies.
Our main objective is to show that, when $\sF$ is constructible, the topological vector spaces $H_i(\sF\scten \Sc_{\sE}(X))$ are Hausdorff (\Cref{scten_const_hauss}), and deduce that the Schwartz sections of the relative de Rham complex has Hausdorff homologies (\Cref{rel_der_haus}). 
We also use the constructability of $\phi_\sharp \RR_X$ to show that $\Sdrt_{\phi,\sE}$ depends only on the "homology type" of the fibers of $\phi$ (\Cref{homology_inv_rel_der}). 

\subsection{Constructible Sheaves}
Classically, one studies complexes of sheaves on a space with constructible cohomologies. This notion has a convenient $\infty$-categorical manifestation. 

\begin{defn}
Let $X$ be a closureful restricted topological space and let $\cD$ be a compactly generated $\infty$-category. 
A sheaf $\sF\in \Shv(X,\cD)$ is called
\begin{itemize}
\item \tdef{$\alpha$-stratified} for a stratification $\alpha \colon X\to P$, if for every $p\in P$ the restriction $\sF|_{X_p}$ is locally constant. 
\item \tdef{stratified} if it is $\alpha$-stratified for some stratification $\alpha$.  
\item \tdef{($\alpha$-)constructible} if it is ($\alpha$-)stratified and all its stalks are compact objects of $\cD$. 
\end{itemize}
\end{defn}

We denote the full subcategories of $\Shv(X,\cD)$ spanned by the ($\alpha$-)stratified and ($\alpha$-)constructible sheaves on $X$ by $\mdef{\Shv_\st^{(\alpha)}(X,\cD)}$ and $\mdef{\Shv_c^{(\alpha)}(X,\cD)}$ respectively.

In the case of restricted topological space of \emph{finite inductive dimension}, the notion of a stratified sheaf can be simplified. 

\begin{prop}
\label{strat_have_triv_strat}
Let $X$ be a closureful restricted topological space of finite inductive dimension. For every stratified sheaf $\sF \in \Shv_\st(X,\cD)$, there is a proper stratification $\alpha\colon X\to P$ such that $\sF|_{X_p}$ is a \emph{constant} sheaf for every $p\in P$. In particular, there are disjoint open sets $U_k$ in $X$ such that $\bigcup_k U_k$ is dense in $X$ and $\sF|_{U_k}$ is constant.  
\end{prop}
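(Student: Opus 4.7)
The plan is to construct the desired stratification in three successive refinements: start from the given witness to $\sF$ being stratified, refine inside each original stratum using a trivializing cover, and then refine once more to achieve properness. The key low-level fact I will use freely is that for any inclusion $i\colon Z\hookrightarrow Y$ of restricted topological spaces, the pullback $i^*\Gamma_Y^*A\simeq \Gamma_Z^*A$ (since $\Gamma_Y\circ i = \Gamma_Z$), so the restriction of a constant sheaf to an arbitrary subspace is again constant.

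Concretely, pick a stratification $\alpha\colon X\to P$ witnessing $\sF\in \Shv_\st(X,\cD)$. Since in a restricted topology every cover is by definition finite, the hypothesis that $\sF|_{X_p}$ is locally constant provides, for each $p\in P$, a finite open cover $\{U_{p,i}\}_{i\in I_p}$ of $X_p$ such that $\sF|_{U_{p,i}}$ is constant for every $i$. Applying the cover stratification construction of \Cref{cov_Strat} to this finite collection yields a stratification $\beta_p\colon X_p\to \mathrm{Pow}(I_p)$; every non-empty stratum of $\beta_p$ sits inside some $U_{p,i}$, so by the observation above $\sF$ is constant on each stratum of $\beta_p$.

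Next, amalgamate these stratum-wise stratifications using \Cref{Amalgamation_Of_Stratifications} to obtain a stratification $\int_P\beta_p\,dp\colon X\to \int_P\mathrm{Pow}(I_p)\,dp$. By construction this refines $\alpha$ and $\sF$ is constant on each of its strata. It need not be proper, but since $X$ is closureful of finite inductive dimension we may invoke \Cref{Existance_of_classical_refinement} to produce a proper refinement $\gamma\colon X\to R$. Every stratum of $\gamma$ is contained in some stratum of $\int_P\beta_p\,dp$, hence $\sF$ is still (the restriction of a) constant sheaf on each stratum of $\gamma$, proving the main claim.

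For the ``in particular'' assertion, let $R_{\max}\subseteq R$ be the set of maximal elements, which is non-empty and finite. For each $r\in R_{\max}$, the singleton $\{r\}$ is upward-closed in $R$, hence open, so $U_r:=\gamma^{-1}(r)$ is an open subset of $X$; on $U_r=X_r$ the sheaf $\sF$ is constant, and the $U_r$ are pairwise disjoint as preimages of disjoint points. To see that $\bigcup_{r\in R_{\max}}U_r$ is dense, take any $x\in X$ with $\gamma(x)=s$; picking $r\in R_{\max}$ with $r\ge s$, properness gives $\overline{X_r}=\bigcup_{t\le r}X_t\ni x$, so every open neighborhood of $x$ meets $X_r=U_r$. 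I do not anticipate a genuine obstacle in this argument; the only subtlety is to verify that cover stratifications are finite (ensured by working in the restricted topology) and that the amalgamation and proper-refinement steps preserve the constancy already achieved, both of which reduce to the pullback identity for constant sheaves recorded above.
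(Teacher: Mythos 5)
Your proof is correct and follows essentially the same route as the paper: extract a finite trivializing cover of each stratum and invoke the existence of a proper stratification compatible with those opens, then observe that constancy passes to subsets and that the open (maximal) strata of a proper stratification are dense. The only cosmetic difference is that you build an intermediate refinement via the cover stratification and amalgamation before applying \Cref{Existance_of_classical_refinement}, whereas the paper applies \Cref{Existence_classical_stratification_compatible_with_collection} directly to the collection of trivializing opens; since the former theorem is deduced from the latter, the two arguments coincide in substance.
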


\begin{proof}
Let $\beta \colon X\to Q$ be a stratification for which the restrictions of $\sF$ to the strata are locally constant. For every $q\in Q$, since $\sF|_{X_q}$ is locally constant, we can find a cover 
\[X_q = U_{1,q}\cup...\cup U_{k,q}\] such that $\sF|_{U_{i,q}}$ is constant. Let $\Theta$ be the collection of all the $U_{j,q}$-s. By \Cref{Existence_classical_stratification_compatible_with_collection}, we can find a proper stratification $\alpha \colon X\to P$ for which every member of $\Theta$ is a union of strata. Such a stratification satisfies the required property. For the "in particular" part, just choose the $U_k$-s to be the open strata of such a stratification.
\end{proof}

While constructible sheaves can be defined over every closureful restricted topological space, the theory works better under certain assumptions on the space $X$. Hence, we consider from now on only constructible sheaves over the following type of restricted topological spaces. 
\begin{defn} \label{def: well behaved}
    Let $X$ be a restricted topological space. We say that $X$ is \tdef{well behaved} if $X$ is closureful of finite inductive dimension, and every locally closed subset of $X$ is of topological shape.
\end{defn}
\begin{example}
    Every semi-algebraic space is well behaved. Indeed, this follows from Propositions \ref{Semialgebraic_Spaces_closureful_and_finite_dimensional} and \ref{Comparison_Shape_Semi_Algebraic_Space}.
\end{example}
Unlike for general sheaves, isomorphisms between stratified sheaves over well behaved restricted topological spaces (in the sense of \Cref{def: well behaved}) \emph{can} be detected on the stalks.
\begin{prop}
\label{iso_const_detect_by_stalks}
Let $X$ be a well behaved restricted topological space and let $\cD$ be a compactly generated $\infty$-category. Then, a morphism $f\colon \sF \to \sG$ in  $\Shv_\st(X,\cD)$ is an isomorphism if and only if, for every $x\in X$, the map $f_x \colon \sF_x \to \sG_x$ is an isomorphism. 
\end{prop}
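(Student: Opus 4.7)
The ``only if'' direction is immediate. For the converse, the plan is to reduce, by induction on the number of strata, to the case of a map of constant sheaves on a well-behaved space, and then to invoke shape theory.

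First, I would apply \Cref{strat_have_triv_strat} to the direct sum $\sF \oplus \sG$ (or to $\sF$ and $\sG$ separately and then intersect with a subsequent proper refinement via \Cref{Existance_of_classical_refinement}) to produce a single proper stratification $\alpha\colon X\to P$ such that $\sF|_{X_p}$ and $\sG|_{X_p}$ are constant sheaves for every $p\in P$. I would then induct on $|P|$: picking a maximal element $p\in P$, the stratum $U = X_p$ is open in $X$, and its closed complement $Z = X\setminus U$ carries the induced $(P\setminus\{p\})$-stratification. The space $Z$ is well-behaved, being locally closed in $X$; by the explicit formula for stalks of a pullback under a closed embedding (which follows from \Cref{pull_closed_embedding}), the map $i^*f$ inherits the hypothesis of being a stalk isomorphism. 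The inductive hypothesis then gives that $i^*f$ is an isomorphism, and by joint conservativity of $i^*$ and $j^*$ recorded in \Cref{op_cl_dec_joint_cons_seqs}, it suffices to verify the claim for $j^*f\colon \Gamma^*A \to \Gamma^*B$, a morphism between constant sheaves on the well-behaved open subset $U$.

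At this stage, I would invoke shape theory. Since $U$ is well-behaved it is locally of topological shape, so \Cref{top_shape_loc_sys} provides an equivalence $\Loc(U,\cD)\simeq \Loc(U^\topify,\cD)$; stalks are preserved under this comparison by \Cref{realization_stalks}. Combining this with \Cref{Local_systems_sheaves_on_shape} applied to $U^\topify$ and the identification $|U|\simeq|U^\topify|$ furnished by the topological shape condition, I obtain an equivalence $\Loc(U,\cD)\simeq \Fun(|U|,\cD)$ that sends $\Gamma^*A$ and $\Gamma^*B$ to the constant functors with values $A$ and $B$, and sends the stalk functor at $x\in U$ to evaluation at the image of $x$ in $|U|$. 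A morphism in $\Fun(|U|,\cD)$ is an isomorphism if and only if each of its components is; since the canonical map $\pi_0(U) \to \pi_0(|U|)$ is surjective (already so for the classical topological space $U^\topify$), the hypothesis that $j^*f$ is a stalk-isomorphism at every $x\in U$ forces every component of the corresponding natural transformation to be invertible, and hence $j^*f$ itself is an isomorphism.

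The main technical obstacle is the last paragraph: verifying that under the composite equivalence $\Loc(U,\cD)\simeq \Fun(|U|,\cD)$ the stalk functor at $x\in U$ genuinely corresponds to evaluation at the image of $x$ in $|U|$, and confirming the $\pi_0$-surjectivity of $U\to |U|$. These are standard shape-theoretic facts but require careful tracing through the chain of equivalences invoked from \Cref{top_shape_loc_sys} and \Cref{Local_systems_sheaves_on_shape}.
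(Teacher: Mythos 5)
Your proof is correct and takes essentially the same route as the paper's: reduce to the strata via the open/closed recollement of $\Shv(X,\cD)$, then identify (locally) constant sheaves with functors out of the shape and test the isomorphism on the points of $X$, which hit every component of $|X_p|$. The only cosmetic difference is that the paper keeps locally constant sheaves on each stratum and applies \Cref{Local_systems_sheaves_on_shape} directly to the essential $\infty$-topos $\Shv(X_p)$, whereas you first pass to constant sheaves and route through the comparison with the generated topology via \Cref{top_shape_loc_sys}; both arguments land in the same place.
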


\begin{proof}
By passing to a common refinement, we may assume that both $\sF$ and $\sG$ are stratified for the same stratification $\alpha\colon X\to P$.  
By \Cref{shv_rec} applied iteratively, it would suffice to show that for every $p\in P$, the map $f$ induces an isomorphism $\sF|_{X_p}\simeq \sG|_{X_p}$. Hence, by restricting both $\sF$ and $\sG$ to $X_p$ we may assume that $\sF$ and $\sG$ are locally constant. 

Since $X$ is well behaved, the $\infty$-topos $\Shv(X)$ is essential, so that by \Cref{Local_systems_sheaves_on_shape} we can regard $\sF$ and $\sG$ as functors $|X|\to \cD$. 
Finally, every point $x\in X$ determines a point in $|X|$, and these points cover all the connected components of $|X|$. Hence, the result follows from the fact that isomorphisms of functors from $|X|$ can be tested pointwise. 
\end{proof}

Stratified and constructible sheaves are closed under various categorical operations.

\begin{prop}
\label{cl_struct_shv}
Let $X$ be a well behaved restricted topological space and $\cD$ be a compactly generated $\infty$-category. 
\begin{enumerate}
    \item For every stratification $\alpha \colon X\to P$, the subcategory $\Shv_\st^\alpha(X,\cD)\subseteq \Shv(X,\cD)$ is closed under all colimits and finite limits in $\Shv(X,\cD)$. 
    \item $\Shv_\st(X,\cD)$ is closed under  finite limits, finite colimits and retracts in $\Shv(X,\cD)$. 
    \item $\Shv_c(X,\cD)$ is closed under finite colimits and  retracts in $\Shv(X,\cD)$.  
    \end{enumerate}
\end{prop}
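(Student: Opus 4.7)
The plan is to handle the three claims in order, with (1) doing most of the conceptual work and (2), (3) being essentially formal consequences. For (1), fix a stratification $\alpha\colon X\to P$ and, for each $p\in P$, let $i_p\colon X_p \hookrightarrow X$ denote the inclusion of the $p$-th stratum. The restriction functor $i_p^*\colon \Shv(X,\cD)\to \Shv(X_p,\cD)$ is the pullback along a geometric morphism, so it preserves all small colimits and finite limits. Thus it suffices to show that for each $p$, the full subcategory $\Loc(X_p,\cD)\subseteq \Shv(X_p,\cD)$ is closed under all small colimits and all small limits. Since $X$ is well behaved, $X_p$ is locally of topological shape, hence essential, and by \Cref{Local_systems_sheaves_on_shape} the inclusion $\Loc(X_p,\cD)\simeq \Shv(|X_p|,\cD)\hookrightarrow \Shv(X_p,\cD)$ is the pullback $\psi^*$ along the essential geometric morphism $\psi\colon \Shv(X_p)\to \Shv(|X_p|)$. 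Being part of an essential geometric morphism, $\psi^*$ is simultaneously a left adjoint and a right adjoint, so its essential image is closed under all small limits and colimits. This yields (1), since $\Shv_\st^\alpha(X,\cD)=\bigcap_p (i_p^*)^{-1}\Loc(X_p,\cD)$.

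For (2), the key observation is that if $\beta$ refines $\alpha$, then every $\alpha$-stratified sheaf is also $\beta$-stratified (a locally constant sheaf restricted to a locally closed subspace remains locally constant). Given finitely many stratified sheaves $\sF_1,\dots,\sF_n$, each $\alpha_i$-stratified, we pass to the common refinement $\alpha_1\times\cdots\times\alpha_n\colon X\to P_1\times\cdots\times P_n$, with respect to which all the $\sF_i$ are simultaneously stratified. Then any finite limit or finite colimit of the $\sF_i$-s lies in $\Shv^{\alpha_1\times\cdots\times\alpha_n}_\st(X,\cD)$ by (1). For retracts, if $\sF$ is a retract of an $\alpha$-stratified sheaf $\sG$, then each $\sF|_{X_p}$ is a retract of the locally constant $\sG|_{X_p}$; since $\Loc(X_p,\cD)$ is closed under limits in $\Shv(X_p,\cD)$ it is in particular closed under retracts, so $\sF$ is $\alpha$-stratified.

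For (3), stratification is handled by (2), so the only additional point is that stalks remain compact under finite colimits and retracts. For every $x\in X$, the stalk functor $\Shv(X,\cD)\to \cD$ is the pullback along a geometric morphism and, by \Cref{cpt_gen_shv_res}, preserves filtered colimits and finite limits; in particular, it preserves finite colimits and retracts. Since compact objects of the compactly generated $\infty$-category $\cD$ are closed under finite colimits and retracts, the constructibility condition on stalks propagates through finite colimits and retracts, completing (3).

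The step where one must be a little careful is the claim that $\Loc(X_p,\cD)\subseteq \Shv(X_p,\cD)$ is closed under \emph{all} small colimits, not merely finite ones: this genuinely uses the essentialness of the shape geometric morphism $\psi$, and hence the hypothesis that $X$ is well behaved (so that every locally closed subspace, in particular each stratum, is of topological shape). Everything else is formal consequences of the preservation properties of $i_p^*$ and of stalk functors together with the book-keeping of refining stratifications.
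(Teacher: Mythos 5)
Your proof is correct and follows essentially the same route as the paper's: reduce (1) to local systems on each stratum via $i_p^*$, characterize them as the essential image of $\psi^*$ using \Cref{Local_systems_sheaves_on_shape} and the well-behavedness hypothesis, and deduce (2) via common refinement of stratifications and (3) via compactness of stalks being preserved under finite colimits and retracts. One small phrasing slip in (3): the sentence ``preserves filtered colimits and finite limits; in particular, it preserves finite colimits'' is a non sequitur as written, but since the stalk functor is the pullback along the geometric morphism $\{x\}\to X$ it preserves \emph{all} colimits, which gives what you need.
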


\begin{proof}
For $(1)$, restriction of sheaves to every stratum $X_p$ of $\alpha$ preserves all colimits and finite limits, and hence it suffices to prove the claim for local systems on a restricted topological space which is locally of constant shape. Since in this case $\Shv(X)$ is essential, $\Loc(X,\cD)$ is the essential image of the fully faithful embedding $\psi^*\colon \Shv(|X|,\cD)\to \Shv(X,\cD)$, which is left exact and commutes with all small colimits. 

$(2)$ follows from $(1)$ since $\Shv_\st(X,\cD) = \bigcup_\alpha \Shv_\st^\alpha(X,\cD)$.

Finally, $(3)$ follows from $(2)$ and the fact that compact objects in a compactly generated $\infty$-category are closed under finite colimits and retracts.
\end{proof}

\subsubsection{Intrinsic Characterisation of Constructible Sheaves}
It turns out that for \emph{stable} $\infty$-category of coefficients, we can identify the constructible sheaves on $X$ categorically. 

\begin{prop} \label{const_comp} 
    Let $X$ be a well behaved restricted topological space and let $\cD$ be a stable compactly generated $\infty$-category. Then $\Shv_c(X,\cD)$ coincides with the full subcategory of $\Shv(X,\cD)$ spanned by the compact objects. Moreover, it is the stable subcategory of $\Shv(X,\cD)$ generated by either of the following collections 
    \begin{enumerate}
        \item $j_!d_U$ for an open embedding $j\colon U\into X$ and $d\in \cD$ a compact object. 
        \item $i_*d_Z$ for $i\colon Z\into X$ a closed embedding and $d\in \cD$ a compact object. 
    \end{enumerate}
\end{prop}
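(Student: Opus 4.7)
The plan is to show that $\Shv_c(X,\cD)$ equals the stable subcategory $S$ of $\Shv(X,\cD)$ generated by sheaves of the form $j_!d_U$ with $j\colon U\into X$ open and $d\in \cD$ compact, and that this $S$ coincides with the compact objects of $\Shv(X,\cD)$. First, since $\Shv(X,\cD)\simeq \Shv(X)\otimes \cD$ and $\Shv(X)$ is compactly generated by representable sheaves (\Cref{cpt_gen_shv_res}(1)), while $\cD$ is compactly generated by assumption, $\Shv(X,\cD)$ is compactly generated with compact generators exactly the $j_!d_U$-s. Each such generator is constructible: it is stratified by the cover stratification $\{U, X\setminus U\}$, with constant restriction $d$ on $U$ and $0$ on the complement, and its stalks are either $d$ or $0$, both compact. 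By \Cref{cl_struct_shv}(3), constructible sheaves are closed under finite colimits and retracts, so $S\subseteq \Shv_c(X,\cD)$.

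For the reverse inclusion $\Shv_c(X,\cD)\subseteq S$, I will induct on the size of a suitable stratification. Given $\sF\in \Shv_c(X,\cD)$, \Cref{strat_have_triv_strat} provides a proper stratification $\alpha\colon X\to P$ such that each $\sF|_{X_p}\simeq (d_p)_{X_p}$ is constant, with $d_p\in \cD$ compact by the stalk-compactness condition. The induction is on $|P|$; the base case $|P|=0$ is trivial. For the step, pick a maximal $p\in P$: since $p^\star = \{p\}$ is open in $P$, the stratum $U:=X_p = \alpha^{-1}(p^\star)$ is open in $X$. Writing $j\colon U\into X$ for the open inclusion and $i\colon Z:=X\setminus U \into X$ for the closed complement, \Cref{op_cl_dec_joint_cons_seqs} supplies a cofiber sequence $j_!j^*\sF \to \sF \to i_*i^*\sF$ in $\Shv(X,\cD)$. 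The first term is $j_!(d_p)_U$, which is a generator of $S$. The restriction $i^*\sF$ is constructible on $Z$ with proper stratification by $P\setminus\{p\}$ and constant restrictions to strata, so by induction it lies in the analogous stable subcategory $S_Z\subseteq \Shv(Z,\cD)$.

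The remaining task is to verify $i_*S_Z\subseteq S$. Since $i_*$ preserves both finite limits and colimits (having adjoints $i^*$ and $i^!$ by \Cref{rec_props}), it is exact, and it suffices to check $i_*(j')_! d'_{U'}\in S$ for each generator of $S_Z$. Given open $j'\colon U'\into Z$, I choose an open $V\subseteq X$ with $U' = V\cap Z$ and set $V' := V\cap (X\setminus Z) = V\setminus U' \in \Op(X)$. On $V$, the open-closed cofiber sequence $(j''_V)_! d'_{V'} \to d'_V \to (i'_V)_* d'_{U'}$, with $j''_V\colon V'\into V$ open and $i'_V\colon U'\into V$ closed, can be pushed forward by the exact functor $(j_V)_!$ (for $j_V\colon V\into X$ open); upon identifying $(j_V)_!(i'_V)_* \simeq i_*(j')_!$, this yields a cofiber sequence
\[
(j_{V'})_! d'_{V'} \to (j_V)_! d'_V \to i_*(j')_! d'_{U'}
\]
in $\Shv(X,\cD)$ whose first two terms lie in $S$, forcing the third into $S$. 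This completes the induction, giving $\Shv_c(X,\cD) = S$.

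To conclude: $S$ consists of compact objects, so $S\subseteq \Shv(X,\cD)^c$; conversely, $\Shv(X,\cD)^c$ is the idempotent completion of any stable subcategory generated by a family of compact generators, and since $S = \Shv_c(X,\cD)$ is closed under retracts (\Cref{cl_struct_shv}(3)) and hence idempotent complete, $S = \Shv(X,\cD)^c$. The equivalence with the second generating family $\{i_*d_Z\}$ follows from the cofiber sequence $j_!d_U \to d_X \to i_*d_Z$ of \Cref{op_cl_dec_joint_cons_seqs} applied to each constant sheaf $d_X$, using that $d_X$ lies in both families (as $(\Id_X)_! d_X$ and as $(\Id_X)_* d_X$). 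The main obstacle I anticipate is the base-change identification $(j_V)_!(i'_V)_* \simeq i_*(j')_!$ in the third paragraph: it expresses the well-definedness of extension-by-zero along the locally closed embedding $U'\into X$ independently of the open-closed factorization, and I would verify it stalk-by-stalk via \Cref{iso_const_detect_by_stalks}, observing that both sides are stratified sheaves with stalk $d'$ on $U'$ and $0$ elsewhere.
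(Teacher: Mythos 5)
Your proof is correct in substance, but it takes a genuinely different route from the paper's on the key inclusion $\Shv_c(X,\cD)\subseteq S$. The paper inducts on the \emph{dimension} of $X$: it takes the disjoint open strata $U_k$ with dense union from \Cref{strat_have_triv_strat}, handles $j_!j^*\sF$ in one step, and then applies the inductive hypothesis to $i^*\sF$ on the lower-dimensional complement $Z$ --- crucially using the \emph{closed-pushforward} generating family $(2)$ on $Z$, so that $i_*$ carries generators to generators for free (a composite of closed embeddings is a closed embedding). You instead induct on the \emph{number of strata}, peeling off one open stratum at a time, and you keep the open-extension generators $(j')_!d'_{U'}$ on $Z$; this forces you to prove the extension-by-zero compatibility $(j_V)_!(i'_V)_*\simeq i_*(j')_!$, which is exactly the work the paper's choice of generating family avoids. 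That identification is the one under-specified point in your write-up: \Cref{iso_const_detect_by_stalks} only detects isomorphisms once you have a \emph{map} between the two (stratified) sheaves, and you never construct it. It does exist canonically --- e.g.\ as the composite $(j_V)_!(i'_V)_*\sG\to i_*i^*(j_V)_!(i'_V)_*\sG\simeq i_*(j')_!(i'_V)^*(i'_V)_*\sG\simeq i_*(j')_!\sG$, using the open/closed base change $i^*(j_V)_!\simeq (j')_!(i'_V)^*$ --- and then your stalk computation finishes the job; but you should either supply this map or, more economically, first establish the equivalence of the two generating families (which you prove only at the end) and then run your induction with the family $(2)$ on $Z$, which collapses this entire paragraph to the observation that $i_*i'_*=(ii')_*$. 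The remaining steps --- compactness and constructibility of the generators, closure of $\Shv_c$ under finite colimits and retracts, the identification of $S$ with the compact objects via idempotent completeness, and the cofiber sequence relating the two families --- agree with the paper's argument.
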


\begin{proof}
First, note that the collections in items $(1)$ and $(2)$ generate the same stable subcategory. Indeed, this follows from the fact that, for every open set $U\subseteq X$ and compact object $d\in \cD$, we have a cofiber sequence $j_!(d_U)\to d_X \to i_*d_Z$. Here, $j\colon U\into X$ and $i\colon Z\into X$ are the inclusions of $U$ and its complement. Hence, it would suffice for the second part of the claim to show that the stable subcategory generated by the union of these two collections coincides with $\Shv_c(X,\cD)$. Let $\sA\subseteq \Shv(X,\cD)$ be the stable subcategory generated by these two collections, and let $\Shv(X,\cD)^\omega$ be the full subcategory spanned by the compact objects. We will prove the result by showing that 
\[
    \Shv_c(X,\cD) \subseteq \sA \subseteq \Shv(X,\cD)^\omega \subseteq \Shv_c(X,\cD). 
\]

First, note that every sheaf of the form $j_!d_U$ is a compact object by \Cref{cpt_gen_shv_res}. We immediately deduce that $\sA\subseteq \Shv(X,\cD)^\omega$. 
Next,
the sheaves of the form $j_! d_U$ generate $\Shv(X,\cD)$ under colimits. Thus, every compact object of $\Shv(X,\cD)$ is a retract of a finite colimit of sheaves of the form $j_! d_U$. It follows from \Cref{cl_struct_shv}(3) and the fact that those sheaves are constructible, that every compact object of $\Shv(X,\cD)$ is constructible. 

It remains to prove that $\Shv_c(X,\cD)\subseteq \sA$. We prove this by induction on the dimension of $X$. Let $\sF\in \Shv_c(X,\cD)$. By \Cref{strat_have_triv_strat}, there is are disjoint open sets $j_k \colon U_k \into X$ whose union is dense in $X$ and such that $j_k^*\sF\simeq (d_k)_{U_k}$ for compact objects $d_k\in \cD$. Let $j\colon \bigcup_k U_k \into X$ be the inclusion of the union. Then, $j_!j^*\sF$ is a direct sum of the sheaves $(j_k)_!j_k^*\sF$ which belong to $\sA$, and hence $j_!j^*\sF$ belongs to $\sA$. 

Let $i\colon Z \into X$ be the closed complement of $U$ in $X$. Since $Z$ is of a smaller dimension than $X$, we may assume by induction that $i^*\sF$ is in the stable subcategory generated by the sheaves in item ($2$). This implies that the same holds for $i_*i^*\sF$ since $i$ is a closed embedding. The result now follows from the cofiber sequence 
\[
j_!j^*\sF \to \sF \to i_*i^*\sF
\]
from \Cref{rec_props} and the stability of $\cD$.   
\end{proof}

Using this intrinsic description of the constructible sheaves, we can show that they are preserved under the exceptional push-forward functor. 
\begin{prop} \label{push_const_to_const}
Let $X$ and $Y$ be well behaved restricted topological spaces and let $\phi \colon X\to Y$ be a map inducing an essential geometric morphism 
$\Shv(X)\to \Shv(Y)$. For every compactly generated stable $\infty$-category $\cD$, the functor  
\[
    \phi_\sharp\colon \Shv(X,\cD)\to \Shv(Y,\cD)
\] 
takes constructible sheaves to constructible sheaves. 
\end{prop}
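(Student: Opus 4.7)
The plan is to exploit the intrinsic categorical characterization of constructibility provided by \Cref{const_comp}. Since both $X$ and $Y$ are well behaved restricted topological spaces and $\cD$ is compactly generated and stable, that proposition identifies
\[
\Shv_c(X,\cD) = \Shv(X,\cD)^\omega \quad \text{and} \quad \Shv_c(Y,\cD) = \Shv(Y,\cD)^\omega,
\]
the full subcategories of compact objects. Thus the entire problem reduces to showing that the functor $\phi_\sharp \colon \Shv(X,\cD) \to \Shv(Y,\cD)$ preserves compact objects.

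The key observation is that $\phi^*$ sits in a \emph{two-sided} chain of adjunctions $\phi_\sharp \dashv \phi^* \dashv \phi_*$: the right adjoint $\phi_*$ exists because $\phi$ is a geometric morphism, and the left adjoint $\phi_\sharp$ exists by our hypothesis that $\phi$ is essential. In particular, since $\phi^*$ admits a right adjoint, it preserves all small colimits, and a fortiori it preserves filtered colimits. By the standard adjoint characterization of compact objects (a left adjoint preserves compact objects exactly when its right adjoint preserves filtered colimits), it follows immediately that $\phi_\sharp$ carries $\Shv(X,\cD)^\omega$ into $\Shv(Y,\cD)^\omega$. Combining this with the identifications of the previous paragraph proves the desired statement.

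This argument is entirely formal once \Cref{const_comp} is available, so the main ``obstacle'' has already been overcome in that earlier result. What makes things work is the conjunction of two features of our setting: the stability and compact generation of $\cD$, which force compact objects and constructible sheaves to coincide on well-behaved spaces, and the essentialness of $\phi$, which upgrades $\phi^*$ to a functor with adjoints on both sides. Without either of these, one would have to argue directly with stratifications and the explicit generators $j_!d_U$, tracking constructibility through pushforward along $\phi$; but with them in hand, the preservation of compact objects is automatic.
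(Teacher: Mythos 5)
Your proposal is correct and matches the paper's own proof: both reduce the statement to preservation of compact objects via \Cref{const_comp}, and then observe that $\phi_\sharp$ preserves compact objects because its right adjoint $\phi^*$ preserves filtered colimits (being itself a left adjoint of $\phi_*$). No gaps.
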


\begin{proof}
In view of \Cref{const_comp}, it suffices to show that $\phi_\sharp$ preserves compact objects. This follows from the fact that its right adjoint, $\psi^*$, preserves filtered colimits.  
\end{proof}

\subsubsection{Comparison of Topologies for Constructible Sheaves}
For a restricted topological space $X$, the functor $\real_*\colon \Shv(X^\topify,\cD) \to \Shv(X,\cD)$ is fuly faithful (\Cref{Comparison_map_fuly_faithful}), but it is far from an isomorphism. For constructible sheaves, the comparison of topologies is better behaved.
To state the precise result, we first define stratified and constructible sheaves for the \emph{topology} of a restricted topological space (rather than the restricted topology). 
\begin{defn}
Let $X$ be a closureful restricted topological space. A sheaf $\sF\in \Shv(X^\topify,\cD)$ is called
\begin{enumerate}
    \item \tdef{$\alpha$-stratified} for a stratification $\alpha\colon X\to P$ if $\sF_{X_p^\topify}$ is locally constant for every $p\in P$. 
    \item \tdef{Stratified} if it is $\alpha$-stratified for some stratification $\alpha$. 
    \item \tdef{($\alpha$-)constructible} if it is ($\alpha$-)stratified and has compact stalks. 
\end{enumerate}
\end{defn}

We denote the relevant $\infty$-categories by $\Shv_\st^{(\alpha)}(X^\topify,\cD)$ and $\Shv_c^{(\alpha)}(X^\topify,\cD)$. 

\begin{warning}
Note that the notions of stratified and constructible sheaf on $X^\topify$ depend on the restricted topology on $X$ and not only on the topology it generates. Indeed, we require sheaves in $\Shv_\st(X^\topify,\cD)$ to be stratified for a stratification of $X$.
\end{warning}

For restricted topological spaces which are locally of topological shape, we had a comparison result for local systems (\Cref{top_shape_loc_sys}). For well-behaved restricted topological spaces, we can extend this comparison to constructible sheaves as well.
We start with the following useful special case of this comparison. 
\begin{lem} \label{gens_constructible_topify}
Let $X$ be a well behaved restricted topological space, and let $\cD$ be a compactly generated stable $\infty$-category. Then $\Shv_c(X^\topify,\cD)$ is the stable subcategory of $\Shv(X^\topify,\cD)$ generated by the sheaves of the form $i_*d_{Z^\topify}$ for $i\colon Z\into X$ a closed subset and $d\in \cD$ a compact object. 
\end{lem}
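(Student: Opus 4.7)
The plan is to adapt the proof of \Cref{const_comp} to the classical topology $X^\topify$, working inductively on the inductive dimension of $X$. Write $\sA \subseteq \Shv(X^\topify,\cD)$ for the stable subcategory generated by the sheaves of the form $i_*d_{Z^\topify}$ with $Z\subseteq X$ closed and $d\in\cD$ compact; the goal is to prove $\sA = \Shv_c(X^\topify,\cD)$.

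For the inclusion $\sA \subseteq \Shv_c(X^\topify,\cD)$, it suffices to verify that each generator is constructible, since the same arguments as \Cref{cl_struct_shv} show that constructible sheaves on $X^\topify$ are closed under finite colimits, fibers, and retracts. For a generator $i_*d_{Z^\topify}$, equip $X$ with the two-strata stratification $\alpha\colon X \to \{0 < 1\}$ given by $\alpha^{-1}(0) = Z$ and $\alpha^{-1}(1) = X\setminus Z$; this is continuous since $X\setminus Z \in \Op(X)$. The restriction to $Z^\topify$ is the constant sheaf $d_{Z^\topify}$ and the restriction to $(X\setminus Z)^\topify$ vanishes, both locally constant with compact stalks ($d$ or $0$).

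For the reverse inclusion, given $\sF \in \Shv_c(X^\topify,\cD)$, I would first establish the $X^\topify$-analog of \Cref{strat_have_triv_strat}: there exists a finite disjoint collection of open sets $\{U_k\} \subseteq \Op(X)$ whose union $U$ is dense in $X$ and such that $\sF|_{U_k^\topify}$ is constant with value a compact $d_k \in \cD$. The proof transfers verbatim once one invokes the equivalence $\Loc(X_p^\topify,\cD) \simeq \Loc(X_p,\cD)$ from \Cref{top_shape_loc_sys}, which applies because the well-behavedness of $X$ ensures that each locally closed $X_p$ is itself well behaved, hence locally of topological shape. Set $Z = X\setminus U$, with associated inclusions $j\colon U \into X$ and $i\colon Z \into X$; the induced topological maps yield, by \Cref{shv_rec} and \Cref{rec_props} applied to $\Shv(X^\topify,\cD)$, a cofiber sequence
\[
j_!j^*\sF \to \sF \to i_*i^*\sF.
\]

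It remains to show both flanking terms lie in $\sA$. Since $\dim Z < \dim X$, the inductive hypothesis applied to $Z$ implies that $i^*\sF$ lies in the stable subcategory of $\Shv(Z^\topify,\cD)$ generated by sheaves $i'_*d'_{(Z')^\topify}$ for $Z' \subseteq Z$ closed; pushing forward along the closed embedding $i$ then gives $i_*i^*\sF \in \sA$. For the other term, the disjointness of the $U_k^\topify$ in $U^\topify$ gives $j^*\sF \simeq \bigoplus_k (\tilde j_k)_!(d_k)_{U_k^\topify}$ with $\tilde j_k \colon U_k^\topify \into U^\topify$, so $j_!j^*\sF \simeq \bigoplus_k (j_k)_!(d_k)_{U_k^\topify}$ where $j_k\colon U_k \into X$. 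Each summand fits into a cofiber sequence
\[
(j_k)_!(d_k)_{U_k^\topify} \to (d_k)_{X^\topify} \to (i_k)_*(d_k)_{Z_k^\topify}
\]
with $Z_k := X\setminus U_k$ closed in $X$. The middle term is $(\id_X)_*(d_k)_{X^\topify}$, a generator of $\sA$ with $Z=X$, and the right term is a generator with $Z = Z_k$; so the left term lies in $\sA$, completing the induction. The main obstacle is the analog of \Cref{strat_have_triv_strat} for $X^\topify$: one needs to trivialize each locally constant restriction $\sF|_{X_p^\topify}$ over a cover by \emph{restricted-topology} open sets of $X_p$, and this is exactly where the well-behavedness hypothesis on $X$ is essential.
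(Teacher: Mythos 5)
Your proposal is correct and follows essentially the same route as the paper: induction on the inductive dimension, reduction to finding disjoint restricted-topology opens with dense union on which $\sF$ is constant, with that trivialization step supplied by the comparison of local systems (\Cref{top_shape_loc_sys}) applied stratum-wise, and the recollement cofiber sequence plus the sequence $(j_k)_!(d_k)_{U_k^\topify}\to (d_k)_{X^\topify}\to (i_k)_*(d_k)_{Z_k^\topify}$ to conclude. The paper compresses the recollement bookkeeping into the phrase ``as in the proof of \Cref{const_comp}''; you have simply spelled it out.
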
  

\begin{proof}
As in the proof of \Cref{const_comp}, by induction on the dimension of $X$ it would suffice to show that there are disjoint open sets $U_k$ in $X$ such that $\sF|_{U_k^\topify}$ is constant and $\bigcup_k U_k$ is dense in $X$. Choose a stratification $\alpha\colon X\to P$ such that $\sF$ is locally constant on $X_p^\topify$ for every $p\in P$. By replacing $X$ with the union of the open strata of $\alpha$, we may assume that $\sF$ is locally constant. 

Since $X$ is well behaved, and in particular, locally of topological shape, we deduce from \Cref{top_shape_loc_sys} that $\sF\simeq \real^*\sF_0$ for some $\sF_0\in \Loc(X,\cD)$. Since $\sF_0$ is constructible (in fact, locally constant) we can find disjoint open sets $U_k\subseteq X$ with dense union, such that $(\sF_0)|_{U_k}$ is a constant sheaf. This implies that $\sF|_{U_k^\topify}$ is a constant sheaf and the result follows.  
\end{proof}
We are ready to prove our comparison result for constructible sheaves. 

\begin{thm} \label{comp_const_st}
    Let $X$ be a well-behaved restricted topological space and let $\cD$ be a stable compactly generated $\infty$-category.
    Then, the adjunction $\real^*\colon \Shv(X,\cD) \adj  \Shv(X^\topify,\cD) : \real_*$ restricts to an isomorphism
\[
    \Shv_c(X,\cD) \simeq \Shv_c(X^\topify,\cD)
\]
    Moreover, for every stratification $\alpha\colon X\to P$, the isomorphism $\real^*$ takes the full subcategories 
$\Shv_c^\alpha(X,\cD)$ isomorphically onto $\Shv_c^\alpha(X^\topify,\cD)$.
\end{thm}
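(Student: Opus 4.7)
The strategy is to exploit the intrinsic descriptions of both sides as stable subcategories generated by closed-embedding pushforwards of compact constant sheaves. By \Cref{const_comp}, $\Shv_c(X,\cD)$ is generated (as a stable subcategory) by the sheaves $i_*d_Z$ for $i\colon Z\into X$ a closed embedding and $d\in\cD$ compact, and by \Cref{gens_constructible_topify}, $\Shv_c(X^\topify,\cD)$ is generated by the corresponding sheaves $i_*d_{Z^\topify}$. First I would apply \Cref{Base_change_comparison} together with $\real^*\Gamma_Z^*d \simeq \Gamma_{Z^\topify}^*d$ (since $\Gamma_{Z^\topify} = \Gamma_Z\circ\real$) to obtain
\[
\real^*(i_*d_Z)\simeq i_*\real^*d_Z \simeq i_*d_{Z^\topify}.
\]
Hence $\real^*$ carries generators to generators, and since it preserves finite limits and colimits, it restricts to a functor $\Shv_c(X,\cD)\to\Shv_c(X^\topify,\cD)$.

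The core step is that the unit $\sF\to\real_*\real^*\sF$ is an isomorphism on all of $\Shv_c(X,\cD)$. On a generator $\sF = i_*d_Z$, \Cref{Base_change_comparison} and the identity $\real_{X*}(i^\topify)_* = i_*\real_{Z*}$ give
\[
\real_*\real^*(i_*d_Z) \simeq i_*\real_{Z*}d_{Z^\topify};
\]
since $X$ is well-behaved, the locally closed subset $Z$ is of topological shape, so by \Cref{top_shape_loc_sys} applied to $Z$ the unit $d_Z\iso\real_{Z*}d_{Z^\topify}$ is an isomorphism. The class of $\sF$ for which the unit is an isomorphism is closed under retracts and under fiber (equivalently, cofiber) sequences, because $\real_*$ is a right adjoint between stable $\infty$-categories and therefore preserves fiber sequences. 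By \Cref{const_comp} this class is all of $\Shv_c(X,\cD)$. Combined with the counit isomorphism $\real^*\real_*\iso\Id$, which holds everywhere by \Cref{Comparison_map_fuly_faithful}, this shows that $\real^*|_{\Shv_c(X,\cD)}$ is fully faithful. Essential surjectivity is analogous: since $\real_*(i_*d_{Z^\topify})\simeq i_*d_Z\in\Shv_c(X,\cD)$ and $\real_*$ preserves fiber sequences, it sends $\Shv_c(X^\topify,\cD)$ into $\Shv_c(X,\cD)$, so every $\sG$ is of the form $\real^*\real_*\sG$ with $\real_*\sG$ constructible on $X$.

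For the refinement to a fixed stratification $\alpha\colon X\to P$, I would verify that both $\real^*$ and $\real_*$ preserve the property of being $\alpha$-stratified. For $\real^*$: pullback along the locally closed embedding $i_p\colon X_p\into X$ commutes with $\real^*$, by decomposing $i_p$ as a closed embedding inside an open one and combining \Cref{Base_change_comparison} with the trivial open-embedding case; and $\real^*$ sends local systems on $X_p$ to local systems on $X_p^\topify$ by \Cref{top_shape_loc_sys}, since $X_p$, as a locally closed subset of a well-behaved space, is itself well-behaved and in particular of topological shape. For $\real_*$: given $\sG\in\Shv_c^\alpha(X^\topify,\cD)$, the already-proved basic equivalence applied to $X_p$ gives
\[
\real^*_{X_p}\bigl((\real_*\sG)|_{X_p}\bigr)\simeq \sG|_{X_p^\topify},
\]
which is locally constant. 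Since the equivalence $\real^*_{X_p}\colon \Shv_c(X_p,\cD)\simeq\Shv_c(X_p^\topify,\cD)$ restricts on local systems to the equivalence of \Cref{top_shape_loc_sys}, the preimage $(\real_*\sG)|_{X_p}$ is necessarily a local system.

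The main obstacle I expect is in the stratified step: one must carefully verify that $\real^*$ commutes with restriction to locally closed subsets (mixing the closed and open Beck-Chevalley assertions of \Cref{Base_change_comparison}), and correctly execute the bootstrap that applies the basic equivalence to the well-behaved strata $X_p$ in order to transfer the $\alpha$-stratification condition across $\real_*$. Each individual ingredient is already in place, but the assembly requires some bookkeeping.
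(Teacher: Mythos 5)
Your proposal is correct and follows essentially the same route as the paper: reduce to the generators $i_*d_Z$ via \Cref{const_comp} and \Cref{gens_constructible_topify}, use the closed-embedding Beck--Chevalley isomorphism of \Cref{Base_change_comparison} together with the topological-shape property of locally closed subsets to verify the unit is an isomorphism on generators, and transfer the $\alpha$-stratification condition stratum-by-stratum via \Cref{top_shape_loc_sys}. The only difference is that you spell out the bootstrapping (closure under retracts and fiber sequences) and the two directions of the stratified claim more explicitly than the paper does.
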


\begin{proof}
First, note that $\real_*$ takes constructible sheaves to constructible sheaves. 
Indeed, by \Cref{gens_constructible_topify}, it suffices to show that $\real_* i_*d_{Z^\topify}$ is constructible for every closed embedding $i\colon Z\into X$ and compact object $d\in \cD$. But since $Z$ is locally of topological shape, we deduce that $\real_*i_*d_{Z^\topify}\simeq u_*d_Z$, which is constructible. 

Since $\real_*$ is fully faithful, to show that it restricts to an isomorphism $\Shv_c(X^\topify,\cD)\iso \Shv_c(X,\cD)$, it remains to show that the unit $\sF\to \real_*\real^*\sF$ is an isomorphism for every $\sF\in \Shv_c(X,\cD)$. By \Cref{const_comp}, it would suffice to show this for sheaves of the form $\sF\simeq i_*(d_Z)$. 
By \Cref{Base_change_comparison}, we can identify the unit map $i_*d_Z \to \real_*\real^*i_*d_Z$ with the image of the unit map $d_Z\to \real_*\real^*d_Z$ under the functor $i_*$. Since $Z$ is itself well behaved, we reduced to show that this unit is an isomorphism at constant sheaves, and this case follows from \Cref{top_shape_loc_sys}. 

Finally, to show that $\real^*$ take $\Shv_c^\alpha(X,\cD)$ onto $\Shv_c^\alpha(X^\topify,\cD)$, just note that for every constructible sheaf $\sF$ on $X$ and locally closed subset $S\subseteq X$, the restriction $\sF|_S$ is locally constnat if and only if $\real^*\sF|_{S^\topify}$ is locally constant, again by \Cref{top_shape_loc_sys}. 
\end{proof}

\begin{rmk}
One can similarly show that $\real^*$ induces an isomorphism on the full subcategories spanned by the ($\alpha$-)stratified sheaves. However, we shall not need this result, so we leave the necessary modification for the interested reader. 
\end{rmk}
\subsubsection{The Relative de Rham Complex \& Homological Equivalences}
Informally, one way to interpret the relative de Rham Theorem (\Cref{global_relative_de_rham}) is that for a Nash submersion $\phi\colon X\to Y$ the complex $\Sdrt_{\phi,\sE}$ is a mixture of the Schwartz space of $Y$ and the homologies of the fibers of $\phi$. We shall now justify this intuition by showing that the complex $\Sdrt_{\phi,\sE}$ is invariant under (fiberwise) homology equivalences relative to $Y$.  
 
First, we make a general remark about the relation between the shape of a semi-algebraic space and its singular homology. For a topological space $X$, we denote by $C_*(X,\RR)\in \Der(\RR)$ the singular chain complex of $X$ with values in $A$. 

\begin{prop} \label{sing_shape_Nash_man}
    Let $X$ be a semi-algebraic space. Then, we have a natural  isomorphism 
\[
    C_*(X^\topify,\RR)\simeq |X|\otimes \RR. 
\]
\end{prop}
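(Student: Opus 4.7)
The plan is to reduce the statement to two classical facts: (i) for a sufficiently nice topological space $Y$, the shape $|\Shv(Y)|\in\Spaces$ is naturally equivalent to the singular simplicial set $\mathrm{Sing}(Y)$; and (ii) for any space $A\in\Spaces$, there is a natural isomorphism $A\otimes\RR\simeq C_*(A,\RR)$ in $\Der(\RR)$.

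First, by \Cref{Comparison_Shape_Semi_Algebraic_Space}, the comparison geometric morphism $\real\colon\Shv(X^\topify)\to\Shv(X)$ induces a natural isomorphism $|X^\topify|\simeq |X|$ in $\Spaces$. This reduces the problem to constructing a natural isomorphism $C_*(X^\topify,\RR)\simeq |X^\topify|\otimes\RR$. The triangulation theorem (\Cref{Existence_of_triangulation}) shows that $X^\topify$ is homeomorphic to the geometric realization of an almost simplicial complex; in particular it is locally contractible and has the homotopy type of a CW complex. I would then invoke the identification of the shape of $\Shv(Y^\topify)$ with $\mathrm{Sing}(Y)$ for locally contractible $Y$, as established, e.g., in \cite[\S A.1]{HA} or in \cite{hoyois2018higher}. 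This provides $|X^\topify|\simeq\mathrm{Sing}(X^\topify)$ in $\Spaces$.

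For fact (ii), the tensor $A\otimes\RR$ is, by definition, the colimit in $\Der(\RR)$ of the constant diagram $A\to\Der(\RR)$ with value $\RR$. Via the Dold--Kan correspondence, this colimit is computed by the normalized chain complex of the simplicial $\RR$-vector space $\RR[A]$, whose homology is singular homology with $\RR$ coefficients. Specializing to $A=\mathrm{Sing}(X^\topify)$ yields a natural isomorphism $|X^\topify|\otimes\RR\simeq C_*(X^\topify,\RR)$.

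The main obstacle I anticipate is ensuring naturality of all identifications in the semi-algebraic space $X$. The shape functor is natural with respect to geometric morphisms, and singular chains are natural with respect to continuous maps; compatibility follows because the topologification $X\mapsto X^\topify$ is itself functorial, and the classical identification $|\Shv(Y^\topify)|\simeq\mathrm{Sing}(Y)$ is natural in $Y$. Verifying the locally contractible hypothesis uniformly across all semi-algebraic spaces is guaranteed by the triangulation theorem, so no additional hypothesis on $X$ is required.
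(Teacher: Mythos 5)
Your proposal is correct and follows essentially the same route as the paper: reduce to $X^\topify$ via \Cref{Comparison_Shape_Semi_Algebraic_Space}, observe that $X^\topify$ is locally contractible (and paracompact) so that its shape is $\mathrm{Sing}(X^\topify)$ by the results of \cite[\S A.4]{HA}, and identify $\mathrm{Sing}(X^\topify)\otimes\RR$ with the singular chain complex. The only difference is that you spell out the Dold--Kan computation of $A\otimes\RR$, which the paper leaves implicit.
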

 
Note that here $|X|\otimes \RR$ is defined via the canonical tensoring of $\Der(\RR)$ over $\Spaces$, and agrees with the real singular homology of the homotopy type $|X|$. 
 
 \begin{proof}
    First, by \Cref{Comparison_Shape_Semi_Algebraic_Space} we have $|X|\simeq |X^\topify|$. Now, $X^\topify$ is a locally contractible, paracompact topological space, and hence it is locally of singular shape in the sense of \cite[Definition A.4.9]{HA} (see, e.g., \cite[Remark A.4.11]{HA}). We deduce that 
 \[
    |X^\topify|\otimes \RR \simeq \Sing(X^\topify)\otimes \RR \simeq C_*(X^\topify,\RR).  
 \]
 \end{proof}
 
 We are ready to show that the relative de Rham complex is invariant under relative homology equivalences.

\begin{thm} \label{homology_inv_rel_der}
    Let 
\[
    \xymatrix{
    X_0 \ar^\phi[rr] \ar^{\alpha_0}[rd] &   & X_1 \ar^{\alpha_1}[ld] \\ 
    & Y &  
    }
\]
    be a commutative diagram of Nash manifolds, with $\alpha_i$ Nash submersions. If, for every $y\in Y$, the induced map 
    $\alpha_*\colon C_*((X_0)_y^\topify,\RR)\to C_*((X_1)_y^\topify,\RR)$ on the singular chains of the fibers is an isomorphism in $\Der(\RR)$, then there is a natural isomorphism
\[
    \Sdrt_{\alpha_0,\sE} \simeq \Sdrt_{\alpha_1,\sE} \qin \CShv(Y,\infFre).
\]
\end{thm}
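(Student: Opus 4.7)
The plan is to deduce the statement from the Global Relative de Rham Theorem (\Cref{global_relative_de_rham}), which supplies, for $i=0,1$, natural isomorphisms
\[
(\alpha_i)_! \Sdrt_{\alpha_i,\sE} \simeq (\alpha_i)_\sharp(\RR_{X_i})\scten \Sc_\sE \qin \CShv(Y,\infFre).
\]
Thus it is enough to produce a natural isomorphism $(\alpha_0)_\sharp \RR_{X_0} \simeq (\alpha_1)_\sharp \RR_{X_1}$ in $\Shv(Y,\Der(\RR))$ and then tensor with $\Sc_\sE$. Since $\alpha_0 = \alpha_1 \circ \phi$, we have $(\alpha_0)_\sharp \simeq (\alpha_1)_\sharp \phi_\sharp$, and combining with the canonical isomorphism $\phi^* \RR_{X_1} \simeq \RR_{X_0}$ and the counit $\phi_\sharp \phi^* \to \Id$ yields the map
\[
(\alpha_0)_\sharp \RR_{X_0} \simeq (\alpha_1)_\sharp \phi_\sharp \phi^* \RR_{X_1} \to (\alpha_1)_\sharp \RR_{X_1}.
\]
The task reduces to showing this map is an isomorphism.

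To this end, I first observe that both sides are constructible sheaves on $Y$ valued in $\Der(\RR)$: the constant sheaves $\RR_{X_i}$ are constructible, every Nash submersion induces an essential geometric morphism by \Cref{Nash_submersion_locally contractible}, and the functor $\phi_\sharp$ along such a morphism preserves constructibility by \Cref{push_const_to_const}. Since $Y$ is well behaved (being a Nash manifold), \Cref{iso_const_detect_by_stalks} reduces the problem to checking the map is an isomorphism on every stalk $y\in Y$.

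Fix $y\in Y$ and consider the pullback square of Nash manifolds
\[
\xymatrix{(X_i)_y \ar^{j_i}[r]\ar^{\beta_i}[d] & X_i \ar^{\alpha_i}[d]\\ \{y\} \ar^y[r] & Y.}
\]
By Proper Base-Change (\Cref{Proper base-change}), together with $j_i^* \RR_{X_i} \simeq \RR_{(X_i)_y}$, one obtains
\[
y^*(\alpha_i)_\sharp \RR_{X_i} \simeq (\beta_i)_\sharp \RR_{(X_i)_y} \simeq |(X_i)_y| \otimes \RR,
\]
which, via \Cref{sing_shape_Nash_man}, is identified with the singular chain complex $C_*((X_i)_y^\topify, \RR)$. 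Under these identifications, the map we constructed becomes precisely the map $C_*((X_0)_y^\topify, \RR) \to C_*((X_1)_y^\topify, \RR)$ induced on singular chains by the restriction of $\phi$ to the fibers over $y$, which is an isomorphism by hypothesis.

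The principal technical point is this stalk computation, combining Proper Base-Change with the shape-versus-singular-chains comparison to identify $(\alpha_i)_\sharp \RR_{X_i}$ pointwise with the singular homology of the fibers; once this is in hand, the constructibility of the sheaves involved allows the stalkwise verification, and tensoring the resulting isomorphism with $\Sc_\sE$ yields the theorem.
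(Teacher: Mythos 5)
Your proposal is correct and follows essentially the same route as the paper: reduce via the Global Relative de Rham Theorem to showing $(\alpha_0)_\sharp\RR_{X_0}\iso(\alpha_1)_\sharp\RR_{X_1}$, invoke constructibility (\Cref{push_const_to_const}) and stalkwise detection (\Cref{iso_const_detect_by_stalks}), and compute stalks via proper base change together with \Cref{sing_shape_Nash_man}. Your write-up is in fact slightly more explicit than the paper's about how the comparison map is constructed (via the counit of $\phi_\sharp\dashv\phi^*$, which for a general Nash map $\phi$ should be read in $\Pro$-categories or equivalently via adjunction from the unit of $\alpha_1$), but this is only a presentational difference.
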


\begin{proof}
By \Cref{global_relative_de_rham}, we have natural isomorphisms 
\[
(\alpha_i)_!\Sdrt_{\alpha_i,\sE} \simeq (\alpha_i)_\sharp\RR_X \scten \Sc_{\sE}, \quad i=0,1.
\]
Hence, it would suffice to show that $\phi$ induces an isomorphism $(\alpha_0)_\sharp \RR_X \iso (\alpha_1)_\sharp \RR_Y$.

By \Cref{push_const_to_const} the sheaves $(\alpha_i)_\sharp \RR_X$ are constructible, and hence by \Cref{iso_const_detect_by_stalks} it would suffice to show that $\phi$ induces isomorphisms $((\alpha_0)_\sharp \RR_X)_y \iso ((\alpha_1)_\sharp \RR_X)_y$ for every $y\in Y$. Combining \Cref{Proper base-change} and \Cref{sing_shape_Nash_man}, we can identify these maps with the maps $\phi_*\colon C_*((X_0)_y^\topify,\RR) \to C_*((X_1)_y^\topify,\RR)$, which are isomorphisms by our assumption on $\phi$.
\end{proof}

\begin{rmk}
If, in the result above, $\phi$ is itself a Nash submersion, one can realize the claimed isomorphism via integration of relative differential forms along the fibers of $\phi$.
\end{rmk}

\subsection{Pseudo-Free Resolutions}

In order to compute the cosheaves of the form $\sF\scten \Sc_\sE$ appearing in the statement of the relative de Rham theorem, we shall use a convenient presentation of real-valued constructible sheaves. 

As for cosheaves, for a (restricted) topological space $X$, we have a functor 
\[
[-]\colon \Ch_b(\Shv(X,\RR))\to \PShv(X,\Der(\RR))\oto{L_X} \Shv(X,\Der(\RR)) 
\]
which we refer to (once again) as the \tdef{realization functor}. 

\begin{defn} (see \cite[Definition 1]{yekutieliderived})
Let $X$ be a restricted topological space. A \tdef{pseudo-free sheaf} of real vector spaces on $X$ is a sheaf of the form $\bigoplus_{i=1}^n \RR_{U_i}$ for some open sets $U_i\in \Op(X)$. A \tdef{pseudo-free resolution} of a sheaf $\sF\in \Shv(X,\Der(\RR))$ is an isomorphism $\sF \simeq [C_\bullet]$ where $C_\bullet$ is a bounded complex of pseudo-free sheaves on $X$.   
\end{defn}

This section aims to show that, on a Nash manifold, every finite constructible sheaf admits a pseudo-free resolution. 
\begin{rmk}
Of course, this result is expected to hold in much greater generality (e.g., for semi-algebraic spaces), but we restrict our attention to the case that is used in the proof of the Hausdorffness of the homologies of $\Sdrt_{\phi,\sE}$.
\end{rmk}
\subsubsection{Constructible Sheaves on Simplicial Complexes}
We start by providing pseudo-free resolutions for constructible sheaves on the geometric realizations of almost simplicial complexes (see \Cref{def:almost_simp_comp}).
For simplicity, and since this is the only case we need, we consider only almost simplicial complexes which are open in their simplicial closure. 
\begin{defn}
Let $K$ be an almost simplicial complex on a set of vertices $A$. We denote by $\overline{K}$ the simplicial closure of $K$, that is, the minimal simplicial complex on the set of vertices $A$ which contains $K$. We say that $K$ is \tdef{locally closed} if $K$ is open in $\overline{K}$.
\end{defn}

Equivalently, $K$ is locally closed if for every $\sigma_0\subseteq\sigma_1 \subseteq \sigma_2 \in \Pow(A)$, if $\sigma_0$ and $\sigma_2$ belong to $K$ then so does $\sigma_1$. 

For a locally closed almost simplicial complex $K$, we can give a convenient presentation for the constructible sheaves on $\triangle_K$. 
Let $X$ be a closureful restricted topological space, and let $\alpha\colon X\to P$ be a stratification. We have a canonical functor $P^\op\oto{(-)^\star} T(X) \oto{\Yo} \Shv(X)$, which takes $p\in P$ to the sheaf represented by the open star $p^\star$. Since the target of this functor admits all small colimits, it extends uniquely to a colimit preserving functor 
\[
    \Fun(P,\Spaces)\simeq \PShv(P^\op)\to \Shv(X).
\]
For $\cD\in \Prl$, we can now tensor with $\cD$ this map to get a colimit preserving functor
\[
    \mdef{\alpha^\star} \colon \Fun(P,\cD)\to \Shv(X,\cD).
\]

\begin{prop}\label{const_shv_simp}
Let $K$ be a locally closed almost simplicial complex, and let $\cD$ be a stable, compactly generated $\infty$-category. Let $\cD^\omega\subseteq \cD$ be the full subcategory spanned by the compact objects. The composition 
\[
     \Fun(K,\cD^\omega) \subseteq \Fun(K,\cD) \oto{\alpha_K} \Shv(\triangle_K,\cD)
\]
is fully faithful, with essential image $\Shv_c^{\alpha_K}(\triangle_K,\cD).$
\end{prop}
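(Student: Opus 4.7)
The plan is to identify the functor $\alpha_K^\star$ with the pullback $\alpha_K^*$ of sheaves along the continuous map $\alpha_K \colon \triangle_K \to K$, where $K$ carries its Alexandrov topology (upward-closed sets as opens). Under the standard identification $\Shv(K, \cD) \simeq \Fun(K, \cD)$ sending a sheaf $\sG$ to $\sigma \mapsto \sG(\sigma^\star_K)$, the Yoneda image of $\sigma \in K$ is the sheaf represented by its open star $\sigma^\star_K = \{\tau : \tau \ge \sigma\}$. Both $\alpha_K^\star$ and $\alpha_K^*$ are colimit-preserving and send this representable to the sheaf represented by $\alpha_K^{-1}(\sigma^\star_K)$, so they coincide. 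Using this reformulation, I will check well-definedness on compact objects: the generator $F_\sigma \otimes c$ (for $\sigma \in K$, $c \in \cD^\omega$) maps to the extension by zero $j_!(c_{\alpha_K^{-1}(\sigma^\star_K)})$, whose restriction to a stratum $X_\tau$ is either $0$ or $c_{X_\tau}$ according to whether $\tau \ge \sigma$, hence $\alpha_K$-constructible. Closure of $\Shv_c^{\alpha_K}$ under finite colimits and retracts, via \Cref{cl_struct_shv}, extends this to arbitrary compact functors.

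Fully faithfulness and essential surjectivity are proved by induction on $|K|$. The base case $|K| = 1$ reduces to showing that every local system with compact stalks on the open semialgebraic simplex $X_\sigma$ is constant with value in $\cD^\omega$, which follows from \Cref{Comparison_Shape_Semi_Algebraic_Space} since $X_\sigma^\topify$ is contractible. For the inductive step, choose a minimal element $\sigma_0 \in K$; then $K' := K \setminus \{\sigma_0\}$ is again a locally closed almost simplicial complex, open in $K$ with closed complement $\{\sigma_0\}$ (by minimality). Correspondingly, $\triangle_{K'} \subseteq \triangle_K$ is open with closed complement $X_{\sigma_0}$, since minimality of $\sigma_0$ gives $\overline{X_{\sigma_0}} \cap \triangle_K = X_{\sigma_0}$. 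Both $\Shv(\triangle_K, \cD)$ and $\Fun(K, \cD) \simeq \Shv(K, \cD)$ are therefore recollements of the corresponding categories over the open and closed parts (by \Cref{shv_rec}), and since the squares
\[
\xymatrix{\triangle_{K'} \ar^j[r] \ar_{\alpha_{K'}}[d] & \triangle_K \ar^{\alpha_K}[d] \\ K' \ar^j[r] & K}
\qquad
\xymatrix{X_{\sigma_0} \ar^i[r] \ar_{\alpha_{\{\sigma_0\}}}[d] & \triangle_K \ar^{\alpha_K}[d] \\ \{\sigma_0\} \ar^i[r] & K}
\]
are strict pullbacks, $\alpha_K^\star$ intertwines these recollements and preserves the cofiber sequences $j_!j^* \to \Id \to i_*i^*$ from \Cref{rec_props}.

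The conclusion is a diagram chase. For fully faithfulness, a morphism in $\Fun(K, \cD^\omega)$ (resp.\ in $\Shv_c^{\alpha_K}(\triangle_K, \cD)$) is determined by its image under $j^*$ and $i^*$ together with a gluing datum; by the inductive hypothesis and the base case, these restrictions match under $\alpha_K^\star$, so it remains to match the gluing data. For essential surjectivity, induction applied to $j^*\sF$ yields $F' \in \Fun(K', \cD^\omega)$ with $\alpha_{K'}^\star F' \simeq j^*\sF$; setting $F(\sigma_0) := i^*\sF \in \cD^\omega$ and gluing via the recollement produces $F \in \Fun(K, \cD^\omega)$ with $\alpha_K^\star F \simeq \sF$. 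The main obstacle lies in matching the gluing data precisely: on the sheaf side it is encoded by the map $i^*\sF \to i^*j_*j^*\sF$, while on the functor side it is the canonical map $F(\sigma_0) \to \lim_{\tau > \sigma_0} F(\tau) = (j_*F|_{K'})(\sigma_0)$. Equating these reduces to a base-change identification $i^*j_*\alpha_{K'}^\star \simeq \alpha_{\{\sigma_0\}}^\star i^*j_*$, which can be verified at stalks: a sufficiently small neighborhood of a point of $X_{\sigma_0}$ in $\triangle_K$ deformation-retracts onto the union of open stars of simplices $\tau > \sigma_0$, so the stalk of $j_*\alpha_{K'}^\star F'$ at that point is naturally $\lim_{\tau > \sigma_0} F'(\tau)$, matching the formula on the functor side.
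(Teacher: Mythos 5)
Your approach — an induction on $|K|$ using the recollement from \Cref{shv_rec}, stripping off one minimal simplex at a time — is genuinely different from the paper's proof, which reduces to $\cD = \Spaces$ and a simplicial complex and then invokes Lurie's exodromy theorems (\cite[Theorem A.6.10, A.9.3]{HA}) together with the tensoring trick $\Shv_{\mathrm{st}}(\triangle_K^{\topify},\cD)\simeq \Shv_{\mathrm{st}}(\triangle_K^{\topify})\otimes\cD$. The structural parts of your argument are fine: the identification $\alpha_K^\star = \alpha_K^*$, the fact that $\alpha_K^*$ commutes with $j_!, j^*, i^*$ (and hence with $i_*$ by the cofiber sequence), the well-definedness via generators and \Cref{cl_struct_shv}, the base case via contractibility of $X_\sigma^{\topify}$ and \Cref{Comparison_Shape_Semi_Algebraic_Space}, and the observation that $K'$ remains a locally closed almost simplicial complex when $\sigma_0$ is minimal.

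However, there is a real gap exactly where you flag "the main obstacle": the Beck–Chevalley map $\alpha_K^*j'_* \to j_*\alpha_{K'}^*$ (equivalently, after applying $i^*$, the identification $i^*j_*\alpha_{K'}^*\sG \simeq \alpha_{\{\sigma_0\}}^*\bigl(\lim_{\tau>\sigma_0}\sG(\tau)\bigr)$) is not proved but only asserted via a hand-wave. Two concrete problems. First, the claim that "a sufficiently small neighborhood of a point of $X_{\sigma_0}$ in $\triangle_K$ deformation-retracts onto the union of open stars of simplices $\tau>\sigma_0$" cannot be literally correct: that union is $\alpha_K^{-1}(\sigma_0^\star\smallsetminus\{\sigma_0\})$, which is not small and is not contained in any neighborhood shrinking to a point. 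What is actually needed is that for a shrinking family of semi-algebraic neighborhoods $V\ni x$, the sections $(\alpha_{K'}^*\sG)(V\cap\triangle_{K'})$ stabilize to $\lim_{\tau>\sigma_0}\sG(\tau)$; this requires knowing that $V\cap\triangle_{K'}$, for $V$ small, has the constructible shape of the link of $\sigma_0$ — a conic-structure statement that is itself a local form of exodromy, not formally implied by anything you have quoted. Second, to even apply \Cref{iso_const_detect_by_stalks} as you propose, you would first have to show that $i^*j_*\alpha_{K'}^*\sG$ is stratified (indeed constant) on $X_{\sigma_0}$, which again is part of what needs proving, not an available hypothesis. So the inductive skeleton is sound and avoids the paper's appeal to \cite[Theorem A.9.3]{HA}, but the key stalk lemma that would close the induction is missing; as written, the argument quietly re-uses the content of the exodromy theorem without establishing it.
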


\begin{proof}
First, by \Cref{comp_const_st} we have $\Shv_c^{\alpha_K}(\triangle_K,\cD)\simeq \Shv_c^{\alpha_K}(\triangle_K^\topify,\cD)$. 
Next, we shall show that $\Fun(K,\cD)\iso\Shv_\st^{\alpha_K}(\triangle_K^\topify,\cD).$
If $\cD\simeq \Spaces$ and $K$ is a simplicial complex, then this is a consequence of \cite[Theorem A.6.10]{HA} and \cite[Theorem A.9.3]{HA}. 
The case where $K$ is only a locally closed almost simplicial complex, follows from the case of a simplicial complex since $\Shv_\st^{\alpha_K}(\triangle_K^\topify,\cD)$ is the full subcategory of $\Shv_\st^{\alpha_{\overline{K}}}(\triangle_{\overline{K}}^\topify,\cD)$ spanned by the sheaves with vanishing stalks at the interiors of simplices that does not belong to $K$, while $\Fun(K,\Spaces)$ identifies, via left Kan extension, with the full subcategory of $\Fun(\overline{K},\Spaces)$ spanned by the functors which vanish on simplices not in $K$.  
Finally, to show the claim for general $\cD$, it suffices to show that $\Shv_\st^{\alpha_K}(\triangle_K^\topify,\cD) \simeq \Shv_\st^{\alpha_K}(\triangle_K^\topify)\otimes \cD$. This follows from \Cref{Local_systems_sheaves_on_shape} and the fact that the tensor product with $\cD$ in $\Prl_\st$ commutes with pullbacks (this, in turn, follows from the dualizability of $\cD$, see, e.g., \cite[\S 0.6.7]{GR}). Indeed, we have  
\[
\Shv_\st^{\alpha_K}(\triangle_K^\topify,\cD) \simeq \Shv(\triangle_K^\topify,\cD)\times_{\Shv(\coprod_{\sigma\in K} \triangle_\sigma,\cD)} \Loc(\coprod_{\sigma\in K} \triangle_\sigma,\cD) 
\]
and each of the terms in this pullback is obtained by tensoring with $\cD$ the corresponding category of sheaves of spaces. 

It remains to show that the isomorphism $\Fun(K,\cD)\iso \Shv_\st^{\alpha_K}(\triangle_K^\topify,\cD)$ carry $\Fun(K,\cD^\omega)$ isomorphically onto $\Shv_c^{\alpha_K}(\triangle_K^\topify,\cD)$. In other words, we have to verify that for every $\sF\in \Fun(K,\cD)$, the sheaf $\alpha^\star(\sF)$ has compact stalks if and only if $\sF$ has only compact values. In one direction, since $K$ is a finite poset, every $\cD^\omega$-valued functor on $K$ can be presented as a finite colimit of functors of the form $\Map(\sigma,-)\otimes d$ for $d\in \cD^\omega$ and $\sigma \in K$. Since the functor $\alpha_K^\star$ takes this sheaf to the extension by zero $d_{\sigma^\star}$ which has compact stalks, we deduce that this is the case for the entire essential image of $\alpha_K^\star$. 

In the opposite direction, the functor $\Shv_c^{\alpha_K}(\triangle_K^\topify,\cD) \to \Fun(K,\cD)$ takes a sheaf $\sF$ to the functor $\sigma \mapsto \sF(\sigma^\star)$. Hence, it remains to show that the sections of a constructible sheaf on every open set of $\triangle_K$ is a compact object of $\cD$. By \Cref{const_comp}, it suffices to show this for sheaves of the form $i_*d_{Z^\topify}$ for a closed subset $Z\subseteq \triangle_K$. In this case, 
\[
\Gamma(\sigma^\star,i_*d_{Z^\topify})\simeq \Gamma(\sigma^\star\cap Z^\topify,\Gamma^*d)\simeq d^{|\sigma^\star\cap Z^\topify|},
\]
and the result follows from the fact that $|\sigma^\star\cap Z^\topify|$ is a finite space, in this case, being the shape of a topological space that admits a finite triangulation.
\end{proof}

We shall now specialize to the case $\cD= \Der(\RR)$ and exploit the presentation of $\Shv_c^{\alpha_K}(\triangle_K,\cD)$ to resolve its objects by some specific pseudo-free sheaves. 

\begin{prop} \label{pf_res_simp_comp}
Let $K$ be a locally closed almost simplicial complex. Then, every object of $\Shv_c^{\alpha_K}(\triangle_K,\Der(\RR))$ admits a finite pseudo-free resolution by a complex of the from 
\[
\dots \to  \bigoplus_{t\in T_{k+1}}\oto{d} \RR_{U_t}\bigoplus_{t\in T_k} \RR_{U_t} \oto{d} \bigoplus_{t\in T_{k-1}} \RR_{U_t} \to \dots
\]
for (finitely many) finite sets $T_k$ and open sets $U_t$, which are open stars of simplices in $K$.
\end{prop}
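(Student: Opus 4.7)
The plan is to invoke \Cref{const_shv_simp}: under the equivalence $\Shv_c^{\alpha_K}(\triangle_K,\Der(\RR))\simeq \Fun(K,\Der(\RR)^\omega)$, the pseudo-free sheaf $\RR_{\sigma^\star}$ corresponds to the covariant representable $\Map_K(\sigma,-)\otimes \RR$. Hence the claim reduces to showing that every compact functor $F\colon K\to \Der(\RR)^\omega$ admits a bounded chain-complex resolution whose terms are finite direct sums of such representables.

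I would proceed by induction on $|K|$, the base $K=\emptyset$ being trivial. For the inductive step, pick a maximal element $\sigma\in K$, so that $\{\sigma\}=\sigma^\star$ is open in the Alexandrov topology of $K$ and $Z:=K\setminus\{\sigma\}$ is closed. Writing $j\colon\{\sigma\}\to K$ and $i\colon Z\to K$ for the corresponding geometric morphisms, \Cref{rec_props} yields the fiber sequence $j_!j^*F \to F \to i_*i^*F$. Since $\sigma$ is maximal, $j_!j^*F$ is valued at $F(\sigma)$ on $\sigma$ and at $0$ elsewhere, hence $j_!j^*F\simeq \Map_K(\sigma,-)\otimes F(\sigma)$; presenting the compact complex $F(\sigma)\in\Der(\RR)^\omega$ as a bounded complex of finite-dimensional free $\RR$-modules and tensoring termwise with $\Map_K(\sigma,-)$ already provides a bounded resolution of $j_!j^*F$ by finite direct sums of the single representable $\Map_K(\sigma,-)\otimes\RR$.

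To handle $i_*i^*F$, apply the inductive hypothesis to $i^*F\in \Fun(Z,\Der(\RR)^\omega)$ to obtain a bounded resolution $\tilde D_\bullet$ whose terms are finite direct sums of representables $\Map_Z(\tau,-)\otimes\RR$ for $\tau\in Z$. The crucial computation is that, for each $\tau\in Z$, applying the recollement cofiber sequence $j_!j^*\to\Id\to i_*i^*$ to the representable $\Map_K(\tau,-)\otimes\RR$ produces
\[
\Map_K(\sigma,-)\otimes \RR \longrightarrow \Map_K(\tau,-)\otimes \RR \longrightarrow i_*(\Map_Z(\tau,-)\otimes\RR) \quad\text{when } \tau\le\sigma,
\]
and an isomorphism $\Map_K(\tau,-)\otimes\RR\simeq i_*(\Map_Z(\tau,-)\otimes\RR)$ when $\tau\not\le\sigma$; the left-hand map in the first case is the one corresponding to $\tau\le\sigma$ via the Yoneda lemma. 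Thus each $i_*(\Map_Z(\tau,-)\otimes\RR)$ is itself a one- or two-term bounded complex of representables in $K$. Replacing each term of $i_*\tilde D_\bullet$ by this complex and forming the total complex yields a bounded resolution of $i_*i^*F$ by finite direct sums of representables of the form $\Map_K(\rho,-)\otimes\RR$ with $\rho\in K$.

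To conclude, I would lift the connecting map $i_*i^*F\to (j_!j^*F)[1]$ of the cofiber sequence to an honest chain map between the two resolutions constructed above; this is possible because the target consists of finite direct sums of representables, which are projective in the heart of the natural $t$-structure on $\Fun(K,\Der(\RR))$ (evaluation at any $\sigma\in K$ is an exact functor on $\Fun(K,\Vect_\RR)$, so Yoneda makes $\Map_K(\sigma,-)\otimes \RR$ projective). The mapping fiber of this chain map is a bounded chain complex whose terms are finite direct sums of the representables $\Map_K(\rho,-)\otimes\RR$, and whose realization is isomorphic to $F$. Translating back through \Cref{const_shv_simp} gives the desired pseudo-free resolution. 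The main technical point is the explicit computation of $i_*$ on representables and the bookkeeping of degrees when passing to the total complex of $i_*\tilde D_\bullet$ and when forming the mapping fiber; both are routine homological algebra.
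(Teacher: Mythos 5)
Your proof is correct, and after the common first step it takes a genuinely different route from the paper's. Both arguments begin identically: use \Cref{const_shv_simp} to translate the problem into resolving an object of $\Fun(K,\Der(\RR)^\omega)$ by a bounded complex of finite direct sums of representables $\Map(\sigma,-)\otimes\RR$, which correspond to the pseudo-free sheaves $\RR_{\sigma^\star}$. At that point the paper simply observes that these representables are projective generators of the abelian category $\Fun(K,\Mod_\RR)$, identifies $\Fun(K,\Der(\RR))$ with its derived category, and cites \cite[Proposition 2.4]{perling2005resolutions} for the existence of finite resolutions of representations of finite posets. You instead prove that combinatorial statement from scratch by induction on $|K|$, peeling off a maximal simplex $\sigma$ and gluing resolutions of $j_!j^*F$ and $i_*i^*F$ along the recollement cofiber sequence; your key computation that $i_*$ of a representable of $Z$ is either a representable of $K$ or the cofiber of a map of two representables is correct, as are the totalization of the resulting double complex and the realization of the connecting map by an honest chain map. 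Your route is longer but self-contained and uses only machinery already developed in the paper, and it makes the shape of the resolution explicit in terms of the cell structure of $K$; the paper's route is a one-line reduction to a known result. Two minor points: (i) the justification for lifting the connecting map to a chain map should invoke the fact that the \emph{source} resolution is a bounded complex of projectives (here both sides are, so nothing breaks, but the projectivity of the target is not what does the work); (ii) you should say a word on why restriction to $\{\sigma\}$ and to $Z$ exhibit $\Fun(K,\Der(\RR))$ as a recollement --- either by checking the axioms directly (joint conservativity of the two restrictions and vanishing of $j^*i_*$, both immediate for a maximal $\sigma$), or by identifying $\Fun(K,\Der(\RR))$ with sheaves on the Alexandrov restricted topology of $K$ and appealing to \Cref{op_cl_dec_joint_cons_seqs}.
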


\begin{proof}
By the equivalence 
\[
\Shv_c^{\alpha_K}(\triangle_K,\Der(\RR))\simeq \Fun(K,\Der(\RR)^\omega)
\]
provided by \Cref{const_shv_simp}, it would suffice to resolve every object of $\Fun(K,\Der(\RR)^\omega)$ by a bounded complex of functors which are direct sums of functors of the form $\Map(\sigma,-)\otimes \RR$ for $\sigma \in K$. 
Since the sheaves of the form $\RR[\Map(\sigma,-)]$ for $\sigma\in K$ are projective generators of $\Fun(K,\Der(\RR))$ and of the abalian subcategory $\Fun(K,\Mod_\RR)$, we see that $\Fun(K,\Der(\RR))\simeq \Der(\Fun(K,\Mod_\RR))$. Hence, we can use the classical theory of representations of finite posets to tackle the problem. Specifically, the result now follows from \cite[Proposition 2.4]{perling2005resolutions}.
\end{proof}

\subsubsection{pseudo-free resolutions of constructible sheaves}
Combining the result above with the triangulation results we have for Nash manifolds, we now show 
\begin{thm}
\label{exs_ps_free_res}
Let $X$ be a Nash manifold. Every object of $\Shv_c(X,\Der(\RR))$ admits a finite pseudo-free resolution.
\end{thm}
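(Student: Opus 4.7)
The plan is to reduce the theorem to the triangulated setting of \Cref{pf_res_simp_comp} by triangulating a compactification of $X$ and then pulling back. Let $\sF \in \Shv_c(X,\Der(\RR))$, and choose a proper stratification $\alpha\colon X\to P$ for which $\sF$ is $\alpha$-constructible (existence by \Cref{Existance_of_classical_refinement}). Embedding the (affine) Nash manifold $X$ as a locally closed semi-algebraic subset of $\RR^n$ and applying inverse stereographic projection, view $X \subseteq S^n$, and set $Y := \overline{X}^{S^n}$, a compact semi-algebraic subset of $S^n \subseteq \RR^{n+1}$, with $j\colon X\into Y$ the open inclusion.

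By \Cref{Existence_of_triangulation} applied to $S^n$, obtain a semi-algebraic triangulation of $S^n$ compatible with $Y$ and with each of the strata $\{X_p\}_{p\in P}$. Restricting the triangulation to $Y$ yields a semi-algebraic isomorphism $\triangle_L \iso Y$ where $L$ is a genuine simplicial complex (closed under subsets, hence automatically locally closed in the sense needed for \Cref{pf_res_simp_comp}), and the simplicial stratification $\alpha_L$ refines the stratification of $Y$ by the $X_p$'s together with the complement $Y\setminus X$.

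The extension by zero $j_!\sF \in \Shv(Y,\Der(\RR))$ is then $\alpha_L$-constructible: its stalks agree with those of $\sF$ on $X$ (hence are compact and locally constant on each open simplex $\tau^o \subseteq X$, as $\tau^o$ is contained in some $X_p$) and vanish on $Y\setminus X$. Applying \Cref{pf_res_simp_comp} to the locally closed simplicial complex $L$, we obtain a finite pseudo-free resolution
\[
j_!\sF \simeq [\dots\to \bigoplus_{t\in T_k}\RR_{V_t} \to \dots] \qin \Shv(Y,\Der(\RR))
\]
where each $V_t$ is an open star of a simplex of $L$ in $\triangle_L \simeq Y$.

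Finally, pulling back along $j$---which is exact, commutes with the realization functor $[-]$, and satisfies $j^*\RR_V \simeq \RR_{V\cap X}$ for open $V\subseteq Y$ as well as $j^*j_!\sF \simeq \sF$---produces the desired finite pseudo-free resolution of $\sF$ on $X$ with terms $\bigoplus_t \RR_{V_t\cap X}$; each summand is pseudo-free because $V_t\cap X$ is open semi-algebraic in $X$. The main technical maneuver is to pass to the compactification $Y$ \emph{before} triangulating: because $Y$ is compact (and in particular closed in $S^n$), the triangulation of $S^n$ restricts on $Y$ to an honest simplicial complex $L$, rather than to an almost simplicial complex that may fail to be locally closed (as can happen if one attempts to triangulate the non-compact $X$ directly, e.g., when $X$ is a Nash manifold whose boundary $Y\setminus X$ meets some triangulating simplex only along a face). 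Ensuring the local closedness of the underlying combinatorial datum is precisely what is required to invoke \Cref{pf_res_simp_comp}.
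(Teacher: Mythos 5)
Your treatment of the affine case is correct, and it takes a genuinely different (and in one respect cleaner) route than the paper: rather than triangulating $X$ directly and arguing that the resulting almost simplicial complex is locally closed, you compactify to $Y=\overline{X}\subseteq S^n$, triangulate $Y$ as an honest simplicial complex $L$, resolve $j_!\sF$ on $Y$ via \Cref{pf_res_simp_comp}, and restrict back along the open embedding $j\colon X\into Y$. The verifications you list ($j_!\sF$ is $\alpha_L$-constructible because every open simplex of $L$ lies either in $X$ or in $Y\setminus X$; $j^*$ is exact, commutes with realization, and sends $\RR_V$ to $\RR_{V\cap X}$ and $j_!\sF$ to $\sF$) all go through.

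However, there is a genuine gap: you assume from the outset that $X$ is affine (``Embedding the (affine) Nash manifold $X$ as a locally closed semi-algebraic subset of $\RR^n$\dots''), whereas the theorem is stated for an arbitrary Nash manifold, which is only \emph{locally} isomorphic to semi-algebraic sets and need not admit a global semi-algebraic embedding into any $\RR^n$. The paper spends the bulk of its proof on exactly this reduction: it writes $X=\bigcup_{i=1}^n U_i$ with $U_i$ affine and inducts on $n$, presenting $\sF$ as the pushout of $(j_U)_!C_\bullet \leftarrow (j_W)_!E_\bullet \rightarrow (j_V)_!D_\bullet$ for resolutions on $U$, $V$ and $W=U\cap V$. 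The nontrivial point there, which your proposal does not touch, is that this pushout a priori lives in the $\infty$-category $\Shv(X,\Der(\RR))$, and to conclude that the cone is again a complex of pseudo-free sheaves one must rigidify the comparison maps into strict maps of complexes of sheaves; the paper arranges this by choosing $E_\bullet$ on $W$ to consist of projective objects of $\Fun(K,\Mod_\RR)$ and passing through mates. Without some such gluing argument, your proof establishes the theorem only for affine $X$.
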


\begin{proof}
Assume first that $X$ is affine, and let $\sF\in \Shv_c(X,\Der(\RR))$. Then, by \Cref{Existence_of_triangulation}, $X$ admits a triangulation $\triangle_K\iso X$, which can be chosen to be compatible with a stratification trivializing $\sF$. Since $X$ is a Nash manifold, $K$ is necessarily locally closed. Hence, we have $\sF\in \Shv_c^{\alpha_K}(\triangle_K,\Der(\RR))$ and we have a bounded pseudo-free resolution for $\sF$ by \Cref{pf_res_simp_comp}. 

Now, let $X$ be any Nash manifold, and write $X = \bigcup_{i=1}^n U_i$ with $U_i$ affine. We have seen the result for $n=1$ above, and we proceed by induction on $n$. 
Let $\mdef{U} := U_1$, $\mdef{V}:= \bigcup_{i=2}^nU_i$ and $\mdef{W}:= U\cap V$. Denote by $j_U,j_V$ and $j_W$ their embeddings into $X$ respectively. 
By the inductive hypothesis, $j_U^*\sF$ and $j_V^*\sF$ admit bounded pseudo-free resolutions $C_\bullet$ and $D_\bullet$ respectively. Since $W$ is itself an affine Nash manifold, we can choose a triangulation $\triangle_K\iso W$, which is moreover compatible with a trivializing stratification of $j_W^*\sF$, and also with trivializing stratifications of all the sheaves $j_W^*C_k$ and $j_W^*D_k$ for $k\in \ZZ$.  
By \Cref{pf_res_simp_comp}, we have a pseudo-free resolution 
\[
E_\bullet \colon 
\dots\oto{d}
\bigoplus_{t\in T_k}\RR_{U_t} \oto{d} \bigoplus_{t\in T_{k-1}}\RR_{U_{t}}\oto{d}
\dots
\]
of $j_W^*\sF$ such that the $U_t$-s are open stars of simplices in $K$.

We can now write the sheaf $\sF$ as a pushout 
\begin{equation}
\label{eq:sq_push_res}
\xymatrix{
[(j_W)_! E_\bullet] \ar[r] \ar[d]& [(j_V)_!D_\bullet] \ar[d]
\\ 
[(j_U)_!C_\bullet] \ar[r] & \sF 
}
\end{equation}

The map $[(j_W)_! E_\bullet] \to [(j_U)_! C_\bullet]$ has a mate, which is a map $f\colon [E_\bullet] \to [j_W^*C_\bullet]$. Both the source and the target of this map are the realizations of complexes in $\Shv_c^{\alpha_K}(\triangle_K,\Mod_\RR) \simeq \Fun(K,\Mod_\RR^\omega)$, and all the terms of the complex $E_\bullet$ are projective objects. Hence, we can present $f$ as a map of complexes $f_\bullet \colon E_\bullet \to j_W^*C_\bullet$. Passing back to the mate and arguing similarly for $D_\bullet$, we see that all the maps in the diagram (\ref{eq:sq_push_res}) can be lifted to maps of complexes of sheaves (rather than maps of sheaves of complexes). It follows that $\sF$ can be presented as the cone of a map of complexes of $\Mod_\RR$-valued sheaves 
$(j_W)_!E_\bullet \to  (j_V)_!D_\bullet \oplus (j_U)_!C_\bullet$. Since the cone of a map of complexes of pseudo-free sheaves is a complex of pseudo-free sheaves, we get the result. 
\end{proof}

\subsection{Hausdorffness of the Schwartz Sections}
In this final section, we turn to the second part of \Cref{intro-rel_der} by showing that the relative de Rham complex of a Nash submersion has Hausdorff homology spaces. We will show this more generally for complexes of the form $(\sF \scten \Sc_\sE)(X)$ where $\sF$ is a real-valued constructible sheaf on the Nash manifold $X$.

\subsubsection{Criterion for Hausdorffness}
Our proof of the Schwartz sections' Hausdorffness uses a variant of the notion of pseudo-inverse.

\begin{defn}
Let $f:A\to B$ be a morphism in a ($1$-)category. A \tdef{right pseudo-inverse}
for $f$ is a morphism $g:B\to A$ such that $fgf=f$. 
\end{defn}

Note that if a map $f:A\to B$ of topological vector spaces admits
a right pseudo-inverse, then its image is closed. Indeed, we have, $\im(f)=\Ker(fg-\Id)$. For our application, we need a slightly stronger statement:
\begin{lem}
\label{quasi_pseudo_inverse_then_closed}
Let $A_{0}\subseteq A$
and $B_{0}\subseteq B$ be closed embeddings of topological vector
spaces. Let $f:A\to B$ be a continuous operator such 
$
\overline{f(A)}\cap B_{0}=\overline{f(A_{0})}
$
Suppose
that there is a map $g:B\to A$ satisfying:
\begin{enumerate}
    \item $g(B_{0})\subseteq A_{0}$.
    \item The induced map on the quotients, $\bar{g}:B/B_{0}\to A/A_{0}$ is a right pseudo-inverse to the map $\bar{f}\colon A/A_0\to B/B_0$. 
\end{enumerate} 
If $f(A_{0})$ is closed then $f(A)$ is closed.
\end{lem}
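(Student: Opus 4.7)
The plan is to factor the problem through the quotients $A/A_0$ and $B/B_0$. I would first prove that $\bar f(A/A_0)$ is closed in $B/B_0$, and then lift back to the conclusion using the two remaining hypotheses: the compatibility $\overline{f(A)}\cap B_0 = \overline{f(A_0)}$ and the closedness of $f(A_0)$.

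To establish that $\bar f(A/A_0)$ is closed, set $P := \bar f \bar g \colon B/B_0 \to B/B_0$; note that $\bar g$ is continuous since it is induced from the continuous map $g$ and the quotient topologies. The relation $\bar f \bar g \bar f = \bar f$ immediately gives $P^2 = P$, so $P$ is a continuous idempotent. Moreover, $P(B/B_0) = \bar f(A/A_0)$: one inclusion is obvious, and for the reverse one notes that every element $\bar f(\bar a)$ satisfies $\bar f(\bar a) = P(\bar f(\bar a))$ by the pseudo-inverse relation. Therefore $\bar f(A/A_0) = \ker(\Id - P)$ is closed as the kernel of a continuous linear map.

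Now let $b \in \overline{f(A)}$; the goal is to produce an element of $A$ mapping to $b$. By continuity of the quotient map $\pi_B \colon B \to B/B_0$, the projection $\bar b$ lies in $\overline{\bar f(A/A_0)} = \bar f(A/A_0)$, so there is some $a \in A$ with $\bar f(\bar a) = \bar b$, i.e.\ $b_0 := b - f(a) \in B_0$. Since $b \in \overline{f(A)}$ and $f(a) \in f(A)$, also $b_0 \in \overline{f(A)}$, so the compatibility condition gives
\[
b_0 \in \overline{f(A)} \cap B_0 = \overline{f(A_0)} = f(A_0),
\]
where the last equality uses the hypothesis that $f(A_0)$ is closed. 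Writing $b_0 = f(a')$ with $a' \in A_0$, we conclude $b = f(a) + f(a') = f(a + a') \in f(A)$.

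The main obstacle is the closedness of $\bar f(A/A_0)$; once this is in hand the rest is bookkeeping, the closedness of $f(A_0)$ and the compatibility of closures serving only to bridge from the quotient-level conclusion $b \in f(A) + B_0$ back to $b \in f(A)$. The crucial observation making the closedness step succeed is that the pseudo-inverse relation $\bar f \bar g \bar f = \bar f$ does double duty, providing both the idempotency of $P$ and the identification of its image with $\bar f(A/A_0)$, so that the latter becomes the kernel of a continuous operator and hence automatically closed.
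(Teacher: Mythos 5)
Your proof is correct and takes essentially the same route as the paper's: the paper directly verifies that $\alpha - f(g(\alpha)) \in f(A_0)$ via the chain $(\Id - fg)(\overline{f(A)}) \subseteq \overline{\im(f-fgf)} \subseteq \overline{f(A)}\cap B_0 = \overline{f(A_0)} = f(A_0)$, while you first record that $\im(\bar f) = \ker(\Id - \bar f\bar g)$ is closed in $B/B_0$ and then perform the same correction by an element of $f(A_0)$. The three hypotheses play identical roles in both arguments (and both tacitly use that $g$ is linear and continuous, as it is in the lemma's application).
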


Note that the assumption on $f$ implies that $f(A_0)\subseteq B_0$ so that $\bar{f}$ is well defined.  
\begin{proof}
Let $\alpha\in\overline{f(A)}$. We wish to show that $\alpha \in \im(f)$. 
For this, it would suffice to show that $\alpha-f(g(\alpha))\in f(A_0)$. Since $\alpha\in\overline{f(A)}$ we get 
\[
\alpha-f(g(\alpha)) \in(\Id-fg)\left(\overline{f(A)}\right)\subseteq\overline{\im(f-fgf)}\overset{(!)}{\subseteq}\overline{f(A)\cap B_{0}}\subseteq\overline{f(A)}\cap B_{0}\overset{(!!)}{=}\overline{f(A_{0})}\stackrel{(!!!)}{=}f(A_{0})
\]
where the inclusion $(!)$ follows from the assumption that $\bar{g}$
is a right pseudo-inverse to $\bar{f}$, the equality $(!!)$ is one of our assumptions on $f$ and $(!!!)$ follows from the assumption
that $f(A_{0})$ is closed.
\end{proof}

\subsubsection{Combinatorial Maps} 
Recall that our goal is to analyse the cosheaves of the form $\sF \scten \Sc_{\sE}$ for $\sE$ a Nash vector bundle over a Nash manifold $X$ and $\sF$ a constructible $\Der(\RR)$-valued sheaf on $X$. 
Using \Cref{exs_ps_free_res}, we can represent $\sF$ as (the sheafification of) a finite complex 
\[
\dots \oto{d}C_k \oto{d}C_{k-1} \oto{d} \dots
\]
where $C_k \simeq \bigoplus_{t\in T_k} \RR_{U_t}$ for some finite collection $\{U_t\}_{t\in T_k}$ of open sets in $X$. For every integer $k$, the map $d\colon \bigoplus_{t\in T_k} \RR_{U_{t}}\to \bigoplus_{s\in T_{k-1}} \RR_{U_{t}}$ can be presented by a $T_k \times T_{k-1}$-matrix 
\[
\{M_s^t(x)\}_{t\in T_k, s\in T_{k-1}},
\]
where, for every $t\in T_k$ and $s\in T_{k-1}$, the function $M_s^t(x)$ is a locally-constant real valued function on $U_t$ which vanishes outside of $U_s$. 

Tensoring this presentation of $\sF$ with $\Sc_{\sE}$, and using the fact that the tensor product operation $(-)\scten\Sc_{\sE}$ is colimit-preserving, we get a presentation of $\sF \scten \Sc_{\sE}$ as the realization of an explicit complex of $\Fre$-valued cosheaves on $X$. Namely, let us denote by  $j_t\colon U_t \into X$ the embeddings. 
Then, $\sF \scten \Sc_{\sE}$ is the realization of the complex
\[
\dots \oto{d}\bigoplus_{t\in T_k} (j_t)_!j_t^!\Sc_\sE \oto{d} \bigoplus_{t\in T_{k-1}} (j_t)_!j_t^!\Sc_\sE \oto{d}\dots.  
\]
The differential  
$d(U)\colon \bigoplus_{t\in T_k} \Sc_\sE(U\cap U_t) \to
\bigoplus_{s\in T_{k-1}} \Sc_\sE(U\cap U_t)
$
takes a tuple of Schwartz sections $(\alpha_t)_{t\in T_{k}}$, to the tuple of Schwartz sections $(\beta_s)_{s\in T_{k-1}}$ given by 
\[
\beta_s(x) = \sum_{t\in T_k} M^t_s(x)\alpha_t(x),
\]
(where we set $\alpha_t(x)=0$ for $x\notin U_t$). Next, we observe that, by splitting each $U_t$ into its connected components, it suffices to consider the case where the functions $M^t_s(x)$ as above are all \emph{constant}. 
This motivates the following definition.
\begin{defn} \label{def:comb_map}
 Let $X$ be a Nash manifold. 
 \begin{enumerate}
     \item Let $\{U_t\}_{t\in T}$ be a finite collection of open subsets of $X$, and let $j_t\colon U_t \into X$ denote the embedding. For a Nash vector bundle $\sE$ on $X$, we refer to a cosheaf of the form $\bigoplus_{t\in T} (j_t)_! j_t^! \Sc_\sE$ on $X$ as an ($\sE$-valued) \tdef{combinatorial cosheaf}. 
 \item  For finite collections $\{U_t\}_{t\in T}$ and $\{V_s\}_{s\in S}$ of open subsets of $X$ and a real valued matrix $(M^t_s)_{t\in T,s\in S}$ for which $M_s^t\ne 0$ only when $U_t\subseteq V_s$, we can associate a map 
 \[
    f\colon \bigoplus_{t\in T} (j_t)_! j_t^!\Sc_{\sE} \to \bigoplus_{s\in S} (j_s)_!j_s^!\Sc_\sE, 
 \]
 given on the sections over $U\in \Op(X)$ by the formula
 \[
 f(\alpha)_s = \sum_{t\in T} M_s^t\alpha_t \qin \bigoplus_{s\in S} \Sc_\sE(U\cap V_s)
 \]
 for
 $
 \alpha = (\alpha_t)_{t\in T} \in \bigoplus_{t\in T} \Sc_{\sE}(U_t \cap U)$. 
 We refer to such a map as a \tdef{combinatorial map}.
 \end{enumerate}
 
\end{defn}

The discussion above can now be summarized as follows. 
\begin{prop}
\label{sch_sec_combi}
Let $X$ be a Nash manifold, let $\sE$ be a Nash vector bundle on $X$, and let $\sF\in \Shv_c(X,\Der(\RR))$. Then, the cosheaf $\sF \scten\Sc_\sE$ can be represented by a finite complex of cosheaves of the form $\bigoplus_{t\in T} (j_t)_! \Sc_{\sE}$ with combinatorial differentials. 
\end{prop}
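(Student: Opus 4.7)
The plan is to combine the pseudo-free resolution result \Cref{exs_ps_free_res} with the colimit-preservation (indeed $\Shv(X,\Der(\RR))$-linearity) of the tensor product operation $(-) \scten \Sc_\sE$. The key computation is that for an open $U \subseteq X$ with embedding $j_U$, one has
\[
\RR_U \scten \Sc_\sE \simeq (j_U)_! j_U^! \Sc_\sE,
\]
which is immediate from the defining formula $(\RR_U \scten \Sc_\sE)(\sH) = \Sc_\sE(\RR_U \otimes \sH)$, applied to representable $\sH = \RR_V$ and extended by colimits via \Cref{base_gen_shv}.

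Concretely, I would first invoke \Cref{exs_ps_free_res} to produce a finite pseudo-free resolution $\sF \simeq [C_\bullet]$ in $\Shv(X,\Der(\RR))$, where
\[
C_\bullet \colon \dots \oto{d} \bigoplus_{t \in T_k} \RR_{U_{k,t}} \oto{d} \bigoplus_{t \in T_{k-1}} \RR_{U_{k-1,t}} \oto{d} \dots.
\]
Since a Nash manifold is locally connected, each semi-algebraic open set $U_{k,t}$ is a finite disjoint union of its connected components, which are again semi-algebraic open subsets. Splitting every summand $\RR_{U_{k,t}}$ according to these components and re-indexing, I obtain an equivalent finite pseudo-free resolution whose open sets $U_{k,t}$ are all connected. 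Applying the colimit-preserving functor $(-) \scten \Sc_\sE$ to this resolution and using the identification above produces a finite complex of $\Fre$-valued cosheaves of the form $\bigoplus_{t \in T_k} (j_t)_! j_t^! \Sc_\sE$, whose realization is $\sF \scten \Sc_\sE$.

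It remains to verify that the resulting differentials are combinatorial in the sense of \Cref{def:comb_map}. By the Yoneda embedding, each differential in $C_\bullet$ corresponds to a matrix of morphisms $\RR_{U_t} \to \RR_{V_s}$ in $\Shv(X,\RR)$, and such a morphism is the datum of a locally constant real-valued function on $U_t$ which vanishes on $U_t \setminus V_s$. Because each $U_t$ is connected, such a function is either a constant $M_s^t \in \RR$ (when $U_t \subseteq V_s$) or identically zero (when $U_t \not\subseteq V_s$), which is exactly the compatibility condition demanded in \Cref{def:comb_map}. Tracing through the identification $\RR_U \scten \Sc_\sE \simeq (j_U)_! j_U^! \Sc_\sE$ shows that the induced map on the level of cosheaves is precisely the combinatorial map associated with the matrix $(M_s^t)$, completing the proof.

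There is no major obstacle here beyond the bookkeeping: the two substantive ingredients—the existence of pseudo-free resolutions and the $\Shv(X,\Der(\RR))$-linearity of $(-) \scten \Sc_\sE$—are already in hand. The only mild subtlety is the reduction to connected pieces, which is needed to upgrade the naturally-arising locally-constant matrix entries to honest real numbers matching \Cref{def:comb_map}.
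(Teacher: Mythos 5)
Your proposal is correct and follows essentially the same route as the paper: pseudo-free resolutions via \Cref{exs_ps_free_res}, the identification $\RR_U \scten \Sc_\sE \simeq (j_U)_! j_U^! \Sc_\sE$, and splitting the open sets into connected components so that the matrix entries become honest constants. The one step you dismiss as bookkeeping --- that applying $(-)\scten\Sc_\sE$ to the resolution commutes with passing to realizations, i.e. $[C_\bullet]\scten\Sc_\sE\simeq[C_\bullet\scten\Sc_\sE]$ --- is precisely what the paper's formal proof is devoted to, by regarding both sides as exact functors out of $\Ch_b$ of the additive category of combinatorial cosheaves and invoking its universal property.
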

\begin{proof}
Since every constructible sheaf on $X$ admits a pseudo-free resolution, and by the discussion above, it would suffice to verify that for a pseud-free complex $C_\bullet$ we have $[C_\bullet] \scten \Sc_\sE \simeq [C_\bullet \scten \Sc_\sE]$, where on the target we use the fact that $C_i\scten \Sc_\sE$ is a cosheaf constentrated in degree $0$, since $C_i$ is pseudo-free. Let $\sA_0$ be the additive category of combinatorial sheaves on $X$. We can regard both sides of the above identity as exact functors $\Ch_b(\sA_0)\to \CShv(X,\infFre)$. Since they clearly agree on $\sA_0$-itself, the result follows from the universal property of $\Ch_b(\sA_0)$.
\end{proof}

\subsubsection{Proof of the 
Hausdorffness of Schwartz Sections}
Given \Cref{sch_sec_combi}, to show that $(\sF\scten\Sc_\sE)(X)$ has Hausdorff homology spaces, it suffices to show that every combinatorial map induces a map with closed image on the global sections. We now show this by verifying the conditions of
\Cref{quasi_pseudo_inverse_then_closed} for a suitable filtration on a combinatorial map. 
To verify the first condition, we need the following analytic fact.
\begin{prop}[Approximation of Unity]
\label{approx_unity}
Let $X$ be a Nash manifold and let $\sE$ be a Nash vector bundle on $X$. There exists a sequence of Schwartz functions $a_k(x)\in \Sc(X)$ such that, for every Schwartz section $\alpha \in \Sc_\sE(X)$, we have 
\[
\lim_{k\to \infty}(a_k\cdot \alpha) = \alpha\qin \Sc_\sE(X).
\] 
\end{prop}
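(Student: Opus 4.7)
The plan is to reduce the statement to the classical approximation of unity on $\Sc(\RR^N)$ via a closed Nash embedding. First, embed $X$ as a closed Nash submanifold of some $\RR^N$; pulling back Schwartz functions then gives a continuous open surjection $\Sc(\RR^N) \twoheadrightarrow \Sc(X)$ (see, e.g., \cite[\S 4]{SchNash}). Moreover, every Nash vector bundle is a direct summand of a trivial one, so $\Sc_\sE(X)$ embeds as a topological direct summand into $\Sc(X)^r$ for some $r$. These two reductions, together with linearity, show that it suffices to construct a single sequence $a_k \in \Sc(X)$ such that $a_k\beta \to \beta$ in $\Sc(X)$ for every $\beta \in \Sc(X)$.

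To build the candidate, fix any $\chi \in \Sc(\RR^N)$ with $\chi(0) = 1$ (for instance $\chi(x) = e^{-|x|^2}$), set $\tilde a_k(x) := \chi(x/k)$, and let $a_k := \tilde a_k|_X$. Each $\tilde a_k$ lies in $\Sc(\RR^N)$ since Schwartz functions are stable under dilation, and therefore $a_k \in \Sc(X)$. By the surjectivity of $\Sc(\RR^N) \twoheadrightarrow \Sc(X)$, any $\beta \in \Sc(X)$ lifts to some $\tilde\beta \in \Sc(\RR^N)$, and it is enough to verify the classical convergence $\tilde a_k \tilde\beta \to \tilde\beta$ in $\Sc(\RR^N)$. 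This in turn is a routine Leibniz-rule computation: for any multi-index $\gamma$, the derivatives $\partial^\gamma \tilde a_k$ with $|\gamma|\geq 1$ are uniformly $O(k^{-|\gamma|})$, while $\tilde a_k \to 1$ uniformly on compact sets, so for every polynomial $p$ and every multi-index $\beta$ all terms of $p\cdot\partial^\beta(\tilde a_k \tilde\beta - \tilde\beta)$ go to zero in the sup norm, using that $p\,\partial^\beta\tilde\beta$ is itself Schwartz and hence decays at infinity.

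The analytic content of the argument is minimal; the main obstacle is the topological bookkeeping at the two reduction steps, namely the need for the surjection $\Sc(\RR^N) \twoheadrightarrow \Sc(X)$ and the embedding $\Sc_\sE(X) \hookrightarrow \Sc(X)^r$ to be maps of Fr\'{e}chet spaces that respect (and reflect) the natural topology. Both statements are part of the standard theory of Schwartz sections on Nash manifolds developed in \cite{SchNash}, so modulo citing them the argument is a short verification.
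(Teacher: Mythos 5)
Your analytic core is sound and coincides with the paper's: dilated Gaussians act as approximate identities by multiplication on Schwartz spaces, and the Leibniz estimate you sketch ($\partial^\gamma \tilde a_k = O(k^{-|\gamma|})$ for $|\gamma|\ge 1$, plus uniform-on-compacts convergence of $\tilde a_k$ to $1$ against a decaying factor) is exactly what is needed. The difference lies in the reduction, and that is where there is a genuine gap. You begin by embedding $X$ as a closed Nash submanifold of some $\RR^N$ and by realizing $\sE$ as a direct summand of a trivial bundle. Both of these are available for \emph{affine} Nash manifolds, but a Nash manifold in this paper is only \emph{locally} isomorphic to an affine one; the paper consistently treats affineness as a special case and passes to the general case via finite affine covers (see, e.g., the proof of \Cref{exs_ps_free_res}). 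So, as written, your first sentence is unjustified for general $X$ and your argument only establishes the affine case.

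The missing step is precisely the paper's gluing argument. Cover $X$ by finitely many open affine subsets $U_i$ trivializing $\sE$, run your argument on each $U_i$ to obtain sequences $a_{k,i}\in\Sc(U_i)$ with $a_{k,i}\beta\to\beta$ for all $\beta\in\Sc_\sE(U_i)$, choose a tempered partition of unity $\{\rho_i\}$ for the cover, and set $a_k=\sum_i\rho_i a_{k,i}$. Then $a_k\in\Sc(X)$ (tempered times Schwartz is Schwartz, and $\Sc(U_i)$ includes into $\Sc(X)$), and for $\alpha\in\Sc_\sE(X)$ one has $a_k\alpha=\sum_i a_{k,i}(\rho_i\alpha)$ with $\rho_i\alpha\in\Sc_\sE(U_i)$, so each summand converges to $\rho_i\alpha$ and the sum to $\alpha$. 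Modulo this, your choice of local model is arguably cleaner than the paper's: restriction of a Schwartz function to a \emph{closed} Nash submanifold is again Schwartz, whereas on an open $U\subseteq\RR^n$ the bare Gaussian $e^{-|x|^2/k}$ is not an element of $\Sc(U)$ (Schwartz functions on $U$ must be flat on the complement), and in the paper's proof it is only the factor $\rho_i$ introduced at the gluing stage that places the final $a_k$ in $\Sc(X)$.
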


\begin{proof}
If $\sE$ is trivial and $X$ is an open subset of $\RR^n$, we can choose $a_k(x) = e^{-\frac{|x|^2}{k}}$. A straight forward computation shows that this sequence indeed satisfies the required property. In general, we can cover $X$ by finitle many open sets $U_i$, each of which trivializing $\sE$ and isomorphic to open subsets of $\RR^n$. Choose an appropriate sequence $a_{k,i}(x)$ for every $i$, and a tempered partition of unity $\{\rho_i\}$ for the cover. Then, $a_k(x) = \sum_{i} \rho_i a_{k,i}(x)$ has the desired property.    
\end{proof}

\begin{rmk}
Though we shall not use this fact nor prove it, one can show that the $f_i$-s can be chosen independently of $\sE$.
\end{rmk}
We are now ready to show that a combinatorial map satisfies the first condition of \Cref{quasi_pseudo_inverse_then_closed}.
\begin{prop}
\label{comb_map_good_filt}
Let $X$ be a Nash manifold and let $\sE$ be a Nash vector bundle on $X$. Let 
\[
f\colon \sG_0 = \bigoplus_{t\in T} (j_t)_!j_t^! \Sc_{\sE} \to \bigoplus_{s\in S} (j_s)_!j_s^! \Sc_{\sE} = \sG_1
\]
be a combinatorial map. For every open subset $V\subseteq X$, we have 
\[
\overline{f(\sG_0(X))} \cap \sG_1(V) = \overline{f(\sG_0(V))},
\]
where the closures are taken in $\sG_1(X)\simeq \bigoplus_{s\in S}\Sc_\sE(U_s)$.
\end{prop}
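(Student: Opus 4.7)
My plan is to reduce the statement to the approximation of unity (Proposition \ref{approx_unity}), exploiting the fact that a combinatorial map $f$ has constant real coefficients, so multiplication by any Schwartz function on $X$ commutes with $f$ at every open set. Throughout, all closures are taken in $\sG_1(X) = \bigoplus_{s\in S} \Sc_\sE(V_s)$, and I shall use that the inclusion $\sG_1(V) = \bigoplus_{s} \Sc_\sE(V\cap V_s) \hookrightarrow \sG_1(X)$ is a closed subspace: extension by zero identifies $\Sc_\sE(V\cap V_s)$ with the Schwartz sections on $V_s$ that vanish to infinite order on $V_s\setminus V$, and this is a closed condition.

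The inclusion $\supseteq$ is then immediate from functoriality: $f(\sG_0(V)) \subseteq \sG_1(V)$ because $f$ is a morphism of cosheaves, and the closedness of $\sG_1(V)$ in $\sG_1(X)$ gives $\overline{f(\sG_0(V))}\subseteq \overline{f(\sG_0(X))}\cap \sG_1(V)$.

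For the harder inclusion $\subseteq$, let $\alpha \in \overline{f(\sG_0(X))}\cap \sG_1(V)$, and apply Proposition \ref{approx_unity} to the Nash manifold $V$ with the bundle $\sE|_V$ to obtain a sequence of Schwartz functions $a_n \in \Sc(V) \subseteq \Sc(X)$ with $a_n \beta \to \beta$ in $\Sc_\sE(V)$ for every $\beta \in \Sc_\sE(V)$. Since each component $\alpha_s$ lies in the closed subspace $\Sc_\sE(V\cap V_s)\subseteq \Sc_\sE(V)$ and is preserved by multiplication by $a_n$ (whose support lies in $V$), this gives $a_n\alpha_s\to \alpha_s$ in $\Sc_\sE(V\cap V_s)$, hence in $\Sc_\sE(V_s)$; summing over the finite set $S$ yields $a_n\alpha\to \alpha$ in $\sG_1(X)$. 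On the other hand, for every $\gamma \in \sG_0(X)$ the constancy of the coefficients of $f$ gives $a_n f(\gamma) = f(a_n \gamma)$, and since each $a_n \gamma_t$ is supported in $V\cap U_t$ we have $a_n\gamma \in \sG_0(V)$. Continuity of multiplication by $a_n$ then yields
\[
a_n \cdot \overline{f(\sG_0(X))} \subseteq \overline{f(a_n\sG_0(X))} \subseteq \overline{f(\sG_0(V))},
\]
so $a_n\alpha \in \overline{f(\sG_0(V))}$ for every $n$, and passing to the limit gives $\alpha \in \overline{f(\sG_0(V))}$.

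The only real subtlety is ensuring that a single sequence $a_n$ works simultaneously for all components $\alpha_s$ and all the sections $\gamma_t$ appearing, but this is automatic: Proposition \ref{approx_unity} produces one sequence depending only on $V$ (and, nominally, on the vector bundle, but the same sequence of \emph{scalar} Schwartz functions works for any vector bundle on $V$, as inspection of its proof shows), and both $S$ and $T$ are finite.
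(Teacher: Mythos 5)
Your proof is correct and takes essentially the same route as the paper's: the easy inclusion follows from $\sG_1(V)$ being closed in $\sG_1(X)$, and the reverse inclusion is obtained by applying \Cref{approx_unity} on $V$ to get $a_n\alpha\to\alpha$, then using that multiplication by $a_n\in\Sc(V)$ is continuous, commutes with the combinatorial map $f$, and carries $\sG_0(X)$ into $\sG_0(V)$, so that $a_n\alpha\in\overline{f(\sG_0(V))}$ for all $n$. The only difference is cosmetic: you spell out the component-wise convergence $a_n\alpha_s\to\alpha_s$ and the independence of the approximating sequence from the bundle, points the paper leaves implicit.
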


\begin{proof}
First, we note that $f(\sG_0(V))\subseteq \sG_1(V)$, and since $\sG_1(V)$ is closed in $\sG_1(X)$, we conclude that 
\[
\overline{f(\sG_0(V))}\subseteq \sG_1(V)\cap \overline{f(\sG_0(X))}.
\]
We shall now show the opposite inclusion. 
Let $\alpha \in \sG_1(V)\cap \overline{f(\sG_0(X))}$, so that $\alpha = \sum\limits_{s\in S}\alpha_s$ for some $\alpha_s \in \Sc_{\sE}(V\cap U_s)$. Using approximation of unity for $V$ (\Cref{approx_unity}) we can find a sequence $a_k \in \Sc(V)$ such that 
\[
\lim_{k \to \infty} a_k \alpha_s = \alpha_s, \quad \forall s\in S
\]
and hence 
$
\lim\limits_{k\to \infty} a_k \alpha = \alpha.
$

Since $\overline{f(\sG_0(V))}$ is closed in $\sG_1(X)$, it would suffice to show that, for every $k\in \NN$, we have \[
a_k\alpha \in \overline{f(\sG_0(V))}.
\]
To achieve that, we note that since $\alpha \in \overline{f(\sG_0(X))}$ and $a_k \in \Sc(V)$, we have 
\[
a_k\alpha \in a_k\cdot \overline{f(\sG_0(X))} \subseteq \overline{a_k\cdot f(\sG_0(X))} = \overline{f(a_k\cdot \sG_0(X))} \subseteq \overline{f(\sG_0(V))}.   
\]
\end{proof}
To apply \Cref{quasi_pseudo_inverse_then_closed}, we now need to construct a map $g \colon \sG_1(X) \to \sG_0(X)$ which gives a right pseudo-inverse for $f$ after taking the quotient by the subsepaces $\sG_0(V)$ and $\sG_1(V)$. To construct such a map $g$, we need an extra assumption on $V$. 

\begin{defn}
Let $X$ be a Nash manifold and let $\{U_t\}_{t\in T}$ be a collection of open subsets of $X$. We say that $V\in \Op(X)$ is \tdef{in good position} with respect to the $U_t$-s, if for every $t\in T$ either $U_t\subseteq V$ or $U_t\cup V = X$; Equivalently, if for every $t\in T$ the complement, $Z = X\setminus V$, is either contained in $U_t$ or disjoint from $U_t$. 
\end{defn}

\begin{prop}
\label{good_pos_psu_inv}
Let $X$ be a Nash manifold and let $\sE$ be a Nash vector bundle on $X$. Let 
\[
f\colon \sG_0 = \bigoplus_{t\in T} (j_t)_! j_t^!\Sc_{\sE} \to \bigoplus_{s\in S} (j_s)_!j_s^! \Sc_{\sE} = \sG_1
\]
be a combinatorial map. For every open subset $V\subseteq X$ which is in good position with respect to the subsets $\{U_t\}_{t\in T\cup S}$, there is a map 
$g\colon \sG_1 \to \sG_0$ for which the induced map 
\[
\overline{g}\colon \sG_1(X)/\sG_1(V)\to \sG_0(X)/\sG_0(V) 
\]
is a right pseudo-inverse for the map $\overline{f}\colon \sG_0(X)/\sG_0(V)\to \sG_1(X)/\sG_1(V) $.
\end{prop}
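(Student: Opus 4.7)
The plan is to exploit the good-position hypothesis to reduce the construction of $g$ to a finite-dimensional linear-algebra problem. Let $Z := X\setminus V$, and decompose $T = T_{\mathrm{in}}\sqcup T_{\mathrm{cov}}$ according as $U_t\subseteq V$ or $Z\subseteq U_t$; analogously, decompose $S = S_{\mathrm{in}}\sqcup S_{\mathrm{cov}}$. From $\sG_0(U) = \bigoplus_{t\in T}\Sc_\sE(U\cap U_t)$ and the cosheaf property of $\Sc_\sE$ applied to the cover $\{U_t, X\setminus Z\}$ of $X$ for each $t\in T_{\mathrm{cov}}$, we obtain canonical identifications
\[
\sG_0(X)/\sG_0(V)\;\cong\;\bigoplus_{t\in T_{\mathrm{cov}}}\Sc_\sE(U_t)/\Sc_\sE(U_t\setminus Z)\;\cong\; W^{T_{\mathrm{cov}}}, \quad W := \Sc_\sE(X)/\Sc_\sE(X\setminus Z),
\]
and similarly $\sG_1(X)/\sG_1(V)\cong W^{S_{\mathrm{cov}}}$. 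Under these identifications $\bar f$ becomes $M'\otimes \Id_W$ for the finite-dimensional submatrix $M' := (M^t_s)_{t\in T_{\mathrm{cov}},\,s\in S_{\mathrm{cov}}}$: the contribution $M^t_s\alpha_t$ to $f(\alpha)_s$ with $t\in T_{\mathrm{in}}$ is supported in $U_t\subseteq V$ and so vanishes in $\sG_1(X)/\sG_1(V)$, while for $t\in T_{\mathrm{cov}}$ the scalar $M^t_s$ acts on the common germ space $W$ via the identifications above.

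Since every real matrix admits a right pseudo-inverse, fix $N\in\mathrm{Mat}_{T_{\mathrm{cov}}\times S_{\mathrm{cov}}}(\RR)$ with $M'NM'=M'$ (for instance, its Moore--Penrose inverse). To lift $N\otimes\Id_W$ to a cosheaf map $g\colon\sG_1\to\sG_0$, I first produce tempered cutoff functions $\rho_{s,t}$ on $X$ for each $(s,t)\in S_{\mathrm{cov}}\times T_{\mathrm{cov}}$. The intersection $U_s\cap U_t$ is an open neighborhood of $Z$; using the standard shrinking property of semi-algebraic covers (cf.\ \Cref{def:Proper_Refinements}), choose a semi-algebraic open $\Omega_{s,t}$ with $Z\subseteq \Omega_{s,t}\subseteq \overline{\Omega_{s,t}}\subseteq U_s\cap U_t$, and let $\{\rho_{s,t},\tau_{s,t}\}$ be a tempered partition of unity subordinate to the cover $\{U_s\cap U_t,\,X\setminus\overline{\Omega_{s,t}}\}$ of $X$; then $\rho_{s,t}$ is tempered on $X$, supported in $U_s\cap U_t$, and equals $1$ on the open neighborhood $\Omega_{s,t}$ of $Z$. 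Define $g^s_t\colon\Sc_\sE(U_s)\to\Sc_\sE(U_t)$ by $\beta\mapsto N^s_t\rho_{s,t}\beta$ (the product is Schwartz on $U_s$ with support in $U_s\cap U_t$ and so extends canonically to $\Sc_\sE(U_t)$), and set $g^s_t=0$ for all other $(s,t)$; assembling these components over $U\in\Op(X)$ yields a map $g\colon\sG_1\to\sG_0$ of cosheaves.

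To finish, I would verify the two required properties. The inclusion $g(\sG_1(V))\subseteq\sG_0(V)$ is immediate: components of $g$ with $s\in S_{\mathrm{in}}$ vanish by construction, while for $s\in S_{\mathrm{cov}}$ any section of $\Sc_\sE(U_s)$ flat on $Z$ remains flat on $Z$ after multiplication by a tempered function. On the germ space $W$, the identity $1-\rho_{s,t}\equiv 0$ on $\Omega_{s,t}$ shows that $\rho_{s,t}\beta$ and $\beta$ represent the same class in $W$, so $\bar g^s_t$ is multiplication by $N^s_t$ and hence $\bar g=N\otimes\Id_W$. Consequently $\bar f\bar g\bar f=(M'NM')\otimes\Id_W=M'\otimes\Id_W=\bar f$, as required. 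The main analytic input is the existence of the tempered cutoff functions $\rho_{s,t}$, which combines the shrinking property of semi-algebraic open covers with the existence of tempered partitions of unity on Nash manifolds; the rest of the argument is either formal cosheaf bookkeeping or the standard existence of pseudo-inverses for real matrices.
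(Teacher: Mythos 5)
Your proof is correct and follows essentially the same route as the paper's: identify both quotients with $\bigl(\Sc_\sE(X)/\Sc_\sE(V)\bigr)\otimes\RR[T_1]$ (resp.\ $\RR[S_1]$) for the indices whose sets contain $Z$, take a right pseudo-inverse of the resulting finite submatrix, and lift it using a tempered cutoff equal to $1$ near $Z$. The only (immaterial) difference is that you use a separate cutoff $\rho_{s,t}$ supported in each $U_s\cap U_t$, whereas the paper uses a single $\rho$ supported in the common intersection $W=\bigcap_{t\in T_1\cup S_1}U_t$.
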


\begin{proof}
Let $Z = X\setminus V$, let $T_1 = \{t\in T : Z\subseteq U_t\}$ and let $S_1 = \{s\in S : Z\subseteq U_s\}$.
For every $t\in T\sqcup S$ we have 
\[
\Sc_{\sE}(U_t) / \Sc_{\sE}(U_t \cap V) \simeq 
\begin{cases}
\Sc_\sE(X)/\Sc_{\sE}(V) & \text{ if } Z\subseteq U_t \\ 
0 & \text{ else }
\end{cases}
\]
from which we deduce that 
\[
\sG_0(X)/\sG_0(V)\simeq \Sc_\sE(X)/\Sc_\sE(V)\otimes \RR[T_1]
\]
and 
\[
\sG_1(X)/\sG_1(V)\simeq \Sc_\sE(X)/\Sc_\sE(V)\otimes \RR[S_1].
\]
The combinatorial map, $f$, is associated with a real $T\times S$ matrix $M_s^t$. Let $\overline{M}_s^t$ be the $T_1\times S_1$-submatrix of $M_s^t$, so that $\overline{M}_s^t$ represents a linear map $\overline{M} \colon \RR[T_1] \to \RR[S_1]$. Since $\overline{M}$ is a linear map of finite dimensional real vector spaces, it admits a right pseudo-inverse $\overline{N}$, representable by  an $S_1\times T_1$-matrix $\overline{N}_t^s$.
We can now extend $\overline{N}_t^s$ to a $S\times T$-matrix $N_t^s$ by adding $0$ entries, i.e.,  
\[
N_{t}^s = \begin{cases}
\overline{N}_t^s & \text{ if } s\in S_1 \text{ and } t\in T_1 \\ 
0 & \text{ else }
\end{cases}.
\]
Let $W = \bigcap_{t\in T_1\cup S_1} U_t$ and let $j\colon W\into X$ denote the embedding. 
For every $s\in S$ and $t\in T$, if $N_t^s \ne 0$ then, in particular, $t\in T_1$ and so 
\[
U_t \cap W = W \supseteq U_s \cap W.
\]
Consequently, the matrix $N_t^s$ determines a combinatorial map 
$\widetilde{g}\colon j^!\sG_1 \to j^!\sG_0$. 

Choose a tempered function $\rho \colon X\to \RR$ such that $\rho$ equals $1$ in a neighborhood of $Z$ and is flat outside of $W$ (such exist, e.g., by a tempered partition of unity). 
In this case,  multiplication by $\rho$ gives a map $\sG_1 \to j_!j^! \sG_1$. We now define the morphism $g\colon \sG_1 \to \sG_0$ 
to be the  composition \[
g\colon \sG_1 \oto{\cdot \rho} j_!j^! \sG_1 \oto{j_! \widetilde{g}} j_!j^!\sG_0 \oto{\con} \sG_0.
\] 
Namely, for $\alpha \in \Sc_\sE(U_s)$ we have 
\[
g(\alpha)_t = \sum_s\rho \cdot N_t^s \cdot \alpha_s \in \Sc_\sE(U_t).
\]
 It remains to show that $\bar{g} \colon \sG_1(X)/\sG_1(V) \to \sG_0(X)/\sG_0(V)$ is a right pseudo-inverse of $\bar{f}$. 
 Since $\rho=1$ in a neighborhood of $Z$, the map  
\[
\Sc_\sE(X)/\Sc_\sE(V) \otimes \RR[S_1] \simeq \sG_1(X)/\sG_1(V) \oto{\bar{g}} \sG_0(X)/\sG_0(V)\simeq \Sc_\sE(X)/\Sc_\sE(V)\otimes \RR[T_1]
\]
coincides with the map $1\otimes \overline{N}$. Since $\overline{N}$ is a right pseudo-inverse of $\overline{M}$, we deduce that $\bar{g}$ is a right pseudo-inverse of $\bar{f}$.
\end{proof}

\begin{corl}
\label{ind_step_closed_image}
With the same conditions as in \Cref{good_pos_psu_inv}, if the map $f(V)\colon \sG_0(V)\to \sG_1(V)$ has closed image, then $f(X)\colon \sG_0(X)\to \sG_1(X)$ has closed image. 
\end{corl}

\begin{proof}
Combining \Cref{comb_map_good_filt} and \Cref{good_pos_psu_inv}, we deduce that the map $f(X)$ satisfies the assumptions of \Cref{quasi_pseudo_inverse_then_closed} with respect to the sub-spaces $\sG_0(V)\subseteq \sG_0(X)$ and $\sG_1(V)\subseteq \sG_1(X)$. 
\end{proof}

We can now show that conbinatorial maps has closed image.

\begin{prop}
\label{Comb_closed_image} 
Let $X$ be a Nash manifold and let $\sE$ be a Nash vector bundle on $X$. Let $f\colon   \bigoplus_{t\in T} (j_t)_!j_t^!\Sc_{\sE} \to \bigoplus_{s\in S} (j_s)_!j_s^!\Sc_\sE$ be a combinatorial map. Then, the map 
\[
f(X)\colon \bigoplus_{t\in T} \Sc_{\sE}(U_t) \to \bigoplus_{s\in S} \Sc_{\sE}(U_s)
\]
induced from $f$ on the global sections
has closed image. 
\end{prop}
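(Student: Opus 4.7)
My plan is to prove this by induction on the number of strata of a proper stratification of $X$ compatible with the finite collection $\{U_t\}_{t\in T\cup S}$; such a stratification exists by \Cref{Existence_classical_stratification_compatible_with_collection}.

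For the base case of a single stratum, each $U_t$ must be either empty or all of $X$. After discarding the empty $U_t$'s (whose Schwartz sections vanish), the map $f(X)$ becomes the tensor product of the identity of $\Sc_{\sE}(X)$ with a finite-dimensional real-linear map $M\colon \RR^{T'}\to \RR^{S'}$. Factoring $M$ through its image and tensoring with the Fr\'{e}chet space $\Sc_{\sE}(X)$, I obtain $f(X)$ as a continuous surjection onto $\Sc_{\sE}(X)\otimes_{\RR}\im(M)$ followed by a closed embedding into $\Sc_{\sE}(X)\otimes_{\RR}\RR^{S'}$, so the image is closed.

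For the inductive step, I fix a proper stratification $\alpha\colon X\to P$ compatible with the $U_t$'s (with at least two strata), choose a minimal element $p\in P$, and set $Z:=X_p$, a closed stratum, and $V:=X\setminus Z$. Since each $U_t$ (for $t\in T\cup S$) is a union of strata and $Z$ itself is a single stratum, either $Z\subseteq U_t$ or $Z\cap U_t=\varnothing$; in the latter case $U_t\subseteq V$. Hence $V$ is in good position with respect to the collection $\{U_t\}_{t\in T\cup S}$, in the sense of the definition preceding \Cref{good_pos_psu_inv}. The restriction $f|_V\colon \bigoplus_{t\in T}(j_t|_V)_!(j_t|_V)^!\Sc_{\sE|_V} \to \bigoplus_{s\in S}(j_s|_V)_!(j_s|_V)^!\Sc_{\sE|_V}$ is a combinatorial map on the Nash manifold $V$ given by the same real matrix $(M_s^t)$, and the induced proper stratification $\alpha|_V\colon V\to P\setminus\{p\}$ is compatible with $\{U_t\cap V\}_{t\in T\cup S}$ and has strictly fewer strata. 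Applying the inductive hypothesis to $f|_V$, the map $f(V)\colon \sG_0(V)\to \sG_1(V)$ has closed image. \Cref{ind_step_closed_image} then immediately implies that $f(X)$ has closed image as well.

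The main subtlety is the verification of the good position of $V$, which crucially uses that $Z$ is taken as a \emph{single} stratum of a compatible proper stratification (not merely some closed subset compatible with the $U_t$'s). Once this is in place, the rest of the argument is a straightforward assembly of \Cref{ind_step_closed_image} with the stratification machinery of \Cref{sec:semialg}.
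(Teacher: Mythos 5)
Your proof is correct and follows essentially the same route as the paper: induction on the number of strata, peeling off a closed stratum $Z$ so that $V=X\setminus Z$ is in good position, and invoking \Cref{ind_step_closed_image}. The only (cosmetic) differences are that the paper uses the cover stratification of \Cref{cov_Strat} rather than a proper stratification from \Cref{Existence_classical_stratification_compatible_with_collection}, and runs the induction all the way down to $X=\varnothing$ instead of treating the one-stratum case separately.
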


\begin{proof}
Let $\alpha\colon X\to P$ be the stratification of $X$ associated with the collection of open sets $\{U_t\}_{t\in T\cup S}$ as in \Cref{cov_Strat}.
We prove the result by induction on the size of $P$. 
If $P=\varnothing$ then $X=\varnothing$ and there is nothing to prove. Otherwise, let $Z\subseteq X$ be a closed stratum, and let $V = X\setminus Z$. Then $V$ is in good position with the collection of open sets $\{U_t\}_{t\in T\cup S}$. By \Cref{ind_step_closed_image}, to show that $f(X)$ has closed image, it would suffice to show that the map 
\[
f(V)\colon \bigoplus_{t\in T} \Sc_\sE(U_t \cap V) \to \bigoplus_{s\in S} \Sc_\sE(U_s \cap V)
\]
has closed image. Since the map $f(V)$ is (the global sections of) a combinatorial map over $V$, and since the stratification of $V$ corresponding to the open sets $\{U_t \cap V\}_{t\in T\cup S}$ has strictly fewer strata than $\alpha$, the result now follows from the inductive hypothesis. 
\end{proof}

Finally, we arrive at the goal of this section.
\begin{thm}
\label{scten_const_hauss}
For every
Nash bundle $\sE$ on a Nash manifold $X$ and every constructible
sheaf $\sF\in \Shv_c(X,\Der(\RR))$, the complex $(\sF\scten\Sc_{\sE})(X)$
has Hausdorff homologies. 
\end{thm}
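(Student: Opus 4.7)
The proof plan is to reduce the statement to the already-established fact that combinatorial maps have closed image on global sections (\Cref{Comb_closed_image}).

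First, I would recall the standard functional-analytic fact that, for a complex of Fr\'{e}chet spaces $\dots \to A_{k+1}\oto{d_{k+1}} A_k \oto{d_k} A_{k-1} \to \dots$, the homology $H_k = \ker(d_k)/\im(d_{k+1})$ is a Hausdorff topological vector space if and only if $\im(d_{k+1})$ is closed in $A_k$ (equivalently, closed in $\ker(d_k)$, since $\ker(d_k)$ is always closed when $A_{k-1}$ is Hausdorff). Thus the problem reduces to showing that every differential in the explicit complex computing $(\sF \scten \Sc_{\sE})(X)$ has closed image.

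Next, I would invoke \Cref{sch_sec_combi}, which asserts that $\sF \scten \Sc_\sE$ is representable by a finite complex of $\infFre$-valued cosheaves whose terms are of the form $\bigoplus_{t\in T_k}(j_t)_! j_t^! \Sc_\sE$ and whose differentials are combinatorial maps in the sense of \Cref{def:comb_map}. Evaluating this complex of cosheaves at $X$ gives a finite complex of Fr\'{e}chet spaces
\[
\dots \to \bigoplus_{t\in T_{k+1}} \Sc_{\sE}(U_t) \oto{d_{k+1}(X)} \bigoplus_{t\in T_k} \Sc_\sE(U_t) \oto{d_k(X)} \bigoplus_{t\in T_{k-1}}\Sc_\sE(U_t) \to \dots,
\]
which is a model for $(\sF\scten \Sc_\sE)(X)$ in $\infFre$. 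Each differential $d_k(X)$ is the value at $X$ of a combinatorial map of cosheaves.

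Finally, \Cref{Comb_closed_image} asserts precisely that the global-sections evaluation of any combinatorial map has closed image. Applying this to each $d_k(X)$ yields that every differential in the above complex has closed image, which by the criterion recalled in the first step implies that all the homologies $H_k((\sF \scten \Sc_\sE)(X))$ are Hausdorff. The only non-trivial input is \Cref{Comb_closed_image}, which was already established via the filtration argument combining \Cref{quasi_pseudo_inverse_then_closed}, \Cref{comb_map_good_filt}, and \Cref{good_pos_psu_inv}; so the present theorem is obtained by assembling these results.
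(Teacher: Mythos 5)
Your proposal is correct and follows exactly the route of the paper's own proof: apply \Cref{sch_sec_combi} to present $\sF\scten\Sc_\sE$ as a finite complex of combinatorial cosheaves, then invoke \Cref{Comb_closed_image} to conclude the differentials have closed image on global sections, hence Hausdorff homologies. You merely spell out the standard Fr\'{e}chet-space fact (Hausdorffness of homology $\iff$ closedness of the image of the differential) that the paper leaves implicit.
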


\begin{proof}
Since, by \Cref{sch_sec_combi}, $\sF \scten \Sc_\sE$ is the realization of a complex of cosheaves with combinatorial differentials, the result follows from \Cref{Comb_closed_image}
\end{proof}

 \subsection{Proof of \Cref{intro-rel_der}}
For the reader's convenience, let us indicate how the statement of the main theorem of the paper follows from the results in the body of this text. Recall that, for an infinity category $\cC$, we denote by $h\cC$ the homotopy category of $\cC$, obtained by replacing the morphism spaces in $\cC$ by the sets of their connected components. 

Let $X$ be a Nash manifold. First, since the statement of the theorem involves only constructible sheaves, we can work with the restricted topology on $X$ rather than the classical topology by \Cref{comp_const_st}. 
We have a realization functor of stable $\infty$-categories $\Der(\Shv(X,\RR))\to \Shv(X,\Der(\RR))$. Passing to the homotopy categories and restrict to the constructible objects, we obtain a functor $D_c(X,\RR)\to h\Shv_c(X,\Der(\RR))$ from the constructible derived category of sheaves of real vector spaces on $X$, to the homotopy category of $\Shv_c(X,\Der(\RR))$.
We can now define a functor $D_c(X,\RR)\to h\infFre$ by the composition 
\[
    D_c(X,\RR) \to h\Shv_c(X,\Der(\RR)) \oto{(-)\scten \Sc_\sE} h\CShv(X,\infFre)\oto{\Gamma_!} h\infFre. 
\]
Now, using the combinatorial presentations of the cosheaves of the form $\sF\scten \Sc_\sE$ (\Cref{sch_sec_combi}), we deduce that the image of this composite lands in the full subcategory $h\Ch_b(\Fre^\kappa)$, so we get the type of functor we wanted to construct. Property $(1)$ in the statement of \Cref{intro-rel_der} is now an immediate consequences of \Cref{global_relative_de_rham}. It remains to show that property $(2)$ holds. 
Namely, we need to show the following
\begin{prop} \label{rel_der_haus}
Let $\phi:X\to Y$ be a Nash submersion and let $\sE$ be a Nash
vector bundle on $Y$. Then, the complex $\Sdrt_{\phi,\sE}(X)$ has Hausdorff
homologies. The same holds also for $\Sdr_{\phi,\sE}$.
\end{prop}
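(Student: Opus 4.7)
The plan is to deduce both statements directly from the global relative de Rham theorem (\Cref{global_relative_de_rham}) combined with the Hausdorffness of Schwartz sections of constructible sheaves (\Cref{scten_const_hauss}). There is no genuine obstacle remaining at this point; the work is essentially assembling the pieces already established.

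First I would invoke \Cref{global_relative_de_rham} to obtain an isomorphism of cosheaves on $Y$:
\[
\phi_!\Sdrt_{\phi,\sE} \simeq \phi_\sharp(\RR_X) \scten \Sc_{\sE}.
\]
Evaluating both sides on $Y$ and using that $(\phi_! \sG)(Y) \simeq \sG(\phi^* Y) = \sG(X)$, this produces a natural isomorphism in $\infFre$:
\[
\Sdrt_{\phi,\sE}(X) \simeq (\phi_\sharp(\RR_X) \scten \Sc_{\sE})(Y).
\]

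Next I would verify that $\phi_\sharp(\RR_X) \in \Shv_c(Y,\Der(\RR))$. The constant sheaf $\RR_X$ is constructible; by \Cref{Nash_submersion_locally contractible}, the Nash submersion $\phi$ induces a locally contractible, and in particular essential, geometric morphism of $\infty$-topoi, so the functor $\phi_\sharp$ is defined; and by \Cref{push_const_to_const} it preserves constructibility. Hence $\phi_\sharp(\RR_X)$ is a constructible $\Der(\RR)$-valued sheaf on the Nash manifold $Y$. Applying \Cref{scten_const_hauss} to $Y$, to the Nash vector bundle $\sE$, and to the constructible sheaf $\phi_\sharp(\RR_X)$ yields the Hausdorffness of the homologies of $(\phi_\sharp(\RR_X) \scten \Sc_{\sE})(Y)$, and thus of $\Sdrt_{\phi,\sE}(X)$.

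For the untwisted complex $\Sdr_{\phi,\sE}$, I would apply the untwisted version of the global relative de Rham theorem, giving
\[
\phi_!\Sdr_{\phi,\sE} \simeq \phi_!(\RR_X) \scten \Sc_{\sE} \simeq \phi_\sharp(\Du_\phi^{-1}) \scten \Sc_{\sE}.
\]
Since $\Du_\phi^{-1}$ is a shift of the finite-rank Nash local system $\Or_\phi^{-1}$, it is constructible, and hence by \Cref{push_const_to_const} so is $\phi_\sharp(\Du_\phi^{-1})$. A second application of \Cref{scten_const_hauss}, now to $\phi_\sharp(\Du_\phi^{-1})$, establishes the Hausdorffness of the homologies of $\Sdr_{\phi,\sE}(X)$ and completes the proof.
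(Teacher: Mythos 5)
Your proof is correct and follows the same approach as the paper's: combine the global relative de Rham theorem with the constructibility of $\phi_\sharp(\RR_X)$ and the Hausdorffness of $(\sF\scten\Sc_\sE)(X)$ for constructible $\sF$. Your citation of \Cref{push_const_to_const} for the preservation of constructibility under $\phi_\sharp$ is in fact more precise than the paper's own (which points to \Cref{cl_struct_shv}, likely a typo), and your careful evaluation of the cosheaf isomorphism at the global sections level makes explicit a step the paper glosses over.
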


\begin{proof}
By \Cref{global_relative_de_rham} we have 
$\Sdrt_{\phi,\sE}\simeq \phi_\sharp \RR_X \scten \Sc_\sE$. By \Cref{cl_struct_shv}, $\phi_\sharp \RR_X$ is constructible. Hence, the result follows from \Cref{scten_const_hauss}. The argument for $\Sdr_{\phi,\sE}$ is similar, using the fact that the dualizing local system of $\phi$ is itself constructible.  
\end{proof}
This concludes the proof of \Cref{intro-rel_der} and our investigation of relative de Rham theory for Nash submersions.
\bibliographystyle{alpha}
\bibliography{ref}
 
\end{document}